\newtheorem{lemma}     {Lemma}[section]
\newtheorem{thm}        [lemma]{Theorem}
\newtheorem{teorema1}   [lemma]{Theorem}
\newtheorem{coro}       [lemma]{Corollary}
\newtheorem{cong1}      [lemma]{Conjecture}
\newtheorem{remark1}    [lemma]{Remark}
\newtheorem{defin}      [lemma]{Definition}
\numberwithin{equation}{section}
\newcommand{\de}{\delta}
\newcommand{\Om}{\Omega}
\newcommand{\x}{\mathbf x}
\newcommand{\al}{\alpha}
\newcommand{\be}{\beta}
\newcommand{\ep}{\varepsilon}
\newcommand{\ka}{\kappa}
\newcommand{\la}{\lambda}
\newcommand{\ga}{\gamma}
\newcommand{\Ga}{\Gamma}
\newcommand{\si}{\sigma}
\newcommand{\Th}{\Theta}
\newcommand{\om}{{\omega}}
\newcommand{\Up}{\Upsilon}
\newcommand{\bH}{{\text{\bf H}}}
\newcommand{\cA}{\mathcal A}
\newcommand{\cC}{\mathcal C}
\newcommand{\cD}{\mathcal D}
\newcommand{\cE}{\mathcal E}
\newcommand{\cG}{\mathcal G}
\newcommand{\cF}{\mathcal F}
\newcommand{\cH}{\mathcal H}
\newcommand{\cK}{\mathcal K}
\newcommand{\cM}{\mathcal M}
\newcommand{\cR}{\mathcal R}
\newcommand{\cL}{\mathcal L}
\newcommand{\cZ}{\mathcal Z}
\newcommand{\ZZ}{\mathbb{Z}}
\newcommand{\NN}{\mathbb{N}}
\newcommand{\wh}{\widehat}
\newcommand{\wt}{\widetilde}
\renewcommand{\l}{\ell}
\newcommand{\dis}{\displaystyle}
\newcommand{\mmmintone}[1]{{\dis{\int\kern -.38cm
-}}_{\kern-.21cm\substack{#1}}\;\;}
\newcommand{\mmmintwo}[2]{{\dis{\int\kern -.43cm
-}}_{\kern-.21cm\substack{#1}}^{\substack{#2}}\;\;}
\newcommand{\submint}{{\scriptstyle{\int\kern -.66em -}}}
\newcommand{\submintone}[1]{{\scriptstyle{\int\kern -.66em
-}}_{\scriptscriptstyle{\kern-.21em\substack{#1}}}}
\newcommand{\fracmint}{{\textstyle{\int\kern -.88em -}}}
\newcommand{\fracmintone}[1]{{\textstyle{\int\kern -.88em
-}}_{\scriptscriptstyle{\kern-.21em\substack{#1}}}\;}
\newcommand{\nada}[1]{}
\begin{document}

\title[Oriented percolation]
{Oriented percolation in a random environment}

\author{Harry Kesten}
\address{Harry Kesten,
Department of Mathematics, 310 Malott Hall, Cornell University, Ithaca, NY 14853-4201 USA.}
\email{hk21@cornell.edu}

\author{Vladas Sidoravicius}
\address{Vladas Sidoravicius,
IMPA,
Estrada Dona Castorina 110, 22460-320, Rio de Janeiro, Brasil.}
\email{vladas@impa.br}

\author{Maria Eulalia Vares}
\address{ Maria Eulalia Vares,
IM-Universidade Federal do Rio de Janeiro.\newline Av. Athos da
Silveira Ramos 149, 21941-909, Rio de Janeiro, RJ, Brasil.}
\email{eulalia@im.ufrj.br}
\begin{abstract}
On the lattice $\widetilde{\mathbb Z}^2_+:=\{(x,y)\in \mathbb
Z \times \mathbb Z_+\colon x+y \text{ is even}\}$ we consider the
following oriented (northwest-northeast) site percolation: the
lines $H_i:=\{(x,y)\in \widetilde {\mathbb Z}^2_+ \colon y=i\}$ are
first declared to be {\it bad} or {\it good} with probabilities
$\de$ and $1-\de$ respectively, independently of each other. Given
the configuration of lines, sites on good lines are open with
probability $p_{_G}>p_c$, the critical probability for the
standard oriented site percolation on $\mathbb Z_+ \times \mathbb Z_+$,
and sites on bad lines are open with probability $p_{_B}$, some
small positive number, independently of each other. We show
that given any pair $p_{_G}>p_c$ and $p_{_B}>0$,  there exists a
$\de (p_{_G}, p_{_B})>0$ small enough, so that for $\de \le
\de(p_G,p_B)$ there is a strictly
positive probability of oriented percolation to infinity from the
origin.

\end{abstract}

\maketitle

\noindent {\it{Keywords:}} Oriented percolation, random environment

\noindent {\it{2010  Mathematics Subject Classification:}} Primary
60K35; secondary 60J15.

\tableofcontents

\section{Introduction}
\label{intro}

On the lattice $\widetilde{\mathbb Z}^2_+:=\{(x,y)\in \mathbb Z \times
\mathbb Z_+\colon x+y \text{ is even}\}$ we consider the following
oriented (northwest-northeast) site percolation: the lines
$H_i:=\{(x,y)\in \widetilde {\mathbb Z}^2_+ \colon y=i\}$ are first
declared to be {\it bad} or {\it good} with probabilities $\de$ and
$1-\de$ respectively, independently of each other. Given the
configuration of lines, sites on good lines are open with
probability $p_G$, and sites on bad lines are open with
probability $p_B$, independently of each other. More formally, on
a suitable probability space $(\Om,\cA,P)$, we consider a
Bernoulli sequence $\xi=(\xi_i\colon i \in \mathbb Z_+)$ with
$P(\xi_i=1)=\delta= 1-P(\xi_i=0)$, which determines $H_i$ to be bad or
good, and a family of occupation variables $(\eta_z\colon z \in
\widetilde{\mathbb Z}^2_+ )$ which are conditionally independent
given $\xi$, with $P(\eta_z=1\mid \xi)=p_B= 1-P(\eta_z=0\mid \xi)$
if $z \in H_i$ with  $\xi_i=1$, and $P(\eta_z=1\mid
\xi)=p_G=1-P(\eta_z=0\mid \xi)$ if $z \in H_i$ with $\xi_i=0$. If
$\eta_z=1$ the site $z$ is {\it open}, and otherwise it is {\it closed}. An
{\it open oriented path} on $\widetilde {\mathbb Z}^2$ is a path along which
the second coordinate is strictly increasing and all of whose
vertices are open. The {\it open cluster} of a vertex $z \in \widetilde
{\mathbb Z}^2$ is the collection of sites which can be reached from
$z$ by an open oriented path. This cluster is denoted by $C_z$ and
the open cluster of the origin is denoted by $C_0$. We always
include $z$ itself in $C_z$, whether $z$ is open or not. We say
that {\it percolation occurs} if $C_0$ is infinite with positive
probability. This description of percolation is of course obtained
by rotating the standard picture on $\mathbb Z_+ \times \mathbb Z_+$ by
$\pi/4$ counterclockwise.

The interesting situation is when $p_G>p_c$, the critical
probability for the standard oriented site percolation on $\mathbb
Z_+ \times \mathbb Z_+$, and $p_B$ is some small positive number.
Given $p_G$ and $p_B$ we ask if $\delta>0$ may be taken small enough
so that there is a positive probability of oriented percolation to
infinity from the origin. We prove the answer to be positive,
provided $p_G>p_c$, as stated in the next theorem, which is the
main result of this article.
\bigskip
\begin{thm} \label{main} In the setup described above, let
\begin{equation*}
\Theta(p_G,p_B,\de)\,=\,P(C_0 \text{ is infinite }).
\end{equation*}
Then, if $p_G>p_c$ and $p_B>0$, we can find $\de_0=\de_0(p_G,p_B)>0$
so that $\Theta(p_G,p_B,\de)>0$ for all $\de \le \de_0$. In fact,
for $\de \le \de_0$,
\begin{equation}
P(C_0 \text{ is infinite}| \xi) > 0 \text{ for almost all }\xi.
\label{1.1}
\end{equation}
\end{thm}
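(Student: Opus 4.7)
The plan is to reduce the model to a supercritical oriented Bernoulli site percolation on a coarse-grained lattice via a block/renormalization argument. Fix a large block size $M=M(p_G,p_B)$ and then choose $\de$ so small that, along any $M$ consecutive lines, the probability of seeing a bad line is small. Partition the rows into vertical blocks of height $M$. Most blocks are \emph{good} (no bad line) and host ordinary supercritical oriented site percolation with parameter $p_G>p_c$; the remaining blocks contain a run of consecutive bad lines whose length $\ell$ has a geometric tail $\le\de^\ell$, so $\ell=O(1)$ with overwhelming probability.

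Two quantitative estimates drive the argument. First, in a stretch of $M$ good lines, starting from a sufficiently large seed of open sites on the bottom line, the shape theorem for supercritical oriented site percolation (as made quantitative by Bezuidenhout--Grimmett or Grimmett--Marstrand style block constructions) ensures that with probability at least $1-\epsilon(M)$ the open cluster reaches the top line with a ``wide front'' of horizontal width of order $M$ and local density close to the survival probability $\theta(p_G)>0$; here $\epsilon(M)\downarrow 0$ as $M\to\infty$. Second, when such a front of width $W$ meets a run of $\ell$ bad lines, the probability that no length-$\ell$ northwest--northeast chain of open bad-line sites links the front to the first good line above is at most $(1-p_B^\ell)^{cW}\le\exp(-cWp_B^\ell)$. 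For $\de$ small, runs are short with overwhelming probability, and the width $W$ produced in the preceding good block exceeds $p_B^{-\ell}$ by a large factor, so each crossing succeeds with probability arbitrarily close to $1$.

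Declare a coarse-grained site (one $M$-block together with its horizontal location) \emph{open} if both the good-block propagation estimate and the bad-run crossing estimate succeed inside that block. These events have uniformly bounded range of dependence on the coarse lattice, and their marginal probabilities can be made arbitrarily close to $1$ by taking $M$ large (to control $\epsilon(M)$) and then $\de$ small (to control run lengths and the width recovery). By the Liggett--Schonmann--Stacey comparison theorem, the resulting finite-range-dependent field stochastically dominates an i.i.d.\ Bernoulli field of density $>p_c$, so oriented percolation occurs on the coarse lattice with positive probability. This in turn forces $\Theta(p_G,p_B,\de)>0$ in the original model.

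The main obstacle is the three-way quantitative balance between $M$, $\de$, and the tail of bad-run lengths: one needs $Wp_B^\ell\gg 1$ at every bad run encountered along the percolating cluster, which requires a careful coupling between the front width produced inside a good block and the random length of the next bad run, together with an effective mechanism to re-seed a wide front on the first good line above each successfully crossed run so that the block construction can be iterated. The almost-sure statement \eqref{1.1} then follows from a standard ergodicity argument: writing $\varphi(\xi):=P(|C_0|=\infty\mid\xi)$, the event $\{\varphi>0\}$ is invariant under the vertical shift $T\xi=(\xi_1,\xi_2,\ldots)$, since horizontal-translation invariance of the conditional law given $\xi$ makes $\varphi(\xi)>0$ equivalent to the positivity of percolation from any vertex of $H_1$, which depends only on $T\xi$, and in the reverse direction the single-step connection from $H_0$ to $H_1$ carries positive conditional probability because $p_B>0$. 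Since $\xi$ is i.i.d., $T$ is ergodic, so $\{\varphi>0\}$ has probability $0$ or $1$; the already-established positivity of $\Theta$ forces it to be $1$.
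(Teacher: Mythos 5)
Your plan is a single-scale block renormalization, and this is exactly the approach that the paper explicitly argues cannot work here; the gap is not merely the ``careful coupling'' you flag but a structural failure. Fix any block height $M$ and let $W\le CM$ be the maximal front width your good-block estimate can ever produce. The lengths $\ell$ of the bad runs encountered by the growing cluster have a geometric tail $\de^{\ell}$, but they are \emph{unbounded}: for any threshold $\ell_0$, infinitely many of the $M$-blocks along a percolating column contain a run of length $>\ell_0$ almost surely (Borel--Cantelli, since the events are independent across disjoint blocks of rows and each has probability of order $M\de^{\ell_0}$). Whenever $\ell$ exceeds $\log(CM)/\log(1/p_B)$, your crossing probability $1-(1-p_B^{\ell})^{cW}\approx cWp_B^{\ell}$ is close to $0$, not to $1$. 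So the coarse-grained density cannot be pushed uniformly above $p_c$: infinitely many coarse rows are ``mostly closed'', and these are not rare fluctuations one can absorb, because a single long run closes the \emph{entire} coarse row. For the same reason the Liggett--Schonmann--Stacey step is not available: the coarse field is not finite-range-dependent, since the state of every coarse site in a row is driven by the shared $\xi$-structure of that row, which has unbounded horizontal correlation length. Moreover, runs themselves cluster: a cluster of nearby runs is effectively a single obstacle whose crossing cost compounds, so even a run-by-run analysis underestimates the difficulty.

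The paper's solution is a genuinely multi-scale (hierarchical) renormalization precisely because of this. The bad lines are first grouped into clusters with a notion of \emph{mass}, the clusters are themselves grouped into higher-level clusters, and so on (the partitions $\mathbf C_k$ and $\mathbf C_\infty$ of Section~2). The renormalized $k$-sites have horizontal width $(cL)^k$, which grows with $k$, while a cluster of mass $k$ produces a bad layer of height $\le 3L^{k-1}$ (inequality~\eqref{2.6}); this is the geometric input that keeps the crossing probability under control at every scale, and it has no analogue at a single fixed scale $M$. The separation property \eqref{2.two}/\eqref{2.five}, which forces two clusters of mass $\ge r$ to be at distance $\ge L^r$, is what allows the front to widen enough between consecutive obstacles of a given mass; this is exactly the re-seeding mechanism you identify as missing, and it cannot be arranged with a fixed block size. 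The core of the paper (Sections~4--6, Theorems~\ref{indstep} and~\ref{inductive}) is an induction over the mass $m$ of bad layers, with the ``chaining through $B(m,\ell)$'' machinery of Section~5 replacing your single bad-run crossing estimate. Your ergodicity argument for the almost-sure statement \eqref{1.1} is fine in spirit (the paper handles it slightly differently, via Lemma~\ref{lemma2.3} and the environment modification $\wt\ga$), but the central percolation estimate needs the hierarchical construction, not a one-shot LSS comparison.
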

\medskip
Most of our effort in the article will involve proving Theorem
\ref{main}  for $p_G$ close to one. The extension to any
supercritical $p_G$ is discussed at the end.


\medskip

This work stems from attempts to understand and answer various questions
which were naturally raised in probability,
theoretical computer science and statistical physics. These questions lie
on crossroads of various fields and have several quite
distinct roots.

\noindent $\bullet$ Spatial growth processes such as percolation or
contact process in random environment is a very well established topic. The situation is reasonably well understood when the environment
has good space-time mixing properties. Much less is known
 for environments with long range dependencies. One source of
inspiration is \cite {BDS}, where the contact process with spatial disorder
persisting in time is considered. Shifting their setup to oriented percolation,
the difference is that the  ({\it good/bad}) layers in \cite {BDS} are
parallel to the growth direction. Our environment is somehow ``orthogonal" and
it generates more global effects.

It is worth comparing the situation treated here with that in
\cite{BDS}, where survival (or percolation)  is achieved by pushing the good
lines to be  {\it good enough}, given $p_B$ and the frequency $\delta$ of bad lines.
It is very simple to see that this result cannot hold in the current situation, with
the layers being transversal to the growth.


\noindent $\bullet$ In late sixties, McCoy and Wu (\cite{MW1,MW2,MW3,MW4}) started the study of
a specific class of disordered ferromagnets  with random couplings that are constant
along each horizontal line, for instance with randomly located layers of strongly and
weakly coupled spin systems.

\noindent $\bullet$ A third set of questions comes from theoretical computer
science. Among them, the clairvoyant scheduling problem or
coordinate percolation, introduced by P. Winkler in early nineties:  is it possible, in a complete graph with $n$ vertices, to schedule two independently sampled random walks (by suitably delaying jumps), so that they never collide? This has a representation in terms of planar oriented percolation (due to Noga Alon). For results in this direction see \cite{Wi, BBS, Gac1}. The answer is negative for $n=2$ or $n=3$. Numerical simulations suggest a positive answer for $n\ge 4$. Recent progress in \cite{BSS} gives a positive answer for $n$ large enough.

In parallel, several questions of similar nature, such as compatibility of binary sequences, Lipshitz
embedding and rough isometries
of random one dimensional objects have been considered and recently answered in \cite{BS}. (See also \cite{H, P, Gac2, compat}.)

The approach undertaken in \cite{BS} is, as ours, based on multi-scale analysis. While the
general concept is similar, both methods are quite different in its technical execution. The scheme developed in
\cite{BS} relies more on the fine probabilistic block estimates. The approach taken in our work gives a precise geometric 
description of the random environment, describing the global picture in terms of increasing hierarchies and inter-relation between them.

\section{Construction of renormalized lattices. Step 1: clusters}
\label{sec2constr}

Define $\Ga \equiv \Ga (\om) = \{x \in \Bbb Z_+ \colon \xi_x = 1
\}$ and label the elements of $\Ga$ in increasing order $\Ga = \{
x_j\}_{j\ge 1}$. The sequence $\Ga$ is called the {\it
environment}.

We will build an infinite sequence $\{{\mathbf{C}}_k\}_{k\ge 0}$ of
partitions of $\Ga$. Each partition ${\mathbf{C}}_k$ is a collection
${\mathbf{C}}_k = \{ {\cC}_{k,j} \}_{j\ge 1}$ of subsets of $\Ga$.
We call the  elements of ${\mathbf{C}}_k$ {\it clusters}. The
construction depends on a parameter $L$, a positive integer which
will be fixed later so that the property in Lemma 2.1 below is
satisfied. The clusters will be constructed so as to have the
properties
\begin{equation}
\text{each ${\cC}_{k,j}$ is of the form $I\cap \Ga$ for an
interval $I$}, \label{2.z}
\end{equation}
and
\begin{equation}
\text{span}({\cC}_{k,j}) \cap \text{span}({\cC}_{k,j'}) =
\emptyset \quad \text {if }\; j \neq j', \label{2.one}
\end{equation}
where $span(C)$ is simply the smallest interval (in $\Bbb Z_+$) that
contains $C$. To each cluster ${\cC}_{k,j}$  we will attribute a
{\it mass}, $m({\cC}_{k,j})$, in such a way that
\begin{equation}
d({\cC}_{k,j}, {\cC}_{k,j'}) \ge L^r, \label{2.two}
\end{equation}
if $\min
\{m({\cC}_{k,j}), m ({\cC}_{k,j'})  \} \ge r, \;  \text{for}
\;  r = 1, \dots, k \text{ and } j \ne j'. $
Here the distance $d(D_1,D_2)$ is the usual Euclidean distance
between two sets $D_1$ and $D_2$. To each cluster ${\cC}_{k,j}$
we shall further associate a number $\ell({\cC}_{k,j}) \in
\{0,1,\dots,k\}$ which will be called the {\it level of the
cluster}. This level will satisfy
\begin{equation}
0 \le \ell ({\cC}_{k,j}) < m({\cC}_{k,j}).
\label{lev}
\end{equation}
Finally we construct the (limiting) partition ${\mathbf{C}}_{\infty}= \{ {\cC}_{\infty ,j} \}_{j\ge 1}$ of $\Ga$ with the
property that
\begin{equation}
\text{span}({\cC}_{\infty,j}) \cap \text{span}({\cC}_{\infty,j'}) = \emptyset \quad \text {if} \; j \neq j'.
\label{2.onea}
\end{equation}
To each cluster ${\cC}_{\infty,j}$ of ${\mathbf{C}}_{\infty}$
we will attribute a mass $m ({\cC}_{\infty,j})$, and a level
$\ell({\cC}_{\infty,j})$, in such a way that
\begin{equation}
0 \le \ell({\cC}_{\infty,j}) < m ({\cC}_{\infty,j}), \label{2.a}
\end{equation}
and the following property holds:
\begin{equation}
d({\cC}_{\infty,j}, {\cC}_{\infty,j'}) \ge L^r, \; \; \text{if}
\; \min \{m({\cC}_{\infty,j}), m({\cC}_{\infty,j'})  \} \ge r,
\; \; \text{for} \; \; r \ge 1 \text{ and } j \ne j',\label{2.four}
\end{equation}
or equivalently,
\begin{equation}
d({\cC}_{\infty,j}, {\cC}_{\infty,j'}) \ge L^{\min \{m({\cC}_{\infty,j}), m({\cC}_{\infty,j'})  \}}, \text {for} \; j \neq
j'. \label{2.five}
\end{equation}

\noindent {\bf The construction.}

Recall that $\Ga \equiv \Ga (\om) = \{ x \in \Bbb Z_+ \colon \xi_x
= 1 \}$, and that the elements of $\Ga$ are labeled in increasing
order. $\Ga = \{ x_j\}_{j\ge 1} $ with $x_1 < x_2 < \dots$.

\medskip
\noindent {\bf Level 0.} The clusters of level 0 are just the
subsets of $\Ga$ of cardinality one. We take ${\cC}_{0,j} =
\{x_j\}$ and attribute a unit mass to each such cluster. That is,
$m ({\cC}_{0,j})=1$ and $\ell({\cC}_{0,j})=0$. Set  ${\mathbf{C}}_{0,0}=
{\mathbf{C}}_0 = \{ {\cC}_{0,j} \}_{j\ge 1}$. Further
define $\al(\cC) = \om(\cC) = x$  when ${\cC} = \{x\}$ is a
cluster of level 0.

\medskip

\noindent {\bf  Level 1.} We say that $x_i, x_{ i+1}, \dots
x_{i+n-1}$ form  a {\it maximal $1$-run of length $n\ge 2$} if
\begin{equation*}
x_{j+1}-x_j\, < \, L,  \; j=i, \dots ,i+n-2,
\end{equation*}
and
\begin{equation*}
x_{j+1}-x_j\, \ge \, L  \; \;
\begin{cases} \text{for} \; \; j=i-1, j= i+n-1, \; &\text{if } \; i>1 \\
\text{for} \; \; j= i+n-1, \; &\text{if } \; i=1. \end{cases}
\end{equation*}
The level 0 clusters $\{x_i\}, \{x_{i+1}\}, \dots \{x_{i+n-1}\}$
will be called {\it constituents} of the run. Note that there are
no points in $\Ga$ between two consecutive points of a maximal
1-run. Also note that if $x_{j+1}-x_j \ge L$ and $x_j-x_{j-1} \ge
L$, then $x_j$ does not appear in any maximal 1-run of length at
least 2.

For any pair of distinct maximal runs, $r'$ and $r''$ say, all
clusters in $r'$ lie to the left of all clusters in $r''$ or vice
versa. It therefore makes sense to
label the  consecutive maximal $1$-runs of length at least $2$ in
increasing order of appearance: $r^1_1, r^1_2, \dots $. It is
immediate that $P$-a.s. all runs are finite, and that infinitely
many such runs exist. We write $r^1_i = r^1_i(x_{s_i}, x_{s_i +1},
\dots x_{s_i+n_i-1}), \; i=1, \dots$, if the $i$-th run consists
of $x_{s_i}, x_{s_i +1}, \dots x_{s_i+n_i-1}$. Note that $n_i\ge
2$ and $s_i+n_i \le  s_{i+1}$ for each $i$. The set
\begin{equation*}
  \cC^1_{i} =\{x_{s_i},x_{s_i+1},\dots,x_{s_i+n_i-1}\}
\end{equation*}
is called a {\it level 1-cluster}, i.e., $\ell( \cC^1_i)=1$. We attribute to $\cC^1_i$ the mass given by
its cardinality:
\begin{equation*}
m (\cC^1_i) = n_i.
\end{equation*}
The points
\begin{equation*}
\alpha^1_i = x_{s_i}\quad \text{and}\quad \omega^1_i =
x_{s_i+n_i-1}
\end{equation*}
are called, respectively, the {\it start-point} and {\it
end-point} of the run, as well as of the cluster $\cC^1_i$.
To avoid confusion we sometimes  write more explicitly
$\alpha(\cC^1_i), \;\omega(\cC^1_i)$.

 By ${\text{\bf C}}_{1,1}$ we denote the set of clusters of level 1.
Let ${\text{\bf C}}'_{0,1}=\{\{x_i\}\colon   x_i \in \Ga \setminus
\cC \text{ for all } \cC \in {\text{\bf C}}_{1,1}\}$ and
${\text{\bf C}}_1={\text{\bf C}}_{1,1}\cup{\text{\bf C}}'_{0,1}$.
Note that $\text{\bf C}_{1,1}$ and  $\text{\bf C}'_{0,1}$ consist
of level 1 and level 0 clusters, respectively, and that the union
of all points in these clusters is exactly $\Ga$. We label the
elements of ${\text{\bf C}}_1$ in increasing order as $\cC_{1,j}, j \ge 1$.
For later use we also define $\text{\bf
C}_{0,1} = {\text{\bf C}}_0$. \noindent {\bf Notation.} In our
notation $\cC ^1_j$ denotes the $j^{th}$ level 1-cluster,
and $\cC_{1,j}$ denotes the $j^{th}$ element in $\text{\bf
C}_1$ (always in increasing order).

\medskip
\noindent {\bf Level k+1.} Let  $k \ge 1$ and assume that the
partitions ${\text{\bf C}}_{k'} = \{\cC_{k',j}\colon \; j \ge 1
\}$, and the masses of the $\cC_{k',j}$ have already been defined
for $k'\le k$, and satisfy the properties \eqref{2.z}-\eqref{2.two} and that
${\text{\bf C}}_{k}$ consists of clusters $\cC$ of
levels $\ell \in \{0,1,\dots,k\}$, i.e.,
\begin{equation}
\text{\bf C}_{k}\subset \cup_{\l=0}^k  \text{\bf C}_{\l,\l},
\label{2.zz}
\end{equation}
where for  $\l \ge 0$, ${\text{\bf C}}_{\l,\l}$ is the set of level
$\l$ clusters. We assume, as before, that the labeling goes in
increasing order of appearance. Define
\begin{equation}
{\text{\bf C}}_{k,k+1} = \{ \cC \in {\text{\bf C}}_k \colon  m
(\cC)\ge k+1 \}. \label{2.4e}
\end{equation}
Notice that ${\text{\bf C}}_{1,2}={\text{\bf C}}_{1,1}$ and
${\text{\bf C}}_{k,k+1}\subseteq\cup_{\ell=1}^k{\text{\bf
C}}_{\ell,\ell}$, if $k \ge 1$.

In the previous enumeration of ${\text{\bf C}}_{k}$, let $j_1 <
j_2 < \dots$ be the labels of the clusters in ${\text{\bf
C}}_{k,k+1}$, so that $\text{\bf C}_{k,k+1} = \{\cC_{k,j_1},
\cC_{k,j_2},\dots\}$. In ${\text{\bf C}}_{k, k+1}$ we consider
consecutive maximal $(k+1)$-runs, where we say that the clusters
$\cC_{k,j_s}, \cC_{k,j_{s+1}}, \dots \cC_{k,j_{s+n-1}}
\in {\text{\bf C}}_{k,k+1}$ form a {\it maximal $(k+1)$-run} of
length $n\ge 2$ if:
\begin{equation*}
d(\cC_{k,j_i}, \,  \cC_{k,j_{i+1}})\, < \, L^{k+1}, \; i=s,
\dots ,s+ n-2,
\end{equation*}
and in addition
\begin{equation*}
d(\cC_{k,j_i}, \,  \cC_{k,j_{i+1}})\, \ge \, L^{k+1}  \; \;
\begin{cases} \text{for} \; \; i=s-1, i= s+n-1, \; &\text{if} \; j_s>1 \\
\text{for} \; \; i= s+n-1, \; &\text{if} \; j_s=1.
\end{cases}
\end{equation*}

Again it is immediate that $P$-a.s. all $(k+1)$-runs are finite
and that infinitely many such runs exist. Again we can label them in
increasing order and write  $r^{k+1}_i = r^{k+1}_i (\cC_{k,j_{s_i}}, \cC_{k,j_{s_i+1}},$
$ \dots, \cC_{k,j_{s_i+n_i-1}})$ for the $i$-th $(k+1)$-run, for suitable
$s_i,n_i$ such that $n_i\ge 2$ and $s_i+n_i \le  s_{i+1}$ for all
$i$. ($s_i,n_i$ have nothing to do with those in the previous
steps of the construction.) We set
\begin{equation*}
\alpha^{k+1}_i = \alpha(\cC_{k,j_{s_i}})\quad \text{and}\quad
\omega^{k+1}_i =\om (\cC_{k,j_{s_i+n_i-1}}),
\end{equation*}
and call these the start-point and end-point of the run,
respectively. We define the span of the run
\begin{equation*}
\text{span}(r^{k+1}_i)=[\al^{k+1}_i, \, \om^{k+1}_i],
\end{equation*}
and  associate to it a cluster $\cC^{k+1}_i$ of level
$k+1$, defined as
\begin{equation*}
  \cC^{k+1}_i = \text{span}(r^{k+1}_i)\cap \Ga.
\end{equation*}
In this case, the clusters $\cC_{k,j_{s_i}}, \cC_{k,j_{s_i+1}},
 \dots \cC_{k,j_{s_{i}+n_i-1}}$ are called {\it constituents}
of $\cC^{k+1}_i$. To the cluster $\cC^{k+1}_i$ we
attribute the mass $m(\cC^{k+1}_i)$  by the following
rule:
\begin{equation}
m (\cC^{k+1}_i) = m (\cC_{k,j_{s_i}})+
\sum_{s=s_i+1}^{s_i+n_i-1} (m (\cC_{k,j_s}) - k) =
\sum_{s=s_i}^{s_i+n_i-1} m (\cC_{k,j_s}) - k(n_i-1).
\label{2.foura}
\end{equation}

The points $\alpha^{k+1}_i$ and $\omega^{k+1}_i $ will also be
called, respectively,
{\it start}- and {\it end-point} of the cluster $\cC^{k+1}_i$, and
are also written as $\alpha(\cC^{k+1}_i)$ and
$\omega(\cC^{k+1}_i)$.

By ${\text{\bf C}}_{k+1,k+1}$ we denote the set of all level
$({k+1})$ clusters. Take $\text{\bf C}'_{k,k+1} = \{ \cC \in
{\text{\bf C}}_{k}\colon \cC \cap \text{span}(r^{k+1}_i) =
\emptyset, \; i=1,2, \dots \}$. Finally we define ${\text{\bf
C}}_{k+1} := {\text{\bf C}}_{k+1,k+1} \cup \text{\bf C}'_{k,k+1}$.
We label the elements of ${\text{\bf C}}_{k+1}$ as $\cC_{k+1,j}, j \ge 1$,
in increasing order. Note that a cluster in
${\text{\bf C}}_k$ is also a cluster in ${\text{\bf C}}_{k+1}$ if
and only if it is disjoint from the span of each maximal
$(k+1)$-run of length at least 2. Thus ${\text{\bf C}}_{k+1}$ may
contain some clusters of level no more than $k$, but
 some clusters (of level $\le k$) in ${\text{\bf C}}_{k}$
no longer appear in ${\text{\bf C}}_{k+1}$ (or any ${\text{\bf
C}}_{k+j}$ with $j \ge 1$).

Note also that in the formation of a cluster of level $(k+1)$,
clusters of mass at most $k$ might be incorporated while taking
the span of a $(k+1)$-run; they form what we call {\it dust} or {\it
porous medium} of level at most $k-1$ in between the
constituents, which have mass at least $k+1$.

This describes the construction of the $\mathbf{C}_k$. We next show by
induction that
\begin{equation}
\mathbf{C}_k \text{ is a partition of $\Ga$ and $\mathbf{C}_k$ is a refinement of $\mathbf{C}_{k+1}$}
\label{nested}
\end{equation}
for $k \ge 0$. This is clear
for $k=0$, since $\mathbf{C}_0$ is just the partition of $\Ga$ into
singletons. If we already know \eqref{nested} for $0 \le k \le K$, then
it follows also for $k = K+1$ from the fact that clusters in $\mathbf{C}_{k+1}$
are formed from the clusters in $\mathbf{C}_k$ by combining the
consecutive clusters between the start- and end-point of
a maximal $(k+1)$-run into one cluster. Thus it takes a
number of successive clusters in $\mathbf{C}_k$ and combines them into one
cluster. This establishes \eqref{nested} for all $k$.

The definition of $\mathbf{C}_k$ shows that
\begin{equation*}
\mathbf{C}_{k,k} \subset \mathbf{C}_k \subset \mathbf{C}_{k,k} \cup \mathbf{C}'_{k-1,k}
\subset \mathbf{C}_{k,k} \cup \mathbf{C}_{k-1},
\end{equation*}
from which we obtain by
induction that \eqref{2.zz} holds, as well as
\begin{equation*}
\l(\cC) = \min\{k\colon\cC \in \text{\bf C}_k\}
\end{equation*}
for any $\cC \in \cup_{k\ge 1} \text{\bf C}_k$.

We use induction once more to show that for any $k \ge 0$
\begin{equation}
m(\cC) \ge \ell(\cC) +1  \text{ for any } \cC \in \cup_{0 \le\ell\le k} \mathbf{C}_\ell,
\label{2.fourc}
\end{equation}
and if $\cC^{k+1}_i$ is formed from the constituents
$\cC_{k,j_{s_i}},\dots,\cC_{k,j_{s_i+n_i-1}}$
with $n_i \ge 2$, then
\begin{equation}
m (\cC^{k+1}_i) \ge \max_{s_i \le s \le s_i+n_i-1}
m(\cC_{k,j_s})+n_i-1 >  \max_{s_i \le s \le s_i+n_i-1}
m(\cC_{k,j_s}).
\label{2.fourb}
\end{equation}
Indeed, \eqref{2.fourc} trivially holds for $k = 0$. Moreover, if
\eqref{2.fourc} holds for $k \le K$, then \eqref{2.fourb} for $k =
K$ follows from the rule \eqref{2.foura} (and $n_i \ge 2$). In turn,
\eqref{2.fourb} and \eqref{2.fourc} for $k \le K$ imply
\begin{equation}
m(\cC_i^{k+1}) \ge \max_{s_i \le s \le s_i+n_i-1}m(\cC_{k,j_s})+1,
\label{2.zy}
\end{equation}
and hence also \eqref{2.fourc} for $k = K+1$.

So far we have shown that ${\text{\bf C}}_{k+1}$ is a
partition of $\Ga$ which satisfies \eqref{2.z} and \eqref{2.one} with
$k$ replaced  by  $k+1$ (by the definition of ${\text{\bf
C}}'_{k,k+1}$ and induction on $k$).
We next show by an indirect proof that this is also true for
\eqref{2.two}. It is convenient to first prove the following claim:
\newline
 {\bf Claim.} If $t \ge 1$,
 $\cC \in  \cup_{j \ge 0}\text{{\bf C}}_{t+j}$ and
$\ell(\cC) \le t$, then we have (see definition \eqref{2.4e})
\begin{equation}
\cC \in \text{{\bf C}}_{s,s+1} \text{ for } \l(\cC) \le s \le
(m(\cC)-1)\land t. \label{2.fourff}
\end{equation}
To see this, define $\wh s$ as the
smallest $s \ge \l(\cC)$ for which $\cC \notin \text{{\bf
C}}_{s,s+1}$, and assume that $\wh s \le (m(\cC)-1)\land t$. Then
$m(\cC) \ge \wh s+1$, so that we must have $\cC \notin \text{{\bf
C}} _{\wh s}$. But also $\cC \in\text{{\bf C}}_{\wh s - 1,\wh s}
\subseteq \text{{\bf C}}_{\wh s-1}$. (Note that $\wh s = \l(\cC)$
cannot occur, because one always has $\cC \in \mathbf{C}_{\l,\l+1}$
for $\l = \l(\cC)$, by virtue of \eqref{2.fourc}.) Then it must be
the case that $\cC$ intersects $\text{span}(r_i^{\wh s})$ for some
$i$. In fact, by our construction, $\cC$ must then be a constituent
of some cluster in  $\text{{\bf C}}_{\wh s}$ corresponding to a
maximal $\wh s$-run of length at least 2. But then $\cC$ does not
appear in $\text{{\bf C}}_{\wh s +j}$ for any $j \ge 0$, and in
particular $\cC \notin \cup_{j \ge 0} \text{{\bf C}}_{t+j}$,
contrary to our assumption. Thus, $\wh s \le (m(\cC)-1) \land t$ is
impossible and our claim must hold.
\smallskip

We now turn to the proof of \eqref{2.two}. This is obvious for $k =
0$ or $k=1$. Assume then that \eqref{2.two} has been proven for some
$k \ge 1$. Assume further, to derive a contradiction, that $\cC'$
and $\cC''$ are two distinct clusters in $\text{{\bf C}}_{k+1}$
such that $\min\{m(\cC'), m(\cC'')\} \ge  r$ but $d(\cC',\cC'') <
L^r$  for some $r \le k+1$. Without loss of generality we take $r
= m(\cC') \land m(\cC'') \land (k+1)$. Let $\cC'$ and $\cC''$ have
level $\l'$ and $\l''$, respectively. Since these clusters belong
to ${\text{\bf C}}_{k+1}$ we must have $\max(\l',\l'') \le k+1$.
For the sake of argument, let $\l' \le \l''$. If $\l'=\l''=k+1$,
then $d(\cC', \cC'') \ge L^{k+1}$, because, by construction, two
distinct clusters of level $k+1$ have distance at least $L^{k+1}$.
In this case we don't have $d(\cC',\cC'') < L^r$, so that we may
assume $\l' < k+1$.

Now first assume that $r-1 \ge \max(\l',\l'')= \l''$. Since $r-1
\le k$ we then have by \eqref{2.fourff} (with $t = k$) that $\cC'$
and $\cC''$ both belong to $\text{{\bf C}}_{r-1,r}$. If the
distance from $\cC'$ to the nearest cluster in $\text{{\bf
C}}_{r-1,r}$ is less than $L^r$, then $\cC'$ will be a constituent
of a cluster of level $r$ and $\cC'$ will not be an element of
${\text{\bf C}}_{k+1}$. Thus it must be the case that the distance
from $\cC'$ to the nearest cluster in $\text{{\bf C}}_{r-1,r}$ is
at least $L^{r}$. A fortiori, $d(\cC', \cC'') \ge L^r$. This
contradicts our choice of $\cC', \cC''$.

The only case left to consider is when $r-1 < \max(\l',\l'') =
\l''$. Since $r-1 = ( m(\cC') -1) \land (m(\cC'')-1) \land k \ge \l'
\land \l'' \land k = \l'$ (by \eqref{2.fourc}; recall that $\l' <
k+1$ now) this means $\l' \le r-1 < \l''$. We still have as in the
last paragraph that $\cC' \in \text{{\bf C}}_{r-1,r}$, and that the
distance between $\cC'$ and the nearest cluster in $\text{{\bf
C}}_{r-1,r}$ is at least $L^r$.  By \eqref{2.one}
$\text{span}(\cC')$ and $\text{span}(\cC'')$ have to be disjoint.
For the sake of argument let us further assume that $\cC'$ lies to
the left of $\cC''$, that is, $\om(\cC') < \al(\cC'')$. We claim
that $\al(\cC'')= \al(\cC)$ for some cluster $\cC \in \text{{\bf
C}}_{r-1,r}$. Indeed, the start-point of a cluster of level $\wt
\l\ge 2$ equals the start-point of one of its constituents, which
belongs to $\text{{\bf C}}_{\wt \l -1,\wt \l} \subseteq \text{{\bf
C}}_{\wt \l-1}$. Repetition of this argument shows that $\al(\cC'')$
is also the start-point of a cluster $\cC$ which is a constituent of
some cluster $\wh {\cC}$ such that $s :=\l(\cC) \le r-1$ but $t+1 :=
\l(\wh {\cC}) \ge r$. In particular, $\cC \in \text{{\bf
C}}_{t,t+1}$, so that $\cC \in \mathbf{C}_t$ and $m(\cC) \ge t+1 \ge
r$.
Thus $\l(\cC) \le r-1 \le (m(\cC) -1) \wedge t$.
It then follows from \eqref{2.fourff} that $\cC \in \text{{\bf
C}}_{r-1,r}$. As in the preceding case we then have
\begin{eqnarray*}
d(\cC',\cC'') &\ge& \al(\cC'') - \om(\cC')= \al(\cC) - \om(\cC')\\
&\ge &\text{ the distance from $\cC'$ to the nearest cluster
in \bf C}_{r-1,r}\ge L^r.
\end{eqnarray*}

Of course the inequality for $d(\cC',\cC'')$ remains valid if $\cC'$
lies to the right of $\cC''$, so that we have arrived at a
contradiction in all cases, and \eqref{2.two} with $k$ replaced by
$k+1$ must hold. This completes the proof of \eqref{2.two}.

\medskip
\noindent {\bf Construction of ${\text{\bf C}}_{\infty}$.} Observe
that each $x\in \Ga$ may belong to clusters of several levels, but
not to different clusters of the same level (see \eqref{2.one}). If
$\cC'$ and $\cC''$ are two clusters of levels $\l'$ and
$\l''$, respectively, with $\l' < \l''$, then
\begin{equation}
\text{span}(\cC') \cap \text{span}(\cC'') \ne \emptyset \text{
implies }
 \text{span}(\cC') \subseteq \text{ span}(\cC'').
\label{2.y}
\end{equation}

There will even exist a sequence $\cC_0= \cC', \cC_1, \dots, \cC_s,
\cC_{s+1} = \cC''$ such that $\cC_i$ is a constituent of $\cC_{i+1},
0 \le i\le s$. This follows from the fact that each $\mathbf{C}_k$
is a partition of $\Ga$ and that $\mathbf{C}_k$ is a refinement of
$\mathbf{C}_{k+1}$. In fact, each element of $\mathbf{C}_{k+1}$ is
obtained by combining several consecutive elements of
$\mathbf{C}_k$. (We allow here that an element of $\mathbf{C}_k$ is
already an element of $\mathbf{C}_{k+1}$ by itself.) In turn, we see
then from \eqref{2.zy} that $m(\cC'') > m(\cC')$. In particular, no
point belongs to two different clusters with the same mass. We shall
use this fact in the proof of the next lemma.

We define the random index
\begin{equation*}
\kappa (x) = \sup\{\l\colon x\in \cC \;{\text{ for some}}\;
\cC \in {\text{\bf C}}_{\l,\l}  \}.
\end{equation*}
If we allow the value $\infty$ for $\ka(x)$, then this index is
always well defined, since each $x \in \Ga$ belongs at least to
the cluster $\{x\}$ of level 0.

\begin{lemma}
\label{lemma1.1}
For $\de > 0$ and $3 \le L < (64 \de)^{-1/2}$ we have a.s. $\ka(x) <
\infty$ for all $x \in \Ga$.
\end{lemma}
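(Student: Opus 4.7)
Since $\Gamma$ is a.s.\ a countable subset of $\mathbb{Z}_+$, it suffices by countable sub-additivity to prove that for each fixed $x_0 \in \mathbb{Z}_+$ one has $P(\kappa(x_0)=\infty) = 0$. By the definition of $\kappa$ and the first Borel--Cantelli lemma, this reduces to establishing
\[
 \sum_{\ell \ge 1} P\bigl(x_0 \in \cC \text{ for some } \cC \in \mathbf{C}_{\ell,\ell}\bigr) \;<\; \infty .
\]

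The core step is a multi-scale recursion on the translation-invariant density
\[
 q_\ell \;=\; P\bigl(x_0 \text{ is the left endpoint of some } \cC \in \mathbf{C}_{\ell,\ell+1}\bigr),
\]
with $q_0=\delta$ because $\mathbf{C}_{0,1}=\mathbf{C}_0$ consists precisely of the singletons of $\Gamma$. The target is a recursive inequality of the form
\[
 q_{\ell+1} \;\le\; C\,L^{\ell+1}\, q_\ell^{\,2},
\]
which, upon iteration and using the hypothesis $L^{2}\delta < 1/64$, yields double-exponential decay $q_\ell \le (CL^{2}\delta)^{2^{\ell}}L^{-O(\ell)}$. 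A parallel argument---exploiting that $x_0 \in \cC \in \mathbf{C}_{\ell,\ell}$ forces the $\ell$-run defining $\cC$ to contain two distinct constituents in $\mathbf{C}_{\ell-1,\ell}$ within distance $L^\ell$ of $x_0$ and of each other---then gives a bound $P(x_0 \in \cC \in \mathbf{C}_{\ell,\ell}) \le C' L^{\ell+1} q_{\ell-1}^{\,2}$, whose sum over $\ell$ converges.

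To set up the recursion one splits according to whether a cluster $\cC \in \mathbf{C}_{\ell+1,\ell+2}$ starting at $x_0$ is (i) a newly formed level-$(\ell+1)$ cluster, in which case the first two constituents of its $(\ell+1)$-run are distinct $\mathbf{C}_{\ell,\ell+1}$-clusters whose left endpoints lie at $x_0$ and inside an interval of length at most $L^{\ell+1}$ just to the right; or (ii) a pre-existing cluster $\cC \in \mathbf{C}_{\ell,\ell+2} \subset \mathbf{C}_{\ell,\ell+1}$ that survived stage $\ell+1$ without merging. Summing case (i) over the $O(L^{\ell+1})$ admissible positions of the second constituent's start-point, and using approximate independence of the two start-point events, produces the $O(L^{\ell+1} q_\ell^{\,2})$ contribution; case (ii) is absorbed via an analogous quadratic estimate that uses the smaller density of mass-$\ge \ell+2$ clusters, controlled inside the same inductive scheme.

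The most delicate step will be the quadratic factorization in case (i): the events that $x_0$ and that some nearby $y$ each start a $\mathbf{C}_{\ell,\ell+1}$-cluster are not strictly independent, because both are determined by overlapping portions of the environment $\xi$. However, \eqref{2.two} guarantees a buffer zone of width at least $L^\ell$ between any two distinct clusters in $\mathbf{C}_{\ell,\ell+1}$, so either a van den Berg--Kesten type inequality or a direct conditioning argument on $\xi$ inside the buffer should recover the required factorization. The additional bookkeeping needed to handle case (ii)---which involves following the multi-scale history of a heavy cluster across all earlier stages at which it failed to merge---will also require care, but fits naturally into the same induction on $\ell$.
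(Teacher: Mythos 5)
Your overall strategy --- Borel--Cantelli via an estimate on the probability that a fixed vertex belongs to a high-level cluster --- is the same as the paper's, but the engine you propose (a multi-scale recursion on the level-stratified density $q_\ell$) is genuinely different from the paper's. The paper instead associates to each cluster a \emph{genealogical weighted tree} recording the full merging history down to level-1 constituents, derives the combinatorial identity $\sum_i m_i - \sum_j (n_j-1)(\ell_j-1)=k$ relating the cluster mass $k$ to the leaf masses and branch-node levels, bounds the probability of any fixed tree shape by $\delta^{\sum m_i}L^{\sum(m_i-1)}L^{\sum(n_j-1)\ell_j}$, and then sums over all planted plane trees (at most $4^u$ of them with $u$ vertices). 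This yields the \emph{mass}-stratified estimate \eqref{2.5}: $P(\exists\,\cC:\alpha(\cC)=z,\,m(\cC)=k)\le c_1 e^{-c_2k}$ with $c_2>\log L$, which is what makes the final Borel--Cantelli sum converge.

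The gap in your proposal is precisely that $q_\ell$ is aggregated over all masses $\ge\ell+1$, whereas the step you need at the end --- passing from ``$x_0$ is the start-point of a cluster'' to ``$x_0$ lies in a cluster'' --- requires summing over the possible start-points $z\in[x_0-\mathrm{diam}(\cC),x_0]$, and by \eqref{2.6} the diameter scales like $L^{m(\cC)-1}$, i.e.\ with the \emph{mass} of $\cC$, not with its level. Your claimed bound $P(x_0\in\cC\in\mathbf{C}_{\ell,\ell})\le C'L^{\ell+1}q_{\ell-1}^2$ implicitly uses an $O(L^{\ell+1})$ count of start-positions, but a level-$\ell$ cluster can have arbitrarily large mass and hence diameter much larger than $L^{\ell+1}$, so this count is wrong as written. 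The same mass-dependence also infects case (i) of your recursion: the second constituent's start-point lies within $L^{\ell+1}$ of the \emph{right} endpoint $\omega(\cC_1)$, but $\omega(\cC_1)-x_0=\mathrm{diam}(\cC_1)$ is not a priori $O(L^{\ell+1})$ --- this can be patched by conditioning on $\omega(\cC_1)$, but you then still need to control the mass of $\cC_1$. To close your argument you would have to refine $q_\ell$ into a family $q_\ell^{(m)}$ indexed by mass and carry the mass along the recursion, and once you do that the bookkeeping becomes essentially the paper's sum over genealogical trees, with your ``case (ii)'' corresponding to trees in which the root degree is inherited from an earlier level. The independence/BK issue you flag is comparatively minor: the paper simply enumerates the $\Gamma$-points needed for a given tree and uses raw independence of $\xi$ rather than any correlation inequality.
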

Before proving this lemma we show how to use it for the
construction of ${\text{\bf C}}_{\infty}$. Lemma \ref{lemma1.1}
tells that for each $x \in \Ga$, there exists a cluster of level
$\kappa (x)\in \Bbb Z_+$ which contains $x$. This cluster is
unique, since the elements of  ${\text{\bf C}}_{k,k}$ are pairwise
disjoint. We call it
 the {\it maximal cluster of} $x$ and denote it by $\cD_x$.
Moreover, for $x,x' \in \Ga$, if $x' \in \cD_x$, then $\kappa
(x)= \kappa (x')$ and $\cD_x = \cD_{x'}$. Indeed, $\ka(x)
\ne \ka(x')$ would contradict \eqref{2.y} and the definition of
$\ka$, while $\kappa (x)= \kappa (x')$ but $\cD_x \ne \cD_{x'}$
is impossible by \eqref{2.one}.

Take $\hat x_1 = x_1 \in \Ga = \{ x_j\}_{j\ge 1} $ and define
$\cC_{\infty ,1} = \cD_{\hat x_1}$. Having defined $\cC_{\infty ,j} =
\cD_{\hat x_j}$ for $j=1,\dots,k$, we set $\hat
x_{k+1} = \min \{x_j \in \Ga \colon x_j \notin \cup_{i=1}^k \cC_{\infty ,i} \}$,
and  $\cC_{\infty ,k+1} = \cD_{\hat
x_{k+1}}$. Define ${\text{\bf C}}_{\infty} = \{ \cC_{\infty ,k}
\}_{k \ge 1}$. Clearly, $\Ga = \cup_{k \ge 1}\; \cC_{\infty
,k}$. It is also routine to check that ${\text{\bf C}}_{\infty}$
satisfies \eqref{2.onea} and \eqref{2.a}. As for \eqref{2.four}, this
follows from \eqref{2.two} and the fact that $\cD_x \in \text{\bf
C}'_{k,k+1} \subseteq \text{\bf C}_{k+1}$ for all $k \ge \ka(x)$
(by the definitions of $\ka(x)$ and $\text{\bf C}'_{k,k+1}$).
\medskip

\noindent {\it Proof of Lemma \ref{lemma1.1}.} $\kappa (x)=
+\infty$ can occur only if there exists an infinite increasing
subsequence of indices $\{k_i \}_{i\ge 1}$ such that the point $x$
becomes ``incorporated'' into some cluster of level $k_i$ for all
$i\ge 1$. We will show that
\begin{equation}
P\big( x \text{ belongs to an infinite sequence of
clusters}\big)=0. \label{2.3aa}
\end{equation}
Notice that each cluster of level $k$ necessarily has mass at least
$k+1$ and no point belongs to two different  clusters of the same
mass, as observed above. Setting
\begin{equation}
A_k(x)=[x \text { belongs to a cluster of mass } k], \label{2.4}
\end{equation}
we shall show that for each fixed $x$
\begin{equation}
P(A_k(x) \text { i.o. in } k)=0, \label{2.4a}
\end{equation}
which will prove \eqref{2.3aa}.

We will carry out the proof  in two steps. All constants $c_i$
below are strictly positive and independent of $k$. First we
estimate the probability that a given point $z\in \Bbb Z_+$ is the
start-point of a cluster of mass $k\ge 2$. Specifically, we show
that
\begin{equation}
 P \big(\exists\, \cC \in \bigcup_{\l \ge 1}\text{\bf C}_\l
\colon \al(\cC)=z, m(\cC)=k\big) \le c_1 e^{-c_2 k} \label{2.5}
\end{equation}
for some strictly positive constants $c_1, c_2 >0$ and for each
 $k$ and each $z$. In fact we can take $c_2 > \log L$ so that
\begin{equation}
c_1(L^k+1)e^{-c_2k} \le 2c_1e^{-c_3k} \label{2.7}
\end{equation}
for some constant $c_3 > 0$. This is the most involved part of the
proof.
In the second step of the proof we show  that if $\cC \in
\cup_{\ell \ge 1} \text{\bf C}_\ell$ and $m({\cC}) = k$, then
\begin{equation}
\text{diam} \big( {\cC} \big) \; \le  3L^{k-1}. \label{2.6}
\end{equation}
Due to \eqref{2.6} we will have the following inclusion:
\begin{equation}
A_k(x) \subseteq \big[\exists z\in [x- L^k, x]\colon \al({\cC})=z, \; \text{for some}\; \cC \in \cup_{\ell \ge 1} \text{\bf
C}_\ell \; \text{ and  }\; m(\cC)=k \big]. \label{2.8}
\end{equation}
From \eqref{2.5}, \eqref{2.7} and \eqref{2.8} we will have
\begin{equation}
P\big(A_k(x)\big) \le  c_1(L^k +1) e^{-c_2 k} \le 2c_1e^{-c_3k},
\label{2.9}
\end{equation}
which, by the Borel-Cantelli lemma, gives  \eqref{2.4a}, and so
\eqref{2.3aa}.

Let us now prove \eqref{2.5}, where $k \ge 2$ and $z \in \Bbb Z_+$.
To any given cluster ${\mathcal C} \in \cup_{\ell\ge 1}{\mathbf C}_\ell$ we
associate a ``genealogical weighted tree". It describes the
successive merging processes which lead to the creation of $\mathcal{C}$,
i.e., it tells the levels at which some clusters form runs,
merging into larger clusters and how many constituents entered
each run, down to level 1, and finally the masses of such level 1
clusters. So we represent it as a tree with the root corresponding
to ${\mathcal C}$; the leaves correspond to clusters of level 1, which
are the basic constituents at level 1. This weighted tree gives
the basic information on the cluster, neglecting what was
incorporated as ``dust", on the way.

More formally, we construct the tree iteratively. The root of the
tree corresponds to the cluster $\mathcal C$. If this cluster is of
level 1, the procedure is stopped. For notational consistency such
a tree will be called a 1-leaf tree. To the root we attribute the
index 1, as well as another index which equals the mass of the
cluster.

If the resulting cluster $\mathcal C$ is of level $\ell\,>1$,
 we attribute to the root the
index $\ell$ and add to the graph $n_1$ edges (children) going out
from the root, where $n_1 \ge 2$ is the number of constituents
which form the $\ell$-run leading to $\mathcal{C}$. Each
 endvertex of a newly added edge will correspond to a constituent of
the run, i.e., if $\mathcal C$ has constituents
 ${\mathcal C}_{\ell-1,i_1},\dots, {\mathcal C}_{\ell-1,i_{n_1}} \in {\mathbf
C}_{\ell-1,\ell}$, for suitable $i_1,\dots, i_{n_1}$, then there
is a vertex at the end of an edge going out from the root
corresponding to $\mathcal{C}_{\l-1, i_j}$ for each $j
 = 1, \dots, n_1$. If the constituent corresponding to a given
endvertex is a level 1-cluster, the procedure at this endvertex is
stopped (producing a leaf on the tree), and to this leaf we
attribute an index, which equals the mass of the corresponding
constituent.

If a given endvertex corresponds to a cluster $\wt {\mathcal C}$ of
level $\ell'$ with $1< \ell' < \ell$, then to this endvertex we
attribute the index $\ell' $, and add to the graph $n_2$ new edges
going out of this endvertex, where $n_2$ is the number of
constituents of $\wt {\mathcal C}$ in ${\text {\bf C}}_{\ell'-1,\ell'}$
which make up $\wt {\mathcal{C}}$.

The procedure continues until we reach the state that all
constituents corresponding to newly added edges are level 1
clusters. In this way we obtain a tree with the following
properties:

 i) each vertex of the tree has either 0 or at least two offspring;
in case of 0 offspring we say that the vertex is a {\it leaf} of
the tree. Otherwise we call it a {\it branch node}.

 ii) to each branch node $x$ we attribute an index $\ell_x$; these
indices are strictly decreasing to 1 along any self-avoiding path
from the root to a leaf of the tree.

iii) to each leaf is associated a mass $m \ge 1$. This defines a
map

\begin{equation*}
\gamma\colon \mathcal C \in \cup_{\ell\ge 1}{\mathbf C}_\ell \mapsto
\gamma(\mathcal C) \equiv (\Upsilon (\mathcal C) , \bar l(\mathcal C) , \overline m
(\mathcal C)),
\end{equation*}
where $\Upsilon (\mathcal C)$ is a finite tree with $\mathcal L (\Upsilon
(\mathcal C))$ leaves and $\mathcal N (\Upsilon (\mathcal C))$ branching
nodes. We use the following notation:
\medskip

\noindent $\overline l(\mathcal C)= \{ \ell_1(\mathcal C), \dots , \ell_{\mathcal N
(\Upsilon (\mathcal C))}(\mathcal C) \}$  is a multi-index with one
component for each branching node of $\Upsilon (\mathcal C)$, which
indicates the level at which  branches ``merge" into the cluster
corresponding to the node;

\medskip
\noindent $\overline m (\mathcal C)= \{ m_1(\mathcal C), \dots , m_{\mathcal L
(\Upsilon (\mathcal C))}(\mathcal C) \}$ a multi-index with one component
for each leaf of $\Upsilon (\mathcal C)$, which gives to the mass of
the cluster corresponding to the leaf;

\medskip
\noindent $\bar n (\mathcal C)= \{ n_1(\mathcal C), \dots , n_{\mathcal N
(\Upsilon (\mathcal C))}(\mathcal C) \}$  is a multi-index with one
component for each vertex of $\Upsilon (\mathcal C)$, which gives the
degree of the vertex minus 1. Note that $\bar n (\mathcal C)$ is determined by
$\Up(\mathcal C)$.

To lighten the notation, we will omit the argument $\mathcal C$ in
situations where confusion is unlikely. Thus we occasionally write
$\gamma(\mathcal C) \equiv (\Upsilon, \bar l, \overline m)$ instead of
$(\Upsilon (\mathcal C) , \bar l(\mathcal C) , \overline m (\mathcal C))$.

In order to prove \eqref{2.5} we decompose the event
\begin{equation}
\big [\exists\, \mathcal C \in \cup_{\ell\ge 1}{\mathbf C}_\ell
\colon \alpha(\mathcal C) = z, m(\mathcal C)=k\big] \label{2.12}
\end{equation}
according to the possible values for $\gamma(\mathcal C)$; we shall
abbreviate the number of leaves of $\Up(\mathcal C)$ by $\mathcal L$. Since the
resulting cluster $\mathcal C$, obtained after all merging process ``along
the tree'', has mass $k$, it imposes the following relation
between the multi-indices $\overline m$ and $\bar l$:
\begin{equation}
\sum_{i=1}^{\mathcal L} m_i - \sum_{j=1}^{\mathcal N} (n_j-1)(\ell_j-1) = k
\label{2.12aa}
\end{equation}
Here the first sum runs over all leaves, while the second sum runs
over all branching nodes.
This relation follows from \eqref{2.foura} by induction on the number of
vertices, by writing the tree as the ``union'' of the root and
the subtrees which remain after removing the root. We note that
$\Up$ also has to satisfy
\begin{equation}
\sum_{j=1}^\mathcal N  (n_j-1) = \mathcal L-1, \label{2.14aa}
\end{equation}
because it is a tree, as one easily sees by induction on the
number of leaves. This implies the further restriction
\begin{equation*}
\sum_{i=1}^\mathcal L m_i \ge k+\mathcal L-1,
\end{equation*}
because $\l_j \ge 2$ in each term of the second sum in
\eqref{2.12aa} (recall that we stop our tree construction at each
node corresponding to a cluster of level 1). Thus the probability
of the event in \eqref{2.12} equals to

\begin{equation}
\sum_{r\ge 1} \sum_{{\substack{\Upsilon\!\colon\! \\ \mathcal L (\Upsilon) =r}}}
 {\sum_{\bar l, \overline m}}^{\Upsilon}  P\big(\exists\, \mathcal C \in
\cup_{\ell\ge 1}{\mathbf C}_\ell \colon z=\al(\mathcal C), m(\mathcal
C)=k, \gamma (\mathcal C)
 = (\Upsilon, \bar l, \overline m)\big ),
\label{2.12bb}
\end{equation}
where the third sum $\sum_{\bar l, \overline m}^{\Upsilon}$ is
taken over all possible values of $\bar l, \overline m$,
satisfying \eqref{2.12aa}.

A decomposition according to the value of the sum $\sum_i m_i$,
shows that the expression \eqref{2.12bb} equals
\begin{equation}
\sum_{{r\ge 1}} \sum_{{\substack{\Upsilon\!\colon\! \\ \mathcal L (\Upsilon) =r}}} \sum_{{s \ge r-1}} \sum_{{\substack{\overline m\!\colon\!\! \\ \sum_i m_i  \\
= k +s }}} \sum_{ \bar l}  P\left( \exists \;  \mathcal C \in
\cup_{_{\ell\ge 1}}{\mathbf C}_\ell \colon \alpha(\mathcal C)=z, \; m({\mathcal
C}) = k , \gamma (\mathcal C) = (\Upsilon, \bar l, \overline m)\right),
\label{2.12cc}
\end{equation}
the sum $\sum_{\bar l}$ being taken over possible choices
of $\bar l$ such that $\sum_j (n_j -1)(\ell_j-1)  = s$. The multiple
sum in \eqref{2.12cc} can be bounded from above by
\begin{equation}
 \sum_{r\ge 1} \sum_{\substack{\Upsilon\!\colon\! \\ \mathcal L (\Upsilon) =r}}
\sum_{s \ge r-1} \sum_{\substack{\overline m\!\colon\! \\ \sum_i m_i  = k +s}}
\sum_{ \bar l} \delta^{k+s} L^{k+2s}. \label{2.14}
\end{equation}
Indeed, for fixed $z, k$ and $(\Up, \bar l, \overline m)$, the
probability
\begin{equation*}
P \left(\exists\;  \mathcal C \colon \alpha(\mathcal C)=z, \;
m ({\mathcal C}) = k , \gamma (\mathcal C) = (\Upsilon, \bar l, \overline m)
\right)
\end{equation*}
is easily estimated by the following argument: the
probability to find a level 1 cluster  of mass $m_i$ which
corresponds to some leaf of the tree, and which starts at a given
point $x$, is bounded from above by $\delta^{m_i}L^{m_i -1}$. Indeed,
such a cluster has to come from a maximal level 1 run $x_s,
x_{s+1},\dots, x_{s+m_i-1}$ of elements of $\Gamma$, with $x_s = x$
and $x_{j+1}-x_j \le L$ for $j=s,\dots,s+m_i-2$. The number of
choices for such a run is at most $L^{m_i-1}$, and given the
$x_j$, the probability that they all lie in $\Gamma$ is $\delta^{m_i}$.
Similarly, the probability to find two level 1 clusters of mass
$m_{i_1}$ and $m_{i_2}$ which merge at level $\ell_j$ can be bounded
above by $\delta^{m_{i_1}}L^{m_{i_1} -1}\delta^{m_{i_2}}L^{m_{i_2}
-1}L^{\ell_j}$. The factor $L^{\ell_j}$ here is an upper bound for the
number of choices for the distance between the two clusters; if
they are to merge at level $\ell_j$, their distance can be at most
$L^{\ell_j}$. Iterating this argument we get that
\begin{eqnarray*}
P\left(\exists \mathcal C \in \cup_{\ell\ge 1}{\mathbf C}_\ell
\colon \alpha(\mathcal C)=z, \; m({\mathcal C}) = k , \gamma (\mathcal C) =
(\Upsilon, \bar l, \overline m)\right)\le \delta^{\sum_i m_i}
L^{\sum_i (m_i-1)} L^{\sum_j (n_j-1)\l_j},
\end{eqnarray*}
and taking into account that
\begin{eqnarray*}
\sum_i (m_i-1)  + \sum_j (n_j-1)\ell_j =k+s + s - r + \sum_j (n_j-1),
\end{eqnarray*}
as well as \eqref{2.14aa}, we get the bound \eqref{2.14}.

The number of terms in the sums  over $\overline m$
and $\bar l$ in \eqref{2.14} are respectively bounded by $2^{k +s}$ and $2^{s}$
(since $\sum_j(\ell_j-1) \le \sum_j(n_j-1)(\ell_j-1)= s$ and $\ell_j \ge
2$). Thus we can bound \eqref{2.14} from above by
\begin{eqnarray}
\label{2.16}
 &\sum_{r\ge 1} \sum_{\Upsilon\!\colon\! \mathcal L (\Upsilon) =r}
 \sum_{s \ge r-1}
2^{k +s} 2^{s}  \delta^{k+s} L^{k+2s} \nonumber\\ \nonumber
&\le   (2\delta L)^k \sum_{r\ge 1} \sum_{\Upsilon \!\colon \mathcal L (\Upsilon) =r} \sum_{s \ge r-1}
  (4 \delta L^2)^{s}\\
&\le   (2\delta L)^k \sum_{r\ge 1} \sum_{\Upsilon \!\colon\! \mathcal L
(\Upsilon) =r}
  \frac{(4 \delta L^2)^{r-1}}{1-4 \delta L^2},
\end{eqnarray}
provided we take $4 \delta L^2 <1 $. Now the number of planted plane
trees of $u$ vertices is at most $4^u$ (see \cite{HPT}). Our trees have $r$ leaves, but all vertices which
are not leaves have degree at least 3 (except, possibly, the
root). Thus, by virtue of \eqref{2.14aa}, these trees have at most
$2r$ vertices. The number of possibilities for $\Up$ in the last
sum is therefore at most $\sum_{u=r+1}^{2r} 4^u \le \frac43 4^{2r}
\le 2\cdot 4^{2r}$. It follows that \eqref{2.16} is further bounded
by
\begin{equation*}
2\frac{(2\delta L)^k}{1-4 \delta L^2} \sum_{r\ge 1} 4^{2r}
  (4 \delta L^2)^{r-1}
=   \frac{32(2\delta L)^k}{1-4 \delta L^2} \sum_{r\ge 1}
  (64 \delta L^2)^{r-1}.
\end{equation*}
If we take $64\delta L^2 <1 $, this can be bounded by
$$
\frac{32(2\delta L)^k}{(1-4 \delta L^2)(1-64\delta L^2)},
$$
which proves \eqref{2.5} and \eqref{2.7} with $c_2 = -\log(2\delta) -
\log  L > \log L$ for our choice of $\delta,L$.
It remains to show \eqref{2.6}. It is trivially correct for $k=1$;
in fact a cluster of mass 1 has to be a singleton by
\eqref{2.fourb}. We will use induction on $k$. Assume \eqref{2.6}
holds for all clusters with mass at most $k-1$, where $k\ge 2$.
Let $\mathcal C$ be a cluster with $m(\mathcal C)=k$ and of level $\ell$.
Thus $\mathcal C \in {\mathbf C}_{\ell,\ell}$, and $1 \le \ell \le
k-1$, by virtue of \eqref{2.fourc}. If $\ell=1$ then diam$(\mathcal C)
\leq (k-1)L < 3L^{k-1}$, for $k\ge 2$, provided we take $L \ge 2$.\
If $\ell \ge 2$, then there exist $n\ge 2$, and $\mathcal C_{\ell-1,
i_1}, \dots \mathcal C_{\ell-1, i_n} \in {\mathbf C}_{\ell-1
,\ell}$ such that $\mathcal C$ is made up from the constituents $\mathcal
C_{\ell-1 , i_1}, \dots \mathcal C_{\ell-1 , i_n}$ (where, for
simplicity, we have omitted the indication of the level of the
constituents). If $m_j = m (\mathcal C_{\ell -1, i_j})$, then $m_j \ge
\ell$ (by \eqref{2.fourc}), and from \eqref{2.foura} we see that $m_j
\le k-n+1$ for each $j$. From this and the induction hypothesis we
get
\begin{equation}
\text{diam}(\mathcal C) \le \sum_{j=1}^n {\text{diam}}(\mathcal C_{\ell-1,
i_j}) + (n-1)L^{\ell} < 3nL^{k-n}+(n-1)L^{k-n+1}\le 3L^{k-1}
\label{2.25}
\end{equation}
for all $L\ge 3$ and $n \ge 2$. This proves \eqref{2.6} and the
lemma. \qed

\medskip

Note that $\text{\bf C}_{\infty}$ depends on the collection $\Ga$
only. We shall occasionally write $\text{\bf C}_{\infty}(\ga)$ for
the partition $\text{\bf C}_{\infty}$ at a sample point with $\Ga
= \ga$. We further define
\begin{equation}
\chi (\ga) = \inf \{ k \ge 0\colon\;  d(\cC,  0 )\ge L^{m (\cC)}\text{ for all $ \cC \in \text{\bf C}_{\infty}(\ga)$ with }
 m (\cC) > k \},
\label{2.25z}
\end{equation}
and set $\chi(\ga)=\infty$ if the above set is empty.

The preceding  proof has the following corollary:
\begin{coro}
\label{cor1.2}
Under the conditions of Lemma \ref{lemma1.1} we have $\chi (\ga) < +\infty$ a.s.
\end{coro}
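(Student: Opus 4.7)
The plan is to read off the corollary directly from the bounds established in the course of proving Lemma \ref{lemma1.1}, combined with a Borel--Cantelli argument applied at the origin.

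First I would observe that since $\Ga \subset \Bbb Z_+$ and $0$ denotes the origin, $d(\cC, 0) = \alpha(\cC)$ for any cluster $\cC \subset \Ga$. Hence the event $\{d(\cC,0) < L^{m(\cC)}\}$ translates into the purely combinatorial statement $\alpha(\cC) < L^{m(\cC)}$, which involves only the start-point of $\cC$. The starting bound \eqref{2.5} controls, for each fixed $z$, the probability that $z$ is the start-point of any cluster of a given mass, uniformly in $z$.

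Next I would, for each $k \ge 2$, introduce the event
\begin{equation*}
E_k := \big[\exists\, \cC \in \cup_{\l\ge 1}\text{\bf C}_\l\colon \al(\cC) \le L^k \text{ and } m(\cC) = k\big].
\end{equation*}
A union bound over $z \in \{0,1,\dots,\lfloor L^k\rfloor\}$, combined with \eqref{2.5} and the refined form \eqref{2.7}, yields
\begin{equation*}
P(E_k) \,\le\, (\lfloor L^k\rfloor + 1)\, c_1 e^{-c_2 k} \,\le\, 2 c_1 e^{-c_3 k},
\end{equation*}
which is summable in $k$. The Borel--Cantelli lemma therefore gives $P(E_k \text{ i.o.}) = 0$, so almost surely there exists a (random) $K < \infty$ such that, for every $k > K$, no cluster $\cC \in \cup_{\l \ge 1} \text{\bf C}_\l$ satisfies $\alpha(\cC) \le L^k$ with $m(\cC) = k$. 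Since $\text{\bf C}_\infty \setminus \text{\bf C}_{0,0} \subseteq \cup_{\l \ge 1} \text{\bf C}_\l$, the same conclusion holds a fortiori for all clusters of $\text{\bf C}_\infty$ of mass at least $2$.

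Finally, for any $\cC \in \text{\bf C}_\infty$ with $m(\cC) > \max(K,1)$ we conclude $\al(\cC) > L^{m(\cC)}$, i.e.\ $d(\cC,0) \ge L^{m(\cC)}$, which by the definition \eqref{2.25z} forces $\chi(\ga) \le \max(K,1) < \infty$ a.s. (The only place that requires a moment's thought is that \eqref{2.5} is stated for $k \ge 2$; this is harmless because mass-$1$ clusters are singletons of level $0$, and the infimum in \eqref{2.25z} only constrains clusters of mass strictly greater than $k$, so taking $k\ge 1$ automatically excludes them.) There is no real obstacle; the estimate \eqref{2.7} was, in effect, already designed to absorb the linear-in-$L^k$ factor coming from counting admissible start-points near the origin.
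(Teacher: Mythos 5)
Your proposal is correct and follows essentially the same route as the paper: the paper bounds $P(\exists\,\cC_{\infty,j}\colon m(\cC_{\infty,j})>k,\ d(\cC_{\infty,j},0)<L^{m(\cC_{\infty,j})})$ by $\sum_{m>k}c_1L^m e^{-c_2 m}\to 0$ using \eqref{2.5}--\eqref{2.7}, which is exactly the summable tail you feed into Borel--Cantelli (the two formulations are equivalent). The one small point worth keeping explicit, which you handled correctly, is that passing from start-points of clusters in $\cup_{\l\ge 1}\mathbf{C}_\l$ to clusters of $\mathbf{C}_\infty$ of mass $>k\ge 1$ uses that any such cluster has mass $\ge 2$, hence level $\ge 1$, hence already lies in $\cup_{\l\ge 1}\mathbf{C}_\l$.
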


\noindent In fact, \eqref{2.5} and \eqref{2.7} show that
\begin{eqnarray}
P(\exists \; \cC_{\infty,j} \colon m(\cC_{\infty,j}) > k, d(\cC_{\infty,j}, 0) < L^{m(\cC_{\infty,j})}) \le \sum_{m >  k} c_1L^me^{-c_2m} \to 0 \text{
as } k \to \infty.
\end{eqnarray}

In the next lemma $P(A|\ga)$ denotes the value of
the conditional probability of an event $A$ with respect to the
$\si$-field generated by $\Ga$ on the event $\{\Ga = \ga\}$.

\begin{lemma}\label{lemma2.3} Under the conditions of Lemma \ref{lemma1.1}
it suffices for Theorem \ref{main} to prove that
\begin{equation}
P(C_0 \text{ is infinite}|\ga) > 0 \text{ a.s. on the event
}\{\chi(\ga) = 0\}. \label{2.30}
\end{equation}
\end{lemma}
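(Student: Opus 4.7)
Set $f(\ga) := P(C_0 \text{ is infinite}\mid \ga)$. The plan is to combine a Kolmogorov 0--1 law applied to the i.i.d.\ sequence $\xi$ with the positivity of $P(\chi(\ga)=0)$.

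First I would establish that $P(\chi(\ga)=0)>0$. The event $\{\chi(\ga)\neq 0\}$ amounts to the existence of some cluster $\cC\in \text{\bf C}_\infty$ whose start-point $\al(\cC)$ lies in $[0,L^{m(\cC)})$. Combining \eqref{2.5} and \eqref{2.7} with a union bound over possible start-points yields
$$P(\chi(\ga)\neq 0)\le \sum_{m\ge 1} L^m c_1 e^{-c_2 m}\le 2c_1 \sum_{m\ge 1}e^{-c_3 m}.$$
For $\de$ small enough the right-hand side is strictly less than $1$, so $P(\chi=0)>0$, and the hypothesis of the lemma then yields $P(f>0)\ge P(\{f>0\}\cap\{\chi=0\})=P(\chi=0)>0$.

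Next I would show that $\{f>0\}$ is a tail event of $(\xi_0,\xi_1,\ldots)$. For each $n\ge 0$ pick any $(x_n,n)\in\widetilde{\mathbb Z}^2_+$ and define
$$h_n(\xi):=P\big(\text{there is an infinite open oriented path from }(x_n,n)\text{ in levels }\ge n\ \big|\ \xi\big).$$
Since, given $\xi$, the site configuration on levels $\ge n$ is horizontally translation invariant, $h_n$ does not depend on the specific choice of $x_n$ and is $\si(\xi_n,\xi_{n+1},\ldots)$-measurable. I would then verify $\{f>0\}=\{h_n>0\}$ for every $n$. For $\{h_n>0\}\subseteq\{f>0\}$, the idea is to connect the origin to $(x_n,n)$ by a fixed finite oriented path, which is fully open with probability at least $(\min(p_G,p_B))^{n+1}>0$ given $\xi$; conditional independence given $\xi$ of the site configurations strictly below and strictly above level $n$ (conditioned on $(x_n,n)$ being open) then combines this with $h_n(\xi)$ multiplicatively. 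For the reverse inclusion, any infinite open oriented path from the origin meets level $n$ at some vertex $(X,n)$ whose tail above level $n$ is an infinite open oriented path starting at $(X,n)$; by horizontal translation invariance and countable subadditivity, positive probability of this occurring for some $X$ forces $h_n(\xi)>0$. Hence $\{f>0\}\in\bigcap_{n\ge 0}\si(\xi_n,\xi_{n+1},\ldots)$, the tail $\si$-field of the i.i.d.\ sequence $\xi$.

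Kolmogorov's 0--1 law now forces $P(f>0)\in\{0,1\}$, and combined with the positivity from the first step we get $P(f>0)=1$, which is precisely \eqref{1.1} and therefore yields Theorem \ref{main}. The main obstacle is the first step: one must confirm that the decay constants supplied by the proof of Lemma \ref{lemma1.1} are strong enough to make $P(\chi(\ga)\neq 0)<1$, which may require choosing $\de_0$ strictly smaller than the bound $(64L^2)^{-1}$ already imposed in Lemma \ref{lemma1.1}. This is harmless, since Theorem \ref{main} itself only asks for $\de\le\de_0$ with $\de_0$ small.
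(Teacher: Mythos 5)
Your overall plan matches the paper's: show $P(\chi(\ga)=0)>0$, deduce $P(f>0)>0$ from the hypothesis \eqref{2.30}, then observe that $\{f>0\}$ is a tail event of $\xi$ and apply Kolmogorov's 0--1 law to get $P(f>0)=1$, i.e.\ \eqref{1.1}. Your explicit construction via $h_n$ and the identity $\{f>0\}=\{h_n>0\}$ usefully fleshes out a step the paper merely asserts (``is a tail event''). The substantive difference is in proving $P(\chi(\ga)=0)>0$: you use the quantitative bound from Corollary \ref{cor1.2} at $k=0$, namely $P(\chi>0)\le\sum_{m\ge 1}c_1 L^m e^{-c_2 m}$, and force the sum below $1$ by shrinking $\de$. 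Two caveats apply. First, \eqref{2.5} is stated only for masses $k\ge 2$, so the $m=1$ term needs a separate estimate via $P(\Ga\cap[0,L)\ne\emptyset)\le L\de$; it does fit the same bound but is not literally an application of \eqref{2.5}. Second, with $c_1=32/\big((1-4\de L^2)(1-64\de L^2)\big)$ and $e^{-c_2}=2\de L$, the sum behaves like $\de L^2/(1-64\de L^2)$ and is \emph{not} automatically below $1$ under the bare constraint $64\de L^2<1$ of Lemma \ref{lemma1.1} (it diverges as $64\de L^2\uparrow 1$). You acknowledge this and suggest tightening $\de_0$; that is harmless for Theorem \ref{main}, but strictly speaking you then prove the reduction under a stronger assumption than Lemma \ref{lemma2.3} asserts. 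The paper avoids any such tightening with a softer argument: on $\{0<\chi(\ga)<\infty\}$ there is a unique offending cluster $\cC^*$ with $m(\cC^*)=\chi(\ga)$ and $d(\cC^*,0)<L^{\chi(\ga)}$, and clearing the environment on $[0,\om(\cC^*)]$ produces $\wt\ga$ with $\chi(\wt\ga)=0$; disjointifying over the value of $\om(\cC^*)$, using $\chi<\infty$ a.s.\ (Corollary \ref{cor1.2}) and the independence of $(\xi_i)_{i\le n}$ from $\Ga^{(n)}$, gives $P(\chi=0)>0$ under exactly the hypotheses of Lemma \ref{lemma1.1}. Both routes reach the goal; the paper's is hypothesis-optimal, while yours is more elementary in the positivity step and more explicit on the tail/0--1 argument.
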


\begin{proof} We shall show here that
\begin{equation}
P(\chi(\Ga)= 0) > 0. \label{2.31}
\end{equation}
Together with \eqref{2.30}, this will imply that
\begin{eqnarray*}
P(C_0 \text{ is infinite})&\ge& P(C_0 \text{ is infinite and
}\chi(\Ga)
 =0)\\\nonumber &=& \int_{\{\chi(\ga) = 0\}} P(C_0 \text{ is infinite}|\ga)P(\Ga \in
d\ga) > 0,
\end{eqnarray*}
and hence $P(C_0 \text{ is infinite}) > 0$. This will also prove
\eqref{1.1}, because $\{P(C_0 \text{ is infinite}|\xi) > 0\}$
is a tail event (in $\xi$), and \eqref{2.30}, \eqref{2.31} imply \eqref{1.1}.


\medskip

Thus, we merely have to prove \eqref{2.31}. Now if $\chi(\ga)$ is
finite and non-zero, then there exists a unique cluster $\cC^\ast \in \text{\bf C}_{\infty}(\ga)$ such that $m (\cC^\ast)
= \chi(\ga)$ and $d(\cC^\ast, 0) < L^{\chi(\ga)}$. The
existence of $C^*$ follows at once from the definition of $\chi$.
For the uniqueness we observe that if two such clusters, say
$\cC'$ and $\cC''$, would exist, then they would have to satisfy
$d(\cC', \cC'') < L^{\chi(\ga)} = L^{\min\{m(\cC'),m(\cC'')\}}$, which
contradicts \eqref{2.five} by virtue of the assumption
$\cC', \cC'' \in \mathbf{C}_\infty$.

We use $\cC^*$ to construct a new environment $\wt \ga$
corresponding to the following sequence $\{\wt \xi\}_{i \ge 0}$ of
zeroes and ones:
\begin{equation*}
\text{if } \chi(\ga) = 0, \text{ then }\wt \xi_i = \xi_i \text{
for all } i \ge 0;
\end{equation*}
\begin{equation}
\text{if } 0 < \chi(\ga) < \infty, \text{ then }\wt \xi_i =
\begin{cases}
0 &\text{ if } i \le \om(\cC^*)\\ \xi_i &\text{ if } i >
\om(\cC^*).
\end{cases}
\label{2.39}
\end{equation}
We shall now show that
\begin{equation}
\chi(\wt \ga) = 0. \label{2.37}
\end{equation}
Of course we only have to check this in the case $0 <
\chi(\ga) < \infty$. We claim that in this case all clusters in
$\mathbf{C}_\infty(\wt \ga)$
are also clusters in $\mathbf{C}_\infty(\ga)$ (which
are located in $[\om(\cC^*)+1, \infty)$) and the masses of such a
cluster in the two environments $\ga$ and $\wt \ga$ are the same.
To see this we simply run
through the construction of the clusters in $\cup_{\l \ge 1}\mathbf{C}_\l$ in the environment $\wt \ga$, until
there arises a difference between these this construction and the
construction in the environment $\ga$. More precisely, we apply
induction with respect to the level of the clusters. Clearly any
cluster of level 0 in $\wt \ga$ is simply a single point of $\Ga$
which lies in $[\om(\cC^*)+1, \infty)$, and has mass 1. This is also
a cluster of level 0 and mass 1
in $\ga$. Assume now that we already know that any cluster in $\wt
\ga$ of level $\le k$ is a cluster of $\ga$ of level $k$ and located
in $[\om(\cC^*)+1, \infty)$ and with the same mass in $\ga$ and $\wt \ga$.

Since $\xi_i = 0$ for $i \le \om(\cC^*)$ in  the environment $\wt \ga$,
the span of any $(k+1)$-run in $\wt \ga$ has to be contained in
$[\om(\cC^*)+1,\infty)$. Therefore the span of any cluster of level $k+1$ in
environment $\wt \ga$ also has to be contained in $[\om(\cC^*)+1,\infty)$.
In addition, since the two environments $\ga$
and $\wt \ga$ agree in this interval, a difference in the
constructions or masses of some cluster of level $k+1$
can arise only because in $\ga$ there
is a $(k+1)$-run which contains clusters of level $k$ which lie in
$[\om(\cC^*)+1, \infty)$ as well as clusters which intersect
$[0, \om(\cC^*)]$.
But then these  clusters of level $k$ will be constituents of a single
$(k+1)$-cluster, $\cC \in \mathbf{C}_\infty(\ga)$ say. span($\cC$)
has to contain points of both
$[0,\om(\cC^*)]$ and of $[\om(\cC^*)+1, \infty)$ in $\ga$.
Consequently, span($\cC)$ has to contain both points $\om(\cC^*)$
and $\om(\cC^*)+1$. Since $\om(\cC^*) \in \cC^*$ we then have from
\eqref{2.y} that span($\cC^*) \subset \text{span}(\cC)$ and $\cC^* \ne
\cC$ (because $\om(\cC^*)+1 \notin \text{span} (\cC^*$)). But no such $\cC$
can exist, because $\cC^* \in \mathbf{C}_\infty$.

This establishes our last claim. Now, by definition of $\chi$,
\eqref{2.37} is equivalent to
\begin{equation}
d(\cC,0) \ge L^{m(\cC)} \label{2.38}
\end{equation}
for all clusters $\cC$ in $\mathbf{C}_\infty(\wt \ga)$. In view of
our claim this will be implied by \eqref{2.38} for all
clusters $\cC$ in $\mathbf{C}_\infty(\ga)$ located in $[\om(\cC^*),
\infty)$. Now, if $\cC$ is such a cluster with $m(\cC) \le m(\cC^*)$,
then \eqref{2.38} holds, because, by virtue of \eqref{2.five},
\begin{equation*}
d(\cC,0)= \al(\cC) \ge \al(\cC) - \om(\cC^*) = d(\cC, \cC^*) \ge
L^{m(\cC)}.
\end{equation*}
On the other hand, if $m(\cC) > m(\cC^*) = \chi(\ga)$, then the
definition of $\chi$ shows that we have $\al(\cC) \ge L^{m(\cC)}$.
This proves \eqref{2.38} in all cases, and therefore also proves
\eqref{2.37}.

We now have
\begin{equation*}
1 = P(\chi(\Ga) < \infty ) \le P(\chi(\Ga) = 0) + \sum_{n
=0}^\infty P(\om(\cC^*) =n,\chi(\Ga^{(n)})=0),
\end{equation*}
where $\om(\cC^*)$ is as described above with $\ga$ denoting the
value of $\Ga$, and if $\Ga$ corresponds to the sequence
$\{\xi_i\}$, then $\Ga^{(n)}$ corresponds to the sequence
$\xi_i^{(n)}$ given by
\begin{equation*}
\xi_i^{(n)} = \begin{cases} 0 &\text{ if } i\le n\\
\xi_i & \text{ if } i >n.
\end{cases}
\end{equation*}
Thus, either $P(\chi(\Ga)=0 ) >0$ or there is some non-random $n
\in \Bbb Z_+$ for which $P(\chi(\Ga^{(n)})= 0) >0$. However,
\begin{eqnarray*}
P(\chi(\Ga) = 0)  &\ge& P(\chi(\Ga) = 0,\; \xi_i = 0 \text{
for } 0 \le i \le n)\\
&=& P(\chi(\Ga^{(n)}) = 0,\; \xi_i = 0 \text{ for }
0 \le i \le n)\\
&=& P(\xi_i = 0 \text{ for } 0 \le i \le n)P(\chi(\Ga^{(n)}) = 0)
\end{eqnarray*}
(since $\Ga^{(n)}$ is determined by $(\xi_i;i > n)$).
This proves the validity of \eqref{2.31} and concludes the argument.\end{proof}

\section {Construction of renormalized lattices. Step 2: layers and sites}
\label{lattice}

From now on  we restrict ourselves to the set of environments
$\ga$ such that $\chi (\ga) =0$. In this section we construct a
sequence of partitions $\{\bH^k\}_{k\ge 0}$ of $\widetilde{\Bbb Z}^2_+$
into horizontal layers, which will be used to define renormalized sites.
As in the preceding section we will do the construction in a recursive way.
The elements of $\bH^k$ are called {\it $k$-layers}, and they will be
associated to the clusters of level at most $k$.

The horizontal layers in correspondence with the span of each cluster of level at most $k$ and
mass greater than $k$ will be called {\it bad} $k$-layers. The other $k$-layers will be called
{\it good} $k$-layers, and are further subdivided into good layers of {\it type 1} and of
{\it type 2}:  The good $k$-layers of type 1 ``contain"\footnote{Identifying a cluster with
the horizontal layer whose projection on the second coordinate is the cluster.} a cluster of level at most $k$ but with mass equal
to $k$, and the remaining good $k$-layers will be considered of type 2. Thus good $k$-layers
of type 2 are ``disjoint"\footnote{idem} from the clusters of level at most $k$ with mass at least $k$.
Thus, bad layers are those which contain the largest (in the sense of mass) clusters of
a new level. These are the most difficult for a percolation path to traverse, and have to
be treated differently from the others (as we shall see later on). The good $k$-layers of
type 2 correspond to the smallest clusters (in the sense of mass).

The good $k$-layers have ``height'' of
order $L^k$ (see \eqref{3.50} below). However, the bad layers can
have different heights. Between two successive bad $k$-layers there
are at least two good $k$-layers of type 2, but not necessarily
any good $k$-layer of type 1. The total number of good $k$-layers
between successive bad $k$-layers is also variable. This is
reflected in the many cases covered in the following definitions.
Checking some of the stated properties requires somewhat tedious
verification of the different cases separately, but we have found no
way to avoid this. Lemma \ref{lemma3.1} and the remark following it summarize
some further properties of the partitions $\{\bH^k\}$.

\smallskip

We shall take $L\ge 12$ and divisible by 3, with $\de$ small enough
for the assumptions of Lemma \ref{lemma1.1} to be verified.

\medskip
\noindent {\bf Step 0.} To begin, we define 0-layers  in the
following way:
 $H_{j}^0=H_j = \{(x,y)\in\widetilde{\Bbb Z}^2_+\colon y=j\}, j \ge 0$.
If $j\in \Ga$ we say that the 0-layer $H_{j}^0$ is {\it bad}, and
otherwise it is called {\it good}. These names are justified by
the fact that sites which belong to good 0-layers are open with
large probability (namely, $p_G$), and  sites which belong to bad
0-layers are open with small probability (namely, $p_B$).
\medskip

\noindent {\bf Step 1.}  Let ${\text{\bf C}}_{1} = \{ \cC_{1
,j} \}_{j\ge 1} $.  We recall that  a.s. each cluster $\cC_{1
,j}$ is finite, and $\al(\cC_{1 ,j})$ and $\om(\cC_{1 ,j})$
are respectively, its start- and end-points. Due to property
\eqref{2.two} we have that $\al(\cC_{1 ,j+1})-\om(\cC_{1
,j})\ge L$, and since we assumed $\chi(\ga)=0$, also $\al(\cC_{1 ,1})\ge L$. We set $\widetilde \al^{1}_{0,0}=0, \widetilde
\om^{1}_{0,0}= 1$,
\begin{equation}
\widetilde b_1= \Big\lfloor \frac{\al(\cC_{1 ,1}
)}{L/3}\Big\rfloor,\quad \widetilde b_j=\Big\lfloor \frac{\al(\cC_{1 ,j} ) -  \om(\cC_{1 ,j-1})}{L/3}\Big\rfloor \quad
\text{for  } j\ge 2,  \label{3.3}
\end{equation}
($\lfloor \cdot \rfloor$ denotes the integer part; in particular
$\widetilde b_j\ge 3$), while
\begin{equation*}
b_j = \begin{cases}\widetilde b_j , \quad &\text{if} \quad m (\cC_{1
,j}) =1;\cr \widetilde b_j +1 , \quad &\text{if} \quad m (\cC_{1 ,j}) > 1,
\end{cases}
\end{equation*}
for  $j \ge 1$. Also
\begin{equation}
\widetilde \om^{1}_{j,0} = \begin{cases}   \om(\cC_{1 ,j})+3, \quad &
\text{if} \quad m (\cC_{1 ,j}) =1; \cr  \om(\cC_{1 ,j}),
\quad & \text{if} \quad m (\cC_{1 ,j}) >1, \end{cases} \label{3.1}
\end{equation}
for $j \ge 1$.

Now, if $j\ge 0,1\le i \le \widetilde b_{j+1}-1$, we set
\begin{equation}
\widetilde \om^{1}_{j,i} =\begin{cases} iL/3,&\quad \text{if } j=0;\\
\om(\cC_{1,j}) + iL/3,&\quad \text{if } j \ge 1.
\end{cases}
\label{3.4a}
\end{equation}
Note that this definition holds for $i \le \wt b_{j+1}-1$ only and
not necessarily for $i = b_{j+1}-1$. In fact, if $j \ge 0$ and
$m(\cC_{1,j+1})>1$ we make the special definition
\begin{equation}
\widetilde \om^{1}_{j,b_{j+1}-1} =\alpha(\cC_{1,j+1}) -1.
\label{3.4b}
\end{equation}
For $j\ge 1$, we also define:
\begin{equation}
\widetilde \al^{1}_{j,0} = \begin{cases}   \widetilde
\om^{1}_{j-1,b_{j}-1}+1, \quad & \text{if} \quad m (\cC_{1 ,j})
=1, \cr \al(\cC_{1 ,j}), \quad & \text{if} \quad m (\cC_{1
,j})> 1.
 \end{cases}  \label{3.2}
\end{equation}
Finally,  let
\begin{equation}
\wt \al^{1}_{j,i} =  \widetilde \om^{1}_{j,i-1}+1, \quad
\text{for}\quad j \ge 0,\,1 \le i \le b_{j+1}-1. \label{3.44}
\end{equation}
For consistency with the notation to be introduced in the next
step we write $b^1_j$ and $\wt b^1_j$ for $b_j$ and $\wt b_j$,
respectively. We also write $\wt \cC_{t,j}, j = 1, 2, \dots$ for
the clusters in $\bold C_t$ of mass at least $t$, in increasing
order. For $t=1$, $\wt \cC_{1,j} = \cC_{1,j}$.

\smallskip

From the facts that $L \ge 12$, property \eqref{2.two} for $k=1$,
and $\alpha(\cC_{1,1})\ge L$, we see that the following properties
hold for $t=1$
\begin{equation}
\widetilde\al^{t}_{j,i} \le \widetilde \om^{t}_{j,i} \text{ for }
j \ge 0,\; 0 \le i \le  b_{j+1}^t-1; \label{3.4c}
\end{equation}
\begin{eqnarray}
&&\nonumber\text{the intervals }[\widetilde\al^{t}_{j,i} ,\widetilde
\om^{t}_{j,i}], \;j \ge 0, 0 \le i \le b_{j+1}^t-1 \;(\text{in the
order } (j,i) =(0,0),
 \dots,\\
&&(0,b_1^t-1), (1,0), \dots, (1,b_2^t-1),(2,0),\dots) \text{ form a
partition of $\Bbb Z_+$}; \label{3.4d}
\end{eqnarray}
\begin{eqnarray}
&&\text{each $[\wt \al^t_{j,i} , \wt \om^t_{j,i}]$ is a
union of intervals $[\wt \al^{t-1}_{u,v}, \wt \om^{t-1}_{u,v}]$}\\\nonumber
&&\text{over a finite number of suitable pairs $(u,v)$}\\\nonumber
&&\text{(for $t=1$ this condition is taken to be fulfilled by
convention)}; \label{3.4g}
\end{eqnarray}
\begin{equation}
\wt \cC_{t,j} \subset[\widetilde \al^{t}_{j,0},\widetilde
\om^t_{j,0}],   \text{ for each }j \ge 1; \label{3.4e}
\end{equation}
\begin{equation}
\text{span}(\wt\cC_{t,j})=[\widetilde \al^{t}_{j,0}, \widetilde
\om^t_{j,0}]\text{  when } m(\wt\cC_{t,j})>t, \text{ for each
}j\ge 1. \label{3.44bb}
\end{equation}
As we shall see, suitable analogues of these properties will hold
at all steps in the recursive construction.
\medskip
For $j\ge 0$ and $0 \le i \le b_{j+1}-1$ define the 1-layers
\begin{equation}
\widetilde H_{j,i}^1 = \bigcup_{s\in [\widetilde \al^{1}_{j,i},
\widetilde \om^{1}_{j,i}] } H_{s}^0. \label{3.44aa}
\end{equation}

In particular, from the initial convention, $\widetilde
H^1_{0,0}=H_{0}^0\cup H_{1}^0$.

Notice that the 1-layers $\widetilde H_{j,i}^1, \;j \ge 0, 1\le i
\le b_{j+1}-1$, are contained in
\begin{equation*}
\bigcup_{\om(\cC_{1,j})+1 \le s \le
\al(\cC_{1,j+1})-1}H_s^0
\end{equation*}
and therefore do not contain any bad
0-layers. They will be called {\it
good} 1-layers of {\it type 2}.
For these values of $i,j$ we set
\begin{equation}
\cD_{j,i}^1=\{\widetilde \om^{1}_{j,i}-1,\widetilde
\om^{1}_{j,i}\}\quad \text{ and} \quad
\cD^{1,\cK}_{j,i}=\{\widetilde \om^{1}_{j,i}\}.
\label{3.44aaa}
\end{equation}
If a layer $\widetilde H_{j,i}^1$ does not contain a bad line we
simply write $\cK_{j,i}^1=[\widetilde \alpha^1_{j,i},\widetilde
\omega^1_{j,i}]$. This applies to each $\wt H^1_{j,i}$ with $i \ge 1$.
On the other hand, if $j \ge 1$, each of the
layers $\widetilde H_{j,0}^1$ contains exactly $m(\wt\cC_{1 ,j})$
bad 0-layers. When $m(\wt \cC_{1 ,j})=1$, (i.e. $\ell(\wt \cC_{1
,j})=0$), $\wt H_{j,0}^1$ is still said to be a {\it good} 1-layer
(of {\it type 1}, in this case). We then set
\begin{eqnarray}
\label{3.44abc}
&&\cD_{j,0}^1=\{\widetilde
\om^{1}_{j,0}-1,\widetilde
\om^{1}_{j,0}\}, \;\cD^{1,\cK}_{j,0}=\;\{\alpha(\wt \cC_{1,j})-1\},\\
&&\text{and } \cK_{j,0}^1=[\om^{1}_{j-1,b_{j}-1}+1, \alpha(\wt
\cC_{1,j})-1 ].\nonumber
\end{eqnarray}
If $m(\wt \cC_{1 ,j}) >1$, then $\widetilde H_{j,0}^1$ is called a
{\it bad} 1-layer.

\noindent {\bf Remark.} The layer $\widetilde H_{0,0}^1$ is
exceptional. It has not been classified as good or bad above,
though there is no harm in calling it good.
\smallskip

The family $\{\widetilde H_{j,i}^1, \; j \ge 0, \; 0 \le i \le
b_{j+1}-1\}$ forms a partition of $\widetilde {\Bbb Z}^2_+$ into
horizontal layers. We relabel these layers in increasing order (upwards)
as $\bH^1 = \{ H_{j}^1 \}_{j\ge 1}$. In particular, we see that $H_{1}^1=\widetilde H_{0,0}^1$. When the
layer $H_{j}^1$ is good, the associated $\cD,\,\cD^K$ and $\cK$
defined above, also carry the superscript 1 and subscript $j$.

Now define  $\al_{j}^1$ and $\om_{j}^1$ by the requirement
\begin{equation}
H_{j}^1 = \bigcup _{s\in [\al_{j}^1, \om_{j}^1]}H_{s}^0,
\label{3.4h}
\end{equation}
and write $\cH_{j}^1=[\al_{j}^1, \om_{j}^1]$, and $\cF_{j}^1=\{\alpha_{j}^1\}$ for each $j\ge 1$. When
$H^1_j$ does not contain a bad line we have $\cH_{j}^1=\cK_{j}^1$, as follows from the definition given above.

\smallskip
\smallskip

\noindent {\bf Remark.} The intervals $\cH_{j}^1, j\ge 1$, are
simply  the intervals $[\wt \al^1_{j,i}, \wt \om^1_{j,i}]$ of
\eqref{3.4d} simply relabeled in upward order. (From the
construction $\cH_{1}^1=\{0,1\}$.)
\medskip

\noindent {\bf Step k.} Let $k \ge 2$. Recall that $\wt \cC_{t,j},
j=1,2,\dots$ are just the clusters in $\bold C_t$ of mass at least
$t$ labeled in increasing space order. Note that $\{\wt \cC_{t,j}\colon j \ge 1\}$
is only a subset of $\bold C_t$. Assume now that we have carried
out the preceding construction through step $k-1$ in such a way
that \eqref{3.4c}-\eqref{3.44bb} hold for $1 \le t \le k-1$ with
\begin{equation}
\widetilde b_1^t= \Big\lfloor \frac{\al(\wt \cC_{t ,1} )}
{L^t/3}\Big\rfloor,\quad \widetilde b_j^t=\Big\lfloor
\frac{\al(\wt\cC_{t ,j} ) -  \om(\wt \cC_{t
,j-1})}{L^t/3}\Big\rfloor \quad \text{for  } j\ge 2  \label{3.4f}
\end{equation}
(in particular $\widetilde b^t_j\ge 3$ due to our assumption $\chi(\ga) = 0$ and
property \eqref{2.two}), while
\begin{equation}
b_j^t = \begin{cases}\widetilde b_j^t , \quad &\text{if} \quad m
(\wt\cC_{t ,j}) =t,\cr \widetilde b_j^t +1 , \quad &\text{if}
\quad m (\wt\cC_{t ,j}) > t,
\end{cases}
\label{3Ag}
\end{equation}
for  $j \ge 1$.

As in \eqref{3.44aa} and \eqref{3.4h} we can form successively for $2
\le t \le k-1$ the $t$-layers
\begin{equation}
\wt H^t_{j,i} = \bigcup_{s: \cH^{t-1}_s \subset [\wt \al^t_{j,i}, \wt
\om^t_{j,i}]} H^{t-1}_s \label{3Ak}
\end{equation}
and then relabel these layers in increasing upward order as $H^t_j,\; j
\ge 1$. The partition $\bold H^t$ is then $\{H^t_j\}_{j \ge 1}$.
Again following the case $t=1$ we define $\al^t_j$ and $\om^t_j$
by the requirement
\begin{equation}
H^t_j = \bigcup_{s:\cH^{t-1}_s \subset [\al^t_j,\, \om^t_j]}
H^{t-1}_s. \label{3Al}
\end{equation}
Finally we take
\begin{equation}
\cH^t_j = [\al^t_j, \om^t_j]. \label{3Am}
\end{equation}

We now consider a cluster $ \wt \cC_{k,j}$ which lies in  $\bold
C_k$ and has mass at least equal to $k$. If $\l(\wt \cC_{k,j}) \le
k-1$ (this occurs for instance if $m(\wt \cC_{k,j}) =k$, by
\eqref{2.fourc}), then even $\wt \cC_{k,j} \in \bold C_{k-1}$, by
virtue of \eqref{2.fourff} with $s=t = k-1$. Hence, by
\eqref{3.44bb} for $t = k-1$ (recall that we are assuming that this
is valid), span $(\wt \cC_{k,j}) = [\wt \al^{k-1}_{u,0}, \wt
\om^{k-1}_{u,0}]$ for some $u$. In particular, there exists an index
$i_j\ge 1$ such that
\begin{equation*}
\al(\wt \cC_{k,j}) = \al_{i_j}^{k-1} \text{ and }
\om (\wt \cC_{k,j}) = \om_{i_j}^{k-1}.
\end{equation*}
On the other hand, if $\l(\wt \cC_{k,j}) = k$, then
$\wt \cC_{k,j}$ is made up
from constituents $\wt \cC_{k-1, s_1}, \wt \cC_{k-1, s_2},
\dots,$ $\wt \cC_{k-1, s_n}$ with $n \ge 2$,
say. These constituents have mass at least $k$. By applying \eqref{3.44bb}
(with $t$  replaced by $k-1$) to
$\wt \cC_{k-1, s_1}$ and to $\wt\cC_{k-1, s_n}$
we see that for each $j$ there exist indices $i_j\le i'_j$ so that
\begin{equation}
\al(\wt \cC_{k,j})=\al^{k-1}_{i_j} \text{ and }\om(\wt
\cC_{k,j})=\om^{k-1}_{i'_j}.
\label{3Bm}
\end{equation}
(For $k=1$ this is \eqref{3.44bb}). Thus in both cases \eqref{3Bm} holds,
but $i_j = i'_j$ if and only if $\l(\wt\cC_{k,j}) \le k-1$.

\smallskip

\noindent {\bf Remark.} The subscripts $i_j,i'_j$ above are the
labels of the first and last constituents of $\wt \cC_{k,j}$ in
the enumeration of ${\text{\bf H}}^{k-1}$ (not of ${\text{\bf
C}}_{k-1}$!), as recursively constructed. They depend on $k$
but we omit this in the (already heavy) notation.


We make the convention that $\widetilde \al^{k}_{0,0}=0$ and
$\widetilde \om^{k}_{0,0}=\om^{k-1}_3$. We further make the
definitions \eqref{3.4f} and \eqref{3Ag} for $t=k$ as well. We also
set, for $j \ge 1$:
\begin{equation}
\widetilde \om^{k}_{j,0} =
\begin{cases}  \om^{k-1}_{i_j+3}, \quad &
\text{if} \quad m (\wt \cC_{k,j}) =k; \cr \om^{k-1}_{i'_j}, \quad
& \text{if} \quad m (\wt \cC_{k,j})>k.
\end{cases} \label{3.1k}
\end{equation}

To proceed with the definitions of $\wt \al^k_{j,i}$ and $\wt
\om^k_{j,i}$ we need new quantities $s(j,i) = s^k(j,i)$. We take
$s(j,i)= s^k(j,i)$ to be the unique value of $s$ for which
\begin{eqnarray}
\label{3.aaaa}
&&iL^k/3 \in \cH^{k-1}_s  \text { if }j=0,\\
&&\om(\wt \cC_{k,j}) + iL^k/3 \in \cH^{k-1}_s  \text { if }j \ge 1.
\nonumber
\end{eqnarray}
($s(j,i)$ is unique because the intervals in \eqref{3.4d} form a
partition of $\Bbb Z_+$.) Now, letting $\wt b^k_j$ and $b^k_j$ for
$j \ge 1$ be defined by \eqref{3.4f} and \eqref{3Ag} with $t=k$, we
may set, for $j\ge 0$ and $1 \le i \le \widetilde b^k_{j+1} -1$:
\begin{equation}
\widetilde \om^{k}_{j,i} = \om^{k-1}_{s(j,i)}. \label{3.1l}
\end{equation}
Still for  $j \ge 0$ and $1 \le i \le \widetilde b^k_{j+1} -1$ we
set
\begin{equation}
\cD_{j,i}^k=[\alpha^{k-1}_{s(j,i)-1},\omega^{k-1}_{s_(j,i)}]
\text{ and
}\cD^{k,\cK}_{j,i}=[\alpha^{k-1}_{s(j,i)},\omega^{k-1}_{s(j,i)}].
\label{3.44bbb}
\end{equation}
These sets are listed in the same (lexicographic) order for the
pairs $(j,i)$ as in \eqref{3.4d}; $\cD^{k,\cK}_v$ will be the $v$-th
set in this enumeration of the family $\{\cD^{k,\cK}_{j,i}\}$. This
comment also applies to the family $\{\cD^{1,\cK}_{j,i}\}$
introduced in \eqref{3.44aaa}-\eqref{3.44abc}.

If $j\ge 0$ and $m (\wt \cC_{k,j+1})>k$ we set
\begin{equation}
\widetilde \om^{k}_{j,b^k_{j+1} -1}=\alpha(\wt \cC_{k,j+1})-1.
\label{3.50f}
\end{equation}
We may now define for $j\ge 1$:
\begin{equation}
\widetilde \al^{k}_{j,0} = \begin{cases} \wt \om^{k}_{j-1,b^k_{j}-1}+1,
\quad & \text{if} \quad m (\wt \cC_{k,j}) =k; \cr
\alpha^{k-1}_{i_j}, \quad & \text{if} \quad m (\wt \cC_{k,j})>k,
 \end{cases}
\label{3.2k}
\end{equation}
and, in the case $m (\wt \cC_{k,j}) =k$,
\begin{equation}
\cD_{j,0}^k=[\alpha^{k-1}_{i_j +2},\omega^{k-1}_{i_j+3}],\text{
and } \cD^{k,\cK}_{j,0}=[\alpha^{k-1}_{i_j
-1},\omega^{k-1}_{i_j-1}].
\label{3.44ccc}
\end{equation}
Finally, if $j\ge 0$, we set
\begin{equation}
\widetilde \al^{k}_{j,i} =  \widetilde \om^{k}_{j,i-1}+1 \quad
\text {for} \quad 1 \le i \le b^k_{j+1}-1. \label{333k}
\end{equation}
We then set, if $j \ge 0,\; 0 \le i \le b^k_{j+1}-1$:
\begin{equation*}
\widetilde H^k_{j,i}=\bigcup_{s\colon \cH^{k-1}_s\subset
[\widetilde \al^{k}_{j,i}, \widetilde\om^{k}_{j,i}]}H^{k-1}_s.
\end{equation*}
The $k$-layers $\widetilde H^k_{j,i}$ with $j\ge 0$ and $1 \le i \le
b^k_{j+1}-1$ are said to be {\it good} of {\it type 2}. The layers
$\widetilde H^k_{j,0}$, with a $j\ge 1$ for which $m(\wt
\cC_{k,j})= k$ are also called good (in this case of {\it type}
1), and  we set
\begin{equation}
\cK^k_{j,0}=[\widetilde
\al^{k}_{j,0},\alpha(\wt\cC_{k,j})-1],\text{ and }\cK^k_{j,i}=
[\widetilde \al^{k}_{j,i},\widetilde \omega^{k}_{j,i}].
\label{329}
\end{equation}
The $k$-layers $\widetilde H^k_{j,0}$, with $j \ge 1$ such that
$m(\wt \cC_{k,j})> k$ are called {\it bad}. The layer
$\widetilde H^k_{0,0}$ is again exceptional.
The {\it support} of the layer $\widetilde H^t_{j,i}$ is the interval $[\wt
\al^t_{j,i},\wt  \om^t_{j,i}]$, sometimes written as $\text{supp}(\widetilde H^t_{j,i})$.

We shall prove by induction that \eqref{3.4c}-\eqref{3.44bb} hold
for all $t \ge 1$. For this we make the convention that for $r \ge
0$, $\al^0_r = \om^0_r = r$ and $\cH^0_r = [r,r]= \{r\}$. We  can
then define $s^1(j,i)$ in the same way as $s^k(j,i)$ in
\eqref{3.aaaa} with $k-1$ replaced by 0. We shall also need the
following quantities:
\begin{eqnarray*}
A_t &=& \max\{(\om^t_j - \al^t_j): j \ge 2, H^t_j
\text{ is a good $t$-layer}\}\\
&=& \max\{(\wt\om^t_{j,i} -\wt\al^t_{j,i}):j \ge 0, 0 \le i \le
b^t_{j+1}-1, (j,i) \ne (0,0), \wt H^t_{j,i} \text{ is a good
$t$-layer}\}
\end{eqnarray*}
and
\begin{eqnarray*}
a_t &=& \min\{(\om^t_j - \al^t_j): j \ge 2, H^t_j
\text{ is a good $t$-layer}\}\\\nonumber
&=& \min\{(\wt \om^t_{j,i} -\wt\al^t_{j,i}): j \ge 0, 0 \le i \le
b^t_{j+1}-1, (j,i) \ne (0,0), \wt H^t_{j,i} \text{ is a good
$t$-layer}\}.
\end{eqnarray*}

\begin{lemma} \label{lemma3.1} Let $L \ge 108$ and let $\ga $ be an
environment with $\chi(\ga) = 0$. Then the following properties
hold for all $t\ge 1$:
\begin{eqnarray}
&&\text{The support of a bad } \text{$(t-1)$-layer
cannot intersect the interval}\nonumber\\
&&[1, \al(\wt \cC_{t,1})-1] \text{ or any interval $[\om(\wt
\cC_{t,j})+1, \al(\wt \cC_{t,j+1})-1],\; j\ge 1$}; \label{3.40z}
\end{eqnarray}
\begin{equation}
\text{For all $j \ge 0, 1 \le i \le \wt b^t_{j+1}-1$},
H^{t-1}_{s^{t}(j,i)}\text{ is a good $(t-1)$-layer};\label{3.50a}
\end{equation}
\begin{eqnarray}
\text{If } j \ge 1 \text{ and }m(\wt \cC_{t,j}) =t,\text{ then }H^{t-1}_{i_j+\l}  \text{ is a good $(t-1)$-layer}\nonumber \\
\text{for }1 \le \l \le L/3,\text{ with $i_j$ as defined after \eqref{3Am}},  \text{ and }\wt \al^t_{j,1} \le \wt\om^t_{j,1};
\label{3.50p}
\end{eqnarray}
\begin{equation}
L^t/4 \le a_t\le A_t \le 2L^t. \label{3.50}
\end{equation}
Moreover, the properties \eqref{3.4c}-\eqref{3.44bb} hold for all $t
\ge 1$.
\end{lemma}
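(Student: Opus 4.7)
The plan is to prove all the listed properties by simultaneous induction on $t$. The base case $t=1$ is largely recorded during Step 1 of the construction: properties \eqref{3.4c}--\eqref{3.44bb} at $t=1$ follow directly from the definitions in that step together with $\chi(\ga)=0$ (which forces $\al(\cC_{1,1})\ge L$) and property \eqref{2.two} at level $1$. The bounds in \eqref{3.50} for $t=1$ are immediate from the choice of $\wt b^1_j$ and the formulas \eqref{3.4a}--\eqref{3.44}. Property \eqref{3.40z} at $t=1$ is essentially vacuous, since $\wt\cC_{1,\cdot}$ enumerates all of $\mathbf{C}_1$, so the gaps between successive $\wt\cC_{1,j}$ contain no element of $\Ga$ and hence no bad $0$-layer.

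For the inductive step, assume all conclusions hold at level $t-1\ge 1$, and fix $t$. Property \eqref{3.40z} at level $t$ is the key structural step, and it rests on the Claim \eqref{2.fourff}: a bad $(t-1)$-layer has support $\text{span}(\wt\cC_{t-1,j'})$ for some $j'$ with $m(\wt\cC_{t-1,j'})\ge t$, and such a cluster either survives as an element of $\mathbf{C}_t$ (hence equals some $\wt\cC_{t,j}$) or is swallowed as a constituent of some $\wt\cC_{t,j}$ of level $t$; in either case \eqref{2.y} gives $\text{span}(\wt\cC_{t-1,j'})\subseteq\text{span}(\wt\cC_{t,j})$, so this span cannot meet $[1,\al(\wt\cC_{t,1})-1]$ or any $[\om(\wt\cC_{t,j})+1,\al(\wt\cC_{t,j+1})-1]$. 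Once \eqref{3.40z} is in hand, \eqref{3.50a} is immediate: by definition of $\wt b^t_{j+1}$, the point $\om(\wt\cC_{t,j})+iL^t/3$ (or $iL^t/3$ if $j=0$) lies in one of these gaps for $1\le i\le\wt b^t_{j+1}-1$, so the $(t-1)$-layer containing it must be good. For \eqref{3.50p}, the inductive bound $A_{t-1}\le 2L^{t-1}$ shows that $L/3$ consecutive good $(t-1)$-layers after $\wt\cC_{t,j}$ occupy total length at most $(L/3)(2L^{t-1}+1)<L^t$, while the gap width is at least $L^t$ by \eqref{2.two}; hence all of $H^{t-1}_{i_j+1},\dots,H^{t-1}_{i_j+L/3}$ lie in the gap and are good, and the same estimate yields $s^t(j,1)\ge i_j+4$, which gives $\wt\al^t_{j,1}\le\wt\om^t_{j,1}$.

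The main technical obstacle is verifying \eqref{3.50} and \eqref{3.4c}--\eqref{3.44bb} at level $t$, since several subcases (good vs.\ bad, type 1 vs.\ type 2, the exceptional layer $\wt H^t_{0,0}$, and the special definitions \eqref{3.1k}, \eqref{3.50f}, \eqref{3.2k}, \eqref{3.44ccc}) must be handled separately. In each subcase the support of a good $t$-layer has length equal to the target $L^t/3$ up to at most a bounded number of underlying $(t-1)$-layer widths, which by the inductive $A_{t-1}\le 2L^{t-1}$ and the choice $L\ge 108$ is a small fraction of $L^t$; this gives $L^t/4\le a_t\le A_t\le 2L^t$. The partition and refinement properties \eqref{3.4c}--\eqref{3.44bb} follow by unwinding the definitions \eqref{3.4f}--\eqref{333k} and invoking their inductive versions at level $t-1$ to see that each new interval $[\wt\al^t_{j,i},\wt\om^t_{j,i}]$ is a union of intervals $\cH^{t-1}_s$ and that $\wt\cC_{t,j}$ is positioned correctly inside $[\wt\al^t_{j,0},\wt\om^t_{j,0}]$, as required by \eqref{3.4e} and \eqref{3.44bb}.
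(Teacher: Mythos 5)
Your proposal is correct and takes essentially the same approach as the paper: establish \eqref{3.40z} as the key structural fact, deduce \eqref{3.50a} immediately from it, and then handle \eqref{3.50p}, \eqref{3.50} and \eqref{3.4c}--\eqref{3.44bb} by inductive length estimates using $A_{t-1}\le 2L^{t-1}$ and case-by-case definition-pushing. Your argument for \eqref{3.40z} is a direct containment argument (the bad cluster $\wt\cC_{t-1,j'}$ either survives in $\mathbf{C}_t$ or is swallowed, and in either case $\mathrm{span}(\wt\cC_{t-1,j'})\subseteq\mathrm{span}(\wt\cC_{t,j})$, hence misses the gaps), whereas the paper runs the equivalent argument by contradiction via a disjointness/``cannot lie between successive $\wt\cC_{t,\cdot}$'' step; also, what you invoke from the Claim \eqref{2.fourff} is really the fact that a bad $(t-1)$-layer's support is a cluster span, which is \eqref{3.44bb} at level $t-1$ via \eqref{3Bm}---a harmless misattribution since you correctly run the simultaneous induction.
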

\begin{proof} Note that it follows directly from the definitions
that if $\wt H^t_{j,i}$ is a bad $t$-layer for some $t \ge 1$,
then it must be the case that $i=0, m(\wt \cC_{t,j}) > t$ and
$\text{span}(\wt \cC_{t,j}) = [\wt \al^t_{j,0}, \wt \om^t_{j,0}]$.

For $t=1$, \eqref{3.40z} is obvious, since each bad line belongs to
some cluster $\wt \cC_{1,j}$.

Now assume that \eqref{3.40z} is false
for some $t \ge 2$ and that $\wt H^{t-1}_{p,0}$ is a bad
$(t-1)$-layer whose support $[\wt \al^{t-1}_{p,0},
\wt \om^{t-1}_{p,0}]$ intersects $[\om(\wt \cC_{t,j})+1,\al(\wt
\cC_{t,j+1})]$ in a point $x$. (For $j=0$ we interpret $\om(\wt
\cC_{t,0})$ as 0, but for simplicity we restrict ourselves to $j
\ge 1$ in the proof of \eqref{3.40z}.) Then, as just observed, $[\wt
\al^{t-1}_{p,0}, \wt \om^{t-1}_{p,0}] = $ span$(\wt \cC_{t-1,p})$
for some cluster $\wt \cC_{t-1,p}$ of mass at least $t$.
When $\bold C_t$ is formed from $\bold C_{t-1}$, then $\wt
\cC_{t-1,p}$ is either in the same maximal $t$-run as the
constituents of $\wt \cC_{t,j}$, or in the same maximal $t$-run as
the constituents of $\wt \cC_{t,j+1}$, or $\wt \cC_{t-1,p}$ is
disjoint from both these maximal $t$-runs. In fact only the last
situation is possible, because the point $x$ of $\wt \cC_{t-1,p}$
lies outside span$(\wt\cC_{t,j}) \cup$ span$(\wt \cC_{t,j+1})$
and $\wt \cC_{t-1,p} \in \bold C_{t-1,t}$.
It then follows that span$(\wt \cC_{t,j})$, span$(\wt
\cC_{t,{j+1}})$ and span$(\wt \cC_{t-1,p})$ are (pairwise)
disjoint (see \eqref{2.y}). But this too is impossible,
because
 $\wt \cC_{t,j}$ and $\wt \cC_{t,j+1}$ are successive clusters of $\bold
 C_t$ of mass $\ge t$. Thus $\wt \cC_{t-1,p}$ cannot lie between
$\wt \cC_{t,j}$ and $\wt \cC_{t,j+1}$. Thus, \eqref{3.40z} holds.

Next we prove \eqref{3.50a}. Assume, to derive a contradiction, that
for some $t \ge 1, j \ge 0, 1\le i \le \wt b^t_{j+1}-1,
H^{t-1}_{s^t(j,i)}$ is a bad $(t-1)$-layer. For the purpose of
this proof use the abbreviation $r = s^t(j,i)$. By definition of
$s^t(j,i)$ we then have
\begin{equation}
\al^{t-1}_r \le \om(\wt \cC_{t,j}) + i\frac{L^t}3 \le \om^{t-1}_r
\label{3.50b}
\end{equation}
if $j \ge 1$, and $\al^{t-1}_r \le iL^t/3 \le \om^{t-1}_r$ if
$j=0$. For the sake of argument we assume that $j \ge 1$; the case
$j=0$ is similar. But for $1 \le i \le \wt b^t_{j+1}-1$, it holds
(by \eqref{3.4f})
\begin{equation*}
\om(\wt \cC_{t,j})+1 \le \om(\wt \cC_{t,j}) + i\frac{L^t}3 \le
\al(\wt \cC_{t,j+1}) - \frac {L^t}3.
\end{equation*}
In other words,
\begin{equation*}
\om(\wt \cC_{t,j}) + i\frac{L^t}3  \in [\om(\wt \cC_{t,j})+1,
\al(\wt \cC_{t,j+1})-1] \cap [\al^{t-1}_r,\om^{t-1}_r].
\end{equation*}
By  virtue of \eqref{3.40z} this contradicts our assumption that
$[\al^{t-1}_r,\om^{t-1}_r]$ is a bad $(t-1)$-layer. Thus
\eqref{3.50a} must hold.

The proof of \eqref{3.50p} has several similarities with that of
\eqref{3.50a}. For this property as well as for
\eqref{3.4c}-\eqref{3.44bb} we use a proof by induction. First, the
first part of \eqref{3.50p} for $t=1$ is easy. Indeed, if
$m(\cC_{1,j}) = 1$, then $(\cC_{1,j})$ is a singleton and $i_j =
i'_j$ is such that $\al(\wt \cC_{1,j}) =\om(\wt \cC_{1,j}) =
\al^0_{i_j} = \om^0_{i_j} = i_j$. The layers $H^0_{i_j+\l}, 1 \le
\l \le L/3$, therefore consist of the lines $H_p$, with $p \in
[\om(\wt \cC_{1,j}) + 1, \om(\wt \cC_{1,j}) + L/3] \subset
[\om(\wt \cC_{1,j})+ 1, \al(\wt \cC_{1,j+1})-1]$, because $\al(\wt
\cC_{1,j+1}) - \om(\wt \cC_{1,j}) \ge L$, by virtue of
\eqref{2.two}. It then follows from \eqref{3.40z} that all the layers
$H^0_{i_j+\l}, 1 \le \l \le L/3$ must be good 0-layers, so that
the first part of \eqref{3.50p} holds for $t=1$.

As for the last part of \eqref{3.50p}, if $m(\wt \cC_{1,j}) = 1$ and
$t=1$, this is equivalent (by \eqref{3.44},\eqref{3.1} and \eqref{3.4a})
to
\begin{equation*}
\wt \om^1_{j,0}+1 = \om(\wt \cC_{1,j})+4 \le \om(\wt
\cC_{1,j})+\frac L3.
\end{equation*}
Clearly this last inequality holds when $L \ge 12$, so that
\eqref{3.50p} holds for $t=1$. One can also check by hand that the
properties \eqref{3.50} and \eqref{3.4c}-\eqref{3.44bb} hold for $t=1$.
E.g., for \eqref{3.50p} use \eqref{3.44} and \eqref{3.4a}.
We therefore only have to verify the induction step for
\eqref{3.50p},\eqref{3.50} and \eqref{3.4c}-\eqref{3.44bb}.

Assume then that \eqref{3.50p}, \eqref{3.50} and \eqref{3.4d} have
already been proven for $t= k-1 \ge 1$. It then follows from
\eqref{3.4d} that
\begin{equation}
\al^{k-1}_{u+1} = \om^{k-1}_u + 1, u \ge 0. \label{3.50q}
\end{equation}
We can therefore write
\begin{equation*}
\om^{k-1}_{i_j+\l} = \om^{k-1}_{i_j} + \sum_{v=1}^\l
[\om^{k-1}_{i_j+v}-\al^{k-1}_{i_j+v}+1].
\end{equation*}
Now assume, to derive a contradiction, that $m(\wt \cC_{k,j}) = k$
and that $H^{k-1}_{i_j+\l}$ is a bad $(k-1)$-layer for some $1 \le
\l \le L/3$. Pick $\l \in [1,L/3]$ to be minimal with this
property. Then $H^{k-1}_{i_j+v}$ is a good $(k-1)$-layer for $1
\le v < \l$, so that
\begin{equation*}
\om^{k-1}_{i_j+v} - \al^{k-1}_{i_j+v} \le A_{k-1} \le 2L^{k-1}, 1
\le v < \l,
\end{equation*}
(by \eqref{3.50} for $t = k-1$). In particular,
\begin{equation}
\al^{k-1}_{i_j+\l} - \om^{k-1}_{i_j} = \om^{k-1}_{i_j+\l-1}+1 -
\om^{k-1}_{i_j} \le (\l-1)[2L^{k-1}+1] + 1 \le 3\l L^{k-1} -1.
\label{3.50r}
\end{equation}
But $\al^{k-1}_u$ is increasing in $u$ by \eqref{3.4d} for $t = k-1$.
Therefore,
\begin{eqnarray}
&&\om(\wt \cC_{k,j}) = \om^{k-1}_{i'_j} = \om^{k-1}_{i_j}
\text{ (because we took }m(\wt \cC_{k,j}) = k\text{)}\\ \nonumber
&& = \al^{k-1}_{i_j+1}-1 \text{(by \eqref{3.50q})} <
\al^{k-1}_{i_j+\l} \le \om(\wt \cC_{k,j}) +3\l L^{k-1} -1< \al(\wt
\cC_{k,j+1}),
\label{3.50qq}
\end{eqnarray}
where the last inequality follows from $\al(\wt \cC_{k,j+1}) -
\om(\wt \cC_{k,j})= d(\wt \cC_{k,j}, \wt \cC_{k,j+1}) \ge L^k$,
which in turn follows from \eqref{2.two}. Thus
\begin{equation*}
\al^{k-1}_{i_j+\l}  \in [\om(\wt \cC_{k,j})+1, \al(\wt
\cC_{k,j+1})-1] \cap [\al^{k-1}_{i_j+\l},\om^{k-1}_{i_j+\l}]
\end{equation*}
and we have again arrived at a contradiction  with \eqref{3.40z}.
This proves that $H^{k-1}_{i_j+\l}$ is a good $(k-1)$-layer for $1
\le \l \le L/3$, which is the first claim in \eqref{3.50p} with
$t=k$.

Now, to prove the last part of \eqref{3.50p} with $t=k$, note that
under the condition $m(\wt \cC_{k,j}) =k$ it is equivalent to
\begin{equation}
\om^k_{j,0}+1 = \om^{k-1}_{i_j+3}+1 \le \om^{k-1}_{s^k(j,1)},
\label{3.50s}
\end{equation}
by virtue of \eqref{333k},\eqref{3.1k} and \eqref{3.1l}. But, just as in
\eqref{3.50r},
\begin{equation*}
\om^{k-1}_{i_j+3} \le \om^{k-1}_{i_j} + 3(2L^{k-1}+1) = \om(\wt
\cC_{k,j}) + 6L^{k-1}+3.
\end{equation*}
On the other hand, by the definition of $s(j,1)$ it holds
\begin{equation*}
\om^{k-1}_{s^k(j,1)} \ge\om(\wt \cC_{k,j}) + \frac {L^k}3.
\end{equation*}
Thus, \eqref{3.50s} does indeed hold (for $L \ge 108$) and
\eqref{3.50p} for $t =k$ follows.

Next we turn to \eqref{3.50} for $t=k$. We only give some
representative parts of the argument, and leave the remaining parts
to the reader. In particular, for the last inequality we only
estimate
\begin{eqnarray*}
\max\{(\wt\om^k_{j,i} -\wt\al^k_{j,i}):j \ge 1, m(\wt \cC_{k,j}) >
k,0 \le i \le b^k_{j+1}-1, (j,i) \ne (0,0), \wt \cH^k_{j,i} \text{
is a good $k$-layer}\}.
\end{eqnarray*}

Note that the case with $j\ge 1, i=0$ does not have to be
considered, because $\wt H^k_{j,0}$ is bad when $j \ge 1, m(\wt
\cC_{k,j}) > k$. We now separately consider the cases
\begin{itemize}
\item[(\it{i})] $ j \ge 1, 2 \le i \le \wt b^k_{j+1}-1$;
\item[(\it{ii})] $j \ge 1, i = 1$;
\item[(\it{iii})] $j \ge 1, \wt b^k_{j+1}\le  i \le b^k_{j+1}-1$.
\end{itemize}
For $(j,i)$ in case $(i)$ we have
\begin{eqnarray*}
&&\wt \om^k_{j,i} - \wt \al^k_{j,i} = \om^{k-1}_{s^k(j,i)} -
\wt
\om^k_{j,i-1} -1 = \om^{k-1}_{s^k(j,i)}-\om^{k-1}_{s^k(j,i-1)} - 1\\
&&\le \om^{k-1}_{s^k(j,i)} - [\om(\wt \cC_{k,j}) + i \frac{L^k}3] +
[\om(\wt \cC_{k,j}) + i \frac{L^k}3]
-[\om(\wt \cC_{k,j}) + (i-1) \frac{L^k}3]\\
&&\text{(compare with the right hand inequality
in \eqref{3.50b})}\\
&&\le A_{k-1} + \frac {L^k}3\\
&& \text{(note that this step uses \eqref{3.50a} for $t=k$ and the
induction hypothesis)}.
\end{eqnarray*}
For $(j,i)$ in case $(ii)$ we have similarly
\begin{eqnarray*}
&&\wt \om^k_{j,i} - \wt \al^k_{j,i} = \om^{k-1}_{s^k(j,1)} -
\wt
\om^k_{j,0} -1\\
&&\le  A_{k-1}+ \om(\wt \cC_{k,j}) + \frac {L^k}3 -\om^{k-1}_{i'_j} - 1\\
&&=  A_{k-1}+ \om(\wt \cC_{k,j}) + \frac {L^k}3 -\om(\wt
\cC_{k,j}) - 1.
\end{eqnarray*}
Finally, case $(iii)$ is non-empty if and only if $m(\wt
\cC_{k,j+1}) > k$ so that $b_{j+1}^k = \wt b_{j+1}^k+1$. We then
have for $i = b^k_{j+1}-1$,
\begin{eqnarray*}
&&\wt \om^k_{j,i} - \wt \al^k_{j,i} = \al(\wt \cC_{k,j+1})-1 - \wt
\om^k_{j,b^k_{j+1}-2} -1 \text{ (by \eqref{3.50f} and
\eqref{333k})}\\ &&\le \al(\wt \cC_{k,j+1}) -
\om^{k-1}_{s(j,b^k_{j+1}-2)} \text{ (by
\eqref{3.1l}}\le \al(\wt \cC_{k,j+1}) - \om(\wt \cC_{k,j}) - [b^k_{j+1}-2]\frac{L^k}3\\
&&\le  L^k \text{ (by \eqref{3.4f})}.
\end{eqnarray*}

Thus in all cases checked so far we found that
$$
\wt \om^k_{j,i} - \wt \al^k_{j,i} \le A_{k-1} + L^k,
$$
and it turns out that this inequality holds in all cases. Together
with the induction hypothesis this shows that for $L \ge 12$
$$
A_k \le A_{k-1} + L^k \le 2L^{k-1} + L^k \le 2L^k.
$$
This is the desired first inequality in \eqref{3.50} with $t=k$.

The last case we check is the case of the second inequality of
\eqref{3.50} when $j = 0,\; i=1$, and still $m(\wt \cC_{k,j}) > k$.
In this case we find (using the definition of $s^k(0,1)$):
\begin{eqnarray}
\wt \om^k_{0,1} - \wt \al^k_{0,1} = \om^{k-1}_{s^k(0,1)} -
\wt \om^k_{0,0}-1
= \om^{k-1}_{s^k(0,1)} - \om^{k-1}_3 -1\ge \frac {L^k}3 - \om^{k-1}_3 -1. \label{3.50d}
\end{eqnarray}

To use this estimate we need an upper bound for $\om^{k-1}_3$.
Now, by \eqref{3.4d} for $t=k-1$, $\om^{k-1}_\l = \wt
\om^{k-1}_{0,\l-1}$, for $1 \le \l \le \wt b^{k-1}_1$ (and $\wt
b^{k-1}_1 \ge 3$). Therefore, $\al^{k-1}_\l = \wt
\al^{k-1}_{0,\l-1} = \wt \om^{k-1}_{0,\l-2}+1$ for $\l = 2$ or 3
(see \eqref{333k}). Thus,
\begin{eqnarray*}
\om^{k-1}_3 &&=\; \wt \om^{k-1}_{0,2} = [ \wt
\om^{k-1}_{0,2} -
 \wt \al^{k-1}_{0,2}]+  \wt \om^{k-1}_{0,1} +1 \\\nonumber
&&\le A_{k-1} + \wt \om^{k-1}_{0,1}+1 \text{ (use \eqref{3.50p})} \le 2A_{k-1} + \wt \om^{k-1}_{0,0} +2\\ \nonumber
&&= 2A_{k-1} + \om^{k-2}_3 +2 \text{ (by definition)}.
\end{eqnarray*}
Iteration of this inequality shows that
\begin{eqnarray}
\om^{k-1}_3
\le \sum_{t= 1}^{k-1} (2L^t+2) + 2 \le 8L^{k-1}, \label{3.50m}
\end{eqnarray}
by the first part of \eqref{3.50} and the choice of $\om^0_3$. We
note in passing that this last estimate also gives
\begin{equation}
\max\{A_k, (\om^k_{0,0} - \al^k_{0,0})\} \le 2 L^k,
\label{3.00}
\end{equation}
by our choice of $\al^t_{0,0}, \om^t_{0,0}$. Substitution of the estimate
\eqref{3.50m} into \eqref{3.50d} shows that
\begin{equation*}
\wt \om^k_{j,i}- \wt \al^k_{j,i} \ge \frac {L^k}3 - 8L^{k-1} -1
\ge \frac {L^k}4,
\end{equation*}
provided $L \ge 108$, $j=0,
i=1$, and $m(\wt \cC_{k,j})
> k$. In fact this inequality remains valid for all good $k$-layers
$\wt H^k_{j,i}, \;0 \le i \le b^k_{j+1}-1, (j,i) \ne (0,0)$. Thus
\eqref{3.50} holds also for $t=k$.

Next we must prove \eqref{3.4c} for $t=k$. For $(j,i) = (0,0)$ this
is immediate from our choice of $\wt \al^k_{0,0}$ and $\wt
\om^k_{0,0}$. For $(j,i)$ = (0,1), \eqref{3.4c} requires that
\begin{equation}
\wt \al^k_{0,1} = \wt \om^k_{0,0}+1= \om^{k-1}_3 +1 \le \wt
\om^k_{0,1} \label{3.50n}
\end{equation}
(see \eqref{333k}). But we already saw in \eqref{3.50m} that
$\om^{k-1}_3 \le 8L^{k-1}$, while $\wt \om^k_{0,1} =
\om^{k-1}_{s^k(0,1)}$ (see \eqref{3.1l}) $\ge L^k/3 $ (by definition
of $s^k(0,1)$). Thus \eqref{3.50n} holds and $\wt \al^k_{0,1} \le
\wt \om^k_{0,1}$ when $L \ge 108$.

We also proved \eqref{3.4c} when $j \ge 1, i=1$ and $m(\wt
\cC_{t,j}) = t$, in \eqref{3.50p}.

The remaining cases of \eqref{3.4c} are routine. Also
\eqref{3.4d}-\eqref{3.44bb} do not require any new ideas, but only tedious
definition pushing. We leave the verification to
the reader.
\end{proof}

\medskip

\noindent {\bf Remark.} Property \eqref{3.40z} and the arguments for
its proof also show that a $t$-layer $\wt H^t_{j,i}$ with $j \ge 1$
and which is good of type 2 does not contain any bad $(t-1)$-layer.
On the other hand, if for some $j \ge 1$, $m(\wt \cC_{t,j})=t$, then
the $t$-layer $\wt H^t_{j,0}$ is good of type 1, and contains
exactly one bad $(t-1)$-layer.

Indeed, the support of good $t$-layers of type 2 are of the form
$[\wt \al^t_{j,i},\wt \om^t_{j,i}]$ with $1 \le i \le b^t_{j+1}-1$
and one can check that these are contained in the interval $[\om(\wt
\cC_{t,j}+1, \al(\wt \cC_{t,j+1})-1]$ which cannot intersect any
cluster of mass at least $t$ by \eqref{3.40z}.

If, however, $[\wt \al^t_{j,0},\wt\om^t_{j,0}]$ corresponds to a
good $t$-layer of type 1, i.e., if $m(\wt\cC_{t,j})=t$, then the
formulae \eqref{3.2k} and \eqref{3.1k} (see also \eqref{3.4e})
can be used to show that $[\wt \al^t_{j,0},\wt \om^t_{j,0}]
\supset  [\al(\wt \cC_{t,j}), \om(\wt \cC_{t,j})]$. By
\eqref{3Bm} and its proof we then have $[\al(\wt \cC_{t,j}),
\om(\wt \cC_{t,j})] = [\wt\al^{t-1}_{u,0}, \wt \om^{t-1}_{u,0}]$
for some $u \ge 1$. Moreover, the layer $\wt H^{t-1}_{u,0}$ is a
bad $(t-1)$-layer, because $m(\wt \cC_{t,j}) > t-1$. In this case
$[\wt \al^{t-1}_{u,0}, \wt\om^{t-1}_{u,0}]$ necessarily is the only
bad $(t-1)$-layer in $[\wt \al^t_{j,0},\wt \om^t_{j,0}]$. To see this,
note that if $[\wt \al^t_{j,0},\wt \om^t_{j,0}]$ would contain another
bad $(t-1)$-layer, then it would have to contain a cluster $\wt \cC_{t,j'}$
from $\bold C_t$ of mass $t$, but $j' \ne j$. However, that cluster would
be contained in $[\wt \al^t_{j',0},\wt \om^t_{j',0}]$, and this interval
is disjoint from $[\wt\al^t_{j,0},\wt \om^t_{j,0}]$ (by \eqref{3.4d}),
so that $\wt \cC_{t,j'}$ cannot lie in $[\wt \al^t_{j,0},\wt \om^t_{j,0}]$
after all.

Finally we note that $\wt b_j^k \ge 3$ and \eqref{3.4d} show that
any bad $k$-layer $\wt H^k_{j,0}$ is followed by the two $k$-layers
$\wt H^k_{j,i},\; i=1,2$, which are good of type 2 (as we had
claimed already in the second paragraph of this section).
\bigskip

We have now proven that the family $\{\widetilde H^k_{j,i},\; j \ge
0, 0 \le i \le b^k_{j+1}-1\}$ again gives a partition of $\widetilde
{\Bbb Z}^2_+$ and we relabel it as $\{H^k_j,\; j \ge 1\}$, in
increasing order. (In this notation $\widetilde H^k_{0,0}=H^k_1$.)
As before, when $H^k_j$ is a good layer, the associated $\cD$,
$\cD^K$, and $\cK$ are written as $\cD^k_j$ etc., i.e., they carry
the superscript $k$ and subscript $j$. Now define $\al^k_j$ and
$\om^k_j$ as in \eqref{3Al} with $t=k$, and set $\cH^k_j= [\al^k_j,
\om^k_j]$, as in \eqref{3Am}. We set
\begin{equation*}
s_j=\min\{s\colon H^{k-1}_s \subset H^k_j\} \text{ and }
\cF^k_j=\cH^{k-1}_{s_j}.
\end{equation*}
The intervals $\cH^k_j, j \ge 1$, give a partition of $\Bbb Z_+$.
(From the construction $\cH^k_1=\{0,\dots,\om^{k-1}_3\}$.)

\medskip

\noindent {\bf Remark.}
Some of the preceding arguments can be used to
give a lower bound for $i_{j+1}-i'_j$ as follows. By \eqref{2.two}
\begin{equation*}
\al^{k-1}_{i_{j+1}} - \om^{k-1}_{i'_j} = \al(\wt \cC_{k,j+1})
-\om(\wt \cC_{k,j}) \ge L^k \text{ for } j \ge 1.
\end{equation*}
Moreover, by \eqref{3.40z} and \eqref{3.4d} with $t = k-1$, the
interval $[\om(\wt \cC_{k,j})+1,\al(\wt \cC_{k,j+1})-1]$ is a
disjoint union of the supports of a some good layers. More
precisely,
\begin{equation*}
L^k \le \al(\wt \cC_{k,j+1})-\om(\wt \cC_{k,j})=
\al^{k-1}_{i_{j+1}} - \om^{k-1}_{i'_j}   = 1 + \sum [\al^{k-1}_u
- \om^{k-1}_u + 1],
\end{equation*}
where the sum over $u$ runs over all $u$ for which $\cH^{k-1}_u
\subset [ \om(\wt \cC_{k,j})+1,\al(\wt \cC_{k,j+1})-1]$.
There are $i_{j+1}-i'_j$ such summands and each one of them is at most
$2L^{k-1}+1$, by virtue of \eqref{3.50}. Thus
\begin{equation}
i_{j+1} - i'_j \ge \frac{L^k-1}{2L^{k-1}+1} \ge \frac L6 \text{ for
each } j \ge 0.
\label{3Cm}
\end{equation}
($i'_0$ is to be taken as 0 here; $\chi(\ga) = 0$ should be used
instead of \eqref{2.two} to derive this estimate.)
\bigskip

\noindent{\bf Renormalized sites.}
\medskip

To define renormalized sites we first choose constants $c$ and $L
\ge 108$ such that:

\begin{equation}
c < \frac 3{14}s(p_G),\; c^{-1} \in \mathbb{N} \text{ and } \frac 12 cL \in \mathbb{N},
\label{4.00}
\end{equation}
where $s(p_G)$ is the asymptotic right-edge speed of homogeneous
oriented site percolation on $\wt {\mathbb{Z}}^2_+$ with the
probability of a site being open equal to $p_G$. (see \cite{Durrett}
for the definition of right edge speed). We then define the
renormalized $k$-sites  $S^k_{u,v}$ as follows:

\noindent {\bf Step 0.} $S^0_{u,v}=(u,v)$, for $(u,v) \in \wt{
\mathbb{Z}}^2_+$;

\noindent {\bf Step k.} For $k \ge 1, (u,v) \in \wt {\Bbb
Z}^2_+\setminus (0,0)$
\begin{equation}
S^{k}_{u,v}=\left(\big(\frac {u-1}{2} (cL)^k ,\frac {u+1}{2} (cL)^k \big]\times
\cH^k_v\right)\; \bigcap \widetilde {\Bbb Z}^2_+; \label{sitek}
\end{equation}

\noindent {\bf Remark.} It is clear that for fixed $k \ge 1$ the
$S^k_{u,v}$ with $(u,v)$ running over $\wt{\Bbb Z}^2_+ \setminus
(0,0)$ form a partition of $\wt{\Bbb Z}^2_+$ (see \eqref{3.4d}). We
further define
\begin{equation*}
{\mathring{S}}^k_{u,v} = \bigcup  S^{k-1}_{x,y}
\end{equation*}
with the union running over all $(x,y)$ such that $S^{k-1}_{x,y}
\subset S^k_{u,v}$.  For $k \ge 2$ and fixed $(u,v)$ it is generally
not the case that ${\mathring{S}}^k_{u,v} = S^k_{u,v}$. Thus $S^k_{u,v}$
is not quite a (disjoint) union of $(k-1)$-sites. However,
\begin{equation}
{\mathring{S}}^k_{u,v} \subset S^k_{u,v} \subset \bigcup S^{k-1}_{x,y},
\label{part}
\end{equation}
where the union in the right hand side is over all $(x,y)$ for which
$S^{k-1}_{x,y} \cap S^k_{u,v} \ne \emptyset$
Thus
the points of $S^k_{u,v}$ which are not
contained in ${\mathring{S}}^k_{u,v}$ lie within distance
$(cL)^{k-1}$ of the vertical boundary of
$S^k_{u,v}$, because the projection of any $(k-1)$-site on the
horizontal axis has diameter at most $(cL)^{k-1}$.

Note that by the definitions \eqref{3Ak}-\eqref{3Am} and $\wt \al
^k_{0,0} = 0, \wt \om^k _{0,0} = \om^{k-1}_3$ one has for $k \ge 2$,

\begin{eqnarray*}
H^k_1 &=& \;\wt H^k_{0,0} = \bigcup_{s:\cH^{k-1}_s \subset [\wt
\al^k_{0,0}, \wt \om^k_{0,0}]} H^{k-1}_s=H^{k-1}_1\cup H^{k-1}_2 \cup H^{k-1}_3.
\end{eqnarray*}
Thus, by iterating this relation we see that for any $k\ge 1$, the layer
$H^k_1$ is the union of $H^t_2 \cup H^t_3$ for $1 \le t \le k-1$
and  $H^1_1 = H_0 \cup H_1$.


A renormalized site $S^k_{u,v}$ with $v \ge 2$ is called {\it good (of type
$\l$)} when the $k$-layer $H^k_v$ is good (of type $\l$).
We state two elementary properties of the {\it good} renormalized
sites $S^{k}_{u,v}$ with $v \ge 2$:

i) the number of horizontal $(k-1)$-layers intersecting
$S^{k}_{u,v}$ does not depend on $u$. Any given good $k$-layer $H^k$ intersects at most $8L$
good $(k-1)$-layers (by \eqref{3.50} and the remark a few lines after \eqref{3.50n})
and at most one bad $(k-1)$-layer (by \eqref{3.44bb} and \eqref{3.40z}).
The support
of any bad $(k-1)$-layer which could intersect $H^k$ has diameter
at most $3L^{k-1}$ (since $H^k$ being good, its support cannot contain a cluster
of mass greater than $k$, and \eqref{2.6} holds).
By \eqref{3.50} and the remark following \eqref{3.50n} again,
we see that the number of good $(k-1)$-layers intersecting $H^k$ is at
least $(L^k/4-3L^{k-1})/(2L^{k-1}) \ge L/9$ (provided $L \ge 108$).

ii) the intersection of $S^{k}_{u,v}$ with a $(k-1)$-layer, if
not empty, contains exactly $cL$ $(k-1)$-sites.
\medskip

\noindent {\bf Structure of good sites.}

As stated above, a good $k$-site $S^k_{u,v}$ is called good
of {\it type 1} or {\it type
2} according as the  corresponding $k$-layer $H^k_v$ is good of type 1
or 2 ($v\ge 2$).
It follows from the remark after Lemma \ref{lemma3.1} that
good $k$-sites of type 1 are those good $k$-sites that
contain a layer of bad $(k-1)$-sites. On the other hand,
good $k$-sites of type 2
contain only good $(k-1)$-sites.  If $S^k_{u,v}$ is a good $k$-site
we let
\begin{eqnarray*}
D_l(S^k_{u,v})= \left[\left(\frac {u-1}{2}
+\frac{1}{12}\right)(cL)^k,
\left(\frac {u}{2} -\frac{1}{3}\right)(cL)^k
\right] \times \cD^k_v, \\
D_r(S^k_{u,v})= \left[\left(\frac {u}{2} +\frac{1}{3}\right)(cL)^k,
\left(\frac {u+1}{2} -\frac{1}{12}\right)(cL)^k \right] \times \cD^k_v,
\end{eqnarray*}
as well as
\begin{equation*}
  D^\cK_l(S^k_{u,v})=\left[\left(\frac{u-1}{2}
+\frac{1}{12}\right)(cL)^k,
\left(\frac{u}{2} -\frac{1}{3}\right)(cL)^k \right]
\times \cD^{k,\cK}_v,
\end{equation*}
\begin{equation}
D_r^\cK(S^k_{u,v})= \left[\left(\frac {u}{2} +\frac{1}{3}\right)(cL)^k
,\left(\frac {u+1}{2} -\frac{1}{12}\right)(cL)^k \right] \times \cD^{k,\cK}_v,
\label{Dkv}
\end{equation}
where $\cD^k_v, \cD^{k,\cK}_v$ have been defined before (see
\eqref{3.44aaa},\eqref{3.44abc}, \eqref{3.44bbb} and the lines
following it, and \eqref{3.44ccc}).

We remind the reader of \eqref{329}. After rearranging the pairs
$(j,i)$ in the order of \eqref{3.4d} this says for a good layer
$H^k_v$ we have
\begin{equation}
\cK^k_v = \cH^k_v = [\al^k_v, \om^k_v] \text{ if $H^k_v$ is good of
type 2},
\label{348}
\end{equation}
and
\begin{equation}
\cK^k_v = [\al^k_v, \al(\wt \cC_{k,j})-1] \text{ if $H^k_v$ is good of
type 1 and $\cH^k_v = [\wt \al^k_{j,0},\wt \om^k_{j,0}]$}.
\label{349}
\end{equation}
We next define
\begin{equation}
Ker(S^k_{u,v})=S^k_{u,v}\cap (\Bbb Z \times \cK^k_v);
\label{ker}
\end{equation}
this set is called the {\it kernel of }$S^k_{u,v}$.
Note that $Ker(S^k_{u,v})$ equals $S^k_{u,v}$ if this site is
good of type 2, but is a
strict subset of $S^k_{u,v}$ if $S^k_{u,v}$ is good of type 1.
Basically, if $S^k(u,v)$ is a good $k$-site of type 1, then
its projection on the vertical axis contains exactly one
cluster $\wt \cC_{k,j}$ of mass $k$ (and none of mass greater than
$k$). The kernel of $S^k$
is then the part of $S^k(u,v)$ whose projection on the vertical axis
lies below $\wt\cC_{k,j}$ (compare \eqref{348} and \eqref{349}). We
observe that if $H^k_v$ is a good layer and $k \ge 1$, then
\begin{equation}
\text{top line of } Ker(S^k_{u,v}) = \text{ top line of }\cK^k_v =
\text{top line of }\cD^{k,\cK}_v = \text{ top line of }D_{\theta}^{\cK}(S^k_{u,v}).
\label{3.50abc}
\end{equation}
The second equality here follows from \eqref{3.44aaa},
\eqref{3.44abc}, \eqref{3.44bbb}, \eqref{3.44ccc} and the fact that
$\wt \om^{k-1}_{i_j-1} = \wt \al^{k-1}_{i_j}-1$ (by \eqref{3.4d} for
$t=k-1$).

We define further
\begin{equation}
\cF^k_v=\cH^{k-1}_{w_v},
\label{3.50zyy}
\end{equation}
where $w_v=\min\{w\colon H^{k-1}_w \subset H^k_v\}$, and
\begin{equation}
F(S^k_{u,v})= [(u/2-1/6)(cL)^k,(u/2+1/6)(cL)^k]  \times \cF^k_v.
\label{cen}
\end{equation}
$F(S^k_{u,v})$ is, roughly speaking,
the middle third of the lowest $(k-1)$-layer in
$S^k_{u,v}$. The $(k-1)$-sites contained in $F(S^k_{u,v})$ are
said to be {\it centrally located} in $S^k_{u,v}$.

\bigskip
\noindent {\bf A reversed partition.}


The good $k$-sites were constructed in a way which ensures high crossing
probabilities for open paths from the bottom of the $k$-site. A
technical tool used in the proof requires also good crossing
probabilities in the downwards direction. This is achieved by an
appropriate modification of the horizontal layers.
\medskip

\noindent {\bf Reversed sites.} We begin with the definition of the
reversed layers.
\medskip

\noindent {\bf Step 0.} The 0-layers are $\widehat H^0_j=H_j$.
\medskip

\noindent {\bf Step 1.} Take ${\text{\bf C}}_{1} = \{ \cC_{1
,j} \}_{j\ge 1}$ as before. Take $\widetilde b_j$ as in
\eqref{3.3}, and  set
\begin{equation}
\widehat b_1=\widetilde b_1 +1\text{  and }
\widehat b_j = \begin{cases}\widetilde b_j , \quad &\text{if} \quad m
(\cC_{1 ,j-1}) =1 \cr \widetilde b_j +1 , \quad &\text{if}
\quad m (\cC_{1 ,j-1}) > 1
\end{cases} \quad \text{ for }j\ge 2.
\label{3.1ks}
\end{equation}
With the convention $\widehat\om^{1}_{0,0}=0$ and $\widehat
\al^{1}_{0,0}=1$, we set, for each $j \ge 1$,
\begin{equation}
\widehat \om^{1}_{j,0} = \begin{cases}   \al(\cC_{1 ,j})-3, \quad &
\text{if} \quad m (\cC_{1 ,j}) =1, \cr  \al(\cC_{1 ,j}),
\quad & \text{if} \quad m (\cC_{1 ,j}) >1, \end{cases}
\label{3.1'}
\end{equation}
and
\begin{equation}
\widehat \al^{1}_{j,0} = \begin{cases}   \widetilde \om^1_{j,1}, \quad &
\text{if} \quad m (\cC_{1 ,j}) =1, \cr \om(\cC_{1 ,j}),
\quad & \text{if} \quad m (\cC_{1 ,j})>
1,
 \end{cases}  \label{3.2'}
\end{equation}
with $ \widetilde\om^1_{j,1}$  given by \eqref{3.4a}. If $m(\cC_{1,j}) =
1$, then we set, for $j \ge 1$,
\begin{equation*}
 \widehat \al^{1}_{j,i} =\begin{cases} \widetilde \om^1_{j,i+1}, \quad &\text{if}
\quad  1 \le i \le \widehat b_{j+1}-2,\cr \widehat
\om^{1}_{j+1,0}-1, \quad & \text{if} \quad i = \widehat
b_{j+1}-1,\end{cases}
\end{equation*}
while, for  $m (\cC_{1 ,j})>1$ or $j = 0$ (and $m(\cC_{1,0}) = 1$),
we set
\begin{equation}
 \widehat \al^{1}_{j,i} =\begin{cases} \widetilde \om^1_{j,i}, \quad &\text{if}
\quad  1 \le i \le \widehat b_{j+1}-2,\cr \widehat
\om^{1}_{j+1,0}-1, \quad & \text{if} \quad i = \widehat
b_{j+1}-1.\end{cases}
\label{3.39aa}
\end{equation}
Irrespective of the value of $m(\cC_{1,j})$ we further take for $j \ge 0$
\begin{equation}
\widehat \om^{1}_{j,i} =\widehat \al^{1}_{j,i-1}+1 \quad\text{if}
\quad  1 \le i \le \widehat b_{j+1}-1.
\label{3.39bb}
\end{equation}

For $j\ge 0$ and $0 \le i \le \widehat b_{j+1}-1$  define
$$
\widehat H^1_{j,i} = \bigcup_{s\in [\widehat
\om^{1}_{j,i}, \widehat \al^{1}_{j,i}]}\widehat H^0_s.
$$
The family
$\{\widehat H^1_{j,i}, \; j \ge 0, \; 0 \le i \le \widehat
b_{j+1}-1\}$ is relabeled as $\widehat \bH^1 = \{ \widehat H^1_j
\}_{j\ge 1}$ in increasing order.
We then define $\widehat \om^1_j$ and $\widehat \al^1_j$ through the
requirement
 $$\widehat H^1_j =
\bigcup_{s \in [\widehat \om^1_j,
\widehat \al^1_j]}\widehat H^0_s,
$$ for each $j$, and we set $\widehat {\cH}^1_j=[\widehat \om^1_j, \widehat \al^1_j]$.

\medskip
\noindent {\bf Step k.} We now consider the clusters in
${\text{\bf C}}_{k} = \{ \cC_{k ,j} \}_{j\ge 1}$
which have mass at least $k \ge 2$,  and rename them as
$\wt \cC_{k,j}, j \ge 1$  (always in increasing order).
 We make the convention that $\widehat \om^k_{0,0}=0$
and $\widehat \al^k_{0,0}= \om^{k-1}_3$.
\begin{equation}
\widehat
b^k_1=\widetilde b^k_1+1,\text{ and }
\widehat b^k_j = \begin{cases}\widetilde b^k_j , \quad &\text{if} \quad m
(\wt \cC_{k,j-1}) =k,\cr \widetilde b^k_j +1 , \quad &\text{if}
\quad m (\wt\cC_{k,j-1}) >k,\end{cases}
\quad \text{ for }j \ge 2.
\label{3.1kr}
\end{equation}
Since $m(\wt \cC_{k,j})\ge k$, we can use the same argument as for
 \eqref{3Bm} to show that
for each $j$ there exist $h_j\le h'_j$ such that $\wt
\cC_{k,j}= [\widehat\om^{k-1}_{h_j},\widehat \al^{k-1}_{h'_j}] \cap
\Ga$ (use \eqref{3.39dd} below instead of \eqref{3.44bb}).
Again we avoid double indexing, but $h_j,h'_j$  depend on
the step.

We now define, for $j \ge 1$:
\begin{equation}
\widehat \om^{k}_{j,0} = \begin{cases}   \widehat \om^{k-1}_{h_j-3}, \quad
& \text{if} \quad m (\wt\cC_{k,j}) =k, \cr  \widehat
\om^{k-1}_{h_j}, \quad & \text{if} \quad m (\wt\cC_{k,j}) >k,
\end{cases} \label{3.1rk}
\end{equation}
and
\begin{equation}
\widehat \al^{k}_{j,0} = \begin{cases}   \widehat\al^{k-1}_{\hat s(j,1)},
\quad & \text{if} \quad m (\wt\cC_{k,j}) =k, \cr
\widehat\al^{k-1}_{h'_j}, \quad & \text{if} \quad m (\wt\cC_{k,j}) > k,
 \end{cases} \label{3.2rk}
\end{equation}
where $\hat s(j,i)$ is such that $\om(\wt \cC_{k,j})+iL^k/3 \in
\widehat{\cH}^{k-1}_{\hat s_(j,i)}$, for $1 \le i \le \widehat
b^k_{j+1}-1$,
and $\om(\wt \cC_{k,0})$ is interpreted as zero.
We also set, for $j \ge 1$, and if  $m(\wt
\cC_{k,j})=k$,
\begin{equation}
 \widehat \al^{k}_{j,i} =\begin{cases} \widehat \al^{k-1}_{\hat s(j,i+1)},
\quad &\text{if}
\quad  1 \le i \le \widehat b^k_{j+1}-2,\cr \widehat
\om^{k}_{j+1,0}-1, \quad & \text{if} \quad i = \widehat
b^k_{j+1}-1,\end{cases}
\label{3.39ee}
\end{equation}
while if $m(\tilde \cC_{k,j})>k$ or $j=0$,
\begin{equation}
 \widehat \al^{k}_{j,i} =\begin{cases} \widehat \al^{k-1}_{\hat s(j,i)},
\quad &\text{if}
\quad  1 \le i \le \widehat b^k_{j+1}-2,\cr \widehat
\om^{k}_{j+1,0}-1, \quad & \text{if} \quad i = \widehat
b^k_{j+1}-1,\end{cases}
\label{3.39cc}
\end{equation}
and finally,
for $j \ge 0$,
\begin{equation}
\widehat \om^{k}_{j,i} =  \widehat \al^{k}_{j,i-1}+1 \quad \text {if}
\quad 1 \le i \le \widehat b^k_{j+1}-1.
\label{3.39dd}
\end{equation}
We then set
\begin{equation*}
\widehat H^k_{j,i}=\bigcup_{s\colon \widehat
\cH^{k-1}_s\subset [\widehat \om^{k}_{j,i},\widehat \al^{k}_{j,i}]}
\widehat H^{k-1}_s,
\end{equation*}
if $j \ge 0,\; 0 \le i \le \wh b^k_{j+1}-1$.
In the next lemma we relate the layers $\{\wh H^k_{j,i}, j \ge 0, 0 \le
i \le \wh b^k_{j+1}-1\}$ to the layers $\{\wt H^k_{j,i}, j \ge 0, 0 \le
i \le \wh b^k_{j+1}-1\}$. In particular, we state that for
fixed $k$ the intervals
$[\wh\al^{k}_{j,i} ,\wh \om^{k}_{j,i}],
\;j \ge 0, 0 \le i \le \wh b_{j+1}^k-1$
form a partition of $\Bbb Z_+$.
As before we can then re-label the layers
$\{\widehat H^k_{j,i}, j \ge 0, 0 \le i \le
\wh b^k_{j+1}-1\}$ as $\widehat \bH^k=\{\widehat H^k_j, j \ge 1\}$ in
increasing order, and define $\widehat \om^k_j$ and $\widehat
\al^k_j$ through the identity
\begin{equation*}
\widehat H^k_j = \bigcup_{s\colon \widehat H^{k-1}_s\subset
[\widehat \om^k_j, \widehat \al^k_j]} \widehat H^{k-1}_s,
\end{equation*}
for each $j$, and $\widehat{\cH}^k_j=[\widehat \om^k_j,
\widehat \al^k_j]$. As we shall see in Lemma 3.3,
many of the layers in $\wh H^k$ ``nearly''
coincide with a layer in $H^k$.

 We shall need some properties of the reversed layers which parallel
 the properties in Lemma \ref{lemma3.1}. First some definitions. The {\it
 support} of the reversed layer $\wh H^t_{j,i}$ is the interval
$[\wh \om^t_{j,i}, \wh \al^t_{j,i}]$. The reversed layer $\wh
 H^t_{j,i}$ is called {\it good of type 2} if $1 \le i \le \wh
 b^t_{j+1}-1$, and {\it good of type 1} if $i = 0$ and $m(\wt \cC_{t,j}) =
 t$. It is called {\it bad} if $i=0$ and $m(\wt \cC_{t,j}) > t$.
\begin{eqnarray*}
\wh A_t :&=& \sup\{(\wh \al^t_j-\wh \om^t_j): j \ge 1, \wh H^t_j \text{
 is a good reversed $t$-layer}\}\\
&=& \sup\{(\wh \al^t_{j,i}-\wh \om^t_{j,i}): j \ge 0, 0 \le i \le \wh
b^t_{j+1} -1, \wh H^t_{j,i} \text{
 is a good reversed $t$-layer}\}.
\end{eqnarray*}
\begin{lemma}\label{lemma3.2}
Let $L \ge 108$ and let $\ga $ be an environment
with $\chi(\ga) = 0$. Then the following properties hold:
\begin{eqnarray}
&\nonumber\text{for all $t, j \ge 1$  the support of a bad} \text{ reversed
$(t-1)$-layer cannot intersect the interval}\\
&[1, \al(\wt \cC_{t,1})-1] \text{ or any interval }
[\om(\wt \cC_{t,j})+1, \al(\wt \cC_{t,j+1})-1]\; ;
\label{3.40zb}
\end{eqnarray}
\begin{equation}
\text{for all $t\ge 1,  j \ge 0, 1 \le i \le \wt b^t_{j+1}-1$},
\wh H^{t-1}_{\wh s^{t}(j,i)}\text{ is a good reversed $(t-1)$-layer}\;;
\label{3.50ab}
\end{equation}
\begin{eqnarray}
\nonumber\text{for }&t \ge 1, j \ge 1, 1 \le \l \le L/3
\text{ and }m(\wt \cC_{t,j}) = t,
\wh H^{t-1}_{h_j-\l}\\
& \text{ is a good reversed $(t-1)$-layer}\;;
\label{3.50pb}
\end{eqnarray}
\begin{equation}
\text{for }t \ge 1, \wh A_t \le 2L^t;
\label{3.50bbb}
\end{equation}
\begin{equation}
\wh\om^{t}_{j,i} \le \wh \al^{t}_{j,i} \text{ for }
j \ge 0,\; 0 \le i \le  \wh b_{j+1}^t-1;
\label{3.4cb}
\end{equation}
\begin{eqnarray}
\nonumber&\text{the intervals }[\wh\om^{t}_{j,i} ,\wh \al^{t}_{j,i}],
\;j \ge 0, 0 \le i \le \wh b_{j+1}^t-1 \;(\text{ordered as } (j,i) =(0,0),
\dots,\\
&(0,\wh b_1^t-1), (1,0), \dots, (1,\wh b_2^t-1),(2,0),\dots) \text{ form
a partition of $\Bbb Z_+$};
\label{3.4db}
\end{eqnarray}
\begin{eqnarray}
&\text{for $ t \ge 1, 0 \le i \le \wh b^t_{j+1}$,
each $[\wh \om^t_{j,i} , \wh \al^t_{j,i}]$ is a
union of intervals}\\\nonumber
&[\wh \om^{t-1}_{u,v}, \wh \al^{t-1}_{u,v}]
\text{ over a finite number of suitable pairs $(u,v)$}\\
&\text{(for $t=1$ this condition is taken to be fulfilled by convention)}\;;
\label{3.4gb}
\end{eqnarray}
\begin{equation}
\text{for each } j \ge 1,\; \wt \cC_{t,j}
\subset[\wh \om^{t}_{j,0},\wh \al^t_{j,0}]\;;
\label{3.4eb}
\end{equation}
\begin{equation}
\text{for each }j\ge 1,
\text{span}(\wt\cC_{t,j})=[\wh \om^{t}_{j,0},
\wh \al^t_{j,0}]\text{  when } m(\wt\cC_{t,j})>t.
\label{3.44bbc}
\end{equation}
\end{lemma}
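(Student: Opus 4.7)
The plan is to adapt the proof of Lemma \ref{lemma3.1} to the reversed setting, exploiting the fact that the reversed construction is essentially a mirror image of the forward one, with the extra good layer of type 1 placed below (rather than above) the cluster of mass exactly $t$. Most arguments transfer with obvious sign changes, but the bookkeeping in \eqref{3.1ks} and \eqref{3.1kr}, where $\wh b^t_j$ depends on $m(\wt \cC_{t,j-1})$ rather than on $m(\wt \cC_{t,j})$, has to be kept straight throughout.

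I would proceed by induction on $t$, establishing the properties in the same order as in Lemma \ref{lemma3.1}. For $t=1$ everything follows by direct inspection of the definitions \eqref{3.1ks}--\eqref{3.39bb}, using that $\chi(\ga)=0$ forces $\al(\cC_{1,1})\ge L$ and that \eqref{2.two} for $k=1$ forces $\al(\cC_{1,j+1})-\om(\cC_{1,j})\ge L$; in particular \eqref{3.4db} for $t=1$ says that the intervals \eqref{3.39aa}--\eqref{3.39bb} tile $\Bbb Z_+$ when listed in the appropriate order. For the inductive step from $t-1$ to $t$, property \eqref{3.40zb} is obtained by the same indirect argument as \eqref{3.40z}: if a bad reversed $(t-1)$-layer had support inside $[\om(\wt \cC_{t,j})+1,\al(\wt \cC_{t,j+1})-1]$, then by \eqref{3.44bbc} at level $t-1$ it would correspond to a cluster $\wt \cC_{t-1,p}\in\bold C_{t-1,t}$ of mass $\ge t$ lying strictly between $\wt \cC_{t,j}$ and $\wt \cC_{t,j+1}$, contradicting \eqref{2.y} and the fact that $\wt\cC_{t,j}, \wt\cC_{t,j+1}$ are consecutive clusters of $\bold C_t$ of mass $\ge t$. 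Property \eqref{3.50ab} is then immediate, since for $1\le i\le\wt b^t_{j+1}-1$ the point $\om(\wt \cC_{t,j}) + iL^t/3$ defining $\hat s(j,i)$ lies in the forbidden interval above, so $\wh H^{t-1}_{\hat s(j,i)}$ cannot be bad by \eqref{3.40zb}.

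For \eqref{3.50pb} I would mimic the argument for \eqref{3.50p} with the orientation reversed. Assuming $m(\wt\cC_{t,j})=t$, pick $\l\in[1,L/3]$ minimal with $\wh H^{t-1}_{h_j-\l}$ bad. Combining the inductive bound $\wh A_{t-1}\le 2L^{t-1}$ with \eqref{3.4db} for $t-1$ gives
\[
\wh\om^{t-1}_{h_j-\l} \ge \wh\om^{t-1}_{h_j} - (\l-1)\big(2L^{t-1}+1\big) -1 > \om(\wt\cC_{t,j-1}),
\]
while trivially $\wh\al^{t-1}_{h_j-\l} < \al(\wt\cC_{t,j})$; this places the purported bad support in $[\om(\wt\cC_{t,j-1})+1,\al(\wt\cC_{t,j})-1]$, contradicting \eqref{3.40zb}. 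The bound \eqref{3.50bbb} is then obtained by a case split (analogues of cases (i)--(iii) in the proof of \eqref{3.50}), using the inductive $\wh A_{t-1}\le 2L^{t-1}$ together with the gap $\al(\wt\cC_{t,j+1})-\om(\wt\cC_{t,j})\ge L^t$ coming from \eqref{2.two}. Finally, the structural properties \eqref{3.4cb}--\eqref{3.44bbc} follow by induction directly from the defining formulas \eqref{3.1rk}--\eqref{3.39dd}, together with the inductive form of \eqref{3.4db}; here \eqref{3.4eb} and \eqref{3.44bbc} play the same role in the reversed induction that \eqref{3.4e} and \eqref{3.44bb} played for the forward one (and were needed already to make sense of the indices $h_j,h'_j$).

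The main obstacle is keeping the shift conventions of \eqref{3.1ks}/\eqref{3.1kr} aligned with the rest of the induction: in the reversed numbering, the extra slot accommodating a cluster of mass $>t$ attaches to the cluster \emph{following} it, not to the cluster itself, so the formulas for $\wh\om^t_{j,0}$, $\wh\al^t_{j,0}$ and $\wh\al^t_{j,i}$ split differently according to $m(\wt\cC_{t,j})$ and $m(\wt\cC_{t,j-1})$. Once this accounting is done correctly the remaining verifications are tedious but mechanical case checks, entirely parallel to the ones the authors leave to the reader at the end of Lemma \ref{lemma3.1}.
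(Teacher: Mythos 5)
Your proposal matches the paper's treatment: the authors explicitly decline to write out a proof of Lemma \ref{lemma3.2}, stating only that it is ``essentially the same as that of Lemma \ref{lemma3.1} with much tedious definition pushing,'' and your adaptation is exactly that mirroring, carried out in the right order and with the bookkeeping (the shift by one in \eqref{3.1ks}/\eqref{3.1kr}, the role of $h_j,h'_j$, the use of $\chi(\ga)=0$ in place of \eqref{2.two} when $j=1$) correctly identified. One small slip worth fixing: in your argument for \eqref{3.50pb} the chain of good layers below $\wh H^{t-1}_{h_j}$ gives a lower bound on $\wh\al^{t-1}_{h_j-\l}$ (equivalently on $\wh\om^{t-1}_{h_j-\l+1}$), not on $\wh\om^{t-1}_{h_j-\l}$ — the bad layer itself could be wide, so one cannot control its left endpoint from minimality of $\l$ alone; however, having $\wh\al^{t-1}_{h_j-\l}\in[\om(\wt\cC_{t,j-1})+1,\al(\wt\cC_{t,j})-1]$ already yields the contradiction with \eqref{3.40zb}, so the argument goes through unchanged.
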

We shall not prove this lemma. Its proof is essentially the same as
that of Lemma \ref{lemma3.1} with much tedious definition pushing.
We also note without proof that the analogue of \eqref{3Cm} for
reversed layers is
\begin{equation*}
h_{j+1} - h'_j \ge \frac L6 \text{ for } j \ge 0.
\end{equation*}

\bigskip
We repeat that {\it we always take $L \ge 108$ and $\ga$ such that
$\chi(\ga) =0$}. For $k \ge 1, j \ge 0$ and $ 0 \le i \le b^k_{j+1}-1$
we define $r(k,j,i)$ to be the rank number of the layer $\wt
H^k_{j,i}$ in the partition $\cH^k$ of $\Bbb Z_+$, that is,
$\wt H^k_{j,i} = \cH_{r(k,j,i)}$. Similarly we define $\wh r(k,j,i)$
to be the unique number such that $\wh H^k_{j,i} = \wh\cH^k_{\wh
r(k,j,i)}$ for $k \ge 1, j \ge 0, 0 \le i \le \wh b^k_{j+1}-1$.

The next lemma shows the relation between the partitions $\cH^k$ and $\wh \cH^k$.
Each interval in one of these partitions differs not too much from one of the intervals
in the other partition. This is especially important for the intervals
which contain the clusters $\wt \cC_{k,j}$. If such a cluster has
mass $m(\wt \cC_{k,j}) > k$, then there even exists in each of the
partitions an interval equal to the span of $\wt \cC_{k,j}$ (see \eqref{3.44bb}
and \eqref{3.44bbc}).
\begin{lemma} \label{lemma3.3}
For $k \ge 1, j \ge 0$ and $ 0 \le i \le b^k_{j+1}-1$,
\begin{equation}
r(k,j,i) = \sum_{\l = 0}^{j-1} b^k_{\l+1} + i+1 =
\sum_{\l = 0}^{j-1}\wt b^k_{\l+1} + \sum_{\l=0}^{j-1} I[m(\wt
\cC_{k,\l+1}) > k] +i+1.
\label{3.70}
\end{equation}
For $k \ge 1, j \ge 1$ and $ 0 \le i \le \wh b^k_{j+1}-1$,
\begin{equation}
\wh r(k,j,i) = \sum_{\l = 0}^{j-1} \wh b^k_{\l+1} + i+1 =
\sum_{\l = 0}^{j-1}\wt b^k_{\l+1} + 1+\sum_{\l=1}^{j-1} I[m(\wt
\cC_{k,\l}) > k] +i+1
\label{3.71}
\end{equation}
If $j = 0$, then
\begin{equation}
r(k,0,i) = i+1 \text{ for } 0 \le i \le b^k_1-1 \text{ and } \wh
r(k,0,i) = i+1 \text{ for } 0 \le i \le \wh b^k_1-1.
\label{3.71aa}
\end{equation}
For all $k,u \ge 1$,
\begin{equation}
|\cH^k_u \triangle \widehat {\cH}^k_u| \le   20L^{k-1}
\label{3.72}
\end{equation}
($\triangle$
denotes  symmetric difference and $|\cdot|$ the cardinality).
\end{lemma}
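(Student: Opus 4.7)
The plan is to establish the rank formulas \eqref{3.70}, \eqref{3.71}, \eqref{3.71aa} by direct counting, and then to prove the symmetric difference bound \eqref{3.72} by a careful case analysis based on the relation between the two enumerations of layers at the same rank.

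First, since the pairs $(j,i)$ are arranged in lexicographic order in \eqref{3.4d} (respectively \eqref{3.4db}) to label $\bH^k$ (respectively $\wh \bH^k$), the rank of $\wt H^k_{j,i}$ in $\bH^k$ equals one plus the number of earlier pairs, giving $r(k,j,i) = \sum_{\l=0}^{j-1} b^k_{\l+1} + i + 1$, and similarly for $\wh r(k,j,i)$. The second equalities in \eqref{3.70} and \eqref{3.71} follow by substituting the identities $b^k_{\l+1} = \wt b^k_{\l+1} + I[m(\wt \cC_{k,\l+1})>k]$ from \eqref{3Ag} and $\wh b^k_1 = \wt b^k_1+1$, $\wh b^k_{\l+1} = \wt b^k_{\l+1} + I[m(\wt \cC_{k,\l}) > k]$ for $\l \ge 1$ from \eqref{3.1kr}. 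The formula \eqref{3.71aa} is the special case $j = 0$ in which both sums are empty.

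Subtracting \eqref{3.71} from \eqref{3.70} for the same $(j, i)$ with $j \ge 1$ gives, after a telescoping cancellation,
\begin{equation*}
\wh r(k, j, i) - r(k, j, i) \,=\, 1 - I[m(\wt \cC_{k, j}) > k] \,\in\, \{0, 1\}.
\end{equation*}
Thus at a common rank $u$ the forward and reverse partitions point to $(j,i)$-indices that either coincide (when $m(\wt \cC_{k,j})>k$ or $j=0$) or differ by one step in the lexicographic order (when $m(\wt \cC_{k,j})=k$). In the former case both intervals equal the span of the same cluster $\wt \cC_{k,j}$ by \eqref{3.44bb} and \eqref{3.44bbc}, so there is nothing to prove; it therefore suffices to bound the symmetric difference in the latter case.

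I would prove \eqref{3.72} in this remaining case by induction on $k$, comparing endpoints directly using the defining formulas. For an interior index $(j,i)$, the formulas \eqref{333k}, \eqref{3.1l}, \eqref{3.39cc}, \eqref{3.39dd} express both endpoints of $\wt H^k_{j,i}$ in the form $\om^{k-1}_{s(j,\cdot)}$ and both endpoints of the shifted reversed interval in the form $\wh \al^{k-1}_{\hat s(j,\cdot)}$. Since $s(j,i)$ and $\hat s(j,i)$ are the indices of the $(k-1)$-intervals (in the two partitions) that both contain the point $\om(\wt \cC_{k,j}) + iL^k/3$, the bounds $A_{k-1}\le 2L^{k-1}$ and $\wh A_{k-1}\le 2L^{k-1}$ from \eqref{3.50} and \eqref{3.50bbb} give $|\om^{k-1}_{s(j,i)} - \wh \al^{k-1}_{\hat s(j,i)}| \le A_{k-1}+\wh A_{k-1} \le 4L^{k-1}$, and similarly for the bottom endpoints. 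This handles the interior subcase with a symmetric-difference bound of $8L^{k-1}$.

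The main obstacle is the boundary subcase $i \in \{0,1,b^k_{j+1}-1\}$ adjacent to a cluster $\wt \cC_{k,j}$ of mass exactly $k$. Here the asymmetry between the forward definition $\wt \om^k_{j,0} = \om^{k-1}_{i_j+3}$ of \eqref{3.1k} (which pushes the bad $(k-1)$-sublayer close to the top of the type-1 layer, inserting three extra good $(k-1)$-layers above it) and the reverse definition $\wh \om^k_{j,0}=\wh \om^{k-1}_{h_j-3}$ of \eqref{3.1rk} (which does the mirror thing) creates an offset between adjacent layers of at most six $(k-1)$-layer heights plus the diameter of the bad $(k-1)$-sublayer. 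By \eqref{3.50} and \eqref{3.50bbb} the former is at most $6(2L^{k-1}+1)$, and by \eqref{2.6} the latter is at most $3L^{k-1}$. Collecting the two-endpoint contributions with a small slack yields $|\cH^k_u \triangle \wh \cH^k_u| \le 20 L^{k-1}$, closing the induction; the exceptional $j=1,i=0$ case involving the first layer of $\Bbb Z_+$ reduces to the same estimate after invoking $\wt \om^k_{0,0}=\wh \al^k_{0,0}=\om^{k-1}_3$ and the bound \eqref{3.50m}.
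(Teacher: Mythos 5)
The rank formulas \eqref{3.70}--\eqref{3.71aa} and the identity $\wh r(k,j,i) - r(k,j,i) = 1 - I[m(\wt\cC_{k,j}) > k]$ are derived correctly and match the paper's \eqref{3.77}. However, the reduction you make next is wrong. You claim that in the case $m(\wt\cC_{k,j})>k$ or $j=0$ ``both intervals equal the span of the same cluster $\wt\cC_{k,j}$ by \eqref{3.44bb} and \eqref{3.44bbc}, so there is nothing to prove.'' This is only true for $i=0$: \eqref{3.44bb} and \eqref{3.44bbc} identify $\wt\cH^k_{j,0}$ and $\wh\cH^k_{j,0}$ with $\text{span}(\wt\cC_{k,j})$, but say nothing about $i\ge 1$. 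For $i\ge 1$ and $m(\wt\cC_{k,j})>k$ the intervals $\wt\cH^k_{j,i}$ and $\wh\cH^k_{j,i}$ at the same rank $u$ are good layers of type 2 with genuinely different endpoints: the top of $\wt\cH^k_{j,i}$ is $\om^{k-1}_{s(j,i)}$, the top of $\wh\cH^k_{j,i}$ is $\wh\al^{k-1}_{\hat s(j,i)}$, and these are right endpoints in two different partitions of $\Bbb Z_+$. The paper's proof devotes its case (iii) (with subcase \eqref{3.77c}) precisely to $m(\wt\cC_{k,j})>k$ and $0\le i\le b^k_{j+1}-1$, and proves \eqref{3.78} there by the $A_{k-1},\wh A_{k-1}$ estimate. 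The case $j=0$, $i\ge 1$ is likewise not covered by \eqref{3.44bb}/\eqref{3.44bbc}, since there is no cluster $\wt\cC_{k,0}$. So the step ``it therefore suffices to bound the symmetric difference in the latter case'' loses most of the lemma.

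A further symptom of the confusion is that your ``interior index'' argument cites \eqref{3.39cc}, which is the defining formula for $m(\wt\cC_{k,j})>k$ or $j=0$, not \eqref{3.39ee}, which is the one needed when $m(\wt\cC_{k,j})=k$ (the $i+1$ shift there is exactly what compensates for the rank offset $\wh r = r+1$). The estimate you sketch --- comparing $\om^{k-1}_{s(j,i)}$ with $\wh\al^{k-1}_{\hat s(j,i)}$ via their common anchor $\om(\wt\cC_{k,j})+iL^k/3$ and the bounds $A_{k-1},\wh A_{k-1}\le 2L^{k-1}$ --- is the right idea and is essentially what the paper does. But it must be carried out in each of the cases (i)--(iv) with the correct shifted or unshifted index pairing, not only in the mass-$k$ case. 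As written, the reduction is incorrect and the proof of \eqref{3.72} has a genuine gap.
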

\begin{proof} Recall that $\cH^k$ is just the increasing rearrangement
of the intervals $[\wt \al^k_{j,i}, \wt \om^k_{j,i}]$ with $0 \le i \le
b^k_{j+1}, j \ge 0$. The first equality in \eqref{3.71} is therefore
immediate from \eqref{3.4d}
and the fact that there are exactly $b^k_{\l+1}$ layers $\wt H^k_{\l,i}$
with $0 \le i \le b^k_{\l+1}$. The second equality then follows from \eqref{3Ag}.
The equalities in \eqref{3.71} follow in the same way from \eqref{3.4db}
and \eqref{3.1kr}. The first equality in \eqref{3.71aa} is immediate,
because $\cH^k$ begins with the supports of the layers $\wt H^k_{0,i},
0 \le i \le b^k_1-1$. The second equality in \eqref{3.71aa} follows in a
similar manner.

To prove \eqref{3.72} we note first that for $j \ge 1, 0 \le i \le
(b^k_{j+1} \land \wh b^k_{j+1})-1$,
\begin{equation}
\wh r(k,j,i) = \sum_{\l = 0}^{j-1}\wt b^k_{\l+1} + 1+\sum_{\l=0}^{j-2} I[m(\wt
\cC_{k,\l+1}) > k] +i+1 = r(k,j,i) + I[m(\wt \cC_{k,j}) = k],
\label{3.77}
\end{equation}
by virtue of \eqref{3.71}, \eqref{3.72}. We further note that for all $k
\ge 1, j \ge 0$ it holds
\begin{equation}
b^k_{j+1}\in \{\wt b^k_{j+1}, \wt b^k_{j+1}+1\} \text{ as well as }
\wh b^k_{j+1} \in \{\wt b^k_{j+1}, \wt b^k_{j+1}+1\}.
\label{3.77a}
\end{equation}
Now fix $k \ge 1$. As $(i,j)$ traverses all pairs
$j \ge 0, 0 \le i \le b^k_{j+1}-1$
in the order given in \eqref{3.4d} $r(k,j,i)$ runs though the positive
integers in order. Equivalently, $\wt \cH_{r(k,j,i)}^k$ runs through
the intervals $\cH_u^k, u=1,2, \dots$ in order. We now must
distinguish different cases. Let $r(k,j,i) = u$ and hence
$\wt \cH^k_u = \wt \cH^k_{j,i}$
for a certain $(j,i)$
with $0 \le i \le b^k_{j+1}-1$. Then we have the
cases (i) $m(\wt {\cC}_{k,j}) = k, j \ge 1, 1\le i \le b^k_{j+1}-1$;
(ii) $m(\wt {\cC}_{k,j})=k, j \ge 1$ but $i = 0$;
(iii)\;$m(\wt {\cC}_{k,j}) > k, j \ge 1, 0 \le i \le b^k_{j+1}-1$;
(iv)\; j=0.
To make the argument clear, let us start with case
(iii), which seems to be the simplest case. In this case \eqref{3.77}
shows that $\wh r(k,j,i) = r(k,j,i) = u$, but there is one proviso. We
can apply \eqref{3.77} only if $(j,i)$ is a legitimate pair, that is, if $i
\le \wh b^k_{j+1}-1$. However, by \eqref{3.1ks} and \eqref{3.1kr} $\wh b^k_{j+1}
= \wt b^k_{j+1} +1$, and hence  by \eqref{3.77a} and $m(\wt \cC_{k,j})>k$, we
automatically have $i \le b^k_{j+1}-1 \le \wh b^k_{j+1}-1$ in case (iii)
(recall the we have $i \le b^k_{j+1}-1$ by choice of i; see \eqref{3.4d}).
Thus $\wh r(k,j,i) = r(k,j,i)$ for all pairs $(j,i)$ in case (iii).
Consequently, \eqref{3.72} reduces in case (iii) to
\begin{equation}
\big|[\wt \al^k_{j,i},\wt \om^k_{j,i}] \triangle
[\wh \om^k_{j,i},\wh \al^k_{j,i}]\big| \le 20L^{k-1}.
\label{3.78}
\end{equation}
We shall verify this inequality in some subcases below, but first let
us consider cases (i) and (ii).
In case (i) $\wh r(k,j,i) = r(k,j,i) +1= u+1$, so $\wh H^k_u$ is the
predecessor of $\cH^k_{j,i}$ in the order of \eqref{3.4db}, i.e. $\wh
\cH^k_u = \wh \cH^k_{j,i-1}$. Thus in case (i) \eqref{3.72} reduces to
\begin{equation}
\big|[\wt \al^k_{j,i},\wt \om^k_{j,i}] \triangle
[\wh \om^k_{j,i-1},\wh \al^k_{j,i-1}]\big| \le 20L^{k-1}.
\label{3.78c}
\end{equation}
(Note that $(j,i-1)$ is a legitimate pair, because $i \le b^k_{j+1}-1$
together with \eqref{3.77a} implies $i-1 \le \wh b^k_{j+1}-1$.) Finally,
in case (ii) with the further restriction $j \ge 1$ we have $\wh
r(k,j,0) = r(k,j,0)+1$, and consequently we will have $\wh
\cH^k_{j',i'} = \wh \cH^k_u$ for $u= r(k,j,0)$ if $(j',i')$ is
the immediate predecessor
of $(j,0)$, that is, if $(j',i') = (j-1, \wh b^k_j-1)$.
Thus case (ii) reduces to
\begin{equation}
\big|[\wt \al^k_{j,0},\wt \om^k_{j,0}] \triangle
[\wh \om^k_{j-1,\wh b^k_j-1},\wh \al^k_{j-1,\wh b^k_j-1}]
\big| \le 20L^{k-1}.
\label{3.78cz}
\end{equation}
Case (iv) with $j=0$ needs special treatment, but this is easy by means
of \eqref{3.71aa}.

To complete the proof we now prove \eqref{3.72} in a few subcases.
We shall only treat some typical examples.
In all cases we shall use the estimate
\begin{equation}
\big|[a_1,b_1] \triangle [a_2,b_2]\big| \le |a_1-a_2|+|b_1-b_2|,
\label{3.77ab}
\end{equation}
which is valid for any intervals $[a_i, b_i], i=1,2$
Thus to prove \eqref{3.78} and \eqref{3.72} in case (iii) it suffices to verify
\begin{equation}
\big|\wt \al^k_{j,i}-\wh \om^k_{j,i}\big|+
\big|\wt \om^k_{j,i} -\wh \al^k_{j,i}\big| \le 20L^{k-1}.
\label{3.77bc}
\end{equation}

Now consider the subcase of (iii)
\begin{equation}
k \ge 2, j \ge 1, m(\wt \cC_{k,j}) > k,2 \le  i \le b^k_{j+1}-2.
\label{3.77c}
\end{equation}
In this subcase
\begin{equation}
[\wt \al^k_{j,i},\wt \om^k_{j,i}] = [\om^{k-1}_{s(j,i-1)}+1,
\om^{k-1}_{s(j,i)}]\text{ and } [\wh \om_{j,i}^k, \wh \al_{j,i}^k] =
[\wh \al^{k-1}_{\wh s(j,i-1)}+1, \wh \al^{k-1}_{\wh s(j,i)}].
\end{equation}
Moreover, by definition, $s(j,i)$ satisfies \eqref{3.aaaa}, while
\begin{equation*}
 \sup\{(\om^{k-1}_j - \al^{k-1}_j): j \ge 2, H^{k-1}_j
\text{ is a good $(k-1)$-layer}\} \le A^{k-1} \le 2L^{k-1}
\end{equation*}
(by \eqref{3.00} and \eqref{3.50a}). Therefore,
\begin{equation}
\big|\wt \al^{k-1}_{s(j,i)} - \om(\wt \cC_{k,j}) - (i-1)L^k/3\big| \le
2L^{k-1} \text{ and }
\big|\wt\om^{k-1}_{s(j,i)} - \om(\wt \cC_{k,j}) - iL^k/3\big| \le
2L^{k-1}
\label{3.77b}
\end{equation}
(see also \eqref{3.50b}).
In turn, this implies that the left hand side of \eqref{3.77bc} is
changed by at most $4L^{k-1}+1$
if we replace $\wt \al^k_{j,i}$ and $\wt \om^k_{j,i}$ by
$\om(\wt \cC_{k,j}) + (i-1)L^k/3$ and
$\om(\wt \cC_{k,j}) + iL^k/3$, respectively.
Similarly, in the subcase \eqref{3.77c} it holds
\begin{equation}
\wh \om^{k-1}_{\wh s(j,i)} \le \om(\wt \cC_{k,j}) + iL^k/3 \le
\wh \al^{k-1}_{\wh s(j,i)} ,
\label{3.77cs}
\end{equation}
so that we have
\begin{equation}
\big|\wh \al^{k-1}_{\wh s(j,i)} - \om(\wt \cC_{k,j}) - iL^k/3\big|
\le 2L^{k-1}.
\label{3.77d}
\end{equation}
Thus, if we replace $\wh \om^k_{j,i}$ and $\wh \al^k_{j,i}$ by $\om (\wt
\cC_{k,j}) + (i-1)L^k/3$ and $\om (\wt \cC_{k,j}) + iL^k/3$,
respectively, then the left hand side of \eqref{3.77bc}
is again not changed by more than $4L^{k-1}+1$.
These considerations prove \eqref{3.78}, and hence \eqref{3.72},
when \eqref{3.77c} holds.

We also consider some subcases of case (ii).
To verify that \eqref{3.78cz} holds in case (ii) we shall use the bounds
\begin{equation}
\om(\wt \cC_{k,j}) = \om^{k-1}_{i_j}\le \om^{k-1}_{i_j+3}
\le \om^{k-1}_{i_j} + 3(2L^{k-1}+1)
\le \om(\wt \cC_{k,j}) + 3(2L^{k-1} + 1),
\label{3.75ea}
\end{equation}
and
\begin{equation}
\wh \om^{k-1}_{h_j}  - 3(2L^{k-1} + 1) \le \wh \om^{k-1}_{h_{j-3}}
\le \wh \om^{k-1}_{h_j} =
\al(\wt \cC_{k,j}).
\label{3.75eb}
\end{equation}
The first inequality in \eqref{3.75ea} is obvious and the second one
follows by the argument for \eqref{3.50qq}. The third inequality is
again obvious since $\om_{i_j} \le \om_{i'_j} = \om(\wt \cC_{k,j})$.
The inequalities in \eqref{3.75eb} follow by similar arguments.

Case (ii) is split into two subcases, namely  (iia) $j \ge 2,
m(\wt \cC_{k,j-1}) = k,m(\wt \cC_{k,j}) =
k, i=0$ and (iib) $j=1$ or ($j\ge 2$ and $m(\wt \cC_{k,j-1}) > k$),
$m(\wt \cC_{k,j}) = k, i=0$.
In each subcase the further subcase $k=1$ requires the use of the
special definitions of Step 1. These will allow us to verify
\eqref{3.72} without use of $\al^0$ or $\om^0$ (i.e., a superscript of 0
combined with further subscripts) in intermediate steps.
The steps for $k=1$ are essentially the same as for $k \ge 2$ and
we shall restrict ourselves here to the cases with $k \ge 2$.

In subcase (iia) we have $b^k_j = \wt b^k_j = \wh b^k_j$ and $k \ge 2,
j \ge 2$ and for suitable $\theta_\l \in [-1,1]$ (which may depend on $k,j$)
\begin{eqnarray}
&&\wt \al^k_{j,0} =\wt \om^k_{j-1, b^k_j-1}+1
= \om^{k-1}_{s(j-1, b^k_j-1)}+1
= \om(\wt \cC_{k,j-1}) +(b^k_j-1)L^k/3 + \theta_1(2L^{k-1}+1),\\\nonumber
&&\wt \om^k_{j,0} = \om^{k-1}_{i_j+3}
= \om (\wt \cC_{k,j})
+3\theta_2(2L^{k-1}+1) \text{ (by \eqref{3.75ea})},\\\nonumber
&&\wh \om^k_{j-1, \wh b^k_j-1}= \wh \al^k_{j-1, \wh b^k_j-2}+1=
\wh \al^{k-1}_{\wh s(j-1,\wh b^k_j-1)} +1 = \om(\wt \cC_{k,j-1}) +(\wh
b^k_j-1)L^k/3 +\theta_3(2L^{k-1}+1),\\\nonumber
&&\wh \al^k_{j-1, \wh b^k_j-1} = \wh \om^k_{j,0}-1 = \wh
\om^{k-1}_{h_j-3} =\wh \om^{k-1}_{h_j} + 3\theta_4(2L^{k-1}+1)
=\al(\wt \cC_{k,j}) + 3\theta_4(2L^{k-1}+1).
\label{3.75fb}
\end{eqnarray}

In subcase (iia) the inequality \eqref{3.78cz}, and hence \eqref{3.72},
follows from these relations together with $b^k_j =
\wh b^k_j$ and $|\al(\wt \cC_{k,j}) - \om(\wt \cC_{k,j})| \le 3L^{k-1}$
(see \eqref{2.6}).

In subcase (iib) we have $b^k_j = \wt b^k_j = \wh b^k_j-1$.
The first, second and fourth line of \eqref{3.75fb} need no change for
this subcase, but in the third line we have
to appeal to \eqref{3.39cc} instead of \eqref{3.39ee}. This third line now
has to be replaced by
\begin{equation*}
\wh \om^k_{j-1, \wh b^k_j-1}= \wh \al^k_{j-1, \wh b^k_j-2}+1=
\wh \al^{k-1}_{\wh s(j-1,\wh b^k_j-2)} +1 = \om(\wt \cC_{k,j-1}) +(\wh
b^k_j-2)L^k/3 +\theta_3(2L^{k-1}+1).
\end{equation*}
The inequalities \eqref{3.78cz} and \eqref{3.72} in case (iib) follow from
these observations and $\wh b^k_j-2 = b^k_j-1$. Any reader who has followed the proof so far
will be able to complete the remaining cases. \end{proof}

The reversed sites $\widehat{S}^k_{u,v}$ are defined as in
\eqref{sitek}, with $\cH^k_j$ replaced by $\widehat {\cH}^k_j$.
\medskip

\begin{remark1} \label{fhkremark} In view of the preceding lemma it
becomes natural to write $\widehat S(S^k_{u,v}):=\widehat
S^k_{u,v}$ as well as $S(\widehat S^k_{u,v}):=S^k_{u,v}$ for $(u,v)
\in \widetilde {\Bbb Z}^2_+$.
\end{remark1}

\section{Passable sites}
\label{passability}

At the end of this section we state key estimates that will lead to
the proof of Theorem \ref{main}. Before doing that we introduce several
key definitions: a rooted seed, passability from the seed ($s$-passable),
and an open cluster.
\medskip

\noindent {\it $s$- and $c$-Passable sites. Rooted seed.}
\medskip

\noindent {\bf Step 0.} A $0$-site is called $s$-{\it passable}
if and only if the site is open.

\noindent {\bf Rooted $0$-seed.} The rooted $0$-seed
$Q^{(0)}(u,v)$, with root at $(u,v)$, is the set of three open
0-sites in $\widetilde {\Bbb Z}^2_+$:
$$
Q^{(0)}=Q^{(0)}(u,v)= \{(u,v),(u+1,v+1),(u-1,v+1)\}.
$$
The site $(u,v)$ is called the {\it root} of $Q^{(0)}_{u,v}$, and we write
$R(Q^{(0)})=\{(u,v)\}$; the sites $(u-1,v+1)$ and $(u+1,v+1)$ are
called the {\it active sites} of $Q^{(0)}$, and we set
$A(Q^{(0)})=\{(u-1,v+1),$ $(u+1,v+1)\}$. (When the location of the
seed is not important we will suppress the subscript.)

We remind the reader that $\wt {\Bbb Z}^2_+$ is oriented upwards in the second
coordinate. We shall therefore say that $A$ is connected to $B$ by an
open path $\pi$ only if $\pi$ is an open path  which respects
the orientation and with initial and endpoint in $A$ and $B$,
respectively. We call such a path simply an {\it open path from $A$
to $B$}. For $A \subset \wt {\Bbb Z}^2_+$ we call the {\it top
line of} $A$ the subset $\{(x,y) \in A:y=y_0\}$, where $y_0$ has the
maximal value for which this subset is nonempty. If $y_0$ takes the
smallest value for which $\{(x,y) \in A:y=y_0\}$ is non empty, then we
call this subset the {\it bottom line} of $A$. Note that the top line
and bottom line as defined here are not complete lines, not even
intervals, in general.

We next define
open clusters, passability and rooted seeds for a general $k \ge
1$. These definitions have
to be used in sequence. First we must use them to define
rooted 0-seed and open cluster of a 0-site; then
passability of a good 1-site (this relies on the definition of a
rooted 0-seed and its open cluster already given above);
then a rooted 1-seed and the open
cluster of a 1-site; next
passability of a good 2-site and a rooted 2-seed, etc.

Let $\theta(p)$ denote the percolation probability for the homogeneous Bernoulli oriented
percolation model with parameter $p$:
\begin{equation}
\theta(p):=  P_p\{\text{there exists an
infinite open path from the origin}\}.
\label{4.90a}
\end{equation}
We first take $p_G$ large enough so that $ \theta(p_G)>1/2$, and $\rho$
will be some fixed number in the interval $(1/2, \theta(p_G))$.
The constant $c$ was chosen in \eqref{4.00}.
\smallskip

\noindent {\bf Open cluster of a rooted $0$-seed.} The open cluster of a rooted
$0$-seed $Q^{(0)} = Q^{(0)}_{u,v}$ is the collection of $0$-sites $w$ for
which there exists an open path of $0$-sites from $A(Q^{(0)})$ to $w$.
This open cluster is denoted by $U(Q^{(0)})$.

We shall soon need the open cluster of a $0$-seed $Q^{(0)}$
restricted to the kernel of a $1$-site $S^1$ which is located such
that all 0-sites of $A(Q^{(0)})$ are below and adjacent to the middle
third of $S^1$, i.e., adjacent to $F(S^1)$. This will simply be the
the collection of good $0$-sites $w$ for
which there exists an open path of good $0$-sites from $A(Q^{(0)})$ to $w$
and inside $S^1$. Note that only the last restriction is added to the
definition of the open cluster of $Q^{(0)}$.

\smallskip
\noindent
{\bf $s$-Passable $k$-site.} A  good $k$-site $S^k$ is
said to be $s$-{\it passable from a rooted $(k-1)$-seed} $Q^{(k-1)}$
if the following conditions (i), (iis) and (iiis) are satisfied (see Figure 5)
\begin{itemize}
\item[{(s1)}] All 0-sites of $A(Q^{(k-1)})$ are below and adjacent to the
middle third of the bottom layer of $S^k$, i.e., adjacent to $F(S^k)$
(see \eqref{cen} for the definition of $F(S^k)$).
\item[{(s2)}] There exist two rooted $(k-1)$-seeds, $\wt Q_l^{(k-1)}$ and $\wt
Q_r^{(k-1)}$ say, such that their top lines are contained in the top line of
$D_l(S^k)$ and the top line of $D_r(S^k)$, respectively,
and such that there exist open oriented paths of 0-sites, entirely
contained in $S^k$, from 0-sites adjacent to $A(Q^{(k-1)})$ to $R(\wt Q_l^{(k-1)})$ and to
$R(\wt Q_r^{(k-1)})$.
\item[{(s3)}] The open cluster of $Q^{(k-1)}$ restricted to $Ker(S^k)$
contains at least $\rho cL/12$ $(k-1)$-sites in each of
$D^\cK_l(S^k)$ and $D_r^\cK(S^k)$ (see \eqref{Dkv}
for the definition of $D_\theta^\cK$).
\end{itemize}
\smallskip \noindent
{\bf Remark.} We shall denote the leftmost rooted $(k-1)$-seed which
fulfills the
requirements for $\wt Q_l^{(k-1)}$ in (s2) as
$Q_l(S^k)$. Similarly $Q_r(S^k)$ denotes the rightmost
rooted $(k-1)$-seed which fulfills the requirements for
$\wt Q_r{(k-1)}$.
We further
define $A(S^k) = A(Q_l(S^k)) \cup A(Q_r(S^k))$ and
call the sites in this set the {\it active sites of} $S^k$.
Note that in these definitions $Q_l(S^k), Q_r(S^k)$ and $A(S^k)$
also depend on $Q^{(k-1)}$, even though the notation does not indicate
this. However, in the definition below of the open cluster of a rooted
$k$ seed we shall use the more explicit notation
$Q_\theta(S^k,Q^{(k-1)})$ with $\theta =$ l or r
to indicate this dependence.

\smallskip
\noindent {\bf Rooted $k$-seed.} A rooted $k$-seed is formed by a
rooted $(k-1)$-seed $Q^{(k-1)}$ and three good $k$-sites
\begin{equation*}
S^k_{u,v}, S^k_{u-1,v+1} \; \text{and}\; S^k_{u+1,v+1};
\end{equation*}
such that
\begin{itemize}
\item{(i)} $S^k_{u,v}$ is s-passable from $Q^{(k-1)}$,

\item{(ii)} $S^k_{u-1,v+1}$ and $S^k_{u+1,v+1}$ are passable from
$Q_l(S^k_{u,v})$ and $Q_r(S^k_{u,v})$, respectively.
\end{itemize}
\noindent The corresponding $k$-seed is denoted by
\begin{equation*}
Q^{(k)}=S^k_{u,v} \cup S^k_{u-1,v+1}\cup S^k_{u+1,v+1}\cup
Q^{(k-1)}.
\end{equation*}
We set
\begin{eqnarray*}
 R(Q^{(k)})=& R(Q^{(k-1)}), \\\nonumber
 A(Q^{(k)})=& A(Q_l(S^k_{u-1,v+1}))\cup A(Q_r(S^k_{u-1,v+1}))\\\nonumber
 & \cup A(Q_l(S^k_{u+1,v+1}))\cup A(Q_r(S^k_{u+1,v+1})).
\end{eqnarray*}

The 0-site  $R(Q^{(k)})$ is called the root of $Q^{(k)}$; the
sites in $A(Q^{(k)})$ are called the active sites of $Q^{(k)}$.

\smallskip
\noindent
{\bf Remark.} We point out that the locations of $D_l(S^k_{u,v})$ and
$D_r(S^k_{u,v})$ are such that the definition of a rooted $k$-seed
makes sense. Specifically,
the top line of $D_l(S^k_{u,v})$ is adjacent to and just below
$F(S^k_{u-1,v+1})$ and so,
if $S^k_{u,v}$ is $s$-passable, then also $Q_l(S^k_{u,v})$
is adjacent to and just below $F(S^k_{u-1,v+1})$. Thus, it makes sense
to speak of $s$-passability of $S^k_{u-1,v+1}$ from
$Q_l(S^k_{u,v})$. Similar statements hold for $S^k_{u+1,v+1}$ and
$Q_r(S^k_{u,v})$.

\smallskip
\noindent
{\bf Remark.} Note that in the definition of a rooted 0-seed we
required the three 0-sites which make up the seed to be open. Starting
from this fact we deduce the following lemma.

\begin{lemma} \label{Lemma 4.1} In a rooted $k$-seed $Q^{(k)}$ there exists for
each $x \in A(Q^{k)})$ an
open oriented path of 0-sites from $R(Q^{(k)})$ to $x$.
\end{lemma}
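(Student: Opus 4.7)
The plan is induction on $k$. For the base case $k=0$, the three sites making up $Q^{(0)}$ are open by definition of a rooted $0$-seed, so the single oriented edges from $R(Q^{(0)}) = (u,v)$ to each of $(u-1,v+1)$ and $(u+1,v+1)$ are themselves open oriented paths of $0$-sites. The inductive step unpacks the recursive structure of the definition of a rooted $k$-seed and strings together repeated applications of the inductive hypothesis and the $s$-passability conditions (s1)--(s2).

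In detail, assume the lemma for rooted $(k-1)$-seeds and consider $Q^{(k)} = S^k_{u,v} \cup S^k_{u-1,v+1}\cup S^k_{u+1,v+1}\cup Q^{(k-1)}$. Applied to $Q^{(k-1)}$, the inductive hypothesis yields an open oriented path from $R(Q^{(k)}) = R(Q^{(k-1)})$ to every $y \in A(Q^{(k-1)})$. By (s1) every such $y$ sits just below $F(S^k_{u,v})$, and by (s2) there exist open oriented paths inside $S^k_{u,v}$ from $0$-sites adjacent to $A(Q^{(k-1)})$ to $R(Q_l(S^k_{u,v}))$ and to $R(Q_r(S^k_{u,v}))$; concatenating them with the paths supplied by the inductive hypothesis produces open oriented paths from $R(Q^{(k)})$ to these two new roots. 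Since $Q_l(S^k_{u,v})$ and $Q_r(S^k_{u,v})$ are themselves rooted $(k-1)$-seeds, a second use of the inductive hypothesis extends these paths to every active site of the two.

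By condition (ii) in the definition of $Q^{(k)}$, the site $S^k_{u-1,v+1}$ is $s$-passable from $Q_l(S^k_{u,v})$ and $S^k_{u+1,v+1}$ is $s$-passable from $Q_r(S^k_{u,v})$. Applying (s1)--(s2) now at this new level, the constructed paths extend through $S^k_{u-1,v+1}$ to $R(Q_l(S^k_{u-1,v+1}))$ and $R(Q_r(S^k_{u-1,v+1}))$, and through $S^k_{u+1,v+1}$ to $R(Q_l(S^k_{u+1,v+1}))$ and $R(Q_r(S^k_{u+1,v+1}))$. A final application of the inductive hypothesis to each of the four rooted $(k-1)$-seeds so obtained produces open oriented paths from their roots to each of their active sites, and by definition the union of the four active-site sets is exactly $A(Q^{(k)})$.

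The only real obstacle is verifying, at each concatenation, that the endpoint of one open path is oriented-adjacent to the starting point of the next, so that the assembled pieces form a single legitimate oriented path of $0$-sites. This is precisely what the placement of $D_l$, $D_r$ and $F$ inside each good $k$-site is designed to guarantee: the top lines of $D_l(S^k_{u,v})$ and $D_r(S^k_{u,v})$ lie adjacent to and just below $F(S^k_{u-1,v+1})$ and $F(S^k_{u+1,v+1})$ respectively, so the roots $R(Q_l(S^k_{u,v}))$ and $R(Q_r(S^k_{u,v}))$ automatically land in the correct positions to launch the next $s$-passability step, and analogously at every subsequent level.
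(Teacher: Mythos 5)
Your proof is correct and follows essentially the same route as the paper: induction on $k$, with the inductive step obtained by concatenating (i) the path from $R(Q^{(k-1)})$ to $A(Q^{(k-1)})$ supplied by the inductive hypothesis, (ii) the path through $S^k_{u,v}$ from (s2), (iii) another inductive-hypothesis path inside $Q_\theta(S^k_{u,v})$, (iv) the path through $S^k_{u\pm 1,v+1}$ from (s2) and condition (ii) of the $k$-seed definition, and (v) a final inductive-hypothesis path inside the resulting $(k-1)$-seed. The only cosmetic difference is that the paper fixes a single target $x$ and lists the five pieces $\pi_5,\dots,\pi_1$, whereas you track all four outermost $(k-1)$-seeds simultaneously and work forward from the root; your closing remark on why the endpoints are oriented-adjacent makes a point the paper leaves implicit.
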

\begin{proof} We use a proof by induction on $k$. For $k = 0$ the
conclusion of the lemma is obvious. For the induction step, let $k \ge
1$ and assume that the conclusion of the lemma with $k$ replaced by
$k-1$ has already been proven. Let further
$Q^{(k)}=S^k_{u,v} \cup S^k_{u-1,v+1}\cup S^k_{u+1,v+1}\cup
Q^{(k-1)}$ be a rooted $k$-seed and let $x \in
A(Q_l(S^k_{u-1,v+1}))$. The other possible locations for $x$ in
$A(Q^{(k)})$ can be handled in the same way. Then there exist open
paths of 0-sites $\pi_i$ as follows:
\newline
$\pi_1$ from $y := R(Q_l(S^k_{u-1,v+1}))$ to $x$ (by the
induction hypothesis);
\newline
$\pi_2$ from some point $z$ in $A(Q_l(S^k_{u,v}))$ to $y$ (because
$S^k_{u-1,v+1}$ is s-passable from $Q_l(S^k_{u,v})$);
\newline
$\pi_3$ from $w := R(Q_l(S^k_{u,v}))$ to $z$ (by the induction
hypothesis again);
\newline
$\pi_4$ from some point $a$ in $A(Q^{(k-1)})$ to $w$ (because
$S^k_{u,v}$ is s-passable from $Q^{(k-1)}$);
\newline
$\pi_5$ from $R(Q^{(k-1)})$ to $a$ (by the induction hypothesis once
more).

Now concatenation of the paths $\pi_5, \pi_4, \dots, \pi_1$ gives an
open path of 0-sites from $R(Q^{(k-1)})$ to $x$, as desired.
\end{proof}

\noindent
{\bf Remark.}
If the origin is connected to $R(Q^{(k-1)})$ by an open path, and
$S^k$ is $s$-passable from $Q^{(k-1)}$, it follows that the origin is
connected by an open path of 0-sites to all sites in
$A(S^k)$.

\smallskip
\noindent
{\bf Open cluster of a rooted $k$-seed with $k \ge 1$.} The open
cluster of a rooted $k$-seed  $Q^{(k)}=
S^k_{u,v} \cup S^k_{u-1,v+1}\cup S^k_{u+1,v+1}\cup
Q^{(k-1)}$ is defined as the collection of $k$-sites consisting of
$S^k_{u,v}, S^k_{u-1,v+1}, S^k_{u+1,v+1}$ and the $k$-sites $S$
for which there
exists a sequence $S(1), \dots, S(n)$ of $k$-sites with the
following properties:
\begin{equation}
\text{each $S(j)$ is good},
\label{clus1}
\end{equation}
\begin{equation}
S(n) = S,
\label{clus2}
\end{equation}
\begin{eqnarray}
&\text{for $0 \le j \le n, S(j)$ is s-passable from a rooted $(k-1)$-seed
$\wt Q(j-1)$},\\
&\text{ where, in the notation of the remark following condition (iiis)},\\
& \wt Q(j-1) = Q_{\theta(j-1)}^{(k-1)}\big(S(j-1),
Q_{\theta(j-2)}^{(k-1)}(S(j-2) \big).
\label{clus3}
\end{eqnarray}
Here the
$\theta(i)$ can be $l$ or $r$, independently of each other, and $S(-1)$ is
interpreted as $S^k_{u,v}$, and $S(0) = S^k_{u+\phi,v+1}$ with $\phi =
-1$ if $\theta(0)= l$ and $\phi =+1$ if $\theta(0) = r$.
Also, $\wt Q(-1) = Q^{(k-1)}$.
\smallskip
We define the {\it open cluster restricted to $Ker(S^{k+1})$ of the rooted
$k$-seed $Q^{(k)}$} in the same way as the open cluster of
$Q^{(k)}$, but now with the added restriction that $Q^{(k)}$ and
all $S(j), -1 \le j \le n$, are contained in $Ker(S^{k+1})\cap
\mathring{S}^{k+1}$ (see definition \eqref{ker} for Ker and
\eqref{part} for $k$).
\medskip

\smallskip
\noindent
{\bf $c$-Passable $k$-site.} A  good $0$-site is said to be $c$-{\it passable} if it is open.

For $k\ge 1$, a good $k$-site $S^k$ is said to be $c$-{\it passable}
if:
\begin{itemize}
\item[(c1)] There exist two rooted $(k-1)$-seeds, $\wt Q_l^{(k-1)}$ and $\wt
Q_r^{(k-1)}$ say, such that their top lines are contained in the top line of
$D_l(S^k)$ and the top line of $D_r(S^k)$, respectively,
and such that there exist open oriented paths of 0-sites, entirely
contained in $S^k$, from the lowest 0-level layer of $F(S^k)$ to $R(\wt Q_l^{(k-1)})$ and to
$R(\wt Q_r^{(k-1)})$.
\item[(c2)] The open cluster of the lowest 0-level layer of $F(S^k)$ restricted to $Ker(S^k)$
contains at least $\rho cL/12$ $(k-1)$-sites in each of
$D^\cK_l(S^k)$ and $D_r^\cK(S^k)$ (see \eqref{Dkv}
for the definition of $D_\theta^\cK$).
\end{itemize}

\begin{defin}\label{denker} We say that a good $k$-site
$S^k$ has an $s$-{\it dense kernel from a seed} $Q^{(k-1)}$ if
condition (s3) holds. If (c2) holds we say that $S^k$ has
$c$-{\it dense kernel}.
\end{defin}

\medskip

\noindent {\bf Remark.} Taking into account the reversed partition, we analogously define the notions of
$\hat c$- and $\hat s$-passable sites.

\medskip


\begin{lemma}\label{Lemma 4.2}(a) Let $[\al^k_v, \om^k_v]$ be a
$k$-interval and let $\cK^k_v$ be defined as in
\eqref{348} and \eqref{349}. If $ \wt I^{k-1}$ is a $(k-1)$-interval
contained in $\cK^k_v$, then $\wt I^{k-1}$ is also good.

(b) Let $[\al^k_v, \om^k_v]$ be good of type 1 and let
$\wt \cC_{k,j}$ be the unique
cluster of $\bold C_k$ of mass at least $k$ in $[\wt \al^k_{j,0},
\wt \om^k_{j,0}]$.
If $[\wt \al^{k-1}_{q,i}, \wt \om^{k-1}_{q,i}]$ is the last
$(k-1)$-interval below
$[\wt\al^k_{j,0},\wt \om^k_{j,0}]$, then this $(k-1)$-interval is good of
type 2.
\end{lemma}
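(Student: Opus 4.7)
The plan is to derive both parts from a single observation: every bad $(k-1)$-layer has its support contained in the span of some $\wt \cC_{k,j'} \in \mathbf{C}_k$ with $m(\wt \cC_{k,j'}) \ge k$. Indeed, a bad $(k-1)$-layer corresponds to a cluster of $\mathbf{C}_{k-1}$ of mass at least $k$; such a cluster lies in $\mathbf{C}_{k-1,k}$ and so, by the recursive construction of $\mathbf{C}_k$, either remains as a cluster of $\mathbf{C}_k$ of the same mass or becomes a constituent of some $\wt \cC_{k,j'}$ with $m(\wt \cC_{k,j'}) > k$. In either case its support lies in the span of a $\mathbf{C}_k$-cluster of mass at least $k$.

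For part (a), I treat the two types of good $k$-layers separately. If $H^k_v$ is good of type 2, then $\cK^k_v = \cH^k_v$, and the remark following Lemma \ref{lemma3.1} (good type 2 $k$-layers contain no bad $(k-1)$-layer) gives the conclusion immediately. If $H^k_v$ is good of type 1, then $\cH^k_v = [\wt \al^k_{j,0}, \wt \om^k_{j,0}]$ contains the unique $\mathbf{C}_k$-cluster of mass at least $k$, namely $\wt \cC_{k,j}$ with $m(\wt \cC_{k,j}) = k$, and $\cK^k_v = [\wt \al^k_{j,0}, \al(\wt \cC_{k,j})-1]$. By \eqref{2.one} the spans of distinct $\wt \cC_{k,j'}$ are pairwise disjoint, so $\cH^k_v$ meets no such span other than that of $\wt \cC_{k,j}$; since $\cK^k_v$ lies strictly below $\al(\wt \cC_{k,j})$, it avoids every span of a $\mathbf{C}_k$-cluster of mass at least $k$, and by the opening observation contains no bad $(k-1)$-layer.

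For part (b), combining $m(\wt \cC_{k,j}) = k$ with \eqref{2.fourff} (applied with $s = t = k-1$) shows $\wt \cC_{k,j} \in \mathbf{C}_{k-1}$ with mass $k > k-1$. The associated $(k-1)$-layer is thus bad and, by \eqref{3.44bb} applied at level $k-1$, has the form $\wt H^{k-1}_{q_0,0}$ for a unique row index $q_0$, with support equal to $\text{span}(\wt \cC_{k,j})$. The $(k-1)$-interval immediately preceding it in the global enumeration (i.e., the one whose right endpoint is $\al(\wt \cC_{k,j})-1$) is therefore $\wt H^{k-1}_{q_0-1,\, b^{k-1}_{q_0}-1}$, the last column of row $q_0-1$ at level $k-1$. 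Since $b^{k-1}_{q_0} \ge \wt b^{k-1}_{q_0} \ge 3$ by \eqref{3.4f}--\eqref{3Ag}, its column index is at least $2 \ge 1$, so this $(k-1)$-layer is good of type 2 by definition.

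The main obstacle is notational rather than conceptual: one must keep track of the distinction between the level/row/column indexing of the cluster partitions $\mathbf{C}_k$ and that of the layer partitions $\mathbf{H}^k$, and identify ``the last $(k-1)$-interval below $[\wt \al^k_{j,0}, \wt \om^k_{j,0}]$'' with the $(k-1)$-layer $\wt H^{k-1}_{q_0-1,\, b^{k-1}_{q_0}-1}$ sitting immediately below $\text{span}(\wt \cC_{k,j})$. Once this identification is made, the conclusion follows directly from the definitions.
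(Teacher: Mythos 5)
Your proposal is correct, and for part (b) it essentially reproduces the paper's own argument: identify $\text{span}(\wt\cC_{k,j})$ with some $[\wt\al^{k-1}_{p,0},\wt\om^{k-1}_{p,0}]$ via \eqref{3.44bb}, note that the preceding $(k-1)$-interval is $[\wt\al^{k-1}_{p-1,b^{k-1}_p-1},\wt\om^{k-1}_{p-1,b^{k-1}_p-1}]$, and invoke $\wt b^{k-1}_p \ge 3$.

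For part (a) you take a different route. The paper argues by a single contradiction: it supposes $\wt I^{k-1}$ is bad, observes that the underlying $\mathbf{C}_{k-1}$-cluster of mass $\ge k$ forces $\cH^k_v$ to equal some $[\wt\al^k_{p,i},\wt\om^k_{p,i}]$, and then rules out both $i\ge 1$ and $i=0$. You instead argue directly, splitting by type: for type 2 you appeal to the remark after Lemma \ref{lemma3.1}, and for type 1 you observe that $\cK^k_v$ lies strictly below $\al(\wt\cC_{k,j})$ and hence misses the span of the unique $\mathbf{C}_k$-cluster of mass $\ge k$ in $\cH^k_v$. Your opening observation (that the support of every bad $(k-1)$-layer is absorbed into the span of a $\mathbf{C}_k$-cluster of mass $\ge k$) is exactly the right structural fact and plays the role of the paper's "$\wt I^{k-1}$ intersects some $\wt\cC_{k,p}$" step. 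Both routes are roughly equal in length; yours is perhaps a touch more transparent because it is affirmative rather than by contradiction.

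One small correction: the step "By \eqref{2.one} the spans of distinct $\wt\cC_{k,j'}$ are pairwise disjoint, so $\cH^k_v$ meets no such span other than that of $\wt\cC_{k,j}$" does not follow from pairwise disjointness of the spans alone; a priori $\cH^k_v$ (which is strictly larger than $\text{span}(\wt\cC_{k,j})$) could still overlap the span of a neighbouring $\wt\cC_{k,j'}$. What you actually want here is the containment $\text{span}(\wt\cC_{k,j'}) \subset [\wt\al^k_{j',0},\wt\om^k_{j',0}]$ together with the partition property \eqref{3.4d}, which makes these intervals disjoint from $\cH^k_v=[\wt\al^k_{j,0},\wt\om^k_{j,0}]$ for $j'\ne j$; or equivalently the remark after Lemma \ref{lemma3.1} stating that a good type-1 $k$-layer contains exactly one bad $(k-1)$-layer. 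With that citation repaired the argument is complete.
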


\noindent
{\bf Remark.} Roughly speaking part (a) says that a $(k-1)$-layer in
the kernel of a good $k$-layer is again good.

\begin{proof} (a) We give a proof by contradiction. So assume that $\wt
I^{k-1}$ is a bad $(k-1)$-interval. Then it contains a cluster $\wt
\cC_{k-1, j}\in \bold C_{k-1}$ for some $j$,
of mass at least $k$ and level at most $k-1$. Either $\wt {\cC}_{k-1,j} \in
\bold C_k$, or $\wt \cC_{k-1,j}$ is a constituent of some cluster in
$\bold C_k$. In any case, $\wt I^{k-1}$ intersects a cluster $\wt
\cC_{k,p} \in \bold C_k$ of mass $\ge k$, for some $p$.
Moreover, our choice of the partition $\bold H^k$ is such that for $i
\ge 1$ the interval $[\wt \al^k_{p,i},\wt\om^k_{p,i}]$ does not
intersect any cluster of mass $\ge k$, while for $i = 0$,
$[\wt \al^k_{p,0},\wt\om^k_{p,0}]$ contains $\wt \cC_{k,p}$ and no
other cluster of mass $\ge k$ (see \eqref{3.4e} and \eqref{3.4d}).
However, we assumed that $\wt I^{k-1} \subset
[\al^k_v, \om^k_v]$, so that $\wt \cC_{k,p} \cap [\al^k_v, \om^k_v]$
is non-empty. This implies that $[\al^k_v, \om^k_v]$ equals
$[\wt \al^k_{p,i},\wt\om^k_{p,i}]$ for some $i \ge 0$.

As we already stated, for $i \ge 1$, $[\wt
\al^k_{p,i},\wt\om^k_{p,i}]$ does not intersect any cluster of mass
$\ge k$, so that $i \ge 1$ is incompatible with a non-empty
intersection of $\wt I^{k-1} \subset
[\wt \al^k_{p,i},\wt\om^k_{p,i}]$ with $\wt \cC_{k,p}$.
But also the case $i=0$ is impossible by \eqref{348} and \eqref{349}.
Indeed, if \eqref{348} applies, then $\wt I^{k-1} \subset [\al^k_v,
\om^k_v]$ cannot intersect any cluster of mass $\ge k$, by virtue
of \eqref{3.40z} (note that the hypothesis $\chi(\ga) =0$ is not needed
for \eqref{3.40z}).
On the other
hand, if \eqref{349} applies we must have that $\wt I^{k-1} \subset
\cK^k_v$ lies strictly below $\wt \cC_{k,p}$ and therefore does not
intersect $\wt \cC_{k,p}$.
\medskip
\noindent
(b) Since $[\wt \al^k_{j,0}, \wt \om^k_{j,0}]$
is good of type 1 it contains
exactly one cluster $\bold C_k$ of mass at least $k$,
and this cluster has mass $k$
(see the remark following \eqref{3.50n}). In  our
previous notation this cluster is denoted as $\wt \cC_{k,j}$. By \eqref{3.44bb}
span$(\wt \cC_{k,j}) = [\wt \al^{k-1}_{p,0}, \wt \om^{k-1}_{p,0}]$ for some
$p$. The $(k-1)$-interval preceding this is
$[\wt \al ^{k-1}_{p-1, b_p^{k-1}-1},\wt \om^{k-1}_{p-1,
b_p^{k-1}-1}]$ (see \eqref{3.4d}). We then must have
$(q,i) = (p-1, b^{k-1}_p -1)$. Since $b^{k-1}_p   \ge \wt b^{k-1}_p \ge 3$,
$[\wt \al ^{k-1}_{p-1, b_p^{k-1}-1},\wt \om^{k-1}_{p-1,
b_p^{k-1}-1}]$ is a good $(k-1)$-interval of type 2
(see the lines following \eqref{333k}).
\end{proof}
\begin{lemma}
\label{dkernel} Let $k \ge 1$.
If a good $k$-site $S^k$ has an $s$-dense
kernel from a rooted seed $Q^{(k-1)}$, then $A(Q^{(k-1)})$ is connected by
open paths of 0-sites inside $Ker(S^k)$ to at
least $(\rho  cL/12)^k$ 0-sites in the top line of $Ker(S^k)$, but
to the left (respectively to the right) of the middle third
of this top line.
\end{lemma}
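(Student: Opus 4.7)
The plan is to proceed by induction on $k$, using condition (s3) at each level to drop down one level and accumulate a multiplicative factor of $\rho cL/12$.

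For the base case $k = 1$, the s-dense kernel assumption from $Q^{(0)}$ is simply condition (s3) with $k=1$, which directly asserts the existence of $\rho cL/12$ open 0-sites in $D_l^\cK(S^1)$ (respectively $D_r^\cK(S^1)$) belonging to the open cluster of $Q^{(0)}$ restricted to $Ker(S^1)$. By definition of the open cluster, each such 0-site is connected to $A(Q^{(0)})$ by an open path of 0-sites inside $Ker(S^1)$. Because $\cD^{1,\cK}_{j,i}$ is a single level (see \eqref{3.44aaa} and \eqref{3.44abc}), these 0-sites all lie in the top line of $Ker(S^1)$. The horizontal range of $D_l^\cK(S^1)$ ends at $(u/2-1/3)(cL)$, strictly to the left of the middle third $[(u/2-1/6)(cL),(u/2+1/6)(cL)]$, so they are in the desired part of the top line.

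For the inductive step, assume the lemma is known up to level $k-1$. Using (s3) for $S^k$, choose at least $\rho cL/12$ distinct $(k-1)$-sites $T_1,\dots,T_m$ contained in $D_l^\cK(S^k)$ that belong to the open cluster of $Q^{(k-1)}$ restricted to $Ker(S^k)$. By Lemma \ref{Lemma 4.2}(a) each $T_j$ is good, since the $(k-1)$-layer containing it lies inside $\cK^k_v$. By the definition of this open cluster, each $T_j$ is reached by a chain of s-passable $(k-1)$-sites inside $Ker(S^k)\cap\mathring{S}^k$, and in particular $T_j$ itself is s-passable from some rooted $(k-2)$-seed $\tilde Q_j^{(k-2)}$ located at the bottom of $T_j$; moreover, iterating condition (s2) along the chain (and invoking Lemma \ref{Lemma 4.1}) yields an open 0-path inside $Ker(S^k)$ from $A(Q^{(k-1)})$ to $A(\tilde Q_j^{(k-2)})$. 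Since $T_j$ is s-passable from $\tilde Q_j^{(k-2)}$, it has s-dense kernel from that seed, so the induction hypothesis applies and gives $(\rho cL/12)^{k-1}$ 0-sites in the top line of $Ker(T_j)$, to the left of the middle third of $T_j$, each connected to $A(\tilde Q_j^{(k-2)})$ by an open 0-path inside $Ker(T_j)$.

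Concatenating these two paths produces, for each $T_j$, at least $(\rho cL/12)^{k-1}$ 0-sites connected to $A(Q^{(k-1)})$ by open 0-paths staying inside $Ker(S^k)$. The identification of top lines in \eqref{3.50abc} together with $T_j\subset D_l^\cK(S^k)$ shows that the top line of $Ker(T_j)$ is contained in the top line of $Ker(S^k)$, and the horizontal range of $D_l^\cK(S^k)$ lies entirely to the left of the middle third of $S^k$, so the sites obtained from different $T_j$'s are distinct and all lie in the desired left part of the top line of $Ker(S^k)$; summing over $j=1,\dots,m$ yields at least $(\rho cL/12)^k$ such sites. The right-hand claim follows by the symmetric argument with $D_r^\cK$. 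The main obstacle to check carefully is the geometric bookkeeping: verifying that the top line of $Ker(T_j)$ really sits inside the top line of $Ker(S^k)$ and that $T_j$'s ``left of middle third'' portion remains to the left of the middle third of $S^k$; this reduces to unpacking the definitions of $D_\theta^\cK$, $\cK^k_v$, and \eqref{3.50abc}, together with the horizontal ranges in \eqref{Dkv} and \eqref{sitek}.
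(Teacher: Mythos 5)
Your proof follows essentially the same route as the paper's: the same base case, the same induction via (s3), the same use of Lemma~\ref{Lemma 4.1}-style concatenation along the chain of $s$-passable $(k-1)$-sites, and the same appeal to \eqref{3.50abc} for the top-line identification. The one place where the paper is more explicit than you are is the step ``top line of $Ker(T_j)$ $\subset$ top line of $Ker(S^k)$'': you cite Lemma~\ref{Lemma 4.2}(a) for $T_j$ being \emph{good}, but what is actually needed (and what the paper records in the display you'd call \eqref{4.70}) is that each $T_j$ is good \emph{of type 2}, so that $Ker(T_j)=T_j$; without the type-2 fact the top line of $Ker(T_j)$ could sit strictly below the top line of $T_j$ and the inclusion would fail, so the ``geometric bookkeeping'' you flag as the main obstacle is in fact a substantive point that has to be made rather than merely checked.
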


\begin{proof} The proof goes by induction on $k$. Start with $k=1$.
If $S^1$ has an $s$-dense kernel from the 0-rooted seed $Q^{(0)}$,
then there are at least $\rho cL/12$ 0-sites in the open cluster of
$Q^{(0)}$ restricted to $Ker (S^1)$ and in $D^{\cK}_\theta, \theta = l, r$.
Each such 0-site is just a
vertex $w$ for which there is an path in $Ker(S^1)$ of open good
0-sites from $A(Q^{(0)})$ to $w$ and in $D_\theta^\cK, \theta = l$ or $r$.
Moreover, such $w$ automatically lie
in the top line of $Ker(S^1_{u,v})$, because for $k=1$
the cardinality of $\cD^{1, \cK}_{j,i}$ equals 1 for each $(j,i)$ with
$m(\wt \cC_{1,j}) = 1, 0 \le i \le b^1_{j+1}-1$ (see \eqref{3.44aaa},
\eqref{3.44abc} and \eqref{3.50abc})
Thus, for $k=1$, the conclusion of the lemma
is immediate from the definitions of an $s$-dense kernel and of
the open cluster of $Q^{(0)}$.

Now assume that the lemma has already been proven for $k$ replaced by $k-1$.
Assume further that $S^k$ has an $s$-dense kernel from the rooted seed
$Q^{(k-1)} =
S_{u,v}^{k-1}\cup  S_{u-1,v+1}^{k-1} \cup S_{u+1,v+1}^{k-1} \cup
Q^{(k-2)}$. Let $\wt S^{k-1}$ be a $(k-1)$-site which belongs to the
open cluster of $Q^{(k-1)}$. Further, for the sake of argument,
let $\wt S^{k-1}$ lie in $D_l^\cK(S^k)$.Then there exists some $n$ and
sequences $S^{k-1}(0),\dots, S^{k-1}(n)$ and $\theta(0), \dots, \theta(n)$ such that
\eqref{clus1}-\eqref{clus3} with $k$ replaced by $k-1$ and $S$ by
$\wt S^{k-1}$ hold.
In particular, $S^{k-1}(j)$ is passable from the rooted $(k-2)$-seed
$\wt Q(j-1) =
Q^{(k-2)}_{\theta(j-1)}(S^{k-1}(j-1),Q^{(k-2}_{\theta(j-2)}(S^{k-1}(j-2))$.
Also, part
of the definition of $s$-passability gives that the
top line of $\wt Q(j-1)$ will be equal to the top line of $S^{k-1}(j-1)$. A
simple induction argument (with respect to $j$), similar to the proof
of Lemma 4.1, then shows that there
exists a path of open 0-sites in $Ker(S^k)$ from $A(Q^{(k-1)})$
to $R(\wt Q(j))$.
For $j=n$, an application of Lemma 4.1 then shows
that for each vertex $x$ in $A(\wt Q(n))$ there exists an open path
of 0-sites from $A(Q^{(k-1)})$ to $x$.
Since $S^k$ has a dense kernel there are at least $\rho c L/12$
choices for $\wt S^{k-1}$ which are contained in $D^\cK _l(S^k)$
(respectively in $D^\cK _r(S^k)$).
Since different $(k-1)$-sites are disjoint,
there are at least $\rho c L/12$ disjoint choices for $\wt
S^{k-1}$ in each of $D_l^\cK(S^k)$ and $D_r^\cK(S^k)$.
Moreover, if $\wt S^{k-1}= S^{k-1}(n)$ is
any fixed one of the possible
choices, then $\wt S^{k-1}$ is $s$-passable from a rooted seed
$\wt Q(n-1)$, as we just showed.
By the induction hypothesis,
there exist at least $[\rho cL/12]^{k-1}$ 0-sites $y$ in the top line of
$Ker(\wt S^{k-1})$, with the property that there exists an
open path in $Ker(\wt S^{k-1})$ from some $x \in A(\wt Q(n-1))$ to
$y$. Such a
connection can be concatenated with the connection from $A(Q^{(k-1)})$
to $x$, to obtain an open path in $Ker(S^k)$ from
$A(Q^{(k-1)})$ to $y$. But then there at least
$[\rho cL/12]^{k-1}$ choices for $y$ in each possible $\wt S^{k-1}$
and $\rho cL/12$ choices for $\wt S^{k-1}$. In total this gives at
least $[\rho cL/12]^k$ 0-sites with the required open connection
from $R(A^{(k-1)})$.

The 0-sites $y$ constructed in the preceding paragraph lie in the
top line of $Ker(\wt S^{k-1})$ for some $\wt S^{k-1}$, which itself
lies in $D_l^\cK(S^k) \cup D_r^\cK(S^k)$.
It remains to show that these $y$ lie in the top line of $Ker(S^k)$ itself.
However, Lemma \ref{Lemma 4.2} shows that
\begin{equation}
\text{each of the possible $\wt S^{k-1}$ is a good
$(k-1)$-site of type 2}.
\label{4.70}
\end{equation}
Now, as observed right after the definition \eqref{ker}, $Ker(\wt
S^{k-1})$ equals $\wt S^{k-1}$ if \eqref{4.70} holds. Thus \eqref{4.70}
implies that the possible $y$ lie in the top lines of the possible
$\wt S^{k-1}$ and,as we shall show now, these latter top lines
are contained in the top line
of $Ker(S^k)$. This is so because the projection on the vertical axis of
$\wt S^{k-1}$ is a whole interval of the
partition $\cH^{k-1}$ of $\Bbb Z_+$, and the same is true for the
projection on the vertical axis of $D_\theta^\cK, \theta =l$ or $r$ (see
\eqref{sitek}, \eqref{3.44aaa}, \eqref{3.44abc}, \eqref{3.44bbb}, \eqref{3.44ccc}). Since
$\wt S^{k-1} \subset D_\theta^\cK$, the projections of $\wt S^{k-1}$ and
$D_\theta^\cK$ must be equal (in fact the projections of $D_l^\cK$ and
$D_r^\cK$ are trivially equal; see \eqref{Dkv}).
Thus the top line of $\wt S^{k-1}$ must equal the top line of
$D_\theta^\cK(S^k)$, and this equals the top line of $Ker(S^k)$ by \eqref{3.50abc}.
This completes the proof of the induction step and the lemma.
\end{proof}

\begin{defin}\label{badlayer} For $\cC\subset \Bbb Z_+$ we write
\begin{equation*}
B(\cC)=\{(x,y) \in \widetilde{\Bbb Z}^2_+\colon y \in
\text{span}(\cC)\}.
\end{equation*}
\end{defin}
We use $\cC(m,\ell)$ to denote a generic
cluster of level $\ell$, i.e. an element of $\text{\bf
C}_{\ell,\ell}$,  with mass $m$. The corresponding horizontal layer
$B(\cC(m,\ell))$ is called the {\it bad layer of mass $m$ and level $\l$
associated with $\cC(m,\l)$}. If there is no ambiguity to which cluster
$\cC(m,\l)$ of mass $m$ and level $\l$ we are associating the bad
layer we will use $B(m, \l)$ instead of $B(\cC(m,\l))$.

\medskip

\begin{defin}\label{A2}({\it Matching pair}) Let $B(m,\ell)$
be as above. For any $k, \; \ell-1 \le k \le m-1$, we say that two good
$k$-sites $S^k_{(u,v-1)}$, $\widehat{S}^k_{(u',v' +1)}$ form a
{\it matching pair} with respect to $B(m,\ell)$ if either $u'=u$
or $u'=u\pm 1$, according as $v'-v$ is even or odd.
\end{defin}
\bigskip

At this point we are ready to give a more detailed description of
the inductive step.
The environment will be a fixed $\ga$ with $\chi(\ga) = 0$.
We assume $p_G>2/3$ and $N$ is a fixed integer for which
$[3(1-p_G)]^{N/5-2}\le 1/72$. This implies that
\begin{equation}
8(1-p^3)^{N/5}\le (1-p)^2, \text{ for all } p\ge p_G. \label{N}
\end{equation}

If $k\ge 1$ and $S^k$ is a good $k$-site of type 1 we
will be looking at a very particular way to obtain its
$s$-passability from a given
$(k-1)$-seed $Q^{(k-1)}$. It will turn out to be enough
for Theorem 1.1 to consider the situation
when $S^k$ is good of type 1 and
the bad $(k-1)$-sites contained in $S^k$ lie in a layer $B(k,\ell)$,
for some $\ell<k$. Span$(\cC(k,\l))$, the projection on the vertical
axis of $B(k,\l)$, equals $[\al^{k-1}_v, \om^{k-1}_v]$ for some $v$ in this
case. The kernel of $S^k$ will be the part of $S^k$ which
lies strictly below the horizontal line $y = \al^{k-1}_v$  and the top
line of the kernel is contained in the line $y = \al^{k-1}_v -1$.
The $(k-1)$-sites with their top line equal to the top line of the
kernel are the $(k-1)$-sites with projection onto the vertical axis
equal to $\cH^{k-1}_{v-1} = [\al^{k-1}_{v-1}, \om^{k-1}_{v-1}]$. These
are therefore of the form $S^{k-1}_{u,v-1}$ for some $u$.

In each case, passability of $S^k$
will be built from the occurrence of three events $W^k_1, W^k_2$
and $W^k_3$ which we define now. Further properties
of these $W^k_i$ will be given in Theorem \ref{indstep} at the end of this
section.

For $k \ge 1$
\begin{equation*}
W^k_1(s)= W^k_1(s,S^k, Q^{(k-1)}_0) =\{ S^k \text{ has $s$-dense
kernel from a seed $Q_0^{(k-1)}$}\},
\end{equation*}
where $Q_0^{(k-1)}$ is a given rooted $(k-1)$-seed
which fulfills condition (i) for $s$-passability of $S^k$.
If $W^k_1(s)$ occurs, then there exists for $\theta = l$ (left) and for $\theta =
r$ (right) in $D^\cK_\theta$ a collection $\cR^{k-1}_\theta$ of
at least $\theta c L/12$
$(k-1)$-sites $S^{k-1}$ in the open cluster of $Q_0^{(k-1)}$.
Each of these is $s$-passable from some rooted $(k-2)$-seed. We
remind the reader that this implies that each of the $S^{k-1}$ in
$\cR^{k-1}_\theta$ contains for $\la = l$ and for $\la = r$ a rooted
$(k-2)$-seed $Q^{(k-2)}_\la = Q_\la^{k-2}(S^{k-1})$ with top line contained
in $D^\cK_\la(S^{k-1})$
for which there exists an open path of 0-sites
in $Ker(S^k)$ from $A(Q_0^{(k-1)})$ to $R(Q^{(k-2)}_\la)$ (see the
proof of Lemma 4.3). The union
of the active sites of $Q_l^{(k-2)}(S^{k-1})$ and
$Q_r^{(k-2)}(S^{k-1})$ is denoted by $A(S^{k-1})$.

For $k \ge 2$ the event $W^k_2(s)$
occurs if and only if $W^k_1(s)$ occurs
and for $\theta=l$ and for $\theta=r$
there exist a collection $\cL_\theta^{k-1}$ of $(k-1)$-sites with the
following properties:
\begin{itemize}
\item{(i)} $\cL_\theta^{k-1} \subset \cR_\theta^{k-1}$ and
the cardinality of $\cL_\theta$ is at least $N$;
\item{(ii)}  for each $S^{k-1}= S^{k-1}_{u,v-1} \in \cL_\theta$ there
exists an index $p$ with $|p - u| \le 2$ and a rooted
$(k-2)$-seed $Q^{(k-2)}(p)$ say, in  $S^{k-1}_{p,v+1}$ and with
top line contained in the $D_l(S^{k-1}_{p,v+1}) \cup
D_r(S^{k-1}_{p,v+1})$
and such that there is an open path inside $S^k$ from
$A(S^{k-1}))$ to $R(Q^{(k-2)}(p))$.
\end{itemize}
When $k=1$ we modify (ii) somewhat because $Q^{(k-2)}$
is meaningless in this case. Recall that a 0-site is just a vertex of
$\wt {\Bbb Z}_+^2$. For $k=1$ $\cR^0_\theta$ will just be taken as the
collection of 0-sites $(u,v-1)$ in $D^\cK_\theta$ for which there
exists an open path in $Ker(S^k)$ from $A(Q_0^{k-1})$ to $(u,v-1)$. We
then replace (ii) by

\noindent (ii, k=1) for each $S^0_{u,v-1} = (u, v-1) \in \cL_\theta$,
there exists a $p$ with $|p-u| \le 2$ such that there is an open path
in $S^1$ from $(u,v-1)$ to $S^0_{p,v+1} = (p, v+1)$.

Finally, if $W_2^k(s)$ occurs, and $k \ge 2$,
then $W_3^k(s)$ occurs if and only if
there exist (at least) two rooted $(k-1)$-seeds in $S^k$,
$Q^{k-1}_{l,1}$ with top line in $D_l(S^k)$ and $Q^{k-1}_{r,1}$ with top
line in $D_r(S^k)$, and open
connections of 0-sites in $S^k$ from the collection of
the rooted $(k-2)$-seeds $Q^{(k-2)}(p)$ mentioned in (ii) above
to $R(Q^{k-1}_{l,1})$ as well as to $R(Q^{k-1}_{r,1})$. When $k=1$
we merely replace the collection of rooted $(k-2)$-seeds
$Q^{(k-2)}(p)$ here by the collection of 0-sites $S^0_{p,v+1}$
mentioned in (ii,k=1).

The definitions of the $W_i^k$ are unfortunately very involved. The
reader should think of $W_1^k$ as providing open connections from the
middle third of the bottom of $S^k$ to the top of its kernel; then
$W_2^k$ will provide open connections from the top of the kernel to
the top of the bad layer, and finally $W^k_3$ from the top of the bad
layer to the top of $S^k$. The connections required for $W_2^k$
from the bottom of the bad layer to its top are the most difficult to
come by. They will be constructed in the next section.
\bigskip

Before formulating our basic set of estimates we state a number of
properties of supercritical oriented site percolation on $\wt {\Bbb Z}^2_+$.
We start with a simple observation which holds for any Bernoulli percolation
as an immediate consequence of coupling.

\begin{lemma}
\label{Lemma 4.4}
Consider site percolation on a graph $\cG$ (possibly
partially oriented). Denote the probability measure under which all
sites are independently open with probability $p$ by $P_p$, and let
$\cE$ be some increasing event. If $p_0, p_0^\prime\in [0,1]$ and
$\tilde p = 1-(1-p_0)(1-p^\prime_0)$, then
\begin{equation}
P_p\{\cE\} \ge 1-  (1-P_{p_0}\{\cE\})(1-P_{p^\prime_0}\{\cE\}) \quad \text {for all } p\ge \tilde p.
\label{4.83}
\end{equation}
\end{lemma}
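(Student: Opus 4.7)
The plan is to establish the inequality at $p = \tilde p$ via a standard coupling construction, and then extend to all $p \ge \tilde p$ by monotonicity of increasing events under the natural coupling of $P_p$'s.

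First I would introduce two independent families $\eta = (\eta_x)_{x \in \cG}$ and $\eta' = (\eta'_x)_{x \in \cG}$ of Bernoulli random variables, defined on a common probability space, with $\eta_x$ having parameter $p_0$ and $\eta'_x$ having parameter $p'_0$, all independent. Define $\zeta_x = \eta_x \vee \eta'_x = \max(\eta_x, \eta'_x)$. Then the $\zeta_x$ are i.i.d.\ Bernoulli with parameter $1-(1-p_0)(1-p'_0) = \tilde p$, so the law of $\zeta$ is $P_{\tilde p}$. Since $\zeta_x \ge \eta_x$ and $\zeta_x \ge \eta'_x$ pointwise, and since $\cE$ is increasing, any configuration in which $\cE$ holds for $\eta$ (respectively for $\eta'$) also satisfies $\cE$ for $\zeta$. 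Hence $\{\eta \in \cE\} \cup \{\eta' \in \cE\} \subseteq \{\zeta \in \cE\}$.

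Taking probabilities and using independence of $\eta$ and $\eta'$,
\begin{equation*}
P_{\tilde p}\{\cE\} \;\ge\; P\bigl(\{\eta \in \cE\} \cup \{\eta' \in \cE\}\bigr) \;=\; 1 - P(\eta \notin \cE)\, P(\eta' \notin \cE) \;=\; 1 - (1-P_{p_0}\{\cE\})(1-P_{p'_0}\{\cE\}),
\end{equation*}
which is precisely \eqref{4.83} at $p = \tilde p$.

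To extend to arbitrary $p \ge \tilde p$, I would use the standard monotone coupling: attach i.i.d.\ uniform $[0,1]$ random variables $(U_x)$ to the sites, declare a site open under $P_p$ iff $U_x \le p$. Under this coupling, the set of open sites at level $p$ contains the set of open sites at level $\tilde p$ whenever $p \ge \tilde p$, so for any increasing event $\cE$ the map $p \mapsto P_p\{\cE\}$ is non-decreasing. Combined with the previous display this yields the desired inequality for all $p \ge \tilde p$. There is essentially no obstacle here; the only point worth noting is that the argument uses only independence of the site variables and monotonicity of $\cE$, so it applies verbatim to site percolation on an arbitrary (possibly oriented) graph $\cG$, as stated.
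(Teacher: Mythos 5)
Your proof is correct and is exactly the standard coupling argument the paper alludes to when it says the lemma is "an immediate consequence of coupling" (the paper gives no written proof). Both the max-coupling of two independent families at parameters $p_0$ and $p_0'$ and the monotone extension via uniforms to $p\ge\tilde p$ are the intended steps.
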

\medskip

Now let us go back to oriented site percolation on $\wt {\Bbb Z}^2_+$
and let $P_p$ be as in the preceding lemma. For $\cA \subset
\wt {\Bbb Z}^2_+$ define
\begin{equation}
\Th(\cA) = \{\text{all vertices in $\cA$ are open}\}
\label{4.89vw}
\end{equation}
and let $|\cA|$ denotes the cardinality of $\cA$.
\begin{lemma}
\label{Lemma 4.5}
There exists some $\tilde p \in (0,1)$ and a universal
constant $c_5 < \infty$ such that for all $p \ge \tilde p$ and all subsets $\cA$ of
$2\Bbb Z \times \{0\}$ it holds
\begin{equation}
P_p\{\text{there is an open path from $\cA$ to $\infty
\big|\Th(\cA)$}\}\ge 1-c_5[9(1-p)]^{|\cA|+1}.
\label{4.82}
\end{equation}
\end{lemma}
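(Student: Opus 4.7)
Write $p^{\ast} = 1-p$ and assume throughout that $p$ is close enough to $1$ that $9p^{\ast} < 1/2$; I will take $\tilde p$ to be the corresponding threshold.

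\emph{Step 1 (reduction to a contour event).} On the event $\Th(\cA)$, the failure event
$F = \{\nexists\text{ open oriented path from }\cA\text{ to }\infty\}$ is equivalent to the joint cluster $C(\cA) := \bigcup_{a\in\cA} C(a)$ being finite. When $C(\cA)$ is finite I would extract from it a canonical ``dual contour'' $\gamma$ consisting of closed sites at levels $\ge 1$ which separates $\cA$ from infinity in the oriented sense (every oriented path from $\cA$ going upward to $\infty$ must pass through $\gamma$). A natural candidate is the set of all closed sites that are children (in the upward orientation) of some site of $C(\cA)$, equipped with an adjacency structure inherited from the lattice so that $\gamma$ is a well-defined connected combinatorial object.

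\emph{Step 2 (length lower bound).} I would show that $|\gamma| \ge |\cA|+1$. The children of the $|\cA|$ sites of $\cA \subset 2\Bbb Z\times\{0\}$ are odd integers at level $1$, and one checks that the set of \emph{distinct} children of $\cA$ has cardinality at least $|\cA|+1$ (with equality precisely when $\cA$ is maximally packed with consecutive spacing $2$). Any finite $C(\cA)$ must have all children of its top-level boundary closed, and by induction on the height of $C(\cA)$ the total number of closed sites along $\gamma$ is at least the width of $C(\cA)$ at level $1$, which is at least $|\cA|+1$.

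\emph{Step 3 (counting contours and conclusion).} The number of admissible contours $\gamma$ of length $\ell$, with (say) the leftmost-lowest site of $\gamma$ playing the role of an anchor, is at most $K\cdot 9^{\ell}$ for an absolute constant $K$ — a standard self-avoiding count in a graph whose effective step set has size bounded by $9$. Each fixed $\gamma$ consists of $\ell$ sites at level $\ge 1$, which are disjoint from $\cA$, so the event that $\gamma$ is entirely closed is independent of $\Th(\cA)$ and has conditional probability $(1-p)^{\ell}$. Combining with Step 2 via a union bound gives
\[
P_p(F\mid \Th(\cA))\;\le\;\sum_{\ell\ge |\cA|+1}K\cdot 9^{\ell}(1-p)^{\ell}\;=\;\frac{K\,[9(1-p)]^{|\cA|+1}}{1-9(1-p)},
\]
which is of the asserted form with $c_{5}=2K$ once $\tilde p$ is chosen so that $9(1-\tilde p)\le 1/2$.

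\textbf{Main obstacle.} The delicate part is making the contour $\gamma$ precise. In oriented \emph{site} percolation the appropriate ``contour'' is not a simple dual curve in the planar dual graph but rather a set of closed sites with a combinatorial structure dictated by the upward orientation. One must choose this structure canonically enough for the counting bound $9^{\ell}$ to apply (so that no contour is counted more than once), yet flexibly enough that every finite cluster $C(\cA)$ admits at least one such $\gamma$ satisfying the length bound $|\gamma|\ge|\cA|+1$. Verifying simultaneously the geometric length bound in Step 2 and the combinatorial count in Step 3 under a single definition of $\gamma$ is the technical heart of the proof.
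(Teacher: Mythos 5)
Your proposal correctly identifies the Peierls contour approach, which is indeed what the paper uses, but the ``main obstacle'' you flag at the end is a genuine unfilled gap, not a technicality, and the paper's proof spends most of its effort exactly there. You define $\gamma$ directly as a set of closed sites (the closed upward-children of the finite cluster $C(\cA)$), then in Step 3 invoke a $K\cdot 9^{\ell}$ count as if $\gamma$ were a self-avoiding walk of length $\ell$. But such a set of closed sites is \emph{not} a connected path in any natural site graph --- already for $\cA=\{(0,0)\}$ with $(\pm 1,1)$ closed, $\gamma=\{(-1,1),(1,1)\}$ consists of two non-adjacent vertices of $\wt{\Bbb Z}^2_+$. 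No step-set of bounded size $9$ is available, and your Step 3 enumeration is therefore unjustified as stated. Your Step 2 inequality $|\gamma|\ge|\cA|+1$ is also asserted via an unproved ``induction on the height of $C(\cA)$''.

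The paper closes this gap with the Durrett--Liggett diamond construction. It first reduces to $\cA$ an interval (citing Durrett, p.~1029, for the fact that intervals are the worst case), then thickens the finite cluster $F$ by a diamond $D=\{|x|+|y|\le 1\}$ and takes $\Ga(F)$ to be the boundary of the unbounded component of $\Bbb R^2\setminus(F+D)$. This $\Ga$ is a genuine circuit of \emph{dual-lattice edges}, self-avoiding in the ordinary sense, so the count ``at most $3^{n-1}$ contours of length $n$ through a fixed anchoring edge'' is honest. The $9^{\ell}$ factor you want then emerges indirectly: the number of closed sites encountered by $\Ga$ is at least $\max(|\Ga|/2,\,a+1)$ (the $a+1$ coming from the $a$ lines $x=y+2i$ plus the line $x=-y$, which relies on $\cA$ being an interval), and the resulting sum $\sum_{n\ge 1}3^{n-1}(1-p)^{(n/2)\vee(a+1)}$ is dominated by $c_5[9(1-p)]^{a+1}$ once $9(1-p)<1$. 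So your parametrization by $\ell=$ number of closed sites is replaced by a parametrization by the dual-contour length $n$, together with the inequality $\ell\ge n/2$; making that switch is precisely what turns the heuristic into a proof.
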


\begin{proof} The lemma states that the conditional probability,
given $\Th(\cA)$, that percolation occurs from at
least one site in $|\cA|$ is at least $1-c_5(1-p)^{|\cA|+1}$. We shall
only need this if $\cA$ is an interval of $a$ integers, and therefore
we shall prove \eqref{4.82} only in this case. However \cite{Durrett}
(p. 1029) proves that this is the worst case, i.e., that if \eqref{4.82}
holds for $\cA$ an interval, then it holds in general. (\cite{Durrett}
discusses bond percolation, but a small modification of his argument works
for site percolation.)

Now let $\cA = \{0, 2, \dots, 2(a-1)\}\times \{0\}$
and let $\cF$ be the
collection of sites $(x,y) \in \wt {\Bbb Z}^2_+$
for which there exists an open path from $\cA$ to $(x,y)$ (with $(x,y)$
itself also open). Then
\begin{equation}
1-P_p\{\text{there is an open path from $\cA$ to $\infty \big|$ all of
$\cA$ is open}\} \le P_p\Big\{\bigcup_F \{\cF = F\}\Big\},
\label{4.84}
\end{equation}
where the union runs over all finite connected subsets $F$ of
$\wt {\Bbb Z}^2_+$ which contain all of $\cA$.
We bound the right hand side of \eqref{4.84} by the usual contour
method, as we explain now. As in Section 10 of \cite{Durrett} or
\cite{Liggett},  let $D$ be the diamond
$\{(x,y):|x|+|y|\le 1\} \subset \Bbb R^2$. For $F$ a finite connected
subset of $\wt {\Bbb Z}^2_+$ which contains $\cA$,
we define $\wt F = F + D$ and $\Ga(F) =$
the topological boundary of the infinite component of $\Bbb R^2
\setminus \wt F$. Then $\Ga(F)$ is made up of edges of the lattice
$\Bbb Z^2_{odd}:=\{(x,y) \in \Bbb Z^2\: x+y \text{ is odd}\}$ and it
separates $\cA \subset F$ from infinity.
Suppose that ${\cF = F}$ occurs and that $e$ is an edge
between two vertices of $\{(x,y) \in \Bbb Z^2:x+y \text{ is even}\}$
which crosses one of the sides of one of the diamonds $v+D, v \in
F$. In fact we must then have that one endpoint of $e$ equals $v$
and the other endpoint, $w$ say, lies in the
unbounded component of $\Bbb R^2 \setminus \wt F$. There are
then two possibilities. Either
\begin{equation}
w\text{ lies below $v$},
\label{4.85}
\end{equation}
so that a path on $\wt {\Bbb Z}^2_+$ is
prevented from going from $v$ to $w$ by the orientation of
$\wt {\Bbb Z}^2_+$. Or,
\begin{equation}
w \text{ lies above $v$},
\label{4.86}
\end{equation}
in which case $w$ must be
closed (otherwise $w$ would belong to $F$, since an open path to $v$
can be continued by going along $e$ from $v$ to $w$). It follows from
this argument that the event $\cup_F \{\cF = F\}$ is contained in the
event that there exists some contour $\Ga$ made up of sides
of the diamonds $u+D, u \in \wt {\Bbb Z}^2_+$,
which separates $\cA$ from infinity, and which has the following
property: if the edge $\{v,w\}$ crosses one of the sides which make up
$\Ga$ and  $v \in$ interior ($\Ga)$
and $w \in$ exterior $(\Ga)  \cap \wt {\Bbb Z}^2_+$ and
\eqref{4.86} holds, then $w$ is closed. Consequently, the right hand
side of \eqref{4.84} is bounded by
\begin{equation}
\sum_\Ga P_p\{\text{each $w \in \wt{\Bbb Z}^2_+$ as above for which \eqref{4.86}
holds is vacant}\}.
\label{4.87}
\end{equation}

It is shown in
\cite{Liggett} and \cite{Durrett}  that the number of $w$
for which \eqref{4.86} holds is at least $|\Ga|/2$, where $|\Ga|$
denotes the number of edges in $\Ga$. Moreover,
as one traverses the line $\{x= y+ 2i\}$, starting at $(2i,0) \in \cA$
and increasing $x$ (and $y$), the first vertex $w \in \wt {\Bbb
Z}^2_+$ in the unbounded component of $\Bbb R^2\setminus \Ga$
which one meets has to be closed. Since this holds for every $0 \le i
\le a-1$, the number of $w$ for which \eqref{4.86} holds is at least
$a$. In fact, there have to be at least $a+1$ such vertices $w$,
because the first vertex $w$ on the line $x = -y$ which lies in the
unbounded component of $\Bbb R^2 \setminus F$ also satisfies
\eqref{4.86}, but does not lie on any of the lines $x=y+2i$.
It follows that the term in \eqref{4.87} for a specific $\Ga$ is at most
$(1-p)^{(|\Ga|/2) \lor (a+1)}$. Moreover, the number of possible $\Ga$ with
$|\Ga| = n$ is at most $3^{n-1}$, because each possible $\Ga$
which separates $\cA$ from infinity must contain the lower left edge
of the diamond $(0,0)+D$, centered at the origin. It follows that
\eqref{4.87}, and hence also the right hand side of \eqref{4.84} is bounded by
\begin{equation*}
\sum_{n = 1}^\infty 3^{n-1}(1-p)^{(n/2) \lor (a+1)}.
\end{equation*}
The lemma follows.
\end{proof}

Again consider oriented site percolation on $\wt{\Bbb Z}^2_+$.
Write $\bold 0$ for the origin and define
\begin{eqnarray}
r_n := &\sup\{(x,n)\in \wt {\Bbb Z}^2_+:\text{ there exists an open
path from $\bold 0$ to } (x,n)\},\\\nonumber
\l_n := &-\inf\{(x,n)\in \wt {\Bbb Z}^2_+:\text{ there exists an open
path from $\bold 0$ to }(x,n)\},
\label{4.88}
\end{eqnarray}

\begin{equation*}
r_n=\l_n=0 \text{ if there is no open path from $\bold 0$ to }\Bbb Z
\times n.
\end{equation*}
We further remind the reader that the percolation probability $\theta(p)$
was defined in \eqref{4.90a}.
It is known (see \cite{Durrett}) that for all $p \ge \tilde p >p_c$  we have $\theta(p) \ge \theta (\tilde p) >0$
and there exists  $s(p) \in (0, +\infty)$ for which
\begin{eqnarray}
&\lim_{n\to\infty} \frac 1n r_n = \lim_{n\to\infty} - \frac 1n \l_n = s(p) \text{ a.s. $[P_p]$
on the event}\\\nonumber
&
\Om_0 := \text{\{there exists an open path from $\bold 0$ to $\infty$\}};
\label{4.90}
\end{eqnarray}
$s(p)$ is called the edge-speed (see \cite{Durrett} or \cite{Liggett}).

Finally we need the existence of a positive density in $[-ns(p),
ns(p)] \times \{n\}$ of sites which have an open connection from a
fixed finite nonempty set.
The next lemma gives the precise meaning of this statement.
We need the following definition: Let $\al \le \be$ and $\eta > 0$.
Also let $\cA = \{0, 2, 4,\dots 2a-2\}$ be some nonempty
interval of $a$ even integers.
Then
\begin{eqnarray}
\nu_n(\al,\be) = &\;\nu_n(\al, \be, \cA, \eta ) := \text{ number of points
$(x,n)$ with $\al n \le x \le \be n $},\\\nonumber
& x+n \text{ even,
for which there exists an open path from}\\\nonumber
&\cA \times \{0\} \text{ to $(x,n)$
which stays inside $[-\eta n,\eta n] \times[0,n]$}.
\label{4.88a}
\end{eqnarray}

\begin{lemma}\label{Lemma 4.6} Let $0 <\ep, \eta \le 1$. There exists some $\bar p =
\bar p(\ep, \eta) < 1$ such that for $p \ge \bar p$ there exists an $n_0 =
n_0(\ep, \eta,p)$, such that for  $n \ge n_0$ and $-s(p) \le \al \le
\be \le s(p)$,
\begin{eqnarray}
P_p\big\{\frac 1n \nu_n(\al, \be,\cA, \eta) \ge
[\theta(p)(\be - \al)-\ep]\frac\eta {17}
\text{ for all } -s(p) \le \al \le \be
\le s(p)\big\} \ge 1-\ep.
\label{4.89}
\end{eqnarray}
\end{lemma}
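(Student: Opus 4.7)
\textbf{Plan of proof of Lemma \ref{Lemma 4.6}.}

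The plan is to combine three standard facts about supercritical oriented site percolation on $\widetilde{\mathbb Z}_+^2$: the shape theorem for the edge-speed $s(p)$, a positive-density-in-slabs statement for the open cluster from a single site, and a large-deviation estimate on the horizontal displacement that allows us to confine open paths inside the tube $[-\eta n,\eta n]\times[0,n]$. First I would reduce to the single-site case. Conditionally on $\Theta(\mathcal A)$, Lemma \ref{Lemma 4.5} says that, except with probability at most $c_5[9(1-p)]^{|\mathcal A|+1}$, there is an open path from $\mathcal A$ to infinity. Taking $\bar p$ close enough to $1$ makes the complementary event negligible compared with $\varepsilon$, so we may restrict attention to a single ``good'' starting vertex whose cluster is infinite.

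Next, I would apply the shape theorem \eqref{4.90} together with the known density result for supercritical oriented percolation (see, e.g., \cite{Durrett}, \cite{Liggett}): for every fixed pair $\alpha\le\beta$ in $[-s(p),s(p)]$,
\begin{equation*}
\frac{1}{n}\,\big|\{(x,n)\in\widetilde{\mathbb Z}_+^2\colon \alpha n\le x\le\beta n,\ \mathbf{0}\leftrightarrow(x,n)\}\big|\ \longrightarrow\ \tfrac{1}{2}\,\theta(p)(\beta-\alpha)
\end{equation*}
in probability as $n\to\infty$, the factor $1/2$ accounting for the bipartite restriction $x+n\equiv 0\,(\mathrm{mod}\,2)$. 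Uniformity in $(\alpha,\beta)$ is then obtained by using the monotonicity of $\nu_n(\alpha,\beta,\mathcal A,\eta)$ (nonincreasing in $\alpha$, nondecreasing in $\beta$): one verifies the bound on a finite $\varepsilon$-mesh of pairs, absorbing the discretization error into $\varepsilon$, and then extends it to all $(\alpha,\beta)$ by monotonicity.

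The main obstacle is the lateral confinement: we must show that a definite fraction of the reached vertices can be realized by open paths that remain inside $[-\eta n,\eta n]\times[0,n]$. My approach is to exploit the exponential decay of the probability that the horizontal displacement of an open path of length $n$ exceeds $s(p)n$ (a standard large-deviation bound contained in the same references), together with translation invariance. By partitioning the strip $[-s(p)n,s(p)n]$ into a bounded number of horizontal shifts of the tube, each of width comparable to $\eta n$, and applying the union bound together with the density statement above to each shift, one shows that the number of \emph{confined} connections from $\mathcal A$ to $[\alpha n,\beta n]\times\{n\}$ is at least a fraction of order $\eta$ of the unrestricted count; tracking the bipartite factor $1/2$, an error margin for the shape theorem, and the geometric losses in the confinement step produces the concrete coefficient $\eta/17$ asserted in \eqref{4.89}.
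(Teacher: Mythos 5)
The first two steps of your outline — reduction to the single-site case (via monotonicity of $\nu_n$ in $\cA$, not via Lemma \ref{Lemma 4.5}, which is actually only used in the Corollary) and the appeal to the shape theorem plus a positive-density statement — are compatible with what the paper does. The genuine gap is in the confinement step, where your description is too vague to verify and, as written, does not give the estimate.

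Concretely: a large-deviation bound on the horizontal displacement exceeding $s(p)n$ controls only the \emph{rare} event that a path drifts faster than the linear speed. It says nothing about confining the path to the narrow tube $[-\eta n,\eta n]$ when $\eta$ is small — that is a strong restriction whose probability cost must be quantified, and it cannot be absorbed as a "fraction of order $\eta$ of the unrestricted count." Moreover, "applying the density statement to each horizontal shift of the tube" does not parse: every path counted by $\nu_n$ starts at $\cA$ near the origin, so shifted tubes do not contain the starting set, and translation invariance does not transfer a density estimate for one starting set to a disjoint shifted tube. Your proposal has no mechanism that actually constructs open paths remaining inside $[-\eta n,\eta n]\times[0,n]$.

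What the paper does instead is a concrete block-concatenation. It sets $m\approx\lfloor\eta n/8\rfloor$ and $k_0\approx 8/\eta$, and performs $k_0+1$ iterations of the following: from the current vertex, use a duality argument (if an infinite \emph{downward} open path emanates from a vertex $(x,m)$ with $-\l_m\le x\le r_m$, then there is an open forward connection from the origin to $(x,m)$ staying in $[-2m,2m]\times[0,m]$) together with the ergodic theorem to find, with conditional probability $\ge 1-2\ep_1$, a new vertex a height $m$ higher and within horizontal distance $O(M_0)$. After $k_0$ steps the cumulative horizontal drift is $O(k_0 M_0+m)$, which is $\le\eta n$ once $n$ is large, and the total success probability is $(1-2\ep_1)^{k_0+1}\ge 1-\ep/2$ by the choice $\ep_1\le\ep/(2k_0)$. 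The factor $\eta/17$ comes out of the block height $\eta n/8$ and assorted $1/2$-type losses; it is not a "fraction of connections preserved under confinement." You should replace your confinement paragraph with this explicit iterative scheme (or an equivalent construction that actually builds the confined paths), since the bound you need is an existence/lower-bound statement, not a survival-probability estimate for a rare-event constraint.
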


\begin{proof} In general, and in particular in the definitions
\eqref{4.88} and \eqref{4.88a} of $\nu_n$, an open path has to have its
initial point and its endpoint open. For the sake of the proof of the
present lemma we shall call a path open if all its
vertices other than its initial point are open. Until the last three
sentences of the proof we allow its initial
point to be open or closed.

Clearly $\nu_n(\al, \be, \cA, \eta)$ is increasing in
$\cA$, so that it suffices to prove \eqref{4.89} for $\cA=$ the origin.
We shall restrict ourselves to $p \ge \tilde p$ as in Lemma \ref{Lemma 4.5}.
By obvious monotonicity we
then have $\theta(p) \ge \theta(\tilde p) > 0$.  In addition it is immediate
from the definition \eqref{4.90} that $s(p) \le 1$. In fact
\begin{equation}
r_n \le n \text{ and $\l_n \le n$ for all }n.
\label{4.90def}
\end{equation}
Thus, it holds
\begin{equation}
\theta(p) \ge \theta(\tilde p) > 0 \text{ and } 0 < s(\tilde p) \le s(p) \le 1
\label{4.90abc}
\end{equation}
for the $p$ which we are considering.

Now let $\ep > 0$ and $\eta > 0$ be given.
We define
\begin{equation}
m = m(n, \eta) = \Big \lfloor \frac \eta 8 n \Big \rfloor,
k_0= k_0(\eta) = \Big \lfloor \frac nm \Big \rfloor -1, m' = n-k_0m.
\label{4.89c}
\end{equation}
Finally, we choose $\ep_1$ such that
\begin{equation}
0 < \ep_1 \le \frac \ep {2k_0}.
\label{4.89b}
\end{equation}
and then
$\bar p=\bar p(\ep, \eta) < 1$ so that $\bar p \ge \tilde p \lor(1-\ep/2) $ and
\begin{equation}
\theta(\bar p) := P_{\bar p}\{\Om_0\} \ge 1-\ep_1.
\label{4.90z}
\end{equation}
Such a $\bar p  < 1$ exists by \eqref{4.82}.

First we observe that \eqref{4.90} implies
that for every $p \ge \bar p$ and $\eta_1 > 0$ there exists a constant
$c_6=c_6(\ep_1, \eta_1,p)$ such that
\begin{eqnarray}
&P_p\big\{\big(|r_t - ts(p)| >  c_6 + \eta_1 t  \text{ or }
|\l_t + ts(p)| >  c_6 + \eta_1 t \text{ for some  }t \in \Bbb
Z_+\big)\big\} \\\nonumber
&\le P_p\big\{\big(|r_t - ts(p)| >  c_6 + \eta_1 t  \text{ or }
|\l_t + ts(p)| >  c_6 + \eta_1 t \text{ for some  }t \in \Bbb
Z_+\big) \cap \Om_0\big\}  +P_p\{[\Om_0]^c\}\\\nonumber
& \le 2\ep_1.
\label{4.90b}
\end{eqnarray}

We observe next that if $\Om_0$ occurs, then
$r_m$ and $\l_m$ are well defined for all $t$. Furthermore, for any $m$
there must exist open paths $\pi_\l =\pi_\l(m)$ and $\pi_r=\pi_r(m)$
from the origin to $(\l_m,m)$ and to $(r_m,m)$, respectively, and these
paths must lie in $[-m,m] \times [0,m]$ (see \eqref{4.90def}).
Next let $x \in \Bbb Z$ with $x+m$ even be
such that $-\l_m \le x \le r_m$. Consider
the open paths starting at $(x,m)$ going {\it downwards}, that is
against the orientation on $\wt {\Bbb Z}^2_+$ assumed so far. Assume
that for a given $x \in [-\l_m,r_m]$ there exists an infinite downward
open path, $\wt \pi_x$ say, starting at $(x,m)$. Since this path starts
between $(-\l_m,m)$ and $(r_m,m)$, it must hit $\pi_l \cup \pi_r$.
Furthermore, the path $\wt \pi_x$ necessarily stays in $[x-m,x+m]
\times [0,m]$ up till time $m$.
For the sake of argument, let $\wt \pi_x$ first intersect $\pi_\l$ in a
point $(y,q)$ with $0 \le q \le m$. Then the piece of $\pi_\l$ from
the origin to $(y,q)$, followed by the piece of $\wt
\pi_x$, traversed in the forward direction, from $(y,q)$ to $(x,m)$
forms an open oriented path from the  origin to $(x,m)$. A similar
argument applies if $\wt \pi_x$ hits $\pi_r$. Thus, if there
exists a downward infinite open path from $(x,m)$, then there exists
an open path from the origin to $(x,m)$. By the estimates
on the locations of $\pi_\l, \pi_r$ and of $\wt \pi_x$ which we have
just given, this path must be contained in $[-2m,2m]\times [0,m]$.

Let us write $J_x$ for the indicator function of the event that there
is an open path contained in $[-2m,2m]\times [0,m]$
from the origin to $(x,m)$. Also, let $\wt I_x$ and $I_x$ be the indicator
functions of the events that there exists an infinite open backwards
path from $(x,m)$, respectively an infinite open forwards path from
$(x,0)$, which stays in $[-2m,2m]$ during $[0,m]$. The
preceding argument shows that for any $M \ge 0$, on the event
\begin{equation}
\cE(M,m) := \cap \{-\l_m \le -2M \le 2M \le r_m\},
\label{4.93a}
\end{equation}
it holds
\begin{eqnarray}
&\wt \nu_m(M, \bold 0): =
\text{ number of points
$(x,m)$ with }x \in [-2M,2M],\\\nonumber
& x+m \text{ even,
for which there exists an open path from } \bold 0\\\nonumber
& \text{to $(x,m)$
which stays inside $[-2m,2m] \times[0,m]$}\\\nonumber
&\ge \sum_{\substack {-2M \le x \le 2M\\x+m \text{ even}}}
J_x \ge  \sum_{\substack{-2M \le x \le 2M\\x+m \text{ even}}} \wt I_x.
\label{4.93f}
\end{eqnarray}
Now, the monotonicity of $s(\cdot)$ and \eqref{4.90abc} and \eqref{4.90}
imply that for each fixed $M >0$ there exists an $m_1=m_1(\ep_1,M)$
such that for $m \ge m_1$ and $p \ge \bar p$
\begin{equation}
P_p\{\cE(M,m) \text{ fails}\} \le P_p\{\cE(M,m) \text{ fails}\} \le
\ep_1.
\label{4.90d}
\end{equation}
Further, the joint distribution of the $\wt I_x,\; x +m$ even is
the same as the joint distribution of the $I_x,\; x$ even.
Also, if
\begin{equation}
 M+c_6 \le \frac m2 \text{ and }|x| \le \frac m2,
\label{4.90e}
\end{equation}
then
\begin{eqnarray*}
&I_x \ge K_x:= I[\text{there exists an open path from $(x,0)$ to
infinity which}\\
&\text{stays in $[x-M-c_6-t, x+M+c_6+t]$
for all $t \ge 0$}.
\end{eqnarray*}

By the ergodic theorem (see \eqref{4.90a} for $\theta$)
\begin{eqnarray}
\liminf_{M \to \infty} \frac 1M \sum_{\substack {x \in [-2M,2M],\\ x
\text{ even}}}I_x \ge \lim_{M \to \infty}
\frac 1M \sum_{\substack {x \in [-2M,2M],\\ x
\text{ even}}}K_x \ge 2\theta(p) \text{ a.s.}\; [P_p].
\label{4.90g}
\end{eqnarray}
Thus there exists an $M_0=M_0 (\ep_1)$ such that for all $p \in [\bar p,1)$
\begin{eqnarray}
&P_p\left\{\wt I_x = 1\text{ for some }x \in [-2M_0,2M_0]\right\}\\
&\ge P_{\bar p}\left\{\frac 1{M_0}  \sum_{\substack{ x \in [-2M_0,2M_0] \\ x+m
\text{ even}}}\wt I_x \ge (2-\ep_1)\theta\right\}
 \ge 1 -\ep_1.
\label{4.92}
\end{eqnarray}

We take $m_2 =m_2(\ep_1)$ such that $ m_2/2 \ge 2M_0+c_6$. Then
\eqref{4.90e} with $M_0$ for $M$ holds true for any
$|x| \le 2M_0, m \ge m_2$.

We now apply \eqref{4.93f}, \eqref{4.90d} and \eqref{4.92} to obtain for all
$m \ge m_2, p \ge p_3$
\begin{eqnarray*}
&P_p\{\text{there is at least one $x \in [-2M_0, 2M_0]$ with an open
path from}\\
&\bold 0 \text{ to $(x,m)$ which is contained in
$[-2m,2m] \times [0,m]$}\}\\
&\ge 1-2\ep_1.
\end{eqnarray*}
In other words, if we first determine the state of all
vertices $(x,y)$ with $0 \le y \le m$, we will find with
probability $1-2\ep_1$ at least one vertex $(x_1,m) \in [-2M_0,2M_0]$
with $x_1+m$ even and with an open connection from $\bold 0$ to $(x_1,m)$
which stays in $[-2m, 2m] \times [0,m]$. On the event that such
an $x$ exists, let $x_1$ be the
smallest $x$ in $[-2M_0,2M_0]$ with these properties. We can then
repeat the argument (after a shift by $(x_1,m))$, to find that with a
further conditional probability of at least $1-2\ep_1$, there exists
an $x_2 \in [x_1- 2M_0, x_1+2M_0] \subset [-4M_0,4M_0]$
with an open path from $x_1$ to
$x_2$ which stays in $[x_1-2m, x_1+2m] \subset [-4m,4m]$ during
$[m,2m]$. Concatenation of the open path from $\bold
0$ to $x_1$ and the path from $x_1$ to $x_2$ gives an open path from
$\bold 0$ to $(x_2, 2m)$ which stays in $[-4m,4m]$ during $[0,2m]$.
Similarly, we find by repeating the argument $k_0$ times that there is a
probability of at least $(1-2\ep_1)^{k_0}$ that $\bold 0$ is connected by an
open path which stays in $[-2k_0m,2k_0m] \times [0,k_0m]$ to a
vertex $(x_{k_0},k_0m)$ with $|x_{k_0}| \le 2k_0M_0$.

We need to concatenate paths once more. This time we replace $m$ by
$m' \in  [m,2m]$ (see \eqref{4.89c}) and the sum over $-2M_0 \le x\le
2M_0$ in \eqref{4.90g} by the sum over $\al m' \le x \le \be m'$ for
some fixed $-s(p) \le \al \le \be \le s(p)$. In essentially the
same way as before we conclude that for $p \ge \bar p$ and $m \ge m_3=m_3(\ep_1,p)$ (suitable)
\begin{eqnarray*}
&P_p \big\{\text{for all $-s(p) \le \al \le \be \le s(p)$
there are at least }[\theta(p)(\be -\al)/2 - \ep_1]m' \\
&\text{values of $x$ with $\al m' \le x  \le \be m',
x+m'$ even, for which there}\\
&\text{is an open path from $\bold 0$ to $(x,m')$
which stays inside}\\
& [-2m',2m'] \times [0,m']
\text{ during $[0,m']$}\big\}\\
&\ge 1-2\ep_1.
\end{eqnarray*}
If $x_{k_0}$ as described above exists, then there is a
conditional probability,
given the state of all vertices $(x,y) \in \wt{\Bbb Z}^2_+$ with $y
\le k_0m$, of at least $(1-2\ep_1)$ that $(x_{k_0},k_0m)$ is connected
to at least
\begin{equation*}
[\theta(\be-\al)/2-\ep_1]m' \ge [\theta(\be -\al)/2 -\ep_1]m \ge [\theta(\be -\al)/2
-\ep_1]\Big \lfloor \frac \eta 8 n \Big \rfloor
\end{equation*}
vertices $(x',km+m') = (x',n)$ in $[\al m'-2
k_0m, \be m'+2k_0m] \times n$ by open paths which stay in
\begin{equation*}
[-2k_0M_0-2m', 2k_0M_0+2m'] \times [0,n] \text{ during }
[0,k_0m+m'] = [0,n].
\end{equation*}
But by \eqref{4.89c} there exists some $n_0 = n_0(\ep,\eta)$
 such that for $n \ge n_0$ it holds $m \ge m_1 \lor
m_2 \lor m_3$ and
\begin{equation*}
2k_0M_0+2m' \le 2k_0M_0+4m \le 2\frac{n}{m} M_0 + 4\frac{\eta}{8} n \le
\eta n,
\end{equation*}
so that the constructed paths stay in $[-\eta n, \eta n] \times
[0,n]$, as is required for them to be counted in $\nu_n$.
Also, by our choice of $\ep_1$ in \eqref{4.89b}
\begin{equation*}
(1-2\ep_1)^{k_0+1} \ge 1 -2(k_0+1)\ep_1 \ge 1-\ep/2.
\end{equation*}
We had to concatenate $k_0+1$ paths, each of which existed with
a conditional probability of at least $1-2\ep_1$,
given the previously chosen paths. Thus the whole construction works
with a probability of at least $(1-\ep_1)^{k_0+1} \ge 1-\ep/2$.
This proves \eqref{4.89} when
$\cA = \bold 0$. As pointed out before this proves the lemma if we do
not insist that the starting point of an open path is open. However,
if we revert to our previous convention that an open path must have an
open initial and  final point, then our construction of open paths
from $\bold 0$ to the horizontal line $\{y=n\}$ is valid only on the
event $\{\bold 0 \text{ is open}\}$. We therefore have to discard the
event $\{\bold 0 \text{ is closed}\}$. Correspondingly, the probability
of finding the required open paths is at least $(1-\ep_1)^{k_0+1} -
(1-p) \ge 1-\ep/2 -\ep/2 = 1-\ep$ (recall that $p \ge \bar p\ge
1-\ep/2$; see the line before \eqref{4.90z}).
\end{proof}

\begin{coro}\label{cor1-sec4} Let $\Th(\cA)$ be as in \eqref{4.89vw} and
\begin{equation*}
\Om(\cA):=\{\text{there is an open path from $\cA$ to infinity}\}.
\end{equation*}
Then under the conditions of Lemma \ref{Lemma 4.6}, if n$\ge n_0$, it holds for any finite
set $\cA \subset \wt {\Bbb Z}^2_+$
\begin{eqnarray}
P_p\big\{\frac 1n \nu_n(\al, \be,\cA, \eta) \ge
[\theta(p) (\be - \al)-\ep] \frac \eta {17}
\text{ for all } -s(p) \le \al \le \be
&\le s(p)\big|\Th(\cA)\big\}\\\nonumber
& \ge 1-\ep.
\label{4.89yz}
\end{eqnarray}
and
\begin{eqnarray}
P_p\big\{\frac 1n \nu_n(\al, \be,\cA, \eta) \ge
[\theta(p)(\be - \al) -\ep]\frac \eta {17}
\text{ for all } -s(p) \le \al \le \be
&\le s(p)\big|\Om(\cA)\big\}\\\nonumber
& \ge 1-\ep.
\label{4.89xy}
\end{eqnarray}
\end{coro}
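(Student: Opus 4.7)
The plan is to deduce both conditional estimates from the unconditional bound of Lemma \ref{Lemma 4.6} by a single application of the Harris--FKG inequality. First observe that the event
\begin{equation*}
E := \Big\{\tfrac{1}{n}\nu_n(\al,\be,\cA,\eta) \ge [\theta(p)(\be-\al)-\ep]\tfrac{\eta}{17}\ \text{for all}\ -s(p)\le\al\le\be\le s(p)\Big\}
\end{equation*}
is an intersection of increasing events (the quantity $\nu_n$ only grows when more sites are declared open), and is therefore itself increasing in the Bernoulli configuration. The conditioning events $\Th(\cA)$ and $\Om(\cA)$ are likewise increasing, being, respectively, the event that all sites of $\cA$ are open and the event that an infinite open oriented path starts at $\cA$.

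Under the product measure $P_p$, Harris--FKG yields $P_p(E\cap B)\ge P_p(E)\,P_p(B)$ for any increasing event $B$ of positive probability, and hence $P_p(E\mid B)\ge P_p(E)$. Applying this with $B=\Th(\cA)$ (which has probability $p^{|\cA|}>0$) and with $B=\Om(\cA)$ (which has positive probability because $p\ge\bar p>p_c$, and $\cA$ is non-empty so it contains a vertex that percolates with probability $\theta(p)>0$), we obtain
\begin{equation*}
P_p(E\mid\Th(\cA))\ \ge\ P_p(E)\quad\text{and}\quad P_p(E\mid\Om(\cA))\ \ge\ P_p(E).
\end{equation*}

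Since Lemma \ref{Lemma 4.6} gives $P_p(E)\ge 1-\ep$ for $n\ge n_0$ and $p\ge\bar p$, both \eqref{4.89yz} and \eqref{4.89xy} follow at once. There is essentially no obstacle beyond checking that $E$ is genuinely increasing (which it is, because adding open sites can only create new open paths without destroying existing ones) and that the conditioning events have positive probability in the regime $p\ge\bar p$. Alternatively, one could give a more hands-on argument for the $\Th(\cA)$ case by noting that under $P_p(\cdot\mid\Th(\cA))$ the configuration outside $\cA$ is still a Bernoulli($p$) product and $E$ becomes no less likely than with $\cA$ possibly closed, but the FKG route handles both conditioning events uniformly in a single line.
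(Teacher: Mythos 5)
Your proposal is correct and is essentially the same as the paper's proof: the paper also observes that $\Th(\cA)$ and $\Om(\cA)$ are increasing events and applies Harris--FKG together with \eqref{4.89}. The added detail about $E$ being increasing and the conditioning events having positive probability is just the same argument spelled out more explicitly.
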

\begin{proof} Since $\Th(\cA)$ and $\Om(\cA)$ are increasing events of
the environment, these inequalities are immediate from \eqref{4.89} and the
Harris-FKG inequality.
\end{proof}

The $W^k_i$ in the next theorem were defined in the paragraph
following \eqref{N}. In this theorem the environment is fixed at
$\ga$ and the estimates are uniform in $\ga$. The probability in this
theorem refers only to the choice of the occupation variables, once the
nature (good or bad) of each site has been fixed.
$\chi(\ga)$ is defined
in \eqref{2.25z}.

\begin{thm}
\label{indstep}
Let $p_B>0$. Then there exist some $p^* = p^*(p_B) < 1$ and $L_1 = L_1(p_B,p_G) < \infty$
such that for $p_G \ge p^*$ and $L \ge L_1$ and for every $\ga$ with
$\chi(\ga)=0$, every $k \ge 1$ and good $k$-site of type 1 which
intersects exactly one bad layer $B(k,\l)$ (for some $\l \le k-1$),
the following bounds hold:

\noindent (a)\; If $Q^{(k-1)}$ is a rooted $(k-1)$-seed and $S^k$ a good of
type 1 $k$-site
which satisfy condition (i) for an $s$-passable $k$-site, then
\begin{equation}
P\{W^k_1(s)\mid Q^{(k-1)} \text{ is a rooted $(k-1)$-seed }\} \geq 1-
\frac{(1-p_G)^{k+1}}{4}. \label{5.h01}
\end{equation}

\noindent (b)\;
\begin{equation}
P\{W^k_2|W^k_1\} \ge 1- \frac{(1-p_G)^{k+1}}{4}.\label{5.h03}
\end{equation}
\noindent (c)\;
\begin{equation}
P\{W^k_3|W^k_2\} \ge 1-\frac{(1-p_G)^{k+1}}{4}. \label{5.h031}
\end{equation}
\noindent (d)\;
\begin{eqnarray}
\nonumber &&P\{S^k \text{ is } s\text{-passable from }Q^{(k-1)}\big|Q^{(k-1)} \text{is
a rooted }(k-1)-\text{seed}\} \geq 1- (1-p_G)^{k+1},\\
&&P\{ S^k \text{ is }c\text{-passable}\} \geq 1- (1-p_G)^{k+1}.
\label{5.h02}
\end{eqnarray}
\end{thm}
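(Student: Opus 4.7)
\medskip

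The natural strategy is a joint induction on $k$. For the base case $k=1$ the bad layer $B(1,\ell)$ with $\ell=0$ is a single bad line, the kernel consists of good $0$-layers of type $2$, and all three events $W^1_i$ reduce to statements about homogeneous oriented percolation with parameter $p_G$: part (a) follows from Corollary \ref{cor1-sec4} (the number of endpoints at height $\alpha^{0}_{v}-1$ reachable from $A(Q^{(0)})$ and lying in $D^{\cK}_\theta(S^1)$ has density at least $\rho$ by choice of $\rho<\theta(p_G)$, and $c<3s(p_G)/14$ ensures the endpoints lie within the required horizontal window), part (b) is the bad-line crossing via $N$ independent attempts using \eqref{N}, and part (c) is again a supercritical crossing. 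The base case therefore isolates the three mechanisms that have to be re-run at every level.

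For the inductive step, fix $k\ge 2$ and assume the four bounds are established at all levels $<k$. For part (a), recall from Lemma \ref{Lemma 4.2}(a) that every $(k-1)$-interval lying in $\cK^k_v$ is good, so by the induction hypothesis (d) each $(k-1)$-site in $\mathring{S}^k \cap Ker(S^k)$ is $s$-passable from a suitable rooted $(k-2)$-seed (or $c$-passable) with conditional probability at least $1-(1-p_G)^k$; these events have a natural independence structure across disjoint $(k-1)$-sites (they depend on disjoint blocks of the environment $\eta$). Thus, declaring a $(k-1)$-site ``open'' when it is passable and propagating $(k-1)$-seeds via Lemma \ref{Lemma 4.1} realises a coupling with supercritical oriented site percolation on $\wt{\ZZ}^2_+$ at parameter $\tilde p\ge 1-(1-p_G)^k$, which, by taking $p_G$ close enough to $1$ and $L$ large enough, exceeds $\bar p$ in Lemma \ref{Lemma 4.6}. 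Applying Corollary \ref{cor1-sec4} at the renormalised level (inside the kernel, whose vertical height is $\asymp L^k/4$ by \eqref{3.50} and whose horizontal width is $(cL)^k$, with the two target strips $D^\cK_\theta$ of horizontal width $\tfrac{5}{12}(cL)^k$) gives at least $\rho cL/12$ passable $(k-1)$-sites in each of $D^\cK_l(S^k), D^\cK_r(S^k)$ outside an event of probability $(1-p_G)^{k+1}/4$, by Lemma \ref{Lemma 4.5} and the usual choice of $L$ large.

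Part (b) is the main obstacle. Given $W^k_1(s)$, Lemma \ref{dkernel} produces at least $(\rho cL/12)^k$ open $0$-site entry points on the top line of the kernel, i.e.\ directly below $B(k,\ell)$; we group these into at least $\rho cL/12$ columns, each associated with a distinct passable $(k-1)$-site in $\cR^{k-1}_\theta$. Each column launches an \emph{independent attempt} to traverse the $k$ bad lines of $B(k,\ell)$: concretely, one requires a short open path through the $k$ bad lines (each step succeeding with probability $p_B$) meeting on the other side a centrally located $(k-2)$-seed in some $S^{k-1}_{p,v+1}$ with $|p-u|\le 2$, which itself has probability of existence bounded below by the induction hypothesis (d) applied to the three candidate good $(k-1)$-sites above the bad layer. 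A single attempt succeeds with probability at least $q\ge p_B^{3k}\cdot(1-(1-p_G)^k)^3$ (the exact exponent being controlled by the diameter bound $\mathrm{diam}(\cC(k,\ell))\le 3L^{k-1}$ from \eqref{2.6} together with the remark after Lemma \ref{lemma3.1} saying only one bad $(k-1)$-layer sits in a type-$1$ $k$-site); with $\rho cL/12$ independent attempts, and having $N$ of them succeed, the failure probability is at most $\binom{\rho cL/12}{N}(1-q)^{\rho cL/12 - N}$, which, exploiting \eqref{N} in the form $8(1-p_G^3)^{N/5}\le (1-p_G)^2$ and taking $L$ large enough relative to $p_B$, is bounded by $(1-p_G)^{k+1}/4$. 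This is the step where the parameters $N$, $p_G$, $p_B$, $L$ interact, and where fixing $p^*(p_B)$ close enough to $1$ and $L_1(p_B,p_G)$ large enough is essential.

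Part (c), conditional on $W^k_2$, is a symmetric counterpart of (a) carried out on the \emph{reversed} partition of Section \ref{lattice} applied to the portion of $S^k$ above $B(k,\ell)$: by Lemma \ref{lemma3.3} the reversed layers differ from the forward layers by at most $20L^{k-1}$, so the good-site structure is preserved, and the same induction plus Corollary \ref{cor1-sec4} argument as for (a) (now launched from the $(k-2)$-seeds of $Q^{(k-2)}(p)$ and reaching $D_\theta(S^k)$ through reversed good $(k-1)$-layers) gives failure probability at most $(1-p_G)^{k+1}/4$. Finally, part (d) follows by the union bound
\[
P\{S^k\text{ is $s$-passable from }Q^{(k-1)}\}\;\ge\;1 - \sum_{i=1}^{3}\frac{(1-p_G)^{k+1}}{4}\;\ge\;1-(1-p_G)^{k+1},
\]
the spare $(1-p_G)^{k+1}/4$ absorbing the (small) probability that $Q^{(k-1)}$ fails to have its three base $0$-sites open; the $c$-passability bound is proved identically, replacing ``seed $Q^{(k-1)}$'' by ``lowest $0$-layer of $F(S^k)$'' in the definition of $\cR^{k-1}_\theta$. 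The handling of good $k$-sites of type $2$ (not requested in the theorem but needed to run the induction) is strictly easier, since no bad layer is present and only the analogues of (a) and (c) are required.
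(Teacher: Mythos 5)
Your overall induction skeleton matches the paper's: prove $(a_1),(b_1),(c_1)$ directly, observe $(a_k),(b_k),(c_k)\Rightarrow(d_k)$ by conditioning, and push $(d_k)\Rightarrow(a_{k+1}),(c_{k+1})$ by a coupling with supercritical oriented percolation at the renormalised level. Parts (a), (c), (d) are handled in roughly the same spirit as the paper. The problem is part (b), which is \emph{the} content of the theorem and which the paper devotes the entirety of Sections \ref{structure} and \ref{conclusion} to.

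Your proposal for (b) claims that, conditional on $W^k_1$, each of the $\sim\rho cL/12$ columns supplies an independent attempt to traverse $B(k,\ell)$, and that ``a single attempt succeeds with probability at least $q\ge p_B^{3k}\cdot(1-(1-p_G)^k)^3$.'' This is where the argument breaks. The cluster $\cC(k,\ell)$ has mass $k$ but, by \eqref{2.6}, its span can have diameter as large as $3L^{k-1}$. Its constituents are clusters of mass $\ge k-1$ separated by gaps of up to $L^{k-1}$, these in turn decompose further, and the whole span contains dust. A single constrained $0$-level attempt through the bad layer therefore has to traverse order $L^{k-1}$ lines (some bad at rate $p_B$, many good at rate $p_G$, and some belonging to embedded sub-clusters where the geometry is again delicate); the per-attempt success probability is of order $p_B^{O(k)}\cdot p_G^{O(L^{k-1})}$, not a quantity bounded below uniformly in $L$. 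You even cite the diameter bound $\mathrm{diam}(\cC(k,\l))\le 3L^{k-1}$ as ``controlling the exact exponent,'' which, taken seriously, would give you $q\le p_B^{\,cL^{k-1}}$ — and then $\sim L$ parallel attempts cannot produce $N$ successes as $L\to\infty$. Fixing $p_G$ close to $1$ does not repair this, because $p_G^{L^{k-1}}\to 0$ for every fixed $p_G<1$.

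What is missing is precisely the mechanism the paper builds in Section \ref{structure}: the \emph{descending decomposition} of $B(k,\ell)$ into a decreasing-mass sequence of sub-layers $B_{\wt\cC_1},\dots,B_{\wt\cC_v}$ separated by porous media (Lemma \ref{decomp}), the notion of \emph{chained hierarchical sets} (Definitions \ref{chained2}--\ref{chnd2}) that re-expands and re-concentrates the open cluster scale by scale while traversing each $B_{\wt\cC_s}$, and the ``transfer'' crossings of the porous gaps via Lemma \ref{vertical-crossing}, which is where the supercritical spreading of oriented percolation is used to pay a cost $p_j^{\varkappa}$ (rather than $p_G^{L^{j}}$) per gap. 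The quantitative version is the recursion \eqref{6N.1} for $p_{j,m}$, the stochastic-domination statements $(b_m)',(b_m)''$, comparison with the auxiliary branching scheme in Definition \ref{calculadora}, and the Cram\'er-type estimate \eqref{final3}; none of this can be shortcut by an ``independent columns each paying $p_B^{O(k)}$'' heuristic. Your base case $k=1$ (monolithic bad line) and your treatment of (a), (c), (d) are fine, but (b) needs the full hierarchical chaining apparatus, and as written the proposal does not contain, or substitute for, it.
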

\medskip

\noindent {\it Proof.} The proof goes by induction. For simplicity
we write $(a_k),(b_k),(c_k)$ and $(d_k)$ for the corresponding
statement at level $k$. After proving $(a_1), (b_1)$ and $(c_1)$ we
shall prove here the following implications:
\begin{equation}
(a_k), (b_k) \text{ and $(c_k)$ together }\Longrightarrow (d_k);
\label{4.80}
\end{equation}
\begin{equation}
(d_k) \Longrightarrow (a_{k+1}) \text{ and } (c_{k+1}).
\label{4.81}
\end{equation}
To complete the proof we shall show in the next section that $(a_j)$
for $j \le k+1$ and $(b_j), (c_j)$ for $j \le k$ imply $(b_{k+1})$.

We first observe that \eqref{4.80} is immediate from the definitions.

Now start with $k = 1$. For $(a_1)$, assume for the sake of argument that
$S^1 = S^1_{0,v} =  \big((-cL/2, cL/2]$
$\times \cH^1_v\big) \cap \wt {\Bbb Z}^2_+$
(see \eqref{sitek}) with kernel $S^1_{0,v} \cap(\Bbb Z \times
\cK^1_v)$ (see \eqref{ker}). $\cK^1$ is some interval, $[y_0,y_1]$ say.

Since $S^1$ is assumed to be good of type 1,
it contains a unique cluster of mass $\ge 1$ and
this cluster has to be a single bad line. This cluster will be $\cC_{1,j}$
for some $j$ in the notation of Section \ref{sec2constr}. Since $Ker(S_1)$
is the part of $S^1$ below the bad line which intersects $S^1$, this
bad line is the line $y= y_1+1$. The bad lines before that will
be $\cC_{1, j-1}$ (or there will not be a previous bad line if $j = 1$).
In this situation $y_1-y_0$ is $\al(\cC_{1,j})
-1 -\om(\cC_{1,j-1}) \ge L-4$ (see \eqref{2.two} and \eqref{3.1}; if $j=1$ we
have to use the assumption $\chi(\ga)=0$ instead of \eqref{2.two}. In
any case, $y_1-y_0 \ge L/4$ if $L$ is taken
$\ge 6$, and we can take $L$ as large as desired if we take $\de>0$
small enough (see Lemma \ref{lemma1.1}). In the other direction,
$S^1$ is a good site of level 1, so $y_1-y_0 \le 2L$ by virtue of \eqref{3.50}.

We also point out that all horizontal lines $\Bbb Z
\times \{y\}$ with $y_0 \le y \le y_1$ lie in $Ker(S^1)$ and are good lines.

The condition that $Q^{(0)}$ is a rooted 0-seed gives us two adjacent
open vertices $(x,y_0-1)$ and $(x+2,y_0-1)$ for some odd $x \in [-cL/6 -1, cL/6+1]$.
For $W^1(s, S^1, Q^{(0)})$ to hold it suffices to have open paths in
$Ker(S^1)$ from $(x,y_0-1) \cup (x+2,y_0-1)$ to at least $\rho c L/12$ 0-sites in
$D_\theta^\cK$ for $\theta = l$ and $\theta = r$. In the simple case of $k=1$
these are just open paths in $S^1$ to $ [-\frac 5{12} cL, -\frac 13
cL] \times  \{y_1\}$ (if $\theta =l$) and to $[\frac 13 cL,\frac 5{12}cL] \times
\{y_1\}$. \eqref{5.h01} for $k=1$ can now be satisfied for
large $L$ by an application of Lemma \ref{Lemma 4.6} that guarantees positive density
of the open oriented cluster restricted to $S^1$ at a suitable hight proportional to $L$,
and then using unrestricted growth to achieve density $\rho$ in
$D_\theta^\cK$ for $\theta = l$ and $\theta = r$.


Next, \eqref{5.h03} for $k=1$ is easy. If $W^1_1(s)$ occurs, then for
$\theta = l,r$ there exist sets $\cR^0_\theta$ containing at least
$\rho cL/(12)$ open 0-sites which have an open connection from the
origin. These sets are contained in the top line of $Ker(S^1)$, that
is, in the horizontal line $\{y = y_1-1\}$. It is important that these sets
$\cR^0_\theta$ are determined by the occupation variables $\eta_{(a,b)}$
with $b \le y_1-1$. For $W^1_2(s)$ to occur, there should be at least
$N$ (see \eqref{N}) sites $(a,y_1)$ in each of $\cR^0_\theta, \theta = l$
or $r$ which have an open
connection to $(x,y_1+2)$ (which is the line just above the
bad line $\{y=y_1+1\}$) for some $x \in [a-2,a+2]$. But the
cardinality of $\cR^0_\theta$ is at least $\rho c L/12$ and hence goes to
infinity with $L$. Thus if we keep $0< p_B,p_G$ and $N$ fixed, then
the conditional probability in the left hand side of \eqref{5.h03} tends
to 1 as $L \to \infty$. Indeed, given $W^0_1(s)$, the event that
$(a,y_1)$ has an open connection to $[a-2, a+2] \times \{y_1+2\}$
has a strictly positive conditional probability, and these events for
$a=a'$ and $a=a''$ are conditionally independent when $|a'-a''| \ge 5$.
Thus, by raising $L_1$ if necessary, \eqref{5.h03} follows.

We turn to \eqref{5.h031}. If $W_2^1(s)$ occurs, then let
$\cL_\theta^0$ be the subset of 0-sites $(a,y_1)$ in $\cR^0_\theta$ which
have an open connection to $(x,y_1+2)$ for some $x \in [a-2,a+2]$. On
the event $W_2^1(s)$ the cardinality of $\cL_\theta^0$ is at least
$N$. Denote the collection of 0-sites $(x, y_1+2)$ with an open
connection from some $(a,y_1) \in \cL^0_\theta$
as just mentioned $\wt {\cL}^0_\theta$. On the event $W^1_2(s)$, the
cardinality of $\wt {\cL}^0_\theta$ is at least $N/5$.
Then $W^1_3(s)$ occurs if for $\theta =l$ as well as $\theta=r$,
there is a $(x,y_1+2) \in \wt {cL}^0_\theta$ which has an open
connection to a rooted 0-seed with top line in the top line of
$S^1$. Note that the top line of $S^1$ is contained in the line $\{y =
y_1+4\}$, by virtue of \eqref{3.2}. Therefore, if $(x,y_1+2) \in \wt
{\cL}^0_\theta$, then the conditional probability that it has
such an open connection is bounded below by $p_G^3$. Since the cardinality
of $\wt{\cL}^0_\theta$ is at least $N/5$ one easily sees that
$P\{W^1_2(s)\mid W^1_1(s)\}\to 1$ as $N \to \infty$. In fact, \eqref{N}
suffices to guarantee \eqref{5.h031}. Thus if we first pick $N$ so that
\eqref{N} holds and then $L_1$ so that \eqref{5.h03} holds then both
\eqref{5.h03} and \eqref{5.h031} hold.

If $p_G>2/3$ and  $N$ has been chosen as above, we
see at once from \eqref{N} that $(d_k)$ implies $(c_{k+1})$. A coupling argument (Lemma \ref{Lemma 4.4})
easily shows that $(a_{k+1})$ follows from $(d_k)$.  It remains to show that having
$(a_j), j\le k+1$ and $(b_j), (c_j), j\le k$ we get $(b_{k+1})$.
This is the core of the proof and we postpone it to Sect. \ref{conclusion}. It
requires a more detailed study of clusters introduced in Sect.\ref{sec2constr},
and which is the object of the next section.


\section{Towards drilling. Structure of clusters}
\label{structure}

\begin{lemma}
\label{level}
If $\cC$ is a cluster of mass $m$
and level $\ell$, then it has at most $m-\ell+1$ constituents.
\end{lemma}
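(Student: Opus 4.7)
The plan is to read off the bound directly from the mass recursion \eqref{2.foura}, combined with the lower bound on the mass of each constituent.

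Concretely, suppose $\cC$ has level $\ell \ge 1$ (the case $\ell = 0$ is trivial since level $0$ clusters are singletons and ``constituents'' only make sense for $\ell \ge 1$). Then by construction $\cC$ is formed as a cluster $\cC^{\ell}_i$ at step $\ell$ of the recursion, so its constituents are elements of $\mathbf{C}_{\ell-1,\ell}$. By the definition \eqref{2.4e}, every element of $\mathbf{C}_{\ell-1,\ell}$ has mass at least $\ell$. Let $n$ denote the number of constituents (so $n \ge 2$ since $\cC$ has positive level). Applying \eqref{2.foura} with $k = \ell-1$ gives
\begin{equation*}
m(\cC) \;=\; \sum_{j=1}^{n} m(\cC_{\ell-1,j}) \;-\; (\ell-1)(n-1).
\end{equation*}

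The main step is then to substitute $m(\cC_{\ell-1,j}) \ge \ell$ for each constituent, yielding
\begin{equation*}
m(\cC) \;\ge\; n\ell - (\ell-1)(n-1) \;=\; n + \ell - 1,
\end{equation*}
so that $n \le m(\cC) - \ell + 1 = m - \ell + 1$, as desired.

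There is no real obstacle here: the only thing to be careful about is confirming that the constituents of a level $\ell$ cluster really do lie in $\mathbf{C}_{\ell-1,\ell}$ (hence have mass $\ge \ell$), which is precisely how the construction in Section \ref{sec2constr} is set up at step $k+1 = \ell$. The algebraic identity $n\ell - (\ell-1)(n-1) = n+\ell-1$ is the heart of the argument, and after that the inequality for $n$ is immediate.
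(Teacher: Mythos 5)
Your proof is correct and uses essentially the same argument as the paper: both invoke \eqref{2.foura} with $k=\ell-1$ together with the fact that each constituent lies in $\mathbf{C}_{\ell-1,\ell}$ and hence has mass at least $\ell$. The paper works from the first form of \eqref{2.foura} (giving $m \ge m_1 + (r-1) \ge \ell + r - 1$ directly), while you use the second form and bound all $n$ masses at once; the two are algebraically identical.
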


\begin{proof}  Assume that $\cC$ is formed from an
$\ell$-run of $r$ constituents: $\cC_1, \dots, \cC_r$, $r \geq
2 $, each $\cC_i$ being of level $\ell_i < \ell$, and mass $m_i$.
From the definition of  $\ell$-run we see that  $m_i \ge
\ell , \; i=1, \dots, r $.  On the other hand using the definition
of the mass of a cluster (first equality in \eqref{2.foura}) we see that
$m \ge m_1 + (r-1)$. The statement follows at once.
\end{proof}
\medskip

\noindent {\bf Notation.} Given   $\Ga (\om) = \{x \in \Bbb Z_+
\colon \xi_x = 1 \}$ and an interval $[a,b]$ we denote by
$\Gamma_{[a,b]}$ a new configuration on $\Bbb Z_+$:
$\Gamma_{[a,b]}\equiv \Gamma_{[a,b]} (\omega)= \Gamma (\omega)
\cap [a,b]$. Equivalently, $\xi_{[a,b]}(x)=\xi_x$ if $x \in [a,b]$,
and is zero otherwise.

\medskip

\begin{defin}
\label{pm} {\em (Porous medium)} We say that
the segment $[x_1, x_2]$ is porous medium of level $k$  (with
respect to $\Gamma$) if:

\noindent 1) ${\text{\bf C}}_{\infty}(\Gamma_{[x_1, x_2]})$ contains
no clusters of mass strictly larger than $k$;

\noindent 2) for any ${\cC} \in {\text{\bf C}}_{\infty}
(\Gamma_{[x_1, x_2]} )$ we have:
\begin{equation*}
d(\cC, x_1) \ge L^{m(\cC)}-1 \quad {\text{and}} \quad d(\cC, x_2) \ge L^{m(\cC)}-1.
\end{equation*}
In particular, $x_1, x_2 \not \in \Gamma$. When $k=0$ the definition reduces to
$\Gamma \cap [x_1,x_2]=\emptyset$.
\end{defin}

\begin{lemma}
\label{poroso}
\noindent a) If $\ell \ge 1$ and ${\cC_i}, \, {\cC_j} \in
{\text{\bf C}}_{\ell-1}(\Gamma)$ are two consecutive constituents
of an $\ell$-run, then the interval $[\omega ({\cC_i})+1,
\alpha ({\cC_j}) -1]$ is a porous medium of level $(\ell-1)$
with respect to $\Gamma$.

\noindent b) If $k \ge 1$ and ${\cC}_i$, ${\cC}_j \in {\text{\bf
C}}_{\infty}(\Gamma)$ are two consecutive clusters of mass at
least $k$, then the interval $[\omega ({\cC}_i)+1, \alpha
({\cC}_j) -1]$ is a porous medium of level $(k-1)$ with respect
to $\Gamma$.
\end{lemma}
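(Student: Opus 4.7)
For both parts the task is to verify the two conditions of Definition~5.2 for the interval $I=[\omega(\cC_i)+1,\alpha(\cC_j)-1]$, treating (a) and (b) in parallel by working at level $\ell-1$ in~(a) and in $\mathbf{C}_\infty(\Gamma)$ in~(b). The engine of the argument is a monotonicity lifting that sends clusters of the restricted environment $\Gamma_I=\Gamma\cap I$ to clusters of the ambient $\Gamma$ with no smaller mass.

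\textbf{Auxiliary monotonicity.} I first prove, by induction on $r$, that whenever $\Gamma''\subset\Gamma$ and $\cC^*\in\mathbf{C}_r(\Gamma'')$, there exists $\cC\in\mathbf{C}_r(\Gamma)$ with $\cC^*\subset\cC$ and $m(\cC)\ge m(\cC^*)$. The base $r=0$ is trivial. For the step, either $\cC^*$ is carried over from level $r-1$ (handled by the hypothesis plus the absorption bound \eqref{2.zy}), or $\cC^*$ is formed at level $r$ with constituents $\cC_1^*,\dots,\cC_n^*\in\mathbf{C}_{r-1,r}(\Gamma'')$. Each constituent lifts to some $\cC_s^{**}\in\mathbf{C}_{r-1,r}(\Gamma)$ by the inductive hypothesis. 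Because $d(\cC_s^{**},\cC_{s+1}^{**})\le d(\cC_s^*,\cC_{s+1}^*)<L^r$, the distinct lifts together with any interposed $\mathbf{C}_{r-1,r}(\Gamma)$-clusters (all squeezed within distance $<L^r$) form a single maximal $r$-run in $\Gamma$, yielding $\cC\in\mathbf{C}_{r,r}(\Gamma)$ with $\cC^*\subset\cC$ via span inclusion. Expanding \eqref{2.foura} gives $m(\cC)-m(\cC^*)\ge 0$ because each extra constituent of the $\Gamma$-run contributes at least $r-(r-1)=1$ to the difference.

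\textbf{Using monotonicity.} For the mass bound in~(a), suppose for contradiction that $\cC^*\in\mathbf{C}_\infty(\Gamma_I)$ has $m(\cC^*)\ge\ell$. Descending through $\cC^*$'s hierarchy via \eqref{2.fourff} produces a subcluster $\cC^{**}\subset\cC^*$ lying in $\mathbf{C}_{\ell-1,\ell}(\Gamma_I)$. The lifting at $r=\ell-1$ yields $\hat\cC\in\mathbf{C}_{\ell-1}(\Gamma)$ with $\cC^{**}\subset\hat\cC$ and $m(\hat\cC)\ge\ell$. Disjointness of spans \eqref{2.one} in $\mathbf{C}_{\ell-1}(\Gamma)$, combined with the fact that $\cC_i$ and $\cC_j$ have spans disjoint from $I$, forces $\text{span}(\hat\cC)\subset I$ (otherwise $\hat\cC$ would have to coincide with $\cC_i$ or $\cC_j$, neither of which meets $I$). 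But then $\hat\cC\in\mathbf{C}_{\ell-1,\ell}(\Gamma)$ is a mass-$\ge\ell$ cluster strictly between $\cC_i$ and $\cC_j$, contradicting their consecutiveness in the $\ell$-run. Part~(b) is identical with $\mathbf{C}_\infty(\Gamma)$ and \eqref{2.onea} replacing $\mathbf{C}_{\ell-1}(\Gamma)$ and \eqref{2.one}. For the distance bound, once $m^*:=m(\cC^*)\le\ell-1$ is known, $\cC^*$ belongs to $\mathbf{C}_{\ell-1}(\Gamma_I)$ (it is never absorbed, since $\cC^*\in\mathbf{C}_\infty(\Gamma_I)$), so the lift produces $\cC\supset\cC^*$ in $\mathbf{C}_{\ell-1}(\Gamma)$ with $\text{span}(\cC)\subset I$ and $m(\cC)\ge m^*$. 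Since $\cC_i$ has mass $\ge\ell>m^*$ and is distinct from $\cC$, \eqref{2.two} with $k=\ell-1,r=m^*$ (in (a)) or \eqref{2.five} (in (b)) gives $d(\cC,\cC_i)\ge L^{m^*}$; containment then yields $d(\cC^*,x_1)=\alpha(\cC^*)-\omega(\cC_i)-1\ge d(\cC,\cC_i)-1\ge L^{m^*}-1$, with the symmetric bound at $x_2$.

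\textbf{Main obstacle.} The technical heart is the monotonicity lifting, specifically the mass bookkeeping when several constituents of $\cC^*$ in $\Gamma''$ collapse to a single lifted cluster $\tilde{\cC}$ in $\Gamma$. This is controlled by a strengthened inductive claim $m(\tilde{\cC})\ge\sum_{s\in I_p}m(\cC_s^*)-(r-1)(|I_p|-1)$ when $|I_p|$ constituents collapse to $\tilde{\cC}$ (matching exactly the mass formula for a virtual $r$-run built from them), propagated via \eqref{2.fourb}. Once the lifting is established, everything else reduces to tight use of disjointness of spans and \eqref{2.two}/\eqref{2.five} together with the definition of consecutive constituents of a run.
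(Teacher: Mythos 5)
Your route via a general monotonicity lifting lemma is genuinely different from what the paper intends. The paper's proof is the one-liner ``It follows at once from the construction,'' pointing to a direct \emph{locality} argument: for $0\le k\le\ell-1$, no cluster of $\mathbf{C}_k(\Gamma)$ can straddle the boundary of $I=[\omega(\cC_i)+1,\alpha(\cC_j)-1]$, because $\mathbf{C}_k$ refines $\mathbf{C}_{\ell-1}$ and $\cC_i,\cC_j$ are whole $\mathbf{C}_{\ell-1}$-clusters bordering $I$; an easy induction on $k$ then shows
\begin{equation*}
\mathbf{C}_k(\Gamma_I)=\{\cC\in\mathbf{C}_k(\Gamma)\colon \text{span}(\cC)\subset I\}
\end{equation*}
with identical masses (the boundary of a maximal $k$-run in $\Gamma_I$ is also maximal in $\Gamma$, since the nearest $\mathbf{C}_{k-1,k}(\Gamma)$-cluster on the $\cC_i$-side lies inside $\text{span}(\cC_i)$ and is at distance at least $L^k$ by \eqref{2.two}). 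Since all $\mathbf{C}_{\ell-1}(\Gamma)$-clusters strictly between the two consecutive constituents have mass $\le\ell-1$, the construction of $\mathbf{C}_\infty(\Gamma_I)$ stabilizes at $\mathbf{C}_{\ell-1}(\Gamma_I)$, and \eqref{2.two} (resp.\ \eqref{2.five}) gives the distance bounds at once. Your approach instead establishes only a one-sided inclusion and mass inequality, which is more general but also more fragile.

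The concrete gap is the ``strengthened inductive claim'' you name as the technical heart but do not prove: when several constituents $\{\cC_s^*\}_{s\in I_p}$ of a $\Gamma''$-cluster collapse onto a common lift $\tilde\cC$, you need $m(\tilde\cC)\ge\sum_{s\in I_p}m(\cC_s^*)-(r-1)(|I_p|-1)$, and you assert this is ``propagated via \eqref{2.fourb}.'' But \eqref{2.fourb} only gives the max-plus-count bound $m(\cC^{k+1}_i)\ge\max_s m(\cC_{k,j_s})+n_i-1$, which does not yield the additive bound you need, and the collapse case really does occur for arbitrary $\Gamma''\subset\Gamma$ (take $\Gamma$ a long run of consecutive integers and $\Gamma''$ two short separated sub-blocks). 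For the specific $\Gamma''=\Gamma\cap I$ of the lemma, however, the collapse never arises, precisely because of locality: two distinct clusters of $\mathbf{C}_{r-1,r}(\Gamma_I)$ are separated by at least $L^{r-1}$ by \eqref{2.two}, and the gap between their spans lies inside $I$, where $\Gamma$ and $\Gamma_I$ coincide, so they cannot be absorbed into a single $\mathbf{C}_{r-1}(\Gamma)$-cluster. Restricting your lifting claim to this $\Gamma''$ — at which point it degenerates into the locality equality above — is the clean way to close the gap; the fully general monotonicity lemma, if you want it, needs its own careful induction that \eqref{2.fourb} alone does not supply.
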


\begin{proof}It follows at once from the construction of
${\text{\bf C}}_{\ell-1}$ and ${\text{\bf C}}_{\infty}$.
\end{proof}

\begin{lemma}
\label{decomp} {\em (Descending decomposition)} Each cluster ${\cC}
\in \cup_\ell{\text{\bf C}}_{\ell,\ell} (\Gamma)$ of mass $m\ge 2$
has the following representation: there exists an increasing
sequence of integers
\begin{equation*}
\alpha ({\cC}) = f_1 < g_1 <f_2 < g_2 < \dots < f_v < g_v  \le
\omega ({\cC})-1,
\end{equation*}
so that for each $1 \le s \le v$, the partition ${\text{\bf
C}}_{\infty}(\Gamma_{[f_s, g_s]})$ consists of a unique cluster,
denoted by $\widetilde {\cC}_s$, with $\alpha (\widetilde {\cC}_s) =
f_s$ and $\omega (\widetilde {\cC}_s)= g_s$, and the following
holds:

\noindent {\rm 1)}
$m(\widetilde {\cC}_1) = m-1$, $m(\widetilde {\cC}_s)= \widetilde m_s$ for $2 \le s \le v$,
where $m-1\equiv\widetilde m_1>\widetilde m_2>\dots>\widetilde m_v$.

\noindent {\rm 2)} the intervals $[g_{s-1} +1, f_{s}-1]$ are
porous media of level $\widetilde m_{s}$ with respect to $\Gamma$,
$2 \le s \le v$, and
\begin{eqnarray}
&L^{\widetilde m_s}\le f_{s}-g_{s-1} \le L^{\widetilde m_s+1};\\
&\omega(\cC)-L <g_v, \quad  [g_v+1,\omega(\cC)-1]\cap
\Gamma=\emptyset.
\label{dec}
\end{eqnarray}
\end{lemma}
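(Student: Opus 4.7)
The plan is to proceed by strong induction on the mass $m$, building the decomposition from the constituent structure at level $\l:=\l(\cC)$. The base case $\l=1$ covers all level-$1$ clusters: $\cC=\{x_s,\ldots,x_{s+m-1}\}$ is a maximal $1$-run, so taking $v=1$, $f_1=x_s$ and $g_1=x_{s+m-2}$ yields $\wt{\cC}_1=\{x_s,\ldots,x_{s+m-2}\}$ as the unique cluster in $\mathbf{C}_\infty(\Ga_{[f_1,g_1]})$, of mass $m-1$; the terminal condition follows from $x_{s+m-1}-x_{s+m-2}<L$ together with the absence of $\Ga$-points strictly between consecutive elements of the run.

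For the inductive step with $\l\ge 2$, let $\cD_1,\ldots,\cD_n\in\mathbf{C}_{\l-1,\l}$ ($n\ge 2$) be the constituents of the $\l$-run creating $\cC$, with masses $m_i\ge\l$. By \eqref{2.fourb}, $m_n\le m-1$, so the inductive hypothesis applies to $\cD_n$. I distinguish two cases. In \textit{Case 1} ($m_n=\l$), set $\wt{\cC}_1=\cD_1\cup\cdots\cup\cD_{n-1}$ with $g_1=\om(\cD_{n-1})$; formula \eqref{2.foura} gives $m(\wt{\cC}_1)=m-m_n+(\l-1)=m-1$. In \textit{Case 2} ($m_n>\l$), apply induction to $\cD_n$ to obtain its first piece $\wt{\cD}_{n,1}$ of mass $m_n-1\ge\l$, and set $\wt{\cC}_1=\cD_1\cup\cdots\cup\cD_{n-1}\cup\wt{\cD}_{n,1}$ with $g_1=g_{n,1}$; since $\wt{\cD}_{n,1}$ has mass $\ge\l$ and level $\le\l-1$ (an auxiliary property of the first piece carried in the induction), and the gap $\al(\cD_n)-\om(\cD_{n-1})<L^\l$ is preserved, these $n$ pieces form an $\l$-run in the restricted environment, yielding one level-$\l$ cluster of mass $\sum_{i<n}m_i+(m_n-1)-(\l-1)(n-1)=m-1$. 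In both cases the remaining $\wt{\cC}_s$ are read off from the inductive decomposition of $\cD_n$: in Case 1 they are $\wt{\cD}_{n,1},\ldots,\wt{\cD}_{n,v_n}$ (all with mass $\le \l-1<m-1$), and in Case 2 they are $\wt{\cD}_{n,2},\ldots,\wt{\cD}_{n,v_n}$. Strict decrease $\wt m_1>\wt m_2>\cdots$ then follows from the induction, since $\wt m_1=m-1$ strictly dominates every mass arising in $\cD_n$'s decomposition.

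The key technical obstacle is verifying that each $\wt{\cC}_s$ is truly the \textit{unique} element of $\mathbf{C}_\infty(\Ga_{[f_s,g_s]})$ and that the gap intervals satisfy the porous-medium and length requirements. For $s\ge 2$ this is inherited directly from $\cD_n$'s decomposition. For $s=1$, uniqueness uses Lemma \ref{poroso}(a) to rule out $\Ga$-points in each $[\om(\cD_i)+1,\al(\cD_{i+1})-1]$ for $i<n$: the restricted environment is exactly $\cD_1\cup\cdots\cup\cD_{n-1}$ (possibly together with $\wt{\cD}_{n,1}$), which merge into a single level-$\l$ cluster with nothing else left to form higher-level clusters. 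The gap bound $L^{\wt m_2}\le f_2-g_1\le L^{\wt m_2+1}$ at $s=2$ holds in Case 1 because $\wt m_2=\l-1$ and \eqref{2.two} plus the $\l$-run condition give $L^{\l-1}\le\al(\cD_n)-\om(\cD_{n-1})<L^\l$; in Case 2 it is inherited from $\cD_n$. The porous-medium property of $[g_1+1,f_2-1]$ follows from Lemma \ref{poroso}(a) in Case 1 and the inductive hypothesis in Case 2. Finally, $\om(\cC)-L<g_v$ together with $[g_v+1,\om(\cC)-1]\cap\Ga=\emptyset$ is inherited recursively from the terminal condition of $\cD_n$'s decomposition, using $\om(\cC)=\om(\cD_n)$.
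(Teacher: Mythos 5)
Your proof is correct and follows essentially the same route as the paper's: build $\wt\cC_1$ of mass $m-1$ by cutting off the tail at the last constituent $\cD_n$, then recurse on $\cD_n$; your dichotomy $m_n=\l$ versus $m_n>\l$ is exactly equivalent to the paper's three-case split ($\l=m$; $\l<m$ with $m(\widehat\cC)=m$; $\l<m$ with $m(\widehat\cC)<m$), since $m(\widehat\cC)=m$ is equivalent to $m_r=\l$. One minor misstatement worth correcting: Lemma \ref{poroso}(a) does not ``rule out $\Ga$-points'' in the gaps $[\om(\cD_i)+1,\al(\cD_{i+1})-1]$ — it only says they are porous medium of level $\l-1$, so they may well contain $\Ga$-points forming small (dust) clusters, and the restricted configuration $\Ga_{[f_1,g_1]}$ is in general strictly larger than $\cD_1\cup\cdots\cup\cD_{n-1}$; the uniqueness of the cluster in $\mathbf{C}_\infty(\Ga_{[f_1,g_1]})$ still holds because when the $\l$-run forms, its span is all of $[f_1,g_1]$ and the dust is absorbed, but that is the argument one should make, not the absence of $\Ga$-points.
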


\medskip
\begin{proof} Observe that the statement is obvious for
clusters of level $1$ and mass $m \ge 2$, in which case $v=1$. We therefore consider clusters of level at least 2. The proof
uses induction on the mass. Assuming the statement to be true for every cluster of
mass at most $m$ we prove it for every cluster of mass $m+1$. Fix
${\cC}\in \cup_\ell{\text{\bf C}}_{\ell, \ell}(\Gamma)$, such that $m({\cC})=m+1$.  We split the proof in two sub-cases.

\noindent {\it Case  $\ell\equiv\ell({\cC})=m$.} In this case it
follows from Lemma \ref{level}  that ${\cC}$ is formed as an
$m$-run of only two constituents, ${\cC}_1$ and ${\cC}_2$,
with $m({\cC}_1)=m({\cC}_2)=m$, and we take $f_1=\alpha ({\cC})$ and $g_1=\omega ({\cC}_1)$.  By Lemma \ref{poroso} we
have that $[\omega ({\cC}_1)+1, \alpha({\cC}_2)-1]$ is
porous media of level $m-1$ and from the definition of the run we
have that $L^{m-1}\le \alpha({\cC}_2) - \omega ({\cC}_1) <
L^{m}$. On the other hand from the induction assumption we know
that there exists a sequence of integers
$$
\alpha ({\cC}_2) = f_1^\prime < g_1^\prime <f_2^\prime <
g_2^\prime < \dots < f^\prime_{v^\prime} < g_{v^\prime}^\prime \le
\omega ({\cC}_2)-1,
$$
such that for each $1 \le s \le v^\prime$ the partition
${\text{\bf C}}_{\infty}(\Gamma_{[f_s^\prime, g_s^\prime]})$
consists of unique cluster, denoted by $\widetilde {\cC}_s^\prime$ with $\alpha (\widetilde {\cC}_s^\prime) = f_s$
and $\omega (\widetilde {\cC}_s^\prime)= g_s$, with
$$
m(\widetilde {\cC}_1^\prime) = m-1,\quad  {\text {and}} \quad
m(\widetilde {\cC}_s^\prime)= \widetilde m_s^\prime, \quad 2
\le s \le v^\prime,
$$
and the intervals $[g_{s-1} +1, f_{s}-1]$ are porous media with
respect to $\Gamma$ of level $\widetilde m_{s}^\prime, \; 2 \le s
\le v^\prime$, and
\begin{equation}
L^{\widetilde m_s^\prime} \le f_{s}-g_{s-1} \le L^{\widetilde
m_s^\prime+1}. \label{decprime}
\end{equation}
Taking $f_s = f^\prime_{s-1}$ and $g_s =
g^\prime_{s-1}, \; 2 \le s \le v^\prime$, we get the desired
representation of ${\cC}$.
\medskip

\noindent {\it Case $ 2\le \ell\equiv \ell({\cC}) < m$.} In
this case ${\cC}$ is formed as an $\ell$-run of $r$
constituents ${\cC}_1, \, \dots ,{\cC}_r$, $2\le r \le
 m- \ell +2$, with $m ({\cC}_i) \geq \ell, \; 1\le
i \le r$, and so  ${\text{\bf C}}_{\infty} (\Gamma_{[\alpha({\cC}_1), \omega({\cC}_{r-1})]}) $ consists of a unique cluster
which we denote by $\widehat {\cC}$.

If $m( \widehat {\cC} \,)=m$, we see from \eqref{2.foura} that $m(\cC_r)=\ell <
m$. In this case we set $ f_1 = \omega({\cC}_{r-1})$, and using
the inductive assumption for $\cC_r$, we complete the
representation as in the previous case.

If $m(\widehat {\cC}\,)<m$, we have that $\ell + 1 \le m(\cC_r) = m - m(\widehat {\cC}\,)+ \ell \le m $. By the inductive
assumption applied to ${\cC}_r$ as the unique element of
${\text{\bf C}}_{\infty}(\Gamma_{[\alpha({\cC}_r), \omega({\cC}_r) ]})$ there are integers
$$
\alpha ({\cC}_r) = \widetilde f_1 < \widetilde g_1 <\widetilde
f_2 < \widetilde g_2 < \dots < \widetilde f_{\tilde v} <
\widetilde g_{\tilde v} \le \omega ({\cC}_r)-1
$$
for which properties $1)-2)$ of the lemma hold. Moreover, the
unique cluster $\widetilde {\cC}_1$ of ${\text{\bf C}}_{\infty}
(\Gamma_{[\widetilde f_1, \widetilde g_1]})$ has mass $ m - m(\widehat
{\cC}\,)+ \ell -1 \geq \ell$. In the configuration
$\Gamma_{[\alpha ({\cC}_1), \widetilde g_1]}$ the clusters ${\cC}_1, \dots , {\cC}_{r-1}$ and $\widetilde {\cC}_1$  will
form an $\ell$-run, producing a cluster of mass $m$. Therefore,
taking $f_s = \widetilde f_{s},\,  s \ge 2$ and $g_s = \widetilde
g_{s}, \, 1 \le s \le \tilde v$, we get the desired representation
of ${\cC}$.
\end{proof}

\medskip

\begin{defin}
\label{itnr1} {\em (Itinerary of a bad layer)} In the notation of
the previous lemma, the sequence $\{\widetilde  m_s \}_{s=1}^v$
will be called the {\it itinerary of the descending decomposition}.
\end{defin}

\smallskip It follows from the construction that for any $1\le k\le
m-1$ one can find $i_k\le i'_k$ so that $B(m,\ell)=\cup_{i_k\le s\le
i^\prime_k}H^k_{s}$, and if $\ell<k$ we have $i_k=i'_k$. In
particular it exists $j$ so that $B(m,\ell)=H^{m-1}_{j}$. (For
$\ell=0$ this is a single bad line, and $m=1$.)

\smallskip
Notice that if $k > \ell -1$ in the above definition, then it is
always the case that $i'=i$; the case $i'=i\pm 1$ may occur only if
$k= \ell -1$.

\medskip
\begin{defin}
\label{A22} {\em (Zones and tunnels)} If $S^k_{(i,i_k-1)},
\widehat{S}^k_{(i',i'_k +1)}$ form a matching pair with respect to
$B(m,\ell)$, we set
$$
{\cZ}(S^k_{(i,i_k-1)},\widehat S^k_{(i',i'_k +1)})=\left[
(cL)^k\left(\frac{i-L^{1/2}}{2}\right),
(cL)^k\left(\frac{i+L^{1/2}} {2}\right)\right] \times
\cH^{m-1}_{j},
$$
which will be called the {\it zone} associated to
$S^k_{(i,i_k-1)}$ and $\widehat S^k_{(i',i'_k +1)}$.

For $k \ge 1$ and if $i=i'$, we  set
$$
T(S^k_{(i,i_k-1)}, \widehat S^k_{(i',i'_k +1)})=\left[\frac {i-1}{2}
(cL)^{k},\frac {i+1}{2} (cL)^{k}\right] \times \cH^{m-1}_{j},
$$
which we call the  {\it tunnel} associated to $S^k_{(i,i_k-1)}$
and $\widehat S^k_{(i',i'_k +1)}$.

\noindent If $|i'-i|=1$, the {\it tunnel} associated with
$S^k_{(i,i_k-1)}$ and $\widehat S^k_{(i',i'_k +1)}$ is defined in
the following way:
$$
T(S^k_{(i,i_k-1)}, \widehat S^k_{(i',i'_k +1)})=\left[\frac {i\wedge i'
}{2} (cL)^{k},\frac {i\vee i'}{2} (cL)^{k}\right] \times
\cH^{m-1}_{j}.
$$

\noindent And finally, when $k=0$ we set
\begin{equation*}
T((i,i_0-1), (i', i'_0+1))=\begin{cases} [i-1,i]\times \cH^{m-1}_{j}
\text{
if } i=i'\\
[i\wedge i',i\vee i']\times \cH^{m-1}_{j} \text{ if }
|i-i'|=1.\end{cases}
\end{equation*}
\end{defin}

\begin{defin}
\label{vertical} {\em(Vertical sequences)} A collection of
$k$-sites $\{S^k_{(u_s,v_s)}\}_{ s=1}^r$, with  $v_{n+1} = v_n+1,
\; n=1, \dots , r-1$, is called {\it a vertical sequence} if
$|u_1-u_{s}|\le 1$ for all $1<s<r $.
\end{defin}

\begin{defin}
\label{above}  We say that the $k$-site $S^k_{(u_2,v_2)}$ lies
{\it above } $S^k_{(u_1,v_1)}$, or, equivalently, $S^k_{(u_1,v_1)}$
lies {\it below} $S^k_{(u_2,v_2)}$, if $v_1 < v_2$, and
$|u_1-u_{2}|\le 1$.
\end{defin}

\noindent We will use the above definition also in the case of
sequences of reversed sites.
\medskip

\begin{defin}
\label{A3} {\em (Separated pairs)} Two matching pairs
$S^k_{(i,i_k-1)}, \widehat S^k_{(i',i'_k +1)}$ and
$S^k_{(j,i_k-1)}$, \newline $\widehat S^k_{(j',i'_k +1)}$ are said to be {\it
separated} if $ |j-i|\ge 2L^{1/2}$.
\end{defin}

\noindent Notice that if two matching pairs are separated, their
corresponding zones do not intersect.
\medskip

\noindent {\bf Notation.} For an horizontal segment $I =\{(x,y)\in
\widetilde{\Bbb Z}^2_+\colon a\le x\le b\}$  we denote
\begin{eqnarray*}
I_{\Lsh} &= \{(x,y)\in
\widetilde{\Bbb Z}^2_+\colon a+ {\lfloor}
(b-a)/12{\rfloor}+1\le x\le a+
2{\lfloor}(b-a)/12{\rfloor}-1 \},\\
I_{\Rsh} &=\{(x,y)\in
\widetilde{\Bbb Z}^2_+\colon a+ 10{\lfloor}(b-a)/12{\rfloor}+1\le x \le a +
11{\lfloor} (b-a)/12{\rfloor}-1 \}.
\end{eqnarray*}

\begin{defin}
\label{segmentito} An horizontal segment $I =\{(x,y)\in
\widetilde{\Bbb Z}^2_+\colon a\le x\le b\}$ with $b-a=(cL)^k$ is
called $k$-segment either if it is contained in some good $k$-site
$S^k$, or if there is a bad layer $B(m,\ell),\; \ell < k,$ and two
good $k$-sites $S^k$, $\widehat{S}^k$ forming a matching pair with
respect to $B(m,\ell)$ such that $I\subset T_{(S^k,
\widehat{S}^k)}$. We denote a $k$-segment $I$ by $I^k$.
\end{defin}

\begin{defin}
\label{hset} {\em (hierarchical $k$-set)} Given a $k$-segment
$I^k$,  a collection $\overline {I^k}$ of $\ell$-segments
$\{I^\ell_j\}_{j}$, $\ell=0,\dots,k$,  contained in
$I^k$ is  a hierarchical $k$-set
associated with $I^k$ if:
\smallskip

\noindent i) $I^k$ is the unique $k$-segment in the collection;

\noindent ii) $I^\ell_j \cap I^\ell_{j'}=\emptyset$ if $j \ne j'$,
for any $\ell\in \{0,\dots, k-1\}$;

\noindent iii) for $k \ge 1$ and $\ell\in \{1,\dots,k\}$, each interval $(I^\ell_{j})_\Lsh$
 and $(I^\ell_{j})_\Rsh$ contains
at least $\frac{1}{12}\rho c L$ \; $(\ell-1)$-segments
in $\overline {I^k}$.
\end{defin}

When $k=0$ we simply have $\overline{I^0}=\{I^0\}=\{S^0\}$ for a
$0$-site $S^0$ and we identify $\overline{I^0}$ with $I^0$. For $k \ge 1$ and having fixed ${I^k}$,
it is convenient to label the elements of $\overline {I^k}$:
going down, from $\ell=k-1$ to $\ell=0$, we label all $\ell$-segments
contained in each $I^{\ell+1}$ from left to right, starting the numbering
within each $I^{\ell+1}$ every time from 1. Proceeding in this way, we
have a multi-index $ {\mu_{\langle k,\ell \rangle}} = \langle
\mu_{k-1} , \mu_{k-2}, \dots, \mu_{\ell} \rangle$ which indicates
the ``genealogical tree"  down to scale $\ell$. We denote the
corresponding $\ell$-segment with this index by $I^{\ell}_{
{_{\mu_{\langle k,\ell \rangle}}}}$.
We shall also use the following convention. If $\mu_{\ell}=j$, we
will write:
\begin{equation}
\label{ind-hierar}
\langle \mu_{k-1}, \dots, \mu_{\ell+1},j \rangle =
\langle {\mu_{\langle k, \ell+1 \rangle}}, j \rangle.
\end{equation}

\begin{defin}
\label{A3lis}

\noindent (a)  For any type 2 good $k$-site $S^k, \; k \ge 1$,
$\Psi^{k} {(S^k)}$ denotes its top $0$-layer. Analogously,
if $\widehat S^k$ is a good reverse $k$-site of type 2,
$\Upsilon^{k} {(\widehat S^k)}$ denotes its bottom $0$-layer.
When $k=0$, we set $ \Psi^{0} (S^0)= S^0$ and
$\Upsilon^{0} (\widehat S^0)= \widehat S^0$.

\noindent (b) If the $k$-sites $S^k$ and $\widehat S^k$ form a
matching pair with respect to $B (m, \ell)$ in the sense of Definition \ref{A2}, and
$\ell-1\le k \le m-1$, we say that $\Psi^{k} {(S^k)}$ and $\Upsilon^{k} {(\widehat
S^k)}$ also form a matching pair;

\noindent (c) Two hierarchical $k$-sets $\overline{\Psi^k}$ and
$\overline{\Upsilon^k}$ whose $k$-segments $\Psi^k$ and
$\Upsilon^k$ form a matching pair with respect to $B(m,\ell)$, with
$\ell-1\le k \le m-1$, are also called a matching pair with respect
to $B(m,\ell)$.
\end{defin}

\smallskip
For the proof in Section \ref{conclusion}  we shall use the
following hierarchical $k$-sets: let $S^k$ be a good $k$-site with
dense kernel. In this case, there will be  at least
$\frac{1}{12}\rho c L$\, $(k-1)$-sites in $D^K_l (S^k)$ and $D^K_r
(S^k)$ respectively, and each of them will have dense kernel. The
same happens at all scales down 0. The top 0-layers of the kernel of
these dense kernel sites at all scales form a hierarchical $k$-set,
which we denote as $\overline{\Psi^k} (S^k)$. The analogous
hierarchical $k$-set for a reverse $k$-site $\widehat S^k$ we will
denote by $\overline{\Upsilon^k}(\widehat S^k)$. We shall use this
in the case when $S^k$ is of type 2, lying immediately below a bad
layer of mass larger than $k$ (so that $S^k$ coincides with its
kernel) or when it contains a bad layer of mass $k$ (type 1).

\medskip
 \noindent {\bf Notation.} It will be convenient to single out
the class of level $1$ bad layers $B(m,1)$ consisting of $m$
consecutive bad lines. We call such bad layers  ``monolithic'',
and refer to them as bad $1_M$-layers. In this case we write
$B(m,1_M)$.
\medskip

\noindent The following concept of {\it chaining} plays an important
role in the proof in Section \ref{onclusion}. We split it into two
definitions, for ``large" and ``small" hierarchical sets, where
large or small has to do with the level of the bad layer, as made
precise below.

\medskip

\begin{defin}
\label{chained2} {\em (``Large" chained hierarchical $k$-sets)} Let
$B(m,\ell)$ be a bad layer of level $\ell$ and mass $m$. If
$k\in\{\ell -1,\dots, m-1\}$, two hierarchical $k$-sets $\overline
{\Psi^k}$ and $\overline {\Upsilon^k}$ forming a matching pair with
respect to $B(m,\ell)$ are said to be {\it chained through
$B(m,\ell)$} if the following holds:
\medskip

\noindent {\bf The case $k= 0$.}  In this situation $\ell=1$, and
we distinguish the $1_M$-layers from the remaining  $B(m,1)$
layers.

\noindent $\bullet$ $1_M$-layer.  We say that $\overline {\Psi^0}$
and $\overline {\Upsilon^0}$ are chained if there exists an open
vertical path of $0$-sites from a nearest neighbor of $\Psi^0$
to nearest neighbor of  $\Upsilon^0$;

\smallskip

\noindent $\bullet$ Non-monolithic layer. In this case $B(m,1)$ is
formed by $m$ bad lines grouped into $r>1$
$1_M$-layers\footnote{this is slight abuse of our previous
notation}, separated among themselves by at most $L-1$ good lines.
We denote these parts by $B_v(m_v,1_M), \; 1 \le v \le r$, where
$m_v$ is the number of bad lines it contains. We say that $\overline
{\Psi^0}$ and $\overline {\Upsilon^0}$ are chained through $B(m,1)$
if there exist a vertical sequence of hierarchical $0$-sets
$$
\widehat S^0(1),S^0(2), \widehat S^0(2),\dots, S^0(r),
$$
such that

a) all sites $S^0(v), \; v= 2, \dots, r,$ and $\widehat S^0(s)$,
$v=1, \dots, r-1$, are passable;

b) the sites $S^0(v)$ and $\widehat S^0(v), \; v=2, \dots , r-1,$
form a matching pair with respect to $B_v(m_v,1_M)$;

c) $\Psi^0$ and $S^0(1)$ are chained through $B_1(m_1, 1_M)$;
$S^0(v)$ and $\widehat S^0(v)$ are chained through $B_v(m_v, 1_M)$,
for each $v=2, \dots, r-1$; $S^0(r)$ and $\Upsilon^0$ are chained
through $B_r(m_r, 1_M)$.

d) each pair of sites $\widehat S^0(s)$ and  $S^0(s+1)$, $1\le s
\le r-1$, is connected by an open path of 0-sites lying within
${\cZ}(S^0_{(i,i_k-1)}, \widehat S^0_{(i',i'_k +1)})$.

\medskip

\noindent {\bf The case $k \ge 1$.} Again we distinguish two
cases:

\smallskip

\noindent $\bullet$ $k\ge \ell $.  Since $\rho >1/2$, letting
$\widehat \rho=\rho-1/2$, the definition of hierarchical set implies
the existence of at least $ \widehat\rho \frac{c}{6} L$ matching
pairs $\overline {\Psi^{k-1}}, \, \overline {\Upsilon^{k-1}}$ with
respect to $B(m,\ell)$, with $\overline {\Psi^{k-1}} \subset
\overline {\Psi^k}$ and $\overline {\Upsilon^{k-1}} \subset
\overline {\Upsilon^k}$. Let $\mathcal{M}$ be the set formed by the
first (from left to right) $ \widehat\rho \frac{c}{6} L^{1/2}$ such
pairs which are separated. We say that $\overline {\Psi^k}$ and
$\overline {\Upsilon^k}$ are chained if at least one matching pair
in $\cM$ is chained through $B(m,\ell)$.

\medskip

\noindent $\bullet$ $k =\ell-1$. Assume that $B(m,\ell)$ has $r>1$
constituents of masses $m_v$ and levels $\ell_v < \ell$, hereby
denoted as $B_v(m_v, \ell_v)$, $v=1,\dots,r$. We say that $\overline
{\Psi^k}$ and $\overline {\Upsilon^k}$ are chained if there exist a
vertical sequence of good $k$-sites
$$
 \widehat S^k(1),S^k(2), \widehat
S^k(2),\dots, S^k(r),
$$
such that

a)  For each $1\le v \le r-1$,  $S(\widehat S^k(v))$ and $S^k(v+1)$ are connected by a
passable $k$-path, lying entirely  in ${\cZ}(S^k_{(i,i_k-1)}, \widehat S^k_{(i',i'_k +1)})$.
(In particular, $S^k(v)$ has $s$-dense
kernel, $v=2,\dots,r$.)

b) $\widehat S^k(v)$ has $c$-dense kernel (reversed),   $v=1,$ $
\dots,r-1$.

c) $S^k(v)$ and $\widehat S^k(v)$ form a matching pair which respect
to $B_v(m_v, \ell_v)$, $v=2, \dots,r-1$.

d) $\overline {\Psi^k}$ and $\overline {\Upsilon^{k}}(\widehat
S^k(1))$ are chained through $B_1(m_1, \ell_1)$; $\overline
{\Psi^{k}} (S^k(v))$ and $\overline {\Upsilon^{k}}(\widehat S^k(v))$
are chained through $B_v(m_v, \ell_v)$, $v=2, \dots,r-1$; and
finally $\overline {\Psi^{k}} (S^k(r))$ and $\overline {\Upsilon^k}$
are chained through $B_r(m_r, \ell_r)$.

\smallskip

 The connections by open oriented paths that are examined using the iterative
 procedure just defined will be called {\it restricted}.
\end{defin}

\medskip

\noindent {\bf Notation.} For easiness of notation we shall write
$J=\widehat \rho \frac{c}{6}L^{1/2}$.

\medskip

\noindent {\bf Remarks.}

\noindent a) Let $B(m,\ell)$ be a bad layer of mass $m$ and level
$\ell$ through which a matching pair of hierarchical $k$-sets
$\overline {\Psi^k}$ and $\overline {\Upsilon^k}$ is chained. If $k
\ge \ell$, there exists an open oriented (restricted) path of
0-sites crossing $B(m,\ell)$ and lying in $T(S^k_{(i,i_k-1)},
\widehat S^k_{(i',i'_k
 +1)})$; if $k=\ell -1$ such a path exists in
 $\mathcal{Z}(S^k_{(i,i_k-1)}, \widehat S^k_{(i',i'_k+1)})$.

\noindent b) Notice that in Definition \ref{chained2}, for each $j
\in \{0,\dots,k-1\}$, we examine at each step (according to the set
$\mathcal{M}$ in Definition \ref{chained2}) exactly $J$
\;$j$-segments within each checked $j+1$-segment in $\overline
{\Upsilon^k}$, and similarly for $\overline {\Psi^k}$; each checked
to be connected to different $j$-segments within $B(m,\ell)$. The
algorithm for selecting $\mathcal{M}$ at each smaller scale depends
(except in the trivial case of layers $B(m,1_M)$) on what happens
within $B(m,\ell)$ as explained therein. With some abuse of notation
we call $\mathcal{M}(\overline {\Upsilon^k})$ and similarly
$\mathcal{M}(\overline {\Psi^k})$ the collection of these checked
segments at all scales ($J$ at each scale).

\noindent c) The estimates in the next section become easier to
formulate once the number of $j$-segments to be examined within a
$j+1$-segment is fixed at all times.  But it does not depend on the
exact algorithm to define the set $\mathcal{M}$ and for the
construction in Section \ref{conclusion} this will be slightly
different then the one used in the above definition, though we shall
use the same notation.

\medskip

\begin{defin}
\label{chained2'} {\em (``Small" chained hierarchical sets)} Let
$\ell \ge 2$, $k\in\{\ell -1,\dots, m-1\}$, and let $B(m,\ell)$ be a
bad layer of mass $m$ and  level $\ell$ for which $\overline
{\Psi^k}$ and $\overline {\Upsilon^k}$ form a matching pair of
hierarchical $k$-sets, assumed to be chained according to Definition
\ref{chained2}.

\smallskip

\noindent a) We say that a 0-site $\Upsilon^0_{\mu_{\langle k,
0\rangle }} \in \mathcal{M}(\overline {\Upsilon^k})$ is chained to
$\overline {\Psi^k}$, if there exists a 0-site $\Psi^0_{\mu_{\langle
k, 0 \rangle }} \in \mathcal{M}(\overline {\Psi^k})$, and a
restricted open path from a nearest neighbor (from above) of
$\Psi^0_{\mu_{\langle k, 0 \rangle }}$ to a nearest neighbor of
$\Upsilon^0_{\mu_{\langle k, 0\rangle}}$ from below.

\noindent b) We say that the $r$-segment $\Upsilon^r_{\mu_{\langle
k, r\rangle }} \in \mathcal{M}(\overline {\Upsilon^k})$ , $0< r <
\ell -1$, is chained to $\overline {\Psi^k}$ if it contains an
$(r-1)$-segment $\Upsilon^{r-1}_{\mu_{\langle k, r-1 \rangle }}\in
\mathcal{M}(\overline {\Upsilon^k})$ which is chained to $\overline
{\Psi^k}$.
\end{defin}

\noindent {\bf Remark.} A $0$-path as in a) above will be open,
oriented, and will lie entirely in the tunnel $T(\Upsilon^k,
\Psi^k)$ when $k \ge \ell$, and for $k=\ell-1$ it will lie in
$Z(\Upsilon^{\ell-1}, \Psi^{\ell-1})$. \footnote{Defined analogously
to $T(S, \widehat S)$ and $Z(S,\widehat S)$.}

\begin{defin}
\label{chnd2} {\em(chained $k$-sites)} Let $B(m,\ell)$ be a
bad layer of level $\ell$ and mass $m$, and $k\in\{\ell -1,\dots,
m-1\}$. Two $k$-sites $S^k$ and $\widehat S^k$ forming a matching
pair with respect to $B(m,\ell)$ are said to be chained through
$B(m,\ell)$, if the corresponding hierarchical $k$-sets
$\overline{\Psi^k}(S^k)$ and $\overline{\Upsilon^k}(\widehat S^k)$ are
chained through $B(m,\ell)$.
\end{defin}

\medskip \noindent {\bf Notation.} The event of two hierarchical
$k$-sets $\overline {\Psi^k}$, $\overline {\Upsilon^k}$, or analogously
two $k$-sites $S^k$, $\widehat S^k$ being chained through
$B(m,\ell)$ is denoted by
\begin{equation}
\overline {\Psi^k}  \leftrightsquigarrow_{_{\! \! \! \! \! \!\! \!
\! \! \! \!\!\!  B(m,\ell)}} \overline {\Upsilon^k} \label{eqchnd2}
\end{equation}
and respectively
\begin{equation}
S^k  \leftrightsquigarrow_{_{\! \! \! \! \! \!\! \! \! \! \!
\!\!\!  B(m,\ell)}} \widehat S^k. \label{eqchained}
\end{equation}
\bigskip


\section {Conclusion of the proof of Theorem \ref {indstep}}
\label{conclusion}
\noindent {\bf Notation.} Let $\varkappa>0$ to be fixed later. We
recursively define for all $m\ge 1$:
\begin{eqnarray}
\label{6N.1}
\nonumber p_{0,m}&:=&p_{_B}^m p_{_G}^{\varkappa(m-1)},\\
p_{k,m}&:=&(1-(1-p_{k-1,m})^J)p_k^{\varkappa(m-k-1)},\qquad 1\le
k\le m-1,\\\nonumber p_{m,m}&:=&1-(1-p_{m-1,m})^J,\\\nonumber
\end{eqnarray}
where $J={\widehat\rho
\frac{c}{6}L^{1/2}}$, $\widehat \rho=\rho -1/2$ as in Definition \ref{chained2},
 $p_k=1-q_k$, $q_k=q_0^{k+1}$, $p_0=p_{_G}, q_0=1-p_G$, as in Theorem \ref{indstep}.

\bigskip

Recalling the statement of Theorem \ref{indstep} and what has been
proven in Section \ref{passability}, it remains to verify that
$(b_{m+1})$ follows from $(a_j),(b_j),(c_j),(d_j)$ for all $j\le m$
and $(a_{m+1})$. To get such estimates we need a more detailed
analysis, as developed in the last section. For $m\ge 1$, let
$p_{k,m}$ be given as above, and set:
\medskip

\noindent $(b_m)^\prime$  For every bad layer $B(m,\ell)$ of mass $m$ (any level $\ell$), every  $j \in\{\ell-1,\dots, m-1\}$  and every
hierarchical $j$-sets $\overline {\Psi^j},\overline { \Upsilon^j}$ that form a matching pair with respect to $B(m,\ell)$,
\begin{equation}
P(\overline {\Psi^j}  \leftrightsquigarrow_{_{\! \! \! \! \! \!\! \!
\! \! \! \!\!\!  B(m,\ell)}} \overline {\Upsilon^j})\ge p_{j,m}.
\label{6N.2}
\end{equation}
\medskip
For $m \ge 2$:

\noindent $(b_m)^{\prime\prime}$ For every $B(m,\ell)$, $j$,
$\overline {\Psi^j},\overline { \Upsilon^j}$ as in $(b_m)^\prime$,
and every $s\in\{0,\dots,j-1\}$, the distribution of the number of
$\Upsilon^s_{\langle {\mu_{\langle j,s+1 \rangle}},i \rangle}\in
\mathcal{M}(\overline{\Upsilon ^j})$ that are chained to $\overline
{\Psi^j}$, conditioned on $\Upsilon^{s+1}_{{\mu_{\langle j,
s+1\rangle }}}$
  being chained to $\overline {\Psi^j}$, is stochastically larger than $F_{p_{s,m}}$, where $F_p$ denotes
   the distribution of a Binomial random variable with $J$ trials and success probability $p$,
    conditioned to have at least one success. That is,
\begin{eqnarray}
 | \{i \colon
\Upsilon^s_{\langle {\mu_{\langle j,s+1 \rangle}},i \rangle}\in
\mathcal{M}(\overline{\Upsilon ^j}) \colon\;\Upsilon^s_{\langle
{\mu_{\langle j,s+1 \rangle}},i \rangle}
 {\text {chained to }} \overline {\Psi^j} \}| \Big|
\big[   \Upsilon^{s+1}_{{\mu_{\langle j, s+1\rangle }}} \, {\text {
chained to }} \overline {\Psi^j} \big] \succeq
F_{p_{s,m}}\label{6N.3}
\end{eqnarray}
with $\succeq$ standing for stochastically larger in the usual
sense.

\bigskip
\begin{thm}
\label{inductive}
The properties $(a_m)$, $(d_m)$ of Theorem \ref{indstep}
and the above $(b_m)^{\prime}$ hold for every
$m \ge 1$;  $(b_m)^{\prime\prime}$ holds for every
$m\ge 2$.
\end{thm}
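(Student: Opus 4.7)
The plan is to argue by strong induction on the mass parameter $m$, chaining together what was already established in Section \ref{passability} with a secondary induction on the scale index $j$ to handle the new property $(b_{m+1})'$. For the base case $m=1$ everything reduces to what was proved in the paragraphs following \eqref{4.80} (essentially only one bad line, $\ell=1$, so $j=0$ and chaining is a direct open-path computation through a single bad line, yielding exactly $p_{0,1}=p_B$). For the induction step, assume the conclusions hold for all masses $\le m$. The implications $(d_m)\Rightarrow (a_{m+1})$ (via coupling, Lemma \ref{Lemma 4.4}) and $(d_m)\Rightarrow (c_{m+1})$ (via the choice of $N$ satisfying \eqref{N}) were already recorded in Section \ref{passability}; once $(b_{m+1})'$ is in hand, $(b_{m+1})$ itself follows by specialising to $j=m$ and a single pair of matching hierarchical $m$-sets, and then $(d_{m+1})$ is immediate from $(a_{m+1})+(b_{m+1})+(c_{m+1})$.

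The heart of the work is $(b_{m+1})'$ and its companion $(b_{m+1})''$. I would fix a bad layer $B(m+1,\ell)$ and perform a secondary induction on $j\in\{\ell-1,\ldots,m\}$. The base $j=\ell-1$ splits according to Definition \ref{chained2}: in the monolithic case $(\ell=1,\,j=0)$ the bound $p_{0,m+1}=p_B^{m+1}p_G^{\varkappa m}$ is a direct estimate of the probability that a specific column of $m+1$ bad 0-sites, plus the required adjacent good 0-sites above and below, are all open. In the non-monolithic case with $\ell=1$, I would decompose the layer into its monolithic pieces $B_v(m_v,1_M)$ and build a vertical sequence of auxiliary good 0-sites in the intervening good lines, using Lemma \ref{Lemma 4.5}/\ref{Lemma 4.6} and the Harris–FKG inequality to multiply the monolithic bounds while absorbing the good-layer crossings into the extra factor $p_G^{\varkappa m}$. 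In the case $\ell\ge 2$, I would invoke Lemma \ref{decomp} (descending decomposition) to split $B(m+1,\ell)$ into constituents of \emph{strictly smaller mass}, apply the primary induction hypothesis to each constituent to obtain chaining through it with probability $\ge p_{j,m_v}$, and then use $(d_{m'})$ for $m'\le m$ to control the probability that the required vertical sequence of good $(\ell-1)$-sites with $s$- and $c$-dense kernels exists in the porous media gaps provided by Lemma \ref{poroso}.

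For the inductive step $j\to j+1$ (with $j+1\le m$, so $j+1\ge\ell$), Definition \ref{chained2} requires that at least one of the $J=\widehat\rho cL^{1/2}/6$ separated matching pairs of hierarchical $j$-sets inside the larger $(j+1)$-structures be chained through $B(m+1,\ell)$. The $J$ pairs are separated by construction, so the restricted open-path events sit in disjoint tunnels/zones and are genuinely independent; each has probability $\ge p_{j,m+1}$ by the inner inductive hypothesis, which yields the factor $1-(1-p_{j,m+1})^J$. The remaining factor $p_{j+1}^{\varkappa(m-j-1)}$ in the definition of $p_{j+1,m+1}$ accounts for the auxiliary events (passability, dense kernels, correct routing of open paths at scale $j+1$) and is supplied uniformly by $(d_{j+1})$. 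The companion statement $(b_{m+1})''$ is proved in parallel: conditional on $\Upsilon^{s+1}_{\mu}$ being chained, the $J$ candidate children $\Upsilon^s_{\langle\mu,i\rangle}$ sit in separated regions, so the indicator variables of their individual chaining events dominate independent Bernoulli$(p_{s,m+1})$ random variables (again by Harris–FKG plus the separation in Definition \ref{A3}), and conditioning on the original event being witnessed by at least one child converts this into the Binomial distribution conditioned on positivity, as required.

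The main obstacle I anticipate is the bookkeeping at the base level $j=\ell-1$ when $\ell\ge 2$: one has to simultaneously produce a vertical sequence of good $(\ell-1)$-sites threading through \emph{all} constituents of the descending decomposition, verify that the matching pairs required to invoke the primary induction hypothesis on each constituent actually lie in the prescribed tunnels/zones (so that their chaining events are conditionally independent across constituents), and still leave enough probability budget to absorb the extra structural events into the $p_k^{\varkappa(m-k-1)}$ factor. This forces the quantitative choice of $\varkappa$: it must be large enough that $p_k^{\varkappa(m-k-1)}$ can swallow the polynomial-in-$L$ number of auxiliary passability/dense-kernel requirements appearing at scale $k$, yet small enough that the recursion $p_{k,m}=(1-(1-p_{k-1,m})^J)p_k^{\varkappa(m-k-1)}$ does not decay too fast in $m$. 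Once $\varkappa$ is fixed appropriately and $L$ is taken large (which is permitted by the freedom to shrink $\de$ in Lemma \ref{lemma1.1}), the recursion closes and the induction carries through, completing the proof.
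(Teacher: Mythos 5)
Your high-level plan (strong induction on the mass $m$, invoking the descending decomposition for the bottom scale, lifting through scales via separated pairs) correctly identifies most of the ingredients, but there are several concrete gaps between the outline and a working proof.

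First, your secondary induction on $j$ misattributes the factor $p_{j+1}^{\varkappa(m-j-1)}$. For $j+1\ge\ell$, Definition \ref{chained2} defines chaining of a $(j+1)$-pair \emph{purely} as ``at least one of the $J$ separated $j$-pairs in $\mathcal{M}$ is chained''; there is no auxiliary passability/dense-kernel requirement at scale $j+1$ that $(d_{j+1})$ could supply. The factor $p_{j+1}^{\varkappa(m-j-1)}$ in \eqref{6N.1} is deliberate slack in the recursion: it reserves a probability budget so that the finitely many \emph{transfer} events --- which occur not at every scale but only at the scales $\widetilde m_{s+1}$ appearing in the itinerary of Lemma \ref{decomp}, and which are established via Lemma \ref{vertical-crossing} --- can be absorbed. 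You do not describe the transfer operation at all, and without it the base case $j=\ell-1$ cannot be closed. Relatedly, you confuse the $\ell$-run constituents in Definition \ref{chained2} (which all have mass $\ge\ell$, not strictly decreasing) with the descending decomposition of Lemma \ref{decomp} (which has $m>\widetilde m_2>\dots>\widetilde m_v$); the paper explicitly modifies the definition of $\mathcal{M}$ in Section \ref{conclusion} to use the descending decomposition (see the remark after Definition \ref{chained2}), and this modification is essential for the primary induction to bite on strictly smaller masses.

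Second, even granting the base-case strategy, you do not explain how an itinerary-dependent product of constituent-chaining and transfer probabilities is compared to the uniform quantities $p_{j,m+1}$. The paper does this via the auxiliary two-stage box scheme of Definition \ref{calculadora}, which is the crucial combinatorial device and is entirely absent from your proposal. Finally, the inductive step is not complete until one shows that the recursion actually dominates $p_m$, i.e.\ $p_{m,m}\ge p_m$ and \eqref{final1}; this requires the Cram\'er-type large-deviation estimate in \eqref{final3}--\eqref{1106} and the constraint $\frac{\varkappa}{2}\ln 2<y$ on $p_G$. You flag this as ``choose $\varkappa$, take $L$ large, the recursion closes'' without giving the mechanism, but this quantitative verification is where the choice of $\varkappa$, $L$, and the closeness of $p_G$ to $1$ all interact, and it cannot be waved away. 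In short: the skeleton is right, but the zooming/transfer machinery, the box-scheme comparison, and the Cram\'er estimate --- which together constitute the actual proof of $(b_{m+1})'$ --- are missing.
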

\medskip

\noindent {\it Proof.} The proof is by induction in $m$.

\noindent {\it Initial step.}   $(b_1)^{\prime}$ follows directly from the
definitions, and  $(a_1)$, $(d_1)$ have already been verified in Section
\ref{passability}. $(b_2)^{\prime\prime}$ is also trivially verified.

\medskip

\noindent {\it Induction step.} We first
establish $(b_{m+1})^\prime$. Throughout the proof we  use the descending decomposition
representation of the bad layer $B(m+1,\ell)$; we also construct a class of particularly chosen
hierarchical sets that will play a role in the induction.

\medskip

\noindent Let $(\overline {\Psi^{m}}, \overline {\Upsilon^{m}})$
form a matching pair with respect to a bad layer $B(m+1,\ell)$, and
let $\{ \widetilde m_s \}_{s=1}^v$ denote the itinerary of the
descending decomposition of $B(m+1,\ell)$, with $\{ \widetilde
{{\cC}_s} \}_{s=1}^v$ its corresponding clusters, and
$\{B_{\widetilde {{\cC}_s}}\}_{s=1}^v$ the corresponding bad layers.
The interval between any two consecutive clusters $\widetilde
{\cC}_s$ and $\widetilde{\cC}_{s+1}$ is always porous media of level
$\widetilde m_{s+1}$ (see Lemma \ref{decomp}).

\medskip
\noindent An {\it entrance set} $\overline {\Psi^{m}} (s), \, s= 2,
\dots, v$, will be a suitable hierarchical $m$-set located at the
$0$-layer just below $B_{\widetilde {\cC}_s}$, and an {\it exit set}
$\overline {\Upsilon^{m}} (s), \, s= 1, \dots, v$,  a suitable
hierarchical $m$-set located at the $0$-layer just above
$B_{\widetilde {\cC}_s}$ for $s= 1, \dots, v-1$, with $\overline
{\Upsilon^{m}} (v)$ located at the $0$-layer just above
$B_{\widetilde {\cC}_v}$, or at its last $0$-layer, according to
$g_v < \omega(\cC)-1$ or $g_v = \omega(\cC)-1$ (Lemma \ref{decomp}).

\medskip {\it Large segments of exit and entrance sets.} For each $s=1,\dots,v-1$,
the $m$-segment ${\Upsilon}^{m} (s)$ and all $j$-segments
${\Upsilon}^{j}_{\mu_{\langle m, j \rangle}} (s)$, $\widetilde
m_{s+1} \le j < m$,  in $\overline {\Upsilon^m}(s)$, are obtained by
taking the corresponding segments ${\Upsilon}^{m}$ and
${\Upsilon}^{j}_{\mu_{\langle m, j \rangle}}$ and projecting them
vertically on the $0$-layer located just above $B_{\widetilde
{\cC}_s}$, and then by taking as ${\Upsilon}^{j}_{\mu_{\langle m, j
\rangle}}(s)$ a $j$-segment which intersects this projection:  when
there are two such $j$-segments, to avoid ambiguities we take the
one which intersects the left half of the projection. For $s=v$ the
only difference is that when $g_v = \omega(\cC)-1$  the segments
will be located at the last $0$-layer of $B_{\widetilde {\cC}_v}$.

For the entrance sets $\overline{\Psi^m}(s)$ with $s=2,\dots, v$ we
proceed in the same way: the $m$-segment ${\Psi}^{m} (s)$ and all
$j$-segments ${\Psi}^{j}_{\mu_{\langle m, j \rangle}} (s)$,
$\widetilde m_{s} \le j < m$,  in $\overline {\Psi^m}(s)$, are
obtained by taking the corresponding segments ${\Upsilon}^{m}$, and
${\Upsilon}^{j}_{\mu_{\langle m, j \rangle}}$ and projecting them
vertically on the $0$-layer located just below $B_{\widetilde
{\cC}_s}$, with the same selection rule as above in case there are
two such $j$-segments.

\medskip
\noindent {\it Construction of the exit sets $\overline {\Upsilon^m}
(s)$.}  Consider first the case $1\le s <v$. To continue the
construction of the $j$-segments at scales smaller than $\widetilde
m_{s+1}$, we consider, for each already defined $\widetilde
m_{s+1}$-segment of this collection,  the reversed $\widetilde
m_{s+1}$-site for which this segment is the last 0-layer, i.e.
$\widehat S^{\widetilde m_{s+1}}$ such that $\Upsilon (\widehat
S^{\widetilde m_{s+1}}) = {\Upsilon}^{\widetilde
m_{s+1}}_{\mu_{\langle m, \widetilde m_{s+1} \rangle}} (s)$, and
check if this site has $c$-(reverse) dense kernel. If the answer is
affirmative, we take ${\Upsilon}^{j}_{\mu_{\langle m, j \rangle}}
(1), \; j = 0, \dots, \widetilde m_{s+1}-1$  as the bottom 0-layers
of the reverse dense kernel sites of $\widehat S^{\widetilde
m_{s+1}}$, or in other words all the scales from $\widetilde
m_{s+1}$ down to zero correspond to $\overline{\Upsilon^{\widetilde
m_{s+1}}}(\widehat S^{\widetilde m_{s+1}})$. These are called
``compatible" segments. For those sites $\widehat S^{\widetilde
m_{s+1}}$ that do not have $c$-(reverse) dense kernel, we select the
${\Upsilon}^{j}_{\mu_{\langle m, j \rangle}} (s), \; j = 0, \dots,
\widetilde m_{s+1}-1$ in an arbitrary way among the correspondent
sub-segments of bottom 0-layers of the site. We call such choice of
segments ``incompatible" with the process. Only compatible segments
will play a role in the construction.

In the case $s=v$ we make essentially the same construction, as if $\widetilde m_{s+1}=0$,
with the difference that when  $\omega (B(m+1, \ell)) = \omega (\widetilde {\cC}_v) +1$
we locate the hierarchical set at the last $0$-layer of $B_{\widetilde{\cC}_v}$.

\medskip

\noindent Observe that the construction of $\overline {\Upsilon^
m} (s)$ and the compatibility of its segments depend on $\Gamma$ and
on the occupation variables in between $B_{\widetilde{\cC}_s}$ and
$B_{\widetilde{\cC}_{s+1}}$.

\medskip

\noindent {\it Step 1, part 1.} We check if
at least one among the pairs of hierarchical $(m-1)$-sets $\overline
{\Psi^{m-1}_{\mu_{\langle m, m-1 \rangle}}}$ and $ \overline
{\Upsilon^{m-1}_{\mu_{\langle m, m-1 \rangle}}}(1)$ is chained
through $B_{\widetilde{\cC}_1}$. If so, we move to the next item;
otherwise we stop the procedure and say that $\overline {\Psi^{m}}$
and $\overline {\Upsilon^{m}}$ are not chained through
${B(m+1,\ell)}$.
\medskip

\noindent {\it Step 1, part 2.} ({\em Zooming})  For each pair of
hierarchical $(m-1)$-sets $\overline {\Psi^{m-1}_{\mu_{\langle m,
m-1 \rangle}}}$ and $ \overline {\Upsilon^{m-1}_{\mu_{\langle m, m-1
\rangle}}}(1)$ chained through $B_{\widetilde{\cC}_1}$ we select all
multi-indices ${\mu_{\langle m, \widetilde{m}_2 \rangle}}$ and the
corresponding $\widetilde m_{2}$-segments  $\Upsilon^{\widetilde
m_{2}}_{\mu_{\langle m,\widetilde m_{2}\rangle}}(1)$, which are
compatible and from which there exists an open oriented 0-level path
that connects to $\Psi^{m-1}_{\mu_{\langle m, m-1 \rangle}}$ through
$B_{\widetilde{\cC}_1}$.

\noindent {\it Step 1, part 3.} ({\em Transfer}) For the $\widetilde m_{2}$-segments
$\Upsilon^{\widetilde m_{2}}_{\mu_{\langle m,\widetilde m_{2}\rangle}}(1)$ selected in the
previous item, we first check whether

\noindent (i) the forward site $S(\widehat S^{\widetilde m_2}_{\x})$
 is $c$-(forward) passable.
 \medskip

\noindent If the answer is positive, it implies that at least one of the seeds of $Q_l
(S(\widehat S^{\widetilde m_2}_{\x}))$ or $Q_r (S(\widehat
S^{\widetilde m_2}_{\x}))$ is also connected to $\Psi^{m-1}$ (we may call it ``active").
This gives us a way of completing the construction of the hierarchical set $\overline {\Psi^{m}}(2)$
at scales smaller than $\widetilde m_{2}$:
\smallskip

\noindent {\it Construction of the entrance set $\overline {\Psi^{m}}(2)$.}  Take $S^{\widetilde m_{2}}_{\x'}$ such that $\Psi
(S^{\widetilde m_{2}}_{\x'}) = {\Psi}^{\widetilde m_2}_{\mu_{\langle m,\widetilde m_{2}\rangle}}(2)$, and check
whether

\noindent (ii) there exists an oriented passable $\widetilde
m_2$-path starting from the $\widetilde m_{2}$-site which is
$s$-passable from the active seed of  $S(\widehat S^{\widetilde
m_2}_{\x})$ to  $S^{\widetilde m_2}_{\x'}$, and entirely contained
in ${\cZ}(S(\widehat S^{\widetilde m_2}_{\x}),S^{\widetilde
m_2}_{\x'})$.

\smallskip


\medskip

\noindent If the answer to (i) and (ii) is positive we say that
$\overline{\Upsilon^{\widetilde m_2}_{\mu_{\langle m,\widetilde m_2}\rangle}}(1)$
and $\overline{\Psi^{\widetilde m_2}_{\mu_{\langle m,\widetilde m_2}\rangle}}(2)$ are ``active".
Otherwise the procedure of building connection from
$\Upsilon^{\widetilde m_2}_{\mu_{\langle m,\widetilde m_2}\rangle}(1)$ is stopped. This completes Step 1.

\begin {remark1}
\label{chain1} Notice that $\overline{\Psi^m}(2)$ lies just below
$B_{\widetilde C_2}$, but a positive answer to (i) and (ii) above,
besides guaranteeing the connection of $\overline{\Psi^{\widetilde
m_{2}}}(2)$ to the corresponding $\Upsilon^{\widetilde m_{2}}(1)$
(and therefore to $\Psi^{m-1}$ by force of the previous sub-step)
also gives connection by open oriented path of $0$-sites to suitable
sites at the top $0$-layer of $B_{\widetilde C_2}$ (according to the
definition of passability at the scale $\widetilde m_{2}$), which
then implies the existence of an open path to a $0$-site in
$B_{\widetilde C_2}$  which is nearest neighbor of a corresponding
$\widetilde m_{2}$-segment $\Upsilon^{\widetilde m_2}_{\mu_{\langle
m,\widetilde m_2\rangle}}(2)$. The first part does not depend on the
occupation variables in $B_{\widetilde C_2}$, and one might find
convenient to think of the event in (ii) as the intersection of
these two conditions involving disjoint sets of $0$-sites.
\end{remark1}


\bigskip

\noindent {\it Step $s, \; 1< s \le v$.} Having determined the
``active" $\overline{\Upsilon^{\widetilde m_s}_{\mu_{\langle
m,\widetilde m_s}\rangle}}(s-1)$ and $\overline{\Psi^{\widetilde
m_s}_{\mu_{\langle m,\widetilde m_s}\rangle}}(s)$, the process
continues only from the ``compatible" corresponding sub-segments
$\Upsilon^{\widetilde{m}_{s+1}}_{\mu_{\langle
m,\widetilde{m}_{s+1}\rangle}}(s)$.

\noindent {\it Sub-case $s<v$.} The construction repeats what was done above for $s=1$:

\medskip


\begin{itemize}
\item {} We check if $\overline{\Psi^{\widetilde m_{s}}_{\mu_{\langle m, \widetilde m_{s}\rangle}}}(s)$ and $\overline{\Upsilon^{\widetilde m_{s}}_{\mu_{\langle m, \widetilde m_{s}\rangle}}}(s)$ are
chained  through $B_{\widetilde C_s}$;

\item {} For each pair of hierarchical $(\widetilde m_{s}-1)$-sets  $\overline
{\Psi^{\widetilde m_{s}-1}_{\mu_{\langle m, \widetilde m_{s}-1 \rangle}}}(s)$  and  $\overline
{\Upsilon^{\widetilde m_{s}-1}_{\mu_{\langle m, \widetilde m_{s}-1 \rangle}}}(s)$
chained through $B_{\widetilde C_s}$, we
select all multi-indices ${\mu_{\langle m, \widetilde{m}_{s+1} \rangle}}$ and corresponding
$\widetilde m_{s+1}$-segments  $\Upsilon^{\widetilde m_{s+1}}_{\mu_{\langle m,\widetilde m_{s+1}\rangle}}(s)$
which are compatible and for which there exists a 0-level path (open, oriented) connecting them to
$\Psi^{\widetilde{m}_s-1}_{\mu_{\langle \widetilde{m}_s, \widetilde{m}_s-1 \rangle}}$ through $B_{\widetilde C_s}$.


\item{}  item 2, called ``transfer", and the construction of  $\overline {\Psi^{m}}(s+1)$ both follow the same procedure as when $s=1$, replacing $\widetilde m_2$ by $\widetilde m_{s+1}$.
     We then say that $\overline{\Upsilon^{\widetilde m_{s+1}}_{\mu_{\langle m,\widetilde m_{s+1}\rangle}}}(s)$ and $\overline{\Psi^{\widetilde m_{s+1}}_{\mu_{\langle m,\widetilde m_{s+1}\rangle}}}(s+1)$ are active if the analogue of the previous (i)--(ii) both hold.

\end{itemize}

\medskip

\noindent {\it Sub-case $s=v$.} This splits into two situations:

a) $\omega (B(m+1, \ell)) > \omega (\widetilde {\cC}_v) +1$. In this
case we act as if $ \widetilde m_{s+1} = 0$, i.e. we first of all
perform ``zooming" selecting all active elements down to $0$ level,
and repeat the ``transfer" procedure.

\medskip

b) $\omega (B(m+1, \ell)) = \omega (\widetilde {\cC}_v) +1$. In this
case we act as if $ \widetilde m_{s+1} = 0$, i.e. we first of all
perform the``zooming" selecting all active elements down to $0$ level,
however the ``transfer" procedure  reduces to connecting over
the last bad line of $B(m+1,\ell)$.
\bigskip

\noindent {\bf Estimates needed for the induction step.}

\medskip






\noindent In what follows, we will repeatedly use the following basic result on the standard
oriented percolation model on $\widetilde {\mathbb Z}^2_+$:

For $a\ge 1$ a large integer, consider the rectangle $R_a = ([0, a] \times [0, a^2]) \cap \widetilde{\mathbb Z}^2_+$,
and let $(x,0)$, $(y,a^2)$ be two points lying on the two horizontal faces,
with $|x-a/2|,|y-a/2| \le a/10$, and define the following event of vertical crossing:
\begin{eqnarray*}
V(R_a)= [\text{there exists an
open oriented path from } (x,0) \text{ to } (y,a^2)
\text{ lying entirely in } R_a]
\end{eqnarray*}


Then the following holds:

\begin{lemma}
\label{vertical-crossing} There exist  $a_0\ge 1, 0 < \tilde p < 1$
and $\varkappa^\prime>0$, such that for any $a \ge a_0$ and $p \ge
\tilde p$ we have

$$
P_p (V(R_a)) \ge p^{\varkappa^\prime}. \label{help}
$$
\end{lemma}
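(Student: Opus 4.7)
The plan is to combine a Peierls-type contour argument for the existence of a vertical crossing in $R_a$ with the edge-speed estimates of Lemma \ref{Lemma 4.6} to target the specific endpoints $(x,0)$ and $(y,a^2)$.

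First, I would show that an open oriented crossing from the bottom face to the top face of $R_a$ exists with probability tending to $1$ exponentially fast in $a$. Failure of such a crossing implies, by planar duality analogous to the contour construction in the proof of Lemma \ref{Lemma 4.5}, the existence of a horizontal ``blocker'' of closed sites separating the two faces within $R_a$. A minimal blocker in an $a$-wide tube has size at least $\lfloor a/2\rfloor$, while the number of minimal blockers of size $k$ is at most $C^k$ for a universal constant $C$ (by the usual enumeration of self-avoiding dual contours on a strip). Summing over the possible sizes and vertical positions gives a failure probability at most $a^2 (C(1-p))^{a/2}$, which is exponentially small in $a$ once $p$ is close enough to $1$ that $C(1-p)<1$.

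Second, to pin the crossing to the specific endpoints, I would apply Corollary \ref{cor1-sec4} at the midline $y=\lfloor a^2/2\rfloor$ with parameter $\eta$ of order $1/a$, chosen so that the tube of half-width $\eta\cdot a^2/2 \sim a$ fits within $R_a$. This gives $\gtrsim a$ open sites at the midline that are connected to $(x,0)$ by open paths staying in the lower half of $R_a$. Reversing the orientation of $\widetilde{\mathbb{Z}}^2_+$ (which preserves the law of the percolation) and applying the same bound produces $\gtrsim a$ sites at the midline connected backward to $(y,a^2)$ through the upper half. Since $|x - y|\le a/5$, the two tubes overlap in a window of width $\ge a/2$ about the centerline; by independence of the top and bottom halves conditioned on the midline, and a Paley--Zygmund / second-moment argument on the overlap, these dense sets intersect with positive probability, yielding a crossing entirely inside $R_a$.

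The main obstacle is that Lemma \ref{Lemma 4.6} requires $p \ge \bar p(\epsilon,\eta)$ with $\bar p$ possibly approaching $1$ as $\eta = 1/a \to 0$, so the second step cannot be applied with $\epsilon,\eta$ fixed once and for all. The fix is either to sharpen the density estimate in the regime of small $\eta$ (exploiting that $p$ is already near $1$, so $\theta(p)$ is near $1$), or to replace the single-midline gluing by an iterated block renormalization over $O(\log a)$ dyadic scales, at each of which $\eta$ remains bounded below by a fixed positive constant. Combining the two steps yields $P(V(R_a)) \ge 1 - \delta(p,a)$ with $\delta(p,a)\to 0$ as $p\to 1$, uniformly for $a\ge a_0$; taking $\tilde p$ close enough to $1$ and $a_0$ large enough guarantees this lower bound exceeds $p^{\varkappa'}$ for a suitable positive constant $\varkappa'$.
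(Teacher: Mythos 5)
You have correctly identified the central obstacle: confinement to the narrow rectangle $R_a$ forces $\eta$ to scale like $1/a$, and the threshold $\bar p(\epsilon,\eta)$ in Lemma~\ref{Lemma 4.6} is not uniform as $\eta\to 0$, so a single-midline gluing cannot produce a bound uniform in $a$. However, neither of your two proposed fixes is carried out, and neither obviously repairs the argument. Sharpening the density estimate in the vanishing-$\eta$ regime is essentially equivalent to the lemma itself, so it begs the question. And ``iterated block renormalization over $O(\log a)$ dyadic scales'' does not address the confinement: $R_a$ has aspect ratio $a$, so whatever dyadic subdivision you choose, the band-width-to-height ratio is still $1/a$ at the top scale. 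What is needed --- and what the paper's short sketch does --- is to tile $R_a$ into $a$ (not $\log a$) squares of side $a$, so that inside each square the effective $\eta$ is $O(1)$, survival from the specified open vertex $(x,0)$ across the first square already costs the factor $p^{\varkappa'}$, and each of the remaining $a-1$ squares is traversed (from the already-dense front) with conditional probability $1-e^{-c_1 a}$, giving $P_p(V(R_a))\ge p^{\varkappa'}(1-e^{-c_1 a})^{a-1}$, which is absorbed into a slightly larger exponent once $a\ge a_0$.

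Two further, smaller issues. Your Step 1 (Peierls-type contour bound against a closed horizontal blocker) controls existence of \emph{some} open oriented crossing of $R_a$, which is a much weaker and essentially irrelevant event: the lemma requires a connection from the particular vertex $(x,0)$ to the particular vertex $(y,a^2)$, and the contour estimate neither implies this nor feeds into your Step 2, so it can be discarded. Finally, the closing appeal to a Paley--Zygmund/second-moment argument yields only ``positive probability'', whereas the stated bound $p^{\varkappa'}$ must tend to $1$ as $p\to 1$ uniformly in $a$ (note also $P_p(V(R_a))\le p$ since $(x,0)$ must be open, so $\varkappa'\ge 1$). You need an explicit, polynomial dependence on $p$, which in the paper comes from the first-block survival probability; a nonquantitative second-moment intersection bound does not deliver this.
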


\begin{proof} The proof of the above inequality is rather standard. We sketch it briefly:
let us split the rectangle into $a$ disjoint squares with sides $a$, and choose $a$ large enough,
with $\tilde p$ close enough to $1$ so that the probability of survival from a starting point (centered) in the
first (from the bottom, say) square $a \times a$ is larger than $p^{\varkappa'}$, and given that the process
survives in the square, at its upper boundary it is close to its asymptotic shape and asymptotic density.
Then we repeatedly request survival and approximation to asymptotic density in the next $a-1 $
consecutive squares, starting from $a/(5\theta (p_0))$
centrally located points. The probability of each of such events is
exponentially $\exp(-c_1 a)$, with constant $c_1 > 0$, and
uniformly bounded away from $0$, for $p$ large. We easily get
the desired result.
\end{proof}

\medskip

\noindent
\begin{remark1}
\label{transfer-remark}
The previous lemma is used in the part of the procedure called ``transfer" above. It will be used
at the various scales $k \le m$, with $\tilde p=p_k$, and $a^2\ge L$ which we may assume large enough so
that the estimate applies.
\end{remark1}

\noindent Let  $(\overline {\Psi^m}, \overline {\Upsilon^m})$ be a
matching pair  with respect to $B(m+1,\ell)$ under consideration. By
the induction assumption $(b_{m})^\prime$, we have that for each
pair of indices $\mu_{\langle m,m-1\rangle}$
\begin{equation}
P\left(\,\overline
{\Upsilon^{m-1}_{\mu_{\langle m, m-1 \rangle}}}(1) \leftrightsquigarrow_{_{\! \!
\! \! \! \!\! \! \! \! \! \!\!\! B(\widetilde \cC_1)}} \overline
{\Psi^{m-1}_{\mu_{\langle m, m-1 \rangle}}}\,\right)\geq p_{m-1,m}. \label{6N.6H}
\end{equation}
For a fixed family of hierarchical $(m-1)$-sets, the events in \eqref{6N.6H} are
(conditionally) independent, so that the distribution of the number of
chained pairs, given that at least one of them is chained, is stochastically larger than
$F_{p_{m-1,m}}$.

On the other hand, from the induction assumption $(b_m)^{\prime\prime}$ we have
that for each $0\le j< m-1$ and each pair $\mu_{\langle m,j+1\rangle}$
\begin{equation}
|\{i\colon \overline{\Upsilon^{j}_{\langle\mu_{\langle m,j+1\rangle},i\rangle}}(1) \leftrightsquigarrow_{_{\!
\! \! \! \! \!\! \! \! \! \! \!\!\! B(\widetilde \cC_1)}}   \overline{\Psi^{m-1}_{\mu_{\langle m,m-1\rangle}}}\}| \big |
\big[\overline{\Upsilon^{j+1}_{\mu_{\langle m,j+1\rangle}}}(1) \leftrightsquigarrow_{_{\!
\! \! \! \! \!\! \! \! \! \! \!\!\! B(\widetilde \cC_1)}}
\overline{\Psi^{m-1}_{\mu_{\langle m,m-1\rangle}}}] \succeq
F_{p_{j,m}}, \label{6N.6}
\end{equation}
i.e. conditioned on
$\overline{\Upsilon^{j+1}_{\mu_{\langle m,j+1\rangle}}}(1)$  being
chained to $\overline{\Psi^{m-1}_{\mu_{\langle m,m-1\rangle}}}$, the number of indices $i$ so that
$\overline{\Upsilon^{j}_{\langle\mu_{\langle m,j+1\rangle},i\rangle}}(1)$
is chained to $\overline{\Psi^{m-1}_{\mu_{\langle m,m-1\rangle}}}$ is stochastically larger than
$F_{p_{j,m}}$. We shall use \eqref{6N.6} for $j$ going down to
$j=\widetilde m_2$.

\smallskip


\medskip

Assume $\widetilde m_2 \ge 1$, i.e. $v\ge 2$. For each index $\mu_{\langle m,\widetilde m_2\rangle}$ which yields a chained
set at all steps from $m-1$ down to $\widetilde m_2$ one now checks the $\widetilde m_2$-set
$\overline {\Upsilon^{\widetilde m_2}_{\mu_{\langle m,\widetilde m_2\rangle}}}$ is ``compatible"
and if the conditions (i) and (ii) described in the previous construction hold. Using the
induction assumption, which guarantees the validity of conditions $(a_i)-(d_i)$ for
all $i\le m$. Applying this and Lemma \ref{vertical-crossing}, we get from \eqref{help},
for each such index $\mu_{\langle m,\widetilde m_2\rangle}$:
\begin{equation}
P\left (\overline{\Upsilon^{\widetilde m_2}_{\mu_{\langle m,\widetilde m_2}\rangle}}(1) \text{ and }
\overline{\Psi^{\widetilde m_2}_{\mu_{\langle m,\widetilde m_2}\rangle}}(2) \text { are ``active"}\right) \ge {p^\kappa_{\widetilde m_2}}
\end{equation}
where $\varkappa = \varkappa^\prime + 2$ (the $+2$ appears since we need
to check that the starting $\widetilde m_2$-site at the bottom has
reverse $c$-dense kernel (compatible), and is forward $c$-passable).
Using then $(a_m)$ we get that for all the previous indices
$\mu_{\langle m,\widetilde m_2\rangle}$ as above (for them we have
$\overline{\Upsilon^{\widetilde m_2}_{\mu_{\langle m,\widetilde m_2\rangle}}}(1)$
is chained to $\overline{\Psi^{m-1}_{\mu_{\langle m,m-1\rangle}}}$\,) one gets
\begin{equation}
\label{lift2}
P\left(\exists i\colon \overline{\Upsilon^{\widetilde m_2-1}_{\mu_{\langle\langle m,\widetilde m_2\rangle,i\rangle}}}(2)
\leftrightsquigarrow_{_{\! \!
\! \! \! \!\! \! \! \! \! \!\!\! B(\widetilde \cC_2)}}\overline{\Psi^{\widetilde m_2-1}_{\mu_{\langle\langle m,\widetilde m_2\rangle,i\rangle}}}(2)\right) \geq p_{\widetilde m_2,\widetilde m_2}.
\end{equation}
The event on the l.h.s. of \eqref{lift2} we naturally denote as
\begin{equation*}
\left [\overline{\Upsilon^{\widetilde m_2}_{\mu_{\langle m,\widetilde m_2\rangle}}}(2)
\leftrightsquigarrow_{_{\! \!
\! \! \! \!\! \! \! \! \! \!\!\! B(\widetilde \cC_2)}}\overline{\Psi^{\widetilde m_2}_{\mu_{\langle m,\widetilde m_2\rangle}}}(2)\right]
\end{equation*}
and by the induction assumption we can write, analogously to \eqref{6N.6}, for each $j < \widetilde m_2-1$:
\begin{equation}
|\{i\colon \overline{\Upsilon^{j}_{\langle\mu_{\langle m,j+1\rangle},i\rangle}}(1) \leftrightsquigarrow_{_{\!
\! \! \! \! \!\! \! \! \! \! \!\!\! B(\widetilde \cC_2)}}   \overline{\Psi^{\widetilde m_2-1}_{\mu_{\langle m,\widetilde m_2-1\rangle}}}(2)\}| \big |
\big[\overline{\Upsilon^{j+1}_{\mu_{\langle m,j+1\rangle}}}(2) \leftrightsquigarrow_{_{\!
\! \! \! \! \!\! \! \! \! \! \!\!\! B(\widetilde \cC_2)}}
\overline{\Psi^{\widetilde{m}_2-1}_{\mu_{\langle m,\widetilde m_2-1\rangle}}}(2)] \succeq
F_{p_{j,\widetilde m_2}}. \label{6N.6-2}
\end{equation}
But it is very simple to check that  $F_p\succeq F_{\tilde p}$ when
$1 \ge p\ge \tilde p
> 0 $, and we may therefore replace $p_{j,\widetilde m_2}$ by $p_{j,m}$ on the
r.h.s. of \eqref{6N.6-2}:
\begin{equation}
|\{i\colon \overline{\Upsilon^{j}_{\langle\mu_{\langle m,j+1\rangle},i\rangle}}(1) \leftrightsquigarrow_{_{\!
\! \! \! \! \!\! \! \! \! \! \!\!\! B(\widetilde \cC_2)}}   \overline{\Psi^{\widetilde m_2-1}_{\mu_{\langle m,\widetilde m_2-1\rangle}}}(2)\}| \big |
\big[\overline{\Upsilon^{j+1}_{\mu_{\langle m,j+1\rangle}}}(2) \leftrightsquigarrow_{_{\!
\! \! \! \! \!\! \! \! \! \! \!\!\! B(\widetilde \cC_2)}}
\overline{\Psi^{\widetilde{m}_2-1}_{\mu_{\langle m,\widetilde m_2-1\rangle}}}(2)] \succeq
F_{p_{j,m}}. \label{6N.6-2b}
\end{equation}

\noindent Again we shall use \eqref{6N.6-2b} for all $j$ down to $\widetilde m_{3}$.

\medskip

\noindent Continuing for $s\le v-1$ we extend the lower bounds for the probability of an active
$\overline{\Upsilon^{\widetilde m_{s+1}}_{\mu_{\langle m, \widetilde m_{s+1}\rangle}}}(s)$ given the indices
$\mu_{\langle m,\widetilde m_{s+1}\rangle}$ yielded ``active" hierarchical sets in the previous steps.

The construction at the final step $s=v$ is slightly different as remarked above, and we consider two cases:
a) $\omega (B(m+1, \ell) > \omega (\widetilde \cC_v) +1$; b) $\omega (B(m+1, \ell) = \omega (\widetilde \cC_v) +1$.

\smallskip

\noindent In both cases we proceed as before as if $\widetilde m_{s+1}=0$, so that we use the analogue of \eqref{6N.6-2} all the way
down to $j=0$. The only difference is that in case a) we again have a transfer operation, and we once more use Lemma \ref{vertical-crossing},
this time at scale 0, but in a space without bad layers and of vertical length at least $L$. In case b) we do not have the transfer operation,
and the hierarchical set $\overline{\Upsilon^{m}}(v)$ stays on the last bad layer of $B_{\widetilde \cC_v}$.

In both cases, the final step to connect each final $\Upsilon^{0}_{\mu_{\langle m, 0 \rangle}}(v)$ to the matching $\Upsilon^0_{\mu_{\langle m, 0\rangle}}$ has probability bounded from below by $p_G^{\kappa}p_B$.

\medskip

\noindent {\it Computing the probability. Verification  of $(b_{m+1})^\prime$.} It is useful to establish a comparison with the following
simple auxiliary scheme. Consider the following system of boxes: a unique $(m+1)$-box (or box of scale $m+1$)
contains $J$  $m$-boxes, each of them containing $J$ boxes
of scale $m-1$, and so on down to scale 1: each $1$-box contains $J$ boxes of
scale $0$, thought as points.

\begin {defin}
\label{calculadora}
Checking procedure:

\noindent (a) Each $0$-box is ``good'' with probability $p_G^{m \kappa } p_B^{m+1}$, all independently.

\noindent (b) For each $k=1,\dots, m-1$ a $k$-box is ``good" if:
\begin{itemize}
\item{} it contains at least one ``good" $(k-1)$-box;
\item{} it is ``approved" at $k$-step, which happens with probability $p_k^{\kappa(m-k)}$
independently of everything else.
\end{itemize}

\noindent (c) For $k=m,m+1$ a $k$-box is ``good" if it contains at least one ``good" $(k-1)$-box.
\end{defin}
\smallskip

With all ``approvals" taken independently, and independent of the initial assignments (good/ not good),
it is straightforward to see that for each $k=0,\dots, m+1$, each $k$-box will be ``good" with probability $p_{k,m+1}$.

Of course we could think of the previous procedure in two stages:

\noindent {\it Stage 1}

\noindent (a) Each $0$-box is ``pre-good'' with probability $p_G^{(m-1)\kappa} p_B^{m}$.

\noindent (b) For each $k=1,\dots, m-1$, a $k$-box is ``pre-good" if:
\begin{itemize}
\item{} it contains at least one ``pre-good" $(k-1)$-box;
\item{} it is ``pre-approved" at $k$-step, which happens with probability $p_k^{\kappa(m-k-1)}$
independently of everything else.
\end{itemize}

\noindent (c) For $k=m,m+1$ a $k$-box is ``pre-good" if it contains at least one ``pre-good" $(k-1)$-box.

\smallskip

\noindent {\it Stage 2} Each ``pre-good" $0$-box is ``tested" again with probability $p_G^{\kappa}$; if successful, it is declared ``good".
In increasing order each $k$-box ($k=1,\dots m-1$) is ``tested" again with probability  $p_k^{\kappa}$, all ``tests" being independently; if
test is successful and if it contains at least one ``good" $(k-1)$-box, it is then declared ``good". For $k=m,m+1$, a $k$-box is
declared ``good" if it contains at least one ``good" $(k-1)$-box.

\medskip

After taking into account the estimates obtained with the procedure based on the itinerary of the descending decomposition
of the bad layer $B(m+1,\ell)$, we see that it is comparable (in the sense of stochastic order)
with the previous ``auxiliary scheme" with two stages: the first corresponds to the estimates provided by \eqref{6N.6}, \eqref {6N.6-2b}
(at all steps $s=1,\dots, v$), and the ``testing at stage 2" comes from the ``transfer" part, with the difference that the ``test" with probability $p_k^{\kappa}$ takes place only at $k=\widetilde m_{s+1}$, for $s=1,\dots, v$ along the itinerary (recall $\widetilde m_{v+1}=0$). At the scales which do not appear in the itinerary, the ``test" is automatically successful with probability one.

\bigskip

\noindent {\it Verification of $(b_{m+1})^{\prime\prime}$.} The
scheme used to define when a matching pair of $j+1$-sets is chained,
by taking at each step $J$ separated matching $j$-sets then yields
(conditional) independence (at each step), and allows to easily
conclude $(b_{m+1})^{\prime\prime}$ from $(b_{m+1})^\prime$.

\medskip
To conclude the proof of Theorem \ref{inductive}, and therefore also
of Theorem \ref{indstep} in Section \ref{passability} (where  $p_G$
is taken close enough to 1), it remains essentially to show that by
taking $L$ large one can compare the numbers $p_{m-1,m}$ given by
\eqref{6N.1} with $p_m$ defined immediately after \eqref{6N.1} for
all $m$. This will allow to conclude the induction step for
($b_{m}$) given by \eqref{5.h03}, summarized in the following:

\noindent {\bf Claim}

Let $m \ge 2$. Assuming the validity of $(a_j),(b_j),(c_j),(d_j)$
for all $j \le m-1$, and  ($a_m$), as explained immediately after
\eqref{4.80} and \eqref{4.81}, we can prove that ($b_m$) holds.

\smallskip

Taking into account what has been proven earlier in this section, it
remains to verify that
\begin{equation}
\label{final1} 8N(1-p_{m-1,m})^{\rho \frac{c}{6}\frac{L}{N}}  \le
q_m,   \text{ for all  } m \ge 2,
\end{equation}
where $N$ is given by \eqref{N}, and $p_{m-1,m}$, $q_m$, $p_m$ are
as in \eqref{6N.1} and the line that follows it.

For this, and since $L$ will be taken large it suffices to obtain
\begin{equation}
\label{final2} p_{m,m} \ge p_m \quad  \forall m \ge 2.
\end{equation}

Let
\begin{equation*}
\Theta=\prod_{k=0}^\infty p_k>0,
\end{equation*}
which is an increasing function of $p_G=p_0$, as also
$\rho=\rho(p_G)$.

We recall the interpretation of $p_{m,m}$ given in Definition
\ref{calculadora} (with $m+1$ now replaced by $m$), and proceed with
a similar checking procedure, leaving the $p_B^m$-probability for
the final step of the $0$-boxes, i.e. with the trivial observation
that if one has $t$ (a fixed integer) independent Bernoulli random
variables with probability of success given by $p\tilde p$, then the
probability of no success is bounded from above by
\begin{equation*}
(1-\tilde p)^{tp/2} + e^{-tI_p(p/2)}
\end{equation*}
where $I_p(x)=x\log(x/p)+ (1-x) \log((1-x)/(1-p))$, for $x \in
(0,1)$, is the Cram\'er transform.
(This follows at once from the
decomposition of the Bernoulli essays into two independent ones, of
probabilities $\tilde p$ and $p$ respectively, and Cram\'er Theorem
for the second one.)

At all steps $i$ from $0$ to $m-2$ each $i$-box is tested
independently of anything else with probability
$p_i^{\kappa(m-i-1)}$, and at the end the $0$-box has to be approved
with probability $\tilde p=p_B^m$.\footnote{the $m$ box and its $m-1$ boxes are not tested, according to \eqref{6N.1} }
Using Cram\'er Theorem we can
then estimate from above the probability that the $m$-box is not
``good", by splitting it into cases: (a) for each $i$, the number of
tested $i-1$-boxes which are successful is not smaller then half of
its expected number; (b) the event in (a) fails at some step $i$.
Thus,
\begin{equation}
\label{final3} 1-p_{m,m} \le (1-p_B^m)^{4(J/2)^{m}
\prod_{i=0}^{m-2} p_i^{\kappa(m-i-1)}}+\sum_{i=1}^{m-1}
 e^{-4(J/2)^{i+1}\prod_{j=2}^{i}p_{m-j}^{\kappa(j-1)} f(p_{m-i-1}^{i\kappa})}
\end{equation}
with
\begin{equation}
\label{1106}
f(p)=I_p (p/2)=(1-\frac{p}{2})\log(\frac{2-p}{1-p})-\log2
\end{equation}

It follows at once that $L_0$ large can be taken so that for all
$m\ge 2$, and all $L \ge L_0(p_B,p_G)$,
\begin{equation*}
(1-p_B^m)^{4(J/2)^{m} \prod_{i=0}^{m-2} p_i^{\kappa(m-i-1)}}\le (1-p_B^m)^{4(J/2)^m(\Theta^\kappa)^{m-1}}\le \frac12 q_m.
\end{equation*}

For the second term in \eqref{final3}, we split it into two pieces.
For the piece corresponding to large values of $i$ we use
\begin{equation*}
\sum_{i=m/2}^{m-1}
 e^{-2(J/2)^{i+1}\prod_{j=2}^{i}p_{m-j}^{\kappa(j-1)} f(p_{m-i-1}^{i\kappa})}\le
\frac{m}{2} exp\left\{-2 (\frac J2)^{m/2}\Theta^{\kappa m}f(p_G^{\kappa(m-1)})\right\}
\end{equation*}
which we can bound from above by $\frac 14 q_m$ for all $m\ge 2$,
provided $L\ge L'_0$ similarly as above. It remains to estimate
\begin{equation*}
\sum_{i=1}^{m/2-1}
 e^{-4(J/2)^{i}\prod_{j=2}^{i}p_{m-j}^{\kappa(j-1)} f(p_{m-i-1}^{i\kappa})}.
\end{equation*}
Since we may assume (by taking $L$ large) that $J\Theta^\kappa> 2$,
this last term is bounded from above by
\begin{equation*}
\frac {m}{2}-1 \exp\{-2 J\Theta^\kappa f(p_{m/2}^{\kappa m/2})\}\le
\frac{m}{2} \exp\{-4 f(p_{m/2}^{\kappa m/2})\}.
\end{equation*}

To have this bounded from above by $\frac14 q_m$ we need $4
f(p_{m/2}^{\kappa m/2}) > (m+1) \log q_0^{-1}+ \log (2m)$, and a
simple analysis of $f$ given by \eqref{1106} shows this is
the case provided $q_0$ is chosen sufficiently small. Indeed,
writing for convenience $q_0=e^{-y}$, it remains to check
\begin{equation}
-\ln (1-p_{m_2}^{\varkappa m/2}) \ge \frac{1}{4}(\ln(4m)+(m+1)y)
\end{equation}
Assuming ${\kappa m/2}$ is an integer (small modification otherwise)
\begin{align}
1- p_{m_2}^{\varkappa m/2} = & 1- \sum_{i=0}^{{\varkappa m/2}} \left(\begin{matrix} {\varkappa m/2}\\i\end{matrix} \right) (-1)^{i} e^{-i (m/2+1)y} \notag\\
= & \sum_{i=1}^{{\varkappa m/2}} \left(\begin{matrix} {\varkappa m/2}\\i\end{matrix} \right)(-1)^{i} e^{-i (m/2+1)y} \notag \\
\leq & \sum_{i=1}^{{\varkappa m/2}} \left(\begin{matrix} {\varkappa m/2}\\i\end{matrix} \right) e^{-i (m/2+1)y} \notag \\
\leq & 2^{\varkappa m/2}e^{- (m/2+1)y} \notag
\end{align}
Thus for all such $m$
\begin{equation*}
-\ln (1- p^{\varkappa m/2}_{m/2}) \geq   - \frac{\varkappa m}{2} \ln 2 + (\frac{m}{2} + 1)y  \geq \frac{1}{4} (m+1) y,
\end{equation*}
provided $\frac{\varkappa}{2} \ln 2 < y$, which holds for $p_G$ sufficiently close to 1.


\section{Extension to $p_G>p_c$}
\label{boosting}

In order to extend the main result to all values $p_G>p_c$, several modifications of the scheme described in the previous sections are needed.

We start by choosing $L$ large enough so that conditions $(c_1)$ and $(d_1)$ become satisfied. For this, the first thing
is to enlarge the size of the $0$-seed $Q^{(0)}$; it keeps the triangular shape but has $K$ sites at its top line, with $K$ large enough
so that the probability of an infinite open oriented path (in the homogeneous $p_G$ percolation model) starting from its adjacent sites from above
has probability at least $1 - (1-p^*)/4$. Adjusting $c$ and increasing $L$ one can check that the conditional probability of $S^1$ being
$s-$ passable given $Q^{(0)}$  is larger $p^*$, where the notion of passability at the level 1 includes new enlarged seeds on the top left
and top right parts of $S^1$.

Starting from this scale, the renormalization scheme repeats the previous one for the definitions of renormalized
sites at scales $k \ge 2$, in particular the $k$-seeds, contain only three passable sites of scales $1\le j\le k-1$.

At this point the only non-trivial modification involves the induction step for $(b_m)$ done in Sections \ref{structure} and \ref{conclusion}.
From level $m$ down to level $1$ we follow the same procedure as before. The key change is in the definition of a pair of
matching $1$-sites being chained, and the corresponding probability estimate of such event.
Assume that two sites $S^1$ and $\wh S^1$ form a matching pair with respect to a bad layer of mass $m$ and level $1$,
and have $s$-dense kernel and respectively reverse $\hat c$-dense kernel.
To concatenate open $0$-sites in the cluster within $Ker(S^1)$ to some open $0$-site in the reverse cluster within $Ker(\wh S^1)$,
we will act differently from the case of large $p_G$ since the density can now be arbitrary small. We look at the probability that out of
the (order $L$) $0$-sites lying below the bad layer and in the cluster within $Ker(S^1)$, at least $K^\prime \geq K$ have disjoint open path crossing the bad
layer. If this occurs, one can see that with a probability compatible with the estimates in Section \ref{conclusion} at least one of these points will have an open $0$ level path going to the top of $\wh S^1$. Due to the planarity, this establishes the desired connection.
\medskip

\noindent {\bf Acknowledgement.} The authors thank IMPA, CBPF, and Cornell University for the warm hospitality during the preparation of this work. V. S. is partially supported by CNPq grant 484801/2011-2. M.E.V is partially supported by CNPq grant 304217/2011-5.


\begin{thebibliography}{99}

\bibitem{BBS} P. N. Balister, B. Bollob\'as, A. M. Stacey. {\em Dependent percolation in two dimensions.} Probab. Theory Relat. Fields {\bf 117}, 495–513 (2000).
\bibitem{BSS} R. Basu, V. Sidoravicius, A. Sly. (In preparation)
\bibitem{BS} R. Basu, A. Sly. Lipschitz embeddings of random sequences arXiv: 1204.293 (2012).
\bibitem {BDS} M. Bramson, R. Durrett, R. Schonmann (1991) {\em The contact process in a random environment}. Ann. Probab. {\bf 19}, 960--983 (1991).
\bibitem{BK} I. Benjamini, H. Kesten (1995). {\em Percolation of arbitrary words in $\{0,1\}^\NN$}.
Annals of Probability {\bf 23} 1024-1060.
\bibitem{Durrett}  R. Durrett. {\em Oriented percolation in two dimensions.} Ann. Probab. {\bf 12}, 999--1040 (1984).
\bibitem{Gac1} P. G\'acs. {\em The clairvoyant demon has a hard task.} Combinatorics, Probability and Computing {\bf 9}, 421--424 (2000).
\bibitem{Gac2} P. G\'acs. {\em Compatible sequences and a slow Winkler percolation}. Combin. Probab. Comput. {\bf 6}, 815--856.
\bibitem{H} C. Hoffman. {\em Phase transition in dependent percolation}. Comm. Math. Phys {\bf 254}, 1-22 (2005).
\bibitem{HPT} F. Harary, G. Prins, W.T. Tutte. {\em The Number of Plane Trees.} Indag. Math. {\bf 26}, 319--327 (1964).
\bibitem{JMP}J. Jonasson, E. Mossel, Y. Peres. {\em Percolation in a dependent random environment}. Random Struct. \& Algorithms {\bf 16}, 333--343 (2000).
\bibitem{compat} H. Kesten, B. Lima, V. Sidoravicius, M. E. Vares. {\em On the compatibility of  binary sequences.}  Comm. Pure and Applied Mathematics (to appear).
\bibitem{KSZ} H. Kesten, V. Sidoravicius, and Y. Zhang (2001). {\em Percolation of arbitrary words on the closed
package graph of $\ZZ^2$}. Eletronic Journal of Probability. {\bf 6} 1-27.
\bibitem{Liggett} T. L. Liggett. {\em Stochastic interacting systems: contact, voter and exclusion processes}. Springer, 1999.
\bibitem{MW1} B. McCoy and T. Wu. {\em Theory of a two-dimensional Ising model with random impurities. I. Thermodynamics}. Phys. Rev. {\bf 176}, 631--643 (1968).
\bibitem{MW2} B. McCoy and T. Wu. {\em Theory of a two-dimensional Ising model with random impurities. II. Spin correlation functions}. Phys. Rev. {\bf 188}, 982--1013 (1969).
\bibitem{MW3} B. McCoy and T. Wu. {\em Theory of a two-dimensional Ising model with random impurities. III. Boundary effects}. Phys. Rev. {\bf 188}, 1014--1031 (1969).
\bibitem{MW4} B. McCoy. {\em Theory of a two-dimensional Ising model with random impurities. IV. Generalizations}. Phys. Rev. B {\bf 2}, 2795--2803 (1970).
\bibitem{P} R. Peled. {\em On rough isometries of Poisson processes on the line}. Ann. Appl. Probab. {\bf 20}, 462--494 (2010).
\bibitem{Wi} P. Winkler {\em Dependent percolation and colliding random walks}. Random Structures
and Algorithms. {\bf 16} no. 1, 58-84 (2000).
\end{thebibliography}
\end{document}